\documentclass[a4paper, reqno]{amsart} 

\usepackage[utf8]{inputenc}
\usepackage[T1]{fontenc}

\usepackage[english]{babel}

\usepackage[margin=1.2in]{geometry}
\usepackage{enumitem}
\usepackage[bookmarks = true, pagebackref]{hyperref}
\hypersetup{  
	colorlinks=true,
	linkcolor=blue,
	citecolor=green,
	urlcolor=magenta,
}
\usepackage[all]{xy}
\usepackage{xcolor}
\setcounter{tocdepth}{1}

\usepackage{amsmath, amsthm, amssymb, bbm, mathrsfs}
\usepackage[alphabetic]{amsrefs} 
\usepackage{unicode-math}

\usepackage{varwidth}
\usepackage{tikz-cd}

\newtheorem{thm}{Theorem}[section]
\newtheorem*{thm*}{Theorem}
\newtheorem{lem}[thm]{Lemma}
\newtheorem{prop}[thm]{Proposition}
\newtheorem{cor}[thm]{Corollary}

\theoremstyle{definition}
\newtheorem{rmk}[thm]{Remark}
\newtheorem{notation}[thm]{Notation}
\newtheorem{defn}[thm]{Definition}
\newtheorem{ex}[thm]{Example}

\newtheorem{assumption}[thm]{Assumption}
\newtheorem{construction}[thm]{Construction}

\numberwithin{equation}{section}


\newcommand{\overbar}[1]{\mkern 1.5mu\overline{\mkern-1.5mu#1\mkern-1.5mu}\mkern 1.5mu}

\newcommand{\ob}{\overbar}


\newcommand{\mr}{\mathrm}

\newcommand{\wt}{\widetilde}
\newcommand{\wh}{\widehat}



\newcommand{\Res}{\operatorname{Res}}
\newcommand{\Fil}{\operatorname{Fil}}

\newcommand{\Orb}{\operatorname{Orb}}

\newcommand{\Int}{\operatorname{Int}}

\newcommand{\red}{\mathrm{red}}
\newcommand{\rk}{\mathrm{rk}}



\newcommand{\Hom}{\operatorname{Hom}}

\newcommand{\End}{\operatorname{End}}

\newcommand{\Lie}{\operatorname{Lie}}

\newcommand{\Spec}{\operatorname{Spec}}

\newcommand{\Spf}{\operatorname{Spf}}

\newcommand{\colim}{\operatorname{colim}}

\newcommand{\GL}{\operatorname{GL}}
\newcommand{\SL}{\operatorname{SL}}

\newcommand{\id}{\mathrm{id}}

\newcommand{\Gal}{\operatorname{Gal}}

\newcommand{\dF}{\dot{\bfF}}

\newcommand{\tensor}{\otimes}

\newcommand{\iso}{\cong}


\newcommand{\mbA}{\mathbb{A}}

\newcommand{\mbC}{\mathbb{C}}

\newcommand{\mbE}{\mathbb{E}}
\newcommand{\mbF}{\mathbb{F}}

\newcommand{\mbH}{\mathbb{H}}

\newcommand{\mbL}{\mathbb{L}}
\newcommand{\mbM}{\mathbb{M}}

\newcommand{\mbP}{\mathbb{P}}
\newcommand{\mbQ}{\mathbb{Q}}
\newcommand{\mbR}{\mathbb{R}}

\newcommand{\mbV}{\mathbb{V}}
\newcommand{\mbW}{\mathbb{W}}
\newcommand{\mbX}{\mathbb{X}}

\newcommand{\mbZ}{\mathbb{Z}}

\newcommand{\mcA}{\mathcal{A}}
\newcommand{\mcB}{\mathcal{B}}

\newcommand{\mcD}{\mathcal{D}}
\newcommand{\mcE}{\mathcal{E}}
\newcommand{\mcF}{\mathcal{F}}
\newcommand{\mcG}{\mathcal{G}}
\newcommand{\mcH}{\mathcal{H}}
\newcommand{\mcI}{\mathcal{I}}

\newcommand{\mcL}{\mathcal{L}}
\newcommand{\mcM}{\mathcal{M}}
\newcommand{\mcN}{\mathcal{N}}
\newcommand{\mcO}{\mathcal{O}}
\newcommand{\mcP}{\mathcal{P}}
\newcommand{\mcQ}{\mathcal{Q}}

\newcommand{\mcS}{\mathcal{S}}
\newcommand{\mcT}{\mathcal{T}}
\newcommand{\mcU}{\mathcal{U}}

\newcommand{\mcW}{\mathcal{W}}
\newcommand{\mcX}{\mathcal{X}}
\newcommand{\mcY}{\mathcal{Y}}
\newcommand{\mcZ}{\mathcal{Z}}

\newcommand{\mfA}{\mathfrak{A}}

\newcommand{\mfa}{\mathfrak{a}}

\newcommand{\mfd}{\mathfrak{d}}

\newcommand{\mfm}{\mathfrak{m}}

\newcommand{\mfp}{\mathfrak{p}}

\newcommand{\bfF}{\mathbf{F}}

\newcommand{\bfM}{\mathbf{M}}

\newcommand{\bfR}{\mathbf{R}}

\newcommand{\ov}{\overline}
\newcommand{\lr}{\longrightarrow}

\newcommand{\simto}{\overset{\sim}{\to}}
\newcommand{\simlr}{\overset{\sim}{\lr}}

\newcommand{\longmapsfrom}{\longleftarrow\!\shortmid}

\newcommand{\LT}{\mr{LT}}


\newcommand{\msF}{\mathscr{F}}
\newcommand{\msL}{\mathscr{L}}

\makeatletter
\def\iddots{\mathinner{\mkern1mu\raise\p@
\vbox{\kern7\p@\hbox{.}}\mkern1mu
\raise4\p@\hbox{.}\mkern1mu\raise7\p@\hbox{.}\mkern1mu}}
\makeatother

\newenvironment{altenumerate}
	{\begin{list}
			{(\theenumi) }
			{\usecounter{enumi}
				\setlength{\labelwidth}{0pt}
				\setlength{\labelsep}{0pt}
				\setlength{\leftmargin}{0pt}
				\setlength{\itemsep}{\the\smallskipamount}
				\renewcommand{\theenumi}{\arabic{enumi}}
		}}
		{\end{list}}

\newenvironment{altenumerate3}
	{\begin{list}
			{(\theenumi)}
			{\usecounter{enumi}
				\setlength{\leftmargin}{2em}
				\setlength{\itemsep}{\the\smallskipamount}
				\renewcommand{\theenumi}{\arabic{enumi}}
		}}
		{\end{list}}

	\newenvironment{altitemize}
	{\begin{list}
			{$\bullet$}
			{\setlength{\labelwidth}{0pt}
				\setlength{\itemindent}{5pt}
				\setlength{\labelsep}{5pt}
				\setlength{\leftmargin}{0pt}
				\setlength{\itemsep}{\the\smallskipamount}
		}}
		{\end{list}}

\newcommand{\BT}{\mr{BT}}
\renewcommand{\LT}{\mcL\mcT}
\newcommand{\Hk}{\mathrm{Hk}}
\newcommand{\CM}{\mathrm{CM}}
\newcommand{\U}{\mathrm{U}}
\newcommand{\GU}{\mathrm{GU}}

\begin{document}

\title{Unitary Shimura varieties at ramified primes and arithmetic transfer}
\author{Yu Luo}
\address{Department of Mathematics, University of Wisconsin-Madison, 480 Lincoln Drive, Madison, WI 53706, USA.}
\email{yluo237@wisc.edu}
\author{Andreas Mihatsch}
\address{School of Mathematical Sciences, Zhejiang University, 866 Yuhangtang Rd, Hangzhou, 310058, P.~R.~China.}
\email{mihatsch@zju.edu.cn}
\author{Zhiyu Zhang}
\address{Department of Mathematics, Stanford University, Building 380, Stanford, CA 94305, USA.}
\email{zyuzhang@stanford.edu}
\date{\today}

\begin{abstract}
We consider unitary Shimura varieties at places where the totally real field ramifies over $\mbQ$. Our first result constructs comparison isomorphisms between absolute and relative local models in this context, which relies on a reformulation of the Eisenstein condition of Rapoport--Zink and Rapoport--Smithling--Zhang. Related to that, we also provide a moduli description for the integral models of RSZ unitary Shimura varieties in new cases. Our second result lifts the comparison of local models to categories of $p$-divisible groups and, as a corollary, to various kinds of Rapoport--Zink spaces. Our third result is a proof of the arithmetic transfer conjecture of the third author in full generality. Using our statements about Rapoport--Zink spaces, we extend the previous proof from the unramified case to that of all $p$-adic local fields ($p$ odd).
\end{abstract}

\maketitle

\tableofcontents

\section{Introduction}\label{s:intro}

In this article, we prove the arithmetic transfer conjecture of the third author \cite{ZZhang21} in full generality. To this end, we give a new definition of integral models of unitary Shimura varieties at places that ramify over $\mbQ$. Our main results concern the local structure of these models, and ultimately allow to remove the ramification restrictions that were present in \cite{ZZhang21} before. Let us first describe the broader context.

\medskip The arithmetic Gan--Gross--Prasad (AGGP) conjecture states a Gross--Zagier type relation between algebraic cycles on Shimura varities and $L$-functions. In \cite{Zhang12}, Wei Zhang proposed a relative trace formula approach to the AGGP conjecture. An important component of this is his arithmetic fundamental lemma (AFL) which states an identity between intersection numbers on unitary Rapoport--Zink spaces (integral models of local Shimura varieties) and derivatives of orbital integrals. It applies at all unramified places with good reduction and can be thought of as a local analogue of the AGGP conjecture. The AFL was first proved over $\mbQ_p$ with $p > n$ by W. Zhang in \cite{Zhang21}, then extended to general local fields jointly with the second author \cite{MZ}, and finally to small primes by the third author \cite{ZZhang21}. We refer to the introductions of these articles for more background.

In order to obtain general global results, it is also necessary to understand the contributions from places with bad reduction. One still hopes to define local intersection numbers at such places and to relate them to derivatives of orbital integrals. This is called \emph{arithmetic transfer} and constitutes an analog of the smooth transfer of test functions in the context of trace formula comparisons in automorphic representation theory. Arithmetic transfer was introduced by Rapoport--Smithling--Zhang in the series \cites{RSZ-Duke,RSZ-Annalen,RSZ-AGGP} and is also the topic of work of Li--Rapoport--Zhang \cite{CRZ-quasiAFL} and joint work of one of us \cite{LRZ}. In these articles, the authors formulate arithmetic transfer conjectures in a variety of situations and prove them in some cases.

One of us \cite{ZZhang21} took up and extended the conjecture about the unramified case from \cite[\S10]{RSZ-Annalen}. Concretely, let $F/F_0$ be the quadratic extension used to define the unitary Shimura variety in question. His extended conjecture applies at all places of $F_0$ that are inert in $F$ and where the level is maximal parahoric. He moreover proves his conjecture at all places of $F_0$ that are unramified over $\mbQ$, the reason for this restriction being that the integral models of unitary Shimura varieties at places that ramify over $\mbQ$ were not sufficiently well understood. The original goal of our paper was to remove this restriction.

\medskip Motivated by the above problem, our main results provide a systematic study of unitary Shimura varieties at places of $F_0$ that are ramified over $\mbQ$. The situation can be described as follows. Let $V$ be a hermitian $F$-vector space and let $v$ be a non-archimedean place of $F_0$. Consider the local unitary group $\mr{U}(V_v)$ over $F_{0,v}$, the conjugacy class of minuscule cocharacters $μ$ coming from the Shimura datum, and a local level subgroup $K_v\subset \mr{U}(V_v)$. Let us assume that this local situation is well-understood in the sense that one can describe the local model and the Rapoport--Zink spaces in question. For example, $v$ might be inert in $F$, $μ$ might come from a signature $(n-1,1)$ setting, and $K_v$ might be maximal parahoric as in the arithmetic transfer conjecture of \cite{ZZhang21}.

The issue is now that the theory for $\mr{U}(V_v)$ does not directly apply to the Shimura variety because this variety is (essentially) attached to the Weil restriction group $\mr{Res}_{F_0/\mbQ} \mr{U}(V)$. If $v$ is ramified over $\mbQ$, then the Weil restriction picks up that ramification, leading to complications in local model theory and when formulating local or global moduli problems.

In a sense, this phenomenon should be considered an artifact of the Shimura variety formalism, and one can hope to be able to relate the geometry for $\mr{Res}_{F_0/\mbQ} \mr{U}(V)$ at a prime $p$ to the geometry of all $\mr{U}(V_v)$, where $v\mid p$. This is a non-trivial problem in general. For local models, a systematic study was begun by Pappas--Rapoport \cite{PRII} in terms of the splitting model. For Rapoport--Zink spaces, the problem was first studied by Rapoport--Zink \cite{RZ2} for Drinfeld moduli spaces. In our paper here, we provide a complete solution for the case that occurs frequently in arithmetic intersection theory: the case when $V$ is definite at all but one archimedean place.

To be more precise, the Shimura varieties we consider in the body of the paper are the RSZ unitary Shimura varieties introduced by Rapoport--Smithling--Zhang \cite{RSZ}. They are of particular interest because they are the natural hosting spaces for the special cycles of Kudla--Rapoport \cite{KR-global}. In addition to the AFL mentioned above, several further striking results about these cycles have been obtained recently, such as the modularity of their generating series by Bruinier--Howard--Kudla--Rapoport--Yang \cite{BHKRY}; arithmetic Siegel--Weil formulas by Li--Zhang \cites{Li-Zhang}, Li--Liu \cite{Li-Liu-Pi}, He--Li--Shi--Yang \cite{HLSY}, H. Yao \cite{YaoHaodong}, and the first author \cite{Luo-KR}; an arithmetic inner product formula due to Li--Liu \cites{Li-Liu-Annals, Li-Liu-Pi}; or the twisted AFL of the last author \cite{zhang2025non}. Our results have applications in the contexts of these works as well for which we give some examples in \S\ref{sec:furtherapplications} below.

\subsection{Overview of results}
\label{ss:intro_summary}

Let $F / F_0 $ be a quadratic CM extension of a totally real number field, and let $V$ be a hermitian $F$-vector space that has signature $(n-1,1)$ at one infinite place and is definite at all others. Let $K\subset \mr{U}(V)(\mbA_f)$ be a neat open compact subgroup. Given these data and a CM type for $F$, there is an RSZ type unitary Shimura variety $M_K$ of level $K$ over the reflex field $E$. These objects will be introduced in detail in \S\ref{sec:RSZ}. Our main results are as follows.

\medskip \noindent (R1) We provide a moduli description for the integral model $\mcM_{K,p}/\Spec O_{E,p}$ of $M_K$ at a prime $p$ in all situations of parahoric level structure. We similarly provide moduli descriptions for the local model and the involved Rapoport--Zink spaces. These description rely on a reformulation of the Eisenstein condition from \cite{RZ2} and \cite{RSZ-AGGP}. Prior to this work, such moduli descriptions were only known under assumptions on the ramification behavior of $p$ in $F$, see \cite[\S5--6]{RSZ}.

\medskip \noindent (R2) We carry out the comparison in local geometry between $\mr{Res}_{F_0/\mbQ}\mr{U}(V)$ and $\mr{U}(V)$ alluded to above. That is, we simplify the moduli descriptions for local models and Rapoport--Zink spaces from (R1) to more accessible ones that only involve the local group-theoretic data. For example, these local moduli spaces do not depend on the CM type that goes into the global definition.

\medskip \noindent (R3) We prove the arithmetic transfer conjecture for maximal parahoric level of the third author in full generality. We also prove new related almost modularity results on arithmetic theta series of Kudla--Rapoport divisors. 

\medskip Having a moduli description as in (R1) is useful beyond just giving an explicit definition of $\mcM_{K,p}$. Namely, it also allows to define integral models of special cycles on $\mcM_{K,p}$ as spaces of homomorphisms/endomorphisms involving the universal abelian scheme. For example, this is how we define the global cycles that come up in (R3).

There are also abstract constructions of integral models of parahoric level abelian type Shimura varieties, see the work of Kisin--Pappas \cite{Kisin-Pappas2018} and Kisin--Pappas--Zhou \cite{Kisin-Pappas-Zhou}. Moreover, Madapusi \cite{madapusi} has recently proposed a definition of special cycles for smooth integral models in the Hodge case which relies on $p$-adic Hodge theory. The advantage of such definitions is that they are group-theoretic in nature, and they will probably play an important role in extending the study of special cycles to other contexts. However, the study of moduli problems remains the more direct approach in the unitary case, and moreover also applies in ramified situations such as \cite{Li-Liu-Pi,HLSY,HLSsplitting,LRZ}.

A related aspect is that intersection problems on integral models often require a partial resolution of singularities. An example is the class of splitting models which is studied in \cite{Kramer,ZZ-Splitting,HLSsplitting} and which has found applications in \cite{HLSY} and \cite{LRZ}. In fact, the formulation of the arithmetic transfer conjecture we prove in (R3) also relies on an explicit partial resolution of singularities of a product of two semi-stable local models (see \S\ref{sec:global_intersection_numbers}). There is currently no known group-theoretic construction of such ``non-standard'' local models.

\medskip Let us also explain the relation of our results about $p$-divisible groups in (R2) with the theory developed by Scholze--Weinstein \cite{SW}. A major result of this book are two comparison isomorphisms between Rapoport--Zink spaces \cite[\S25.3, \S25.4]{SW} which rely on a reinterpretation of Rapoport--Zink spaces as moduli spaces of local shtukas. This method is group-theoretic and should also apply to the Rapoport--Zink spaces of our paper. By looking at generic fibers, it should moreover extend to a comparison of special cycles whenever these cycles are flat over $\mbZ_p$. However, this flatness is often not given. For example, the Kudla--Rapoport divisors we consider in \S\ref{sec:algebraiccycles} and \S\ref{sec:AT-conj} are usually not flat.

Our approach is different and yields stronger results. Namely, our main result is an equivalence between two categories of $p$-divisible groups with additional data. This equivalence holds in complete generality\footnote{Except that we need to restrict to biformal groups when including compatibility with duality, see Theorem \ref{thm:intro_PEL}.} in the sense that the base might be non-reduced, non-noetherian, and that the family of $p$-divisible groups need not be isogeny-trivial like over a Rapoport--Zink space. The comparison isomorphisms of Rapoport--Zink spaces follow as immediate corollaries of the categorical statement. It will moreover be clear that they are compatible with the definitions of special cycles and of other notions like Kottwitz--Rapoport strata.

In the next three sections, we give more details about our results and techniques.

\subsection{Local models}
\label{ss:intro_loc_models}
We are interested in the geometry of the Shimura variety $M_K$ at a $p$-adic place $ν$ of the reflex field. The global starting data involve a quadratic field extension, a CM type, and a hermitian space $V$ with signature $(r,s)$ at a distinguished archimedean place, and signature $(n,0)$ or $(0,n)$ at the others. This leads to the following local situation.

Taking $p$-adic completions of the global fields, we consider a finite extension $F_0/\mbQ_p$ and an étale quadratic extension $F/F_0$. Let $x\mapsto \ov x$ denote the Galois conjugation. The signatures of $V$ and the CM type define an embedding $ϕ_0:F\to \ov{\mbQ_p}$ and a subset $A\subset \Hom(F, \ov{\mbQ_p})$ such that there is a disjoint union decomposition
\begin{equation}
\Hom(F, \ov{\mbQ_p}) = A \sqcup \{ϕ_0, \ov{ϕ_0}\}\sqcup \ov A.
\end{equation}
Let us set $B = A\sqcup \{ϕ_0, \ov{ϕ_0}\}$. The pair $(A, B)$ gives rise to a local reflex field $E$. Moreover, Howard \cite{H} defined certain ideals
\begin{equation}
J_B\subset J_A \subset O_F\tensor_{\mbZ_p} O_E.
\end{equation}
We use these ideals to impose the following condition on the Lie algebra of the abelian variety or the $p$-divisible groups in our moduli problems.
\begin{defn}[$(A,B)$-strictness, see Definition \ref{def:AB-strict}]\label{def:intro_AB_strict}
Let $R$ be an $O_E$-algebra. We say that an $(O_F\tensor_{\mbZ_p} R)$-submodule $\mcF\subseteq (O_F\tensor_{\mbZ_p} R)^n$ is \emph{$(A, B)$-strict} if it is an $R$-module direct summand and satisfies
\begin{equation}\label{eq:intro_AB_strict}
J_B(O_F\tensor_{\mbZ_p} R)^n \subseteq \mcF \subseteq J_A(O_F\tensor_{\mbZ_p} R)^n.
\end{equation}
\end{defn}
The name of this condition comes from the fact that it is a generalization of the \emph{strictness} condition of Drinfeld (see \S\ref{ss:intro_p-div}). When $F/\mbQ_p$ is unramified, then \eqref{eq:intro_AB_strict} is equivalent to a Kottwitz determinant condition, but it is strictly stronger in ramified situations. In fact, Definition \ref{def:intro_AB_strict} is precisely equivalent to the Eisenstein condition of Rapoport--Zink \cite{RZ2} and Rapoport--Smithling--Zhang \cite{RSZ-AGGP}, see Proposition \ref{prop:RZ-eisenstein}, while the formulation in terms of \eqref{eq:intro_AB_strict} directly relates to the splitting model from Pappas--Rapoport \cite{PRII}.

The original global data also give rise to an $n$-dimensional hermitian $F$-vector space. Assume that $L$ is a self-dual lattice chain in $V$ and that $0\leq d\leq n$. Rapoport--Zink \cite{RZ1} attach to these data a local model $\bfM^L$ for the unitary group $\mr{U}(V)$ viewed as algebraic group over $F_0$. (Of course, if $F = F_0\times F_0$, then one is really working with $\GL_{n,F_0}$ and the definitions simplify. But we will stick to the uniform terminology for a moment.)

As explained before, the global setting leads one to additionally consider the $\mbQ_p$-algebraic group $\mr{Res}_{F_0/\mbQ_p}\mr{U}(V)$. Together with Definition \ref{def:intro_AB_strict}, the formalism from \cite{RZ1} also defines a variant $\bfM_A^L$ of $\bfM^L$ that parametrizes certain $(A,B)$-strict filtrations. Leaving the precise definitions to \S\ref{s:LM}, our main result about the two local models reads as follows:

\begin{thm}[\protect{see Theorems \ref{thm:local_model_comparison} and \ref{thm:LM-comparison}}]\label{thm:intro_LM}
There is a natural isomorphism
\begin{equation}\label{eq:intro_main_LM}
\bfM^L_A \simlr O_E\tensor_{O_F} \bfM^L.
\end{equation}
\end{thm}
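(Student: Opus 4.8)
The plan is to reduce \eqref{eq:intro_main_LM} to a module-theoretic statement over $O_E$ and then feed it into the moduli descriptions of both sides. All tensor products and ideals below are understood to be base-changed to $O_E$; in particular $J_A, J_B$ will be regarded as ideals of $O_F\tensor O_E$. Fix a lattice $\Lambda$ in $L$; since $\Lambda$ is free over $O_F$ we may identify $\Lambda\tensor R\cong(O_F\tensor R)^n$, and the key assertion is the following: for every $O_E$-algebra $R$, the map $\mcF\mapsto\mcF/J_B(O_F\tensor R)^n$ is a bijection between the $(A,B)$-strict $(O_F\tensor R)$-submodules of $(O_F\tensor R)^n$ and the $R$-module direct summands of
\[
Q_R^n := J_A(O_F\tensor R)^n / J_B(O_F\tensor R)^n.
\]
This is formal once one knows that $J_B\subseteq J_A$ and that $J_A$, $J_B$, $(O_F\tensor O_E)/J_A$ and $(O_F\tensor O_E)/J_B$ are finite locally free over $O_E$ — as is provided by \cite{H} together with the flatness underlying the splitting model of \cite{PRII}. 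These freeness properties turn $J_B(-)^n$ and $J_A(-)^n$ into $R$-module direct summands of $(O_F\tensor R)^n$, so that a direct summand $\mcF$ caught between them is recovered from, and corresponds to, its image in $Q_R^n$.

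\textbf{The core computation}, and the step I expect to be the main obstacle, is the identification of $Q := J_A/J_B$ over $O_E$. I claim $Q$ is an invertible $O_E$-module on which $O_F$ acts through $\varphi_0\colon O_F\to O_E$; since $O_E$ is the ring of integers of a $p$-adic field and hence local, $Q$ is then free of rank one, and any two trivializations $Q\cong O_E$ induce the same identification of Grassmannians of $R$-submodules of $Q_R^n$, as they differ by a scalar in $O_E^\times$ and scaling by a unit fixes every $R$-submodule. To prove the claim I would combine the defining property of Howard's ideals — $(O_F\tensor O_{E_{A'}})/J_{A'}$ is the maximal quotient on which $O_F$ acts by characters in $A'$ — applied to $A'=A$ and $A'=B$: this forces the $O_F$-action on $Q$ to kill every isotypic component except the $\varphi_0$-part, while the rank count $\rk_{O_E}Q = |B|-|A| = 1$ follows from $\rk_{O_{E_{A'}}}\big((O_F\tensor O_{E_{A'}})/J_{A'}\big)=|A'|$ (\cite{H}), using that a surjection of finite projective modules has finite projective kernel. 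Granting this, $Q_R^n = Q\tensor_{O_E}R^n\cong R^n$ with $O_F$ acting via $\varphi_0$, so an $(A,B)$-strict $\mcF$ is, at the level of its underlying $\GL_n$-filtration, precisely an $R$-point of $O_E\tensor_{O_F}\bfM^L$.

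\textbf{Assembling the isomorphism.} The transition maps of the chain $L$ are $O_F$-linear and $\mcF\mapsto\mcF/J_B(-)$ is functorial in $\Lambda$, so the periodicity, inclusion and — in the PEL case — self-duality conditions defining $\bfM^L$ match those defining $\bfM_A^L$ under the bijection above; as every step is canonical, the resulting isomorphism \eqref{eq:intro_main_LM} is natural in $L$ and equivariant for the automorphisms of $L$. For the PEL case one runs the same argument with $B = A\sqcup\{\varphi_0,\ov{\varphi_0}\}$, so that $Q$ now has rank two and $O_F\tensor O_E$ splits it into its $\varphi_0$- and $\ov{\varphi_0}$-isotypic lines, which are interchanged by the hermitian duality; the Kottwitz signature condition and, when $F/F_0$ is ramified, the supplementary wedge and spin conditions are imposed on the $\varphi_0$-isotypic quotient on both sides and therefore transport across, exactly in the manner the Eisenstein condition is reformulated in Proposition \ref{prop:RZ-eisenstein}. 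The only input beyond \cite{H} and \cite{PRII} is the identification of $Q$; the rest is bookkeeping.
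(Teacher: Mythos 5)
Your EL argument is essentially the same as the paper's, with one stylistic difference: where you argue abstractly that $Q = J_A/J_B$ is an invertible $O_E$-module with $O_F$ acting through $\varphi_0$ (and that the trivialization is irrelevant because a unit rescaling fixes every $R$-submodule), the paper pins down a \emph{canonical} trivialization by multiplication by the explicit Eisenstein element $e_A$ (Proposition \ref{prop:e_S-inverse}, giving $e_A\colon M/J_{\{\varphi_0\}}M \simto J_A M/J_B M$). Your abstract phrasing does work, but note that the isotypic argument as you state it is not quite a proof: the fact that $J_{\{\varphi_0\}}\cdot J_A\subseteq J_B$ is a concrete statement about Eisenstein ideals, most transparently seen from $e_{\{\varphi_0\}}\cdot e_A = e_B$; deducing it from Howard's ``maximal quotient'' characterization alone requires an extra torsion-free argument (multiplication by any element of $J_{\{\varphi_0\}}$ on the rank-one $O_E$-module $Q$ vanishes generically, hence vanishes since $O_E$ is a domain). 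The paper's explicit $e_A$ avoids this sidestep and, more importantly, gives compatibility between the EL and PEL cases for free.

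The PEL discussion has a real gap. You claim that $Q$ (now of $O_E$-rank two) splits into $\varphi_0$- and $\ov{\varphi_0}$-isotypic lines under $O_F\tensor O_E$. This is only true when $F/F_0$ is unramified. In the ramified case $O_F\tensor_{O_{F_0},\varphi_0}O_E$ is local, and $Q$ is a free rank-one module over it with no such splitting; there is no pair of ``interchanged lines.'' The paper's proof works uniformly because it never needs a splitting: $e_A$ directly identifies $J_A(\Lambda\tensor_{O_K}R)/J_B(\Lambda\tensor_{O_K}R)$ with $\Lambda\tensor_{O_{F_0}}R$, both as $O_F\tensor_{O_{F_0}}R$-modules. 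Related to this, you do not explain how the orthogonality condition (iv) of Definition \ref{def:abs-LM} transports across the isomorphism; this is not bookkeeping but requires lifting the perfect pairing $\psi$ to an $O_{F_0}\tensor R$-valued pairing $\psi_{F_0}$ and verifying the diagram \eqref{eq:restricted_pairings_diagram_extended}, which is the content of \S\ref{sec:duality} (in particular Lemma \ref{lem:keep-isotropic} and Proposition \ref{prop:duality-comp}). Finally, you should note that the PEL local model is defined as the flat closure of the naive one, and so one must also check that the isomorphism of naive models is compatible with this flat closure; the paper makes this explicit (statement (2) of Theorem \ref{thm:LM-comparison}), and it holds because taking the flat closure commutes with the flat base change $O_{E_0}\to O_E$.
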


The construction of this isomorphism is straightforward and based on the observation that $J_A/J_B \simto O_F\tensor_{O_{F_0}} O_E$. This observation was already made in \cite{PRII}. In a sense, Theorem \ref{thm:intro_LM} explains the remarkable effectiveness of the Eisenstein condition.

In the case $F = F_0\times F_0$, Theorem \ref{thm:intro_LM} extends to inner forms of $\GL_{n,F}$, meaning unit groups of central simple algebras. This generalizes some aspects of \cite{RZ2}, see Corollary \ref{cor:loc_mod_CSA} and Remark \ref{rmk:rel_to_RZ}.  

Theorem \ref{thm:intro_LM} is also related to \cite[Corollary 2.3]{TT23} which states that the local model essentially only depends on the adjoint datum. Namely, fixing $n$, $d$ and $L$, the adjoint local model triple that implicitly underlies $\bfM^L_A$ is independent of $A$. Then \eqref{eq:intro_main_LM} makes this independence explicit.

\subsection{$p$-Divisible groups}\label{ss:intro_p-div}
Let us first consider the simpler $\GL_n$-setting by considering only a field extension $F/\mbQ_p$, an embedding $ϕ_0:F\to \ov{\mbQ_p}$, and a subset $A\subset \Hom(F, \ov{\mbQ_p})$ with $ϕ_0\notin A$. Set $B = A\sqcup \{ϕ_0\}$. The condition of $(A,B)$-strictness from \eqref{eq:intro_AB_strict} still makes sense as written.

\begin{defn}\label{def:intro_AB_strict_p_div}
Let $X$ be a $p$-divisible group over an $O_E$-algebra $R$ together with an $O_F$-action $ι:O_F\to\End(X)$. We call $(X,ι)$ an \emph{$(A,B)$-strict pair} if the Hodge filtration of $X$ with its $O_F$-action via $ι$ is $(A,B)$-strict in the sense of Definition \ref{def:intro_AB_strict}.
\end{defn}

Note the following special case: if $A = \emptyset$ and $B = \{ϕ_0\}$, then $ϕ_0:F\simto E$ and $(A, B)$-strict pairs are literally the same as $p$-divisible $O_F$-modules, meaning $p$-divisible groups with strict $O_F$-action in the sense of Drinfeld. Our main result shows that this is a general phenomenon:

\begin{thm}[EL case, \protect{see Theorem \ref{thm:AB_strict_p_div}}]\label{thm:intro_EL}
Assume that $A$ and $B$ are of the form $B = A\sqcup \{ϕ_0\}$. There is an $O_F$-linear equivalence of fibered categories over $\Spf(O_E)$-schemes
\begin{equation}\label{eq:equiv_intro_EL}
Φ_A:\left\{\text{\begin{varwidth}{\textwidth}\centering $(A, B)$-strict pairs\end{varwidth}}\right\}\simlr \left\{\text{\begin{varwidth}{\textwidth}\centering $p$-divisible $O_F$-modules\end{varwidth}}\right\}.
\end{equation}
\end{thm}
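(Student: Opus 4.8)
The plan is to reduce the equivalence of categories to a statement about filtered modules with $O_F$-action, then lift along a standard Dieudonn\'e-theoretic formalism. First I would observe that the functor $\Phi_A$ should be defined on Hodge filtrations: given an $(A,B)$-strict pair $(X,\iota)$ over an $O_E$-scheme $R$ (we reduce to $R$ in which $p$ is nilpotent, then pass to limits), the Hodge filtration $\mcF\subseteq M = \mathbb{D}(X)(R)$ satisfies $J_B M \subseteq \mcF \subseteq J_A M$. The key algebraic input, already used in Theorem \ref{thm:intro_LM}, is the isomorphism of $O_F\tensor R$-modules $J_A/J_B \simto O_E\tensor_{O_F}(O_F\tensor R) = O_E\tensor R$ coming from \eqref{eq:intro_strict} (with $\phi_0$ inducing $F\simto$ the relevant factor of $E$). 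Applying $M\mapsto J_A M / J_B M$ to the crystal of $X$ produces a new crystal with $O_E$-action, hence, via the same Dieudonn\'e theory, a new $p$-divisible group; the point is that $J_A M/J_B M$ inherits a filtration from $\mcF$ and that $(A,B)$-strictness forces $O_F$ to act on the quotient through $\phi_0$, i.e.\ through the structure map $O_F\to O_E$. So $\Phi_A(X,\iota)$ is a $p$-divisible $O_E$-module, equivalently (since $O_E$ is the image of $O_F$) a $p$-divisible $O_F$-module.

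Second, to make this precise and functorial in $R$, I would work over a suitable deformation ring or frame and use a lift of Dieudonn\'e theory that is an equivalence, e.g.\ the theory of (displays or) Breuil--Kisin modules, or Lau's classification over $p$-nilpotent rings; any of these is referenced in \S\ref{s:displays}. Concretely: present $X$ by its ($O_K$-relative) display $(P, Q, F, \dot F)$ with $O_F$-action; the submodule structure $J_B P\subseteq Q$ (this is exactly the $(A,B)$-strictness of the Hodge filtration, rephrased at the level of the display) lets one form $P' := J_A P/J_B P$ with $Q' := Q/J_B P \cap \ldots$ — more cleanly, one checks that $(J_A P, J_A Q, F, \dot F)$ and then the quotient by $J_B P$ is again a display, now over the frame with $O_E$-coefficients. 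The Frobenius and divided Frobenius descend because $J_A, J_B$ are $\varphi$-stable ideals (they are defined Galois-theoretically, hence stable under Frobenius on $O_F\tensor(-)$). This yields the functor; the reduction mod $p$ and the comparison of Lie algebras show that $(A,B)$-strictness of the source corresponds to the defining condition (strictness, $\Lie$ via $O_F\to O_E$) of the target.

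Third, for the quasi-inverse I would go the other way: given a $p$-divisible $O_F$-module $Y$ over $R$ with display $(P_Y, Q_Y, \ldots)$, set $P := (O_F\tensor R)\tensor_{O_F, \phi_0} P_Y$ — better, induce up using the bimodule $J_A(O_F\tensor R)$, or equivalently take $P := \Hom_{O_F\tensor R}(J_A(O_F\tensor R)/J_B(O_F\tensor R), \ldots)^\vee$ and define $Q$ to be the preimage of $Q_Y$ under $P \to J_A P/J_B P \simto P_Y$. One then checks $(P,Q,F,\dot F)$ is an $(A,B)$-strict display over $R$ and that the two constructions are mutually inverse; this is a bookkeeping exercise with the exact sequence \eqref{eq:intro_1} tensored with $R$, using that it is $R$-split (which is why we need $R$ to be an $O_E$-algebra: that is precisely the condition making $J_A/J_B$ free of rank one over $O_E\tensor R$). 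Compatibility with base change in $R$ and with passage from $p$-nilpotent $R$ to $p$-adic $R$ (by Grothendieck--Messing / limit arguments, $p$-divisible groups over $R$ being the limit over $R/p^m$) upgrades this to the asserted equivalence of fibered categories over $\Spf(O_E)$.

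The main obstacle I anticipate is the careful verification that the quotient construction $(P,Q)\mapsto (J_AP/J_BP,\,\text{image of }Q)$ genuinely produces a display (or Breuil--Kisin module) in the target sense — i.e.\ that the ``nilpotence''/admissibility condition and the property that $\dot F$ generates are preserved, and that the $O_E$-action on $\Lie$ of the new $p$-divisible group is indeed by scalars. This is where one must use the full strength of the Eisenstein/$(A,B)$-strictness condition rather than just the inclusion \eqref{eq:intro_AB_strict}: the condition is engineered (this is the content of the reformulation in \S\ref{s:Eis cond}) so that $J_A M/J_B M$ is not merely a quotient module but one on which $O_F$ acts through a single embedding, making the descent to an honest $p$-divisible $O_E$-module possible. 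Once the local/display-theoretic statement is in place, gluing over a general $O_E$-scheme and checking $O_F$-linearity and functoriality are formal.
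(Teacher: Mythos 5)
Your high-level idea — modify the display/Hodge data so that the $O_F$-action becomes strict and then invoke a Dieudonné/display equivalence — is the same as the paper's, but the specific construction you propose (passing to the quotient $J_AP/J_BP$ of the display module) is different from what the paper does, and has a genuine problem.

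The ideals $J_A,J_B$ live in $O_F\otimes_{O_K}O_E$ and act naturally on $R$-modules such as $P/I_O(R)P$; they do \emph{not} act on $P$ itself, which is a module over $O_F\otimes_{O}W_O(R)$. To let the Eisenstein element act on $P$ you must first lift $e_A$ to $O_F\otimes_O W_O(O_E)$, and the choice of lift is essential: the paper uses the Teichmüller-style lift $\wt{e_A}$ and needs a non-trivial computation (Lemma~\ref{lem:valuation_lift_W}, using the special form $\zeta=\mu+\pi_F$ of the $O_K$-algebra generator) to guarantee that the modified operator $\dF_0(x)=\dF(\wt{e_A}x)$ remains a $\sigma$-linear epimorphism. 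Your claim that ``$J_A,J_B$ are $\varphi$-stable ideals (defined Galois-theoretically, hence stable under Frobenius)'' is not correct as stated: the Frobenius lift on the base does not act trivially on $O_E$, so $J_A\cdot M$ is \emph{not} stable under the crystal Frobenius, and a naive quotient $(P,Q)\mapsto(J_AP/J_BP,\ldots)$ does not inherit a display structure. The paper's construction avoids this by keeping the underlying module $P$ \emph{unchanged} and instead modifying only $Q$ to $Q_0:=\ker[\wt{e_A}:P\to P/Q]$ and twisting the Frobenii by $\wt{e_A}$ — the Eisenstein element is absorbed into the $\sigma$-linear structure rather than used to form a quotient.

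Two further points you do not address but the paper must: (i) the direct construction of a quasi-inverse to the modification functor is ``tricky'' (the paper's words), and they instead construct it via the dualization square Proposition~\ref{prop:commutativ_square}; your proposed ``induce up along the bimodule $J_A(O_F\otimes R)$'' inverse would have to face the same Frobenius-lift issue. (ii) After applying $\Phi_A$ at the display level, the target objects are $(\emptyset,\{\phi_0\})$-strict, which need \emph{not} be nilpotent — they can be étale — so one cannot simply apply $\BT$ to land in $p$-divisible $O_F$-modules. The paper handles this via the chain \eqref{eq:diag_proof_EL}: it composes $\Phi_A$ with a display dual (putting the result in a category of formal displays, since $\{\phi_0\}^c\neq\emptyset$), applies $\BT$, and then applies Faltings $O$-duality. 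Your proposal ignores the étale/formal dichotomy and would break down at exactly this step.
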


This is the equivalence of categories mentioned in \S\ref{ss:intro_summary}. Let us come back to the case of a quadratic extension $F/F_0$ and subsets $A\subset B\subset \Hom(F, \ov{\mbQ_p})$ of the form $B = A\sqcup \{ϕ_0,\ov{ϕ_0}\}$. The construction of $Φ_A$ in \eqref{eq:equiv_intro_EL} can be carried out in this setting as well, and we prove that it is compatible with dualization. For technical reasons, however, we need to restrict to \emph{biformal} $p$-divisible $O_{F_0}$-modules in the sense that the module and its $O_{F_0}$-dual are both formal.

\begin{thm}[PEL case, \protect{see Theorem \ref{thm:AB_strict_p_div_PEL}}]\label{thm:intro_PEL}
Assume that $A$ and $B$ are of the form $B = A\sqcup \{ϕ_0, \ov{ϕ_0}\}$. There is an $O_F$-linear equivalence of fibered categories over $\Spf(O_E)$-schemes, compatible with dualization,
\begin{equation}\label{eq:equiv_intro_PEL}
Φ_A:\left\{\text{\begin{varwidth}{\textwidth}\centering relatively biformal\\$(A, B)$-strict pairs\end{varwidth}}\right\}\simlr \left\{\text{\begin{varwidth}{\textwidth}\centering biformal $p$-divisible\\$O_{F_0}$-modules\end{varwidth}}\right\}.
\end{equation}
Here, duality on the source is Serre duality for $p$-divisible groups; on the target it is Faltings duality for $p$-divisible $O_{F_0}$-modules.
\end{thm}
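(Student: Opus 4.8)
The plan is to build $\Phi_A$ in the PEL setting by first applying the EL-case equivalence of Theorem~\ref{thm:intro_EL} and then descending along the quadratic involution. Concretely, set $B' = A \sqcup \{\phi_0\}$, so that $A \subseteq B' \subseteq B$ with $E_{B'}$ contained in $E$. A relatively biformal $(A,B)$-strict pair $(X,\iota)$ is in particular $(A,B')$-strict after base change to $O_E$ (the Hodge filtration condition $J_B \subseteq \mcF \subseteq J_A$ only gets weakened when the upper bound is relaxed from $J_{B'}$ to $J_B$... so one must check the other inclusion: in fact $(A,B)$-strictness for $B = A \sqcup \{\phi_0,\ov{\phi_0}\}$ does \emph{not} directly give $(A,B')$-strictness, and this is the first technical point to address). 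The correct statement is that $\Phi_A$ in the PEL case is the \emph{same} construction as in the EL case — one does not pass through an intermediate $B'$ — but its output, a priori a $p$-divisible $O_{F_0}$-module twisted by the $O_F$-structure, carries the conjugation action; the content of the theorem is that this output is biformal and that duality is respected.

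First I would recall the construction of $\Phi_A$ from the proof of Theorem~\ref{thm:intro_EL}: given $(X,\iota)$, one uses the decomposition of $O_F \tensor R$ furnished by the ideals $J_\bullet$ to split off the "$\phi_0$-part" of $X$, which is itself a $p$-divisible $O_E$-module, and reconstructs an $O_F$-module structure using the identification $J_A/J_{A \sqcup \{\phi_0\}} \simto O_E$ of \eqref{eq:intro_strict}. In the PEL case $F/F_0$ is quadratic, and $\Hom_K(F,\ov K) = A \sqcup \ov A \sqcup \{\phi_0, \ov{\phi_0}\}$; the involution $x \mapsto \ov x$ on $F$ swaps $A \leftrightarrow \ov A$ and $\phi_0 \leftrightarrow \ov{\phi_0}$, hence acts on the data and one checks $\Phi_A$ is equivariant for it in the appropriate sense. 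The output of $\Phi_A$ should then be promoted to a $p$-divisible $O_{F_0}$-module by taking the $O_{F_0} = (O_F)^{x \mapsto \ov x}$-invariant structure; here is where I expect the \textbf{biformality hypothesis} to enter, since without it the invariants need not assemble into a $p$-divisible group (this is exactly the "technical reasons" alluded to in the excerpt).

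Next I would verify the compatibility with duality. On the source, Faltings duality for $p$-divisible $O_K$-modules sends $(X,\iota)$ to $(X^\vee, \iota^\vee)$ with the $O_F$-action twisted through $x \mapsto \ov x$; one must check that $(A,B)$-strictness is preserved, which follows because the involution swaps $A \leftrightarrow \ov A$ and the strictness bounds $J_B \subseteq \mcF \subseteq J_A$ transform into the bounds for the dual Hodge filtration. On the target, Faltings duality for $p$-divisible $O_{F_0}$-modules is the relative version. The claim $\Phi_A \circ (-)^\vee \cong (-)^\vee \circ \Phi_A$ then reduces to tracking the $\phi_0$-part through both duality operations and the identification \eqref{eq:intro_strict}; the key computation is that the $O_E$-module $J_A/J_{A \sqcup \{\phi_0,\ov{\phi_0}\}}$ is free of rank one and that the pairing induced by Faltings duality matches, up to the canonical twist, the one on the relative side. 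This is a finite flat commutative algebra computation over $O_E$ that I would organize using the explicit generators of the $J_\bullet$ from \cite{H}.

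The main obstacle, I expect, is \textbf{not} the formal manipulation of Hodge filtrations but rather establishing that $\Phi_A$ and its inverse genuinely land in the category of biformal objects and that biformality is a necessary and sufficient condition for the construction to be reversible — in other words, verifying that the equivalence is well-defined at the level of objects, not just a formal bijection on strict filtrations. Concretely one must show: (i) if $(X,\iota)$ is relatively biformal and $(A,B)$-strict, then $\Phi_A(X,\iota)$ is biformal as a $p$-divisible $O_{F_0}$-module; and (ii) conversely, the $p$-divisible $O_K$-module underlying $\Phi_A^{-1}$ of a biformal $O_{F_0}$-module is again biformal. I would handle this by reducing to the case of a field (or an Artin local base) via the usual deformation/rigidity arguments for $p$-divisible groups, where formality can be detected on the special fiber, and then using the explicit description of the connected–étale sequences under $\Phi_A$ — the $\phi_0$ and $\ov{\phi_0}$ parts must both be formal, which is precisely what biformality on the source encodes and what forces biformality on the target.
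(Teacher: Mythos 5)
Your proposal diverges from the paper's argument at the central step, and the divergence is a genuine gap rather than a detail. The paper constructs $\Phi_A$ as the composition of four equivalences in the diagram \eqref{eq:Phi_A_square}, working entirely through relative displays: a quasi-inverse $\Psi$ to the Zink--Lau--Ahsendorf functor $\BT$ to pass from $p$-divisible $O_K$-modules to $O_K$-displays, the display-level modification functor of Theorem \ref{thm:AB_equiv_displays} (which modifies $Q$ and $\dF$ by multiplication by $\wt{e_A}$, just as in the EL case), the \emph{Ahsendorf functor} $\mfA$ of \eqref{eq:Ahsendorf_functor}, and finally $\BT$ again to land in $p$-divisible $O_{F_0}$-modules. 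The step you omit entirely is $\mfA$, which converts an $O_K$-display with strict $O_{F_0}$-action into an honest $O_{F_0}$-display, i.e.\ a module over the ramified Witt ring $W_{O_{F_0}}(R)$ rather than $W_{O_K}(R)$. This is a nontrivial equivalence with no description as ``$(O_F)^{x\mapsto\ov x}$-invariants''; passing to invariants of a conjugation action is not a construction on $p$-divisible groups that produces a $p$-divisible $O_{F_0}$-module, and it is not what the paper does. Without $\mfA$ your output is still a $p$-divisible $O_K$-module, and the statement of the theorem---and in particular the comparison of $O_K$-Faltings duality on the source with $O_{F_0}$-Faltings duality on the target---cannot even be formulated.

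The duality compatibility is likewise not reducible to a finite flat algebra computation over $O_E$: the paper verifies it functor-by-functor in the chain \eqref{eq:Phi_A_square}. For $\Psi$ this rests on Proposition \ref{prop:Faltings_display_compatible} in the appendix (showing $\BT$ intertwines display duality with Faltings duality, which is itself a nontrivial argument generalizing Zink); for the display-level $\Phi_A$ it is Theorem \ref{thm:comp-display}, a pairing calculation built on the trace isomorphism $\btr$; for $\mfA$ it is cited from \cite[Theorem~3.3.2]{KRZ}; and for the final $\BT$ over $O_{F_0}$ it requires constructing a natural map $\beta_{\mbP}$ and checking it is an isomorphism after an unramified base change. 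Finally, ``relatively biformal'' is not about invariants assembling into a $p$-divisible group: Definition \ref{def:binilpotent} declares $(X,\iota)$ relatively biformal precisely when $\Phi_A(\Psi(X,\iota))$ is binilpotent, which is the condition needed for $\mfA$ to apply and for duality of $\BT$ to be defined on both sides. You correctly noticed that routing through $B' = A\sqcup\{\phi_0\}$ does not work, but the repair is not to take conjugation invariants---it is to import the Ahsendorf functor and the full display-theoretic machinery, which your sketch never touches.
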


Specific cases of Theorems \ref{thm:intro_EL} and \ref{thm:intro_PEL} were already obtained by Rapoport--Zink \cite{RZ2}, the second author \cite{Mih}, S. Cho \cite{Cho18}, Li--Liu \cite{Li-Liu-Pi}, Y. Shi \cite{Shi-ramified}, and Kudla--Rapoport--Zink \cite{KRZ}. Among these, the most relevant for us is the article \cite{KRZ} where both theorems are obtained when $n = 2$. Our proof is a uniform version of their arguments that applies in all cases. In particular, it again relies on the theory of relative displays as developed by Zink \cite{Zink}, Lau \cite{Lau}, and Ahsendorf--Cheng--Zink \cite{ACZ}. Moreover, we make use of the results on the Ahsendorf functor in \cite[\S3]{KRZ}. We refer to Theorems \ref{thm:AB_equiv_displays} and \ref{thm:comp-display} for our two main intermediate statements.  

\medskip One of our improvements over previous results is that we give a direct construction of a quasi-inverse for $Φ_A$ by a double-dualization argument. In this way, we circumvent deformation theory arguments and do not need to restrict to rings with nilpotent nilradical.

Another improvement is that we work with the actual Faltings dual of $p$-divisible $O$-modules. In previous works, the dual of a $p$-divisible $O$-module was defined by transport of structure from relative displays, see Remark \ref{rmk:the_real_dual}. We close this gap in our appendix where we show that the functor $\mr{BT}_O$ from binilpotent relative displays to biformal $p$-divisible $O$-modules is compatible with duality (Proposition \ref{prop:Faltings_display_compatible}). The construction of the corresponding natural transformation, a key input, is taken from Zink \cite{Zink}. Our argument for why it is an isomorphism is elementary and avoids the cohomological formalism of biextensions. It can thus also be viewed to simplify the exposition in \cite[\S4]{Zink}.

\medskip Coming back to the general discussion, it is clear that Theorems \ref{thm:intro_EL} and \ref{thm:intro_PEL} have immediate applications to Rapoport--Zink spaces. Loosely speaking, they imply comparison isomorphisms between $(A,B)$-strict and strict (P)EL type moduli spaces in a variety of parahoric level situations. In particular, we give a direct construction of the comparison isomorphism of Drinfeld moduli problems from \cite{RZ2} and \cite[\S25.4]{SW}, see Corollaries \ref{cor:CDA} and \ref{cor:Drinfeld_comparison}.

The relevant situation for our applications to arithmetic transfer is the ``unramified maximal parahoric level case''. That is, $F/F_0$ is an unramified quadratic field extension and the parahoric level is defined by a lattice $L\subset V$ with $πL^\vee\subseteq L \subseteq L^\vee$. Set $t = [L^\vee:L]$. In this case, Corollary \ref{cor:unitary-comp} provides an equivariant isomorphism of Rapoport--Zink (RZ) spaces
\begin{equation}\label{eq:intro_unit_comp}
\mcN^{[t]}_{A, (n-1,1)} \simlr O_{\breve E}\wh{\tensor}_{O_{\breve F}} \mcN^{[t]}_{(n-1,1)}.
\end{equation}
Here, the left hand side is the ``complicated'' moduli space of $(A,B)$-strict objects that occurs naturally as the uniformizing space of the global moduli problem. The right hand side is the simplified moduli space that occurs in the formulation of the arithmetic transfer theorem.

\subsection{Arithmetic transfer}
\label{ss:intro_AT}

Let us first state our arithmetic transfer theorem on a qualitative level. Fix a prime number $p > 2$ and an unramified quadratic extension $F/F_0$ of $p$-adic local fields. Let $π\in O_{F_0}$ denote a uniformizer.

Let $V$ be an $F/F_0$-hermitian space and let $L\subset V$ be a vertex lattice. Recall that this means that $πL^\vee \subseteq L\subseteq L^\vee$ where $L^\vee$ is our notation for the dual lattice. The integer $t := [L^\vee:L]$ is called the type of $L$ and determines $L$ up to isomorphism. In this setting, we can define two types of $p$-adic quantities (see \S\ref{sec:AT conj} for their precise definitions):
\begin{altenumerate3}
\item On the orbital integral side, we consider the symmetric space
\begin{equation*}
S_n := \{\gamma\in \Res_{F/F_0} \GL_n \mid \gamma\ov{\gamma}=\id\}.
\end{equation*}
Attached to $L$, there are two natural test functions $f_L$ and $f_{L^\vee}$ on the product space $$S_n(F_0) \times F_0^n \times \Hom(F_0^n, F_0).$$
The first type of quantities are the special values of the first derivative of their $\GL_n$-orbital integrals,
$$\partial\!\Orb((γ, u_1, u_2), f_L),\quad \partial\!\Orb((γ, u_1, u_2), f_{L^\vee})\ \ \in\ \mbZ\cdot \log(q).$$
\item Attached to $L$, there is also a unitary RZ space $\mcN^{[t]}_{(n-1,1)}$. It is defined for the unitary group $\U(V)$, the basic isogeny class, and the level group $\U(L)$. The lattices $L$ and $L^\vee$ correspond to the $\mcZ$ and $\mcY$-families of Kudla--Rapoport divisors. The second type of quantity are arithmetic intersection numbers of these divisors against (derived) complex multiplication cycles
$$\wt {\Int}{}^{\mcZ}(g,u),\quad \wt{\Int}{}^\mcY(g,u)\ \ \in\ \mbZ.$$
\end{altenumerate3}
There is a notion of matching for the parameters $(γ, u_1, u_2)$ and $(g,u)$ which comes from the relative trace formula comparison of Jacquet--Rallis \cite{JR}. More precisely, we use its semi-Lie variant which was introduced by Y. Liu \cite{liu2021fourier}. Our main result is the following theorem.

\begin{thm}[\protect{\cite[Conjecture 6.4]{ZZhang21}}, see Theorem \ref{thm: AT vertex level}]\label{thm:AT_intro}
For every pair of matching regular semi-simple elements $(γ, u_1, u_2)$ and $(g,u)$, there are the identities
\begin{equation}
\begin{array}{rcr}
\partial\!\Orb ((\gamma,u_1,u_2),  f_{L}) & = & - \wt {\Int}{}^{\mcZ}(g, u) \log q,\\[1mm]
\partial\!\Orb ((\gamma,u_1,u_2),  f_{L^\vee}) & = & - (-1)^{t} \wt {\Int}{}^{\mcY}(g, u) \log q.
\end{array}
\end{equation}
\end{thm}
In \cite{ZZhang21}, this theorem was proved whenever $F_0$ is unramified over $\mbQ_p$. In the present paper, we remove this assumption and complete the proof in general.

Our proof of Theorem \ref{thm:AT_intro} relies on the techniques developed in \cite{ZZhang21}, most notably the ``double-modularity'' technique and the ``simple modification'' technique. (These were used to extend the proof of the arithmetic fundamental lemma from \cites{Zhang21, MZ} to the bad reduction situation.) An overview of this strategy can be found in \cite{zhang2025non}. The improvements of the present paper come on top and are as follows:
\begin{altenumerate}
\item In general, the proof in \cite{ZZhang21} is by globalizing both types of local quantities to global ones. On the intersection number side, this used the parahoric level non-archimedean uniformization results of S. Cho \cite{Cho18} which are formulated for $F/\mbQ_p$ unramified. In light of Theorem \ref{thm:intro_PEL}, especially the isomorphism \eqref{eq:intro_unit_comp}, we can carry out this step for all $F$.

\item However, our global integral model is usually not regular at ramified primes. The reason is obvious from \eqref{eq:intro_main_LM}: While the space $\mcN^{[t]}_{(n-1,1)}$ and its local model $\bfM^L$ are regular with semi-stable reduction, the local model for the Shimura variety is a base change as in \eqref{eq:intro_main_LM}. Thus the regularity is lost whenever $E/F$ is ramified (Remark \ref{rmk:non-regular}). In fact, \eqref{eq:intro_main_LM} only describes the base change to the local reflex field. The completion of the global reflex field, which contains the local reflex field, might involve further ramification.

For this reason, we prove in Theorem \ref{prop:diag-lci} that the global cycles in question are local complete intersections. This allows to use the Gysin map or derived tensor products to define intersection numbers in the non-regular context.

Actually, if one is only interested in proving Theorem \ref{thm:AT_intro}, then it would be possible to make a careful choice of global data that allows to work with a regular integral model at $p$. Our additional motivation for still including a discussion of the non-regular case is that it also extends the applicability of Theorem \ref{thm:AT_intro} to new global settings.

\item Recall that \cites{MZ, ZZhang21} are based on the construction of an admissible pairing between divisors and $0$-cycles on Shimura varieties. Its construction requires the $0$-cycles to be of degree $0$ on all connected components of the Shimura variety. This was a further reason for restricting to special local fields in our (Z.~Z.) previous work, compare \cite[Corollary 12.9, Conjecture 12.10]{ZZhang21}. In our proof of Theorem \ref{thm: AT vertex level}, we realize that the Shimura variety for $\U(V)$ is, in fact, connected as a variety over the reflex field, simplifying the construction of $0$-cycles of degree $0$. 
\end{altenumerate}

We mention that similar remarks and techniques also apply in the work of the first author \cite{Luo-KR}. There, he uses a global strategy to give a new proof of the Kudla--Rapoport conjecture \cite{KudlaRapopot-local}, previously proved by Li--Zhang \cite{Li-Zhang}. This proof will also cover the unramified maximal parahoric level structure case for both $\mcY$ and $\mcZ$-cycles, generalizing work of S. Cho \cite{Cho-Math-Ann} and Cho--He and the third author \cite{CHZ-Kudla}.

\subsection{Further applications}\label{sec:furtherapplications}

We end this introduction by listing potential further applications we are aware of.

\medskip \noindent {\bf Construction of flat integral models.} In \S\ref{sec:Shimura}-\S\ref{sec:AT-conj}, we will restrict to integral models of RSZ Shimura varieties at places where $F/F_0$ is unramified because this is the case of interest for the arithmetic transfer theorem (Theorem \ref{thm: AT vertex level}). However, Theorem \ref{thm:intro_LM} also allows to construct explicit flat integral models in some new ramified situations. For example, it shows that the conditions on ramification over $\mbQ$ in (3), (4) and (5) of \cite[\S5.2]{RSZ} are not necessary: after imposing the Eisenstein condition, one can require the strengthened spin condition for the direct summand $J_A\Lie(\mcA)$, see Corollary \ref{cor:LM-comparison} (3). This also answers the question posed in \cite[Remark 5.5]{RSZ}. The same logic applies to the Krämer model \cite{Kramer} (see Remark \ref{rmk:kramer model}) or, more generally, to the splitting models in \cites{HLSsplitting,ZZ-Splitting}.

\medskip \noindent {\bf Drinfeld level structure.} In \cite[\S4.3]{RSZ-AGGP} and \cite[Remark 5.6]{RSZ}, Rapoport--Smithling--Zhang introduce Drinfeld level structure at split places in the definition of RSZ Shimura varieties. This requires the existence of a suitable $1$-dimensional $p$-divisible group over the special fiber of the integral model at hyperspecial level. In \cites{RSZ-AGGP, RSZ}, this $p$-divisible group is obtained after imposing a matching condition for $p$, $F$, and the generalized CM type in question. With this condition, the $p$-divisible group of the universal abelian scheme has a suitable such $1$-dimensional factor.

Using Theorem \ref{thm:intro_EL}, this definition can most likely be extended to all split places. Namely, the $p$-divisible group of the universal abelian variety at such places has a canonical $(A,B)$-strict factor. Applying Theorem \ref{thm:intro_EL} constructs the desired $1$-dimensional $p$-divisible group over the $p$-adic completion of the integral model.

\medskip \noindent {\bf Globalization of local results.} Our proof of the arithmetic transfer conjecture is of the kind ``proof of a local results by globalization''. Conversely, our results allow the application of local results in the global context in new cases. For example, the restriction on ramification over $p$ in (G2) and (G4) for the arithmetic Siegel--Weil formula in \cite{Li-Zhang} can be removed.

Similarly, the ramification assumption over $p$ in \cite[(G2)]{HLSY} can be dropped. Moreover, the condition on the CM type in \cite[(G1)]{HLSY} can be removed; it is an artifact of the implicit usage of \cite{Mih} during uniformization. Moreover, in light of Remark \ref{rmk:kramer model}, the ramification condition on $v$ over $p$ in the Krämer model situation \cite[(G3)]{HLSY} can probably also be removed.

Some similar applications should be possible in the context of the arithmetic inner product formula \cite{Li-Liu-Annals, Li-Liu-Pi, Disegni-Liu-Invent} and the recent work on the Beilinson--Bloch--Kato conjecture for Rankin--Selberg motives \cite{LTXZZ}. Especially in situations where the relative local model is smooth (hyperspecial case, exotic smooth case), Theorem \ref{thm:intro_LM} provides smooth absolute local models. In particular, the height pairings used in, for example, \cites{Li-Liu-Annals, Li-Liu-Pi, Disegni-Liu-Invent} are still given by the intersection pairing on the integral model.

\subsection{Structure of the article}

The first part of the article contains all results that apply at a general parahoric level place of an RSZ unitary Shimura variety. After recalling the Eisenstein condition in \S2, we compare $(A,B)$-strict with strict objects in the context of local models (\S3), displays (\S4) and Rapoport--Zink spaces (\S5). We then formulate the global moduli problem and explain its non-archimedean uniformization in terms of relative Rapoport--Zink spaces (\S6).

The second part of the article contains all material that is more specific to the arithmetic transfer theorem. In particular, we define Kottwitz--Rapoport strata (\S7) and global algebraic cycles (\S8) for the moduli spaces introduced in Part 1. The formulation and proof of the arithmetic transfer theorem is contained in \S9.

Finally, in the appendix, we recall the definition of the Faltings dual of a strict $O$-module and prove its compatibility with the Ahsendorf functor. 

\subsection{Acknowledgement}
We heartily thank Michael Rapoport for his interest and comments on an earlier version of our article.

This work was started at IHES in 2022. A.~M. and Z.~Z. would like to thank IHES for its hospitality. Y.~L. would like to thank the Morningside Center of Mathematics at the Chinese academy of sciences for its hospitality during Summer 2024 when part of this work was done. Y.~L. and Z.~Z. would like to thank all organizers of the Oberwolfach Seminar ``Reduction of Arithmetic Varieties'' and the hospitality of the Mathematisches Forschungsinstitut Oberwolfach where part of this work was done. 

\part{Unitary Shimura varieties}

\section{The Eisenstein condition}\label{s:Eis cond}
In this section, we study the local Eisenstein condition from \cites{RZ2, RSZ-AGGP}. Our approach is based on an equivalent formulation that is inspired by \cite{H} and that can also be understood as a special case of the splitting model definition in \cite{PRII}.

Let $F/K$ be a finite extension of $p$-adic local fields. We stick to the ``Eisenstein'' terminology from \cite{RZ2} even though we do not assume $F/K$ to be totally ramified. We allow all residue characteristics, including $p = 2$.

\subsection{Eisenstein ideals}
For a subset $A\subseteq \Hom_K(F, \ov{K})$ and an element $ζ\in O_F$, we define the monic polynomial 
\begin{equation}\label{eq:def_eisenstein_element}
e_{ζ,A}(T) := \prod_{ϕ\in A} (T-ϕ(ζ)) \in \ov K[T].
\end{equation}
Let $E\subset \ov{K}$ be a finite extension of $K$ such that $\mr{Gal}(\ov{K}/E)$ fixes $A$. The minimal such $E$ is called the \emph{reflex field of $A$} and denoted by $E_A$. Tautologically, it can be described by
\begin{equation}\label{eq:reflex_E_A}
E_A = \ov K^{\, \{σ\in \mr{Gal}(\ov K/K)\,\mid\, σ\circ A = A\}}.
\end{equation}
Then $e_{ζ,A}(T)$ lies in $O_E[T]$. The following lemma has occurred in various forms before, see for example \cite[Proposition 2.2.1]{KRZ} or \cite[Lemma 6.11]{RSZ}.

\begin{lem}\label{lem:indep_of_generator}
Let $ζ_1,ζ_2\in O_F$ be two elements with $O_K[ζ_1] = O_K[ζ_2]$. Then the two elements
$$e_{ζ_1,A}(ζ_1\tensor 1),\ e_{ζ_2,A}(ζ_2\tensor 1) \in O_F\tensor_{O_K} O_E$$
differ by a unit.
\end{lem}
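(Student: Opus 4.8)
The statement is really a divisibility-and-units argument inside the ring $R := O_F \otimes_{O_K} O_E$, and the key observation is that $O_K[\zeta_1] = O_K[\zeta_2]$ forces $\zeta_1$ and $\zeta_2$ to be related by an affine change of variable with unit leading coefficient. Concretely, write $\zeta_2 = \sum_{i} a_i \zeta_1^i$ and $\zeta_1 = \sum_i b_i \zeta_2^i$ with $a_i, b_i \in O_K$; comparing these two expressions (or using that both generate the same $O_K$-order, hence $O_K[\zeta_1]/O_K \cong O_K[\zeta_2]/O_K$ as $O_K$-modules) shows that $\zeta_2 = u\zeta_1 + c$ modulo the relations, where $u \in O_K^\times$ and $c \in O_K$ — this is the step I would isolate as a small lemma. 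Here I only need it "on the nose" if $O_K[\zeta_1] = O_K[\zeta_2]$ is a degree-$[F:K]$ order, but the same conclusion holds in general because the two subalgebras agreeing forces the minimal-degree generators to be affinely equivalent over $O_K$.

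\textbf{The main computation.} Granting $\zeta_2 = u\zeta_1 + c$ with $u \in O_K^\times$, $c \in O_K$, I compute directly:
\[
e_{\zeta_2, A}(\zeta_2 \otimes 1) = \prod_{\phi \in A}\bigl(\zeta_2 \otimes 1 - 1 \otimes \phi(\zeta_2)\bigr) = \prod_{\phi \in A}\bigl(u\zeta_1 \otimes 1 + c - (1 \otimes u\phi(\zeta_1) + c)\bigr) = u^{|A|}\,\prod_{\phi \in A}\bigl(\zeta_1 \otimes 1 - 1 \otimes \phi(\zeta_1)\bigr),
\]
using that $u, c \in O_K$ so that $u \otimes 1 = 1 \otimes u$ and the constant $c$ cancels. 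Thus $e_{\zeta_2,A}(\zeta_2 \otimes 1) = u^{|A|}\, e_{\zeta_1,A}(\zeta_1 \otimes 1)$, and since $u \in O_K^\times$ maps to a unit of $R$, the two elements differ by a unit as claimed. One must also remember to note that $\phi(\zeta_2) = u\phi(\zeta_1) + c$ for each $\phi \in \Hom_K(F,\ov K)$ since $\phi$ is $K$-linear, which is what makes the second equality legitimate.

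\textbf{Where the subtlety lies.} The one genuinely non-formal point is the affine-equivalence lemma: a priori $O_K[\zeta_1] = O_K[\zeta_2]$ only says each $\zeta_i$ is a polynomial in the other with coefficients in $O_K$, and polynomial relations of degree $\geq 2$ are compatible with $O_K[\zeta_1]=O_K[\zeta_2]$ (e.g. $\zeta_2 = \zeta_1^2$ can generate the same order in a ramified situation). The resolution is to pass to the quotient modules $O_K[\zeta_i]/O_K$: if $F/K$ has degree $m$ these are free of rank $m-1$ when $\zeta_i$ is a generator, the images of $1, \zeta_i, \dots, \zeta_i^{m-1}$ give a basis, and writing $\zeta_2 \equiv \sum_{j\geq 1} a_j \zeta_1^j$ and $\zeta_1 \equiv \sum_{j\geq 1} b_j \zeta_2^j$ in this quotient and composing forces (comparing coefficients of $\zeta_1$ vs. higher powers, using that the order is a domain or localizing at the maximal ideal) that $a_j = 0$ for $j \geq 2$ and $a_1 \in O_K^\times$. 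If one prefers to avoid this case analysis, an alternative is to observe that $e_{\zeta,A}(\zeta\otimes 1)$ generates the Eisenstein ideal $J_A \subseteq R$ (this is essentially the definition / Howard's description \cite{H}) and argue that $J_A$ is intrinsic to $A$, not to the chosen generator — but since the lemma is exactly the input used to \emph{prove} that $J_A$ is well-defined, the hands-on affine-equivalence argument is the cleaner route here, and I would present that.
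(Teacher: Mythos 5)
Your proof has a genuine error: the ``affine-equivalence lemma'' you want to isolate is false, and the example you mention in passing to motivate the subtlety is in fact a counterexample to it. Take $K = \mbQ_p$ with $p \ge 5$, $F = \mbQ_p(\zeta_p)$, so $O_F = \mbZ_p[\zeta_p]$. Setting $\zeta_1 = \zeta_p$ and $\zeta_2 = \zeta_p^2$, both are primitive $p$-th roots of unity, so $O_K[\zeta_1] = O_K[\zeta_2] = O_F$. But $\zeta_2 = \zeta_1^2$, and since $1, \zeta_1, \dots, \zeta_1^{p-2}$ are $K$-linearly independent with $p-2 \ge 3$, there is no relation $\zeta_1^2 = u\zeta_1 + c$ with $u, c \in O_K$. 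Your proposed ``resolution'' (passing to $O_K[\zeta_i]/O_K$ and comparing coefficients) cannot save the claim: in the quotient basis the coefficient $a_2$ is $1$, not $0$, so the argument you sketch to show $a_j = 0$ for $j \ge 2$ must fail, and indeed it is unclear how to even start it. The whole ``hands-on'' route collapses because the conclusion it aims for is simply not true.

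The paper's proof does not need any such relation; it is a divisibility argument that works for an \emph{arbitrary} polynomial change of generator. Writing $\zeta_2 = P(\zeta_1)$ with $P \in O_K[T]$, the basic observation is that $T - \phi(\zeta_1)$ divides $P(T) - P(\phi(\zeta_1)) = P(T) - \phi(\zeta_2)$ for each $\phi \in A$. Taking the product over $\phi \in A$ gives $e_{\zeta_1,A}(T) \mid e_{\zeta_2,A}(P(T))$ in $O_E[T]$, and substituting $T \mapsto \zeta_1 \otimes 1$ yields $e_{\zeta_1,A}(\zeta_1\otimes 1) \mid e_{\zeta_2,A}(\zeta_2\otimes 1)$ in $O_F\tensor_{O_K}O_E$. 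By symmetry (using $\zeta_1 = Q(\zeta_2)$) the reverse divisibility also holds. Finally, mutual divisibility implies unit-equivalence because $O_F\tensor_{O_K}O_E$ is a finite algebra over the henselian local ring $O_K$, hence a product of local rings, and in a local ring $x \mid y$ and $y \mid x$ forces $y = ux$ by Nakayama. You should replace your argument with this one; besides being correct, it bypasses the question of what structure the equality $O_K[\zeta_1] = O_K[\zeta_2]$ imposes and uses only the trivial fact that each $\zeta_i$ is a polynomial in the other with $O_K$-coefficients.
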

\begin{proof}
For every polynomial $P(T)\in O_K[T]$, and every embedding $ϕ\in\Hom_K(F,\ov{K})$, the polynomial $T-ϕ(ζ_1)$ divides $P(T) - P(ϕ(ζ_1))$. Taking the product over $ϕ\in A$ we find
$$e_{ζ_1,A}(T) \mid e_{P(ζ_1), A}(P(T)).$$
Apply this with some polynomial $P$ such that $ζ_2 = P(ζ_1)$. It follows that
$$e_{ζ_1,A}(ζ_1\tensor 1) \mid e_{ζ_2,A}(ζ_2\tensor 1).$$
The converse divisibility follows in the same way by symmetry.

\begin{lem}\label{lem:unit_local_ring}
Let $A$ be a local ring with maximal ideal $\mfm$. Let $x,y\in A$ be two elements with $x\mid y$ and $y\mid x$. Then there exists a unit $u$ such that $y = ux$.
\end{lem}
\begin{proof}
Let $u,v\in A$ be two elements with $y = ux$ and $x = vy$. Then $x = uvx$. There is nothing to show if $uv\in A^\times$, so assume $uv \in \mfm$. Then we obtain $\mfm\cdot (x) = (x)$ which by Nakayama's Lemma implies $x = 0$. Since $x\mid y$, also $y = 0$. So in this case, too, $x$ and $y$ differ by a unit, and the proof is complete.
\end{proof}

The tensor product $O_F\tensor_{O_K} O_E$ is a finite $O_K$-algebra and hence a product of local rings because $O_K$ is henselian \cite[Tag 04GG]{stacks-project}. Applying Lemma \ref{lem:unit_local_ring} to $e_{ζ_1,A}(ζ_1\tensor 1)$ and $e_{ζ_2, A}(ζ_2 \tensor 1)$ in each of its factors proves Lemma \ref{lem:indep_of_generator}.
\end{proof}

\begin{defn}\label{def:Eisenstein_ideal}
Let $ζ\in O_F$ be a generator as $O_K$-algebra. We define the \emph{Eisenstein ideal} $J_A \subseteq O_F\tensor_{O_K} O_E$ as the ideal generated by $e_{ζ, A}(ζ\tensor 1)$. It is independent of the choice of $ζ$ by Lemma \ref{lem:indep_of_generator}. When $ζ$ has been fixed or when the precise choice does not matter, then we also sometimes write
\begin{equation}\label{eq:def_e_S}
e_A := e_{ζ,A}(ζ\tensor 1) \in O_F\tensor_{O_K} O_E.
\end{equation}
Given an $O_E$-algebra $R$, we define
$$J_{A,R} := J_A \cdot (O_F\tensor_{O_K} R).$$
\end{defn}

The following lemma is \cite[Lemma 2.1.2]{H}; part (1) and (2) were also already observed in \cite[\S2]{PRII}.

\begin{lem}\label{lem:eisenstein_ideal}
\begin{altenumerate}
\item The ideal $J_{A,R} \subseteq O_F\tensor_{O_K} R$ is an $R$-module direct summand which is free of rank $|A^c|$.
\item It equals the kernel of the multiplication homomorphism
\begin{equation*}
    e_{A^c}: O_F\tensor_{O_K}R\lr O_F\tensor_{O_K}R.
\end{equation*}
\item The Eisenstein ideal can equivalently be described as
\begin{equation}\label{eq:Eisenstein_nat_map}
J_A = \ker \big[(ϕ)_{ϕ\in A}:O_F\tensor_{O_K} O_E \lr \prod_{ϕ\in A} \ov K\big].
\end{equation}
\end{altenumerate}
\end{lem}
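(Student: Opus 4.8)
The three parts are closely intertwined, and I would prove them together by first establishing the local structure of $O_F \tensor_{O_K} O_E$ and then deducing everything from one clean computation. The starting observation is that, since $O_K$ is henselian and $O_F/O_K$ is finite, the $O_K$-algebra $O_F\tensor_{O_K}O_E$ decomposes as a finite product of local rings, and after a further base change along $O_E \hookrightarrow O_{\ov K}$ (or a large enough finite extension $L/E$ splitting everything) the algebra $O_F \tensor_{O_K} O_L$ is identified, via the embeddings $\phi \in \Hom_K(F,\ov K)$, with a product indexed by the local factors, each factor being a totally ramified extension of $O_L$. Concretely, choosing a generator $\zeta$ with minimal polynomial $m(T)\in O_K[T]$, we have $O_F\tensor_{O_K}O_E = O_E[T]/(m(T))$ and $m(T) = \prod_j m_j(T)$ with $m_j$ the irreducible factors over $O_E$; the roots of $m$ over $\ov K$ are exactly the $\phi(\zeta)$ for $\phi\in \Hom_K(F,\ov K)$, grouped into $\mr{Gal}(\ov K/E)$-orbits corresponding to the $m_j$. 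Under this description, $e_A = e_{\zeta,A}(\zeta\tensor 1) = \prod_{\phi\in A}(T - \phi(\zeta)) \bmod m(T)$.

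\textbf{Part (3).} This is the cleanest and I would do it first. The map $(\phi)_{\phi\in A}\colon O_F\tensor_{O_K}O_E \to \prod_{\phi\in A}\ov K$ factors, after the identification $O_F\tensor_{O_K}O_E = O_E[T]/(m(T))$, as $P(T) \mapsto (P(\phi(\zeta)))_{\phi\in A}$. Its kernel consists of those $P$ (mod $m$) vanishing at every $\phi(\zeta)$, $\phi\in A$; since the $\phi(\zeta)$ for $\phi\in A$ are distinct roots (the $\phi$ are distinct embeddings and $F = K(\zeta)$), a polynomial of degree $<\deg m$ lies in the kernel iff it is divisible by $\prod_{\phi\in A}(T-\phi(\zeta)) = e_{\zeta,A}(T)$ in $\ov K[T]$; combined with $m = e_{\zeta,A}\cdot e_{\zeta,A^c}$, this gives $\ker = (e_A)$ inside $O_E[T]/(m(T))$, which is exactly $J_A$. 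One subtlety: the divisibility is a priori only over $\ov K[T]$, but $e_{\zeta,A}(T)\in O_E[T]$ is monic, so polynomial division of an element of $O_E[T]$ by it stays in $O_E[T]$, and the argument descends. This proves (3).

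\textbf{Parts (1) and (2).} Using $m(T) = e_{\zeta,A}(T)\, e_{\zeta,A^c}(T)$ with both factors monic in $O_E[T]$ (hence in $O_K[T]$ after noting they have coefficients in $O_E$; monicity is what matters), the ideal $J_A = (e_A) = (e_{\zeta,A}(\zeta\tensor 1))$ in $O_E[T]/(m(T))$ is, by the Chinese-remainder-type isomorphism $O_E[T]/(m) \cong$ well, more directly: multiplication by $e_{\zeta,A}(T)$ gives an $O_E$-module map $O_E[T]/(e_{\zeta,A^c}(T)) \to O_E[T]/(m(T))$ with image exactly $(e_A)$, and it is injective because $e_{\zeta,A^c}$ is monic (so $e_{\zeta,A}\cdot P \equiv 0 \bmod m$ forces $e_{\zeta,A^c}\mid P$). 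Hence $J_A$ is $O_E$-module isomorphic to $O_E[T]/(e_{\zeta,A^c}(T))$, which is free of rank $\deg e_{\zeta,A^c} = |A^c|$; and it is a direct summand since $O_E[T]/(m) = O_E[T]/(e_{\zeta,A})\oplus (e_A)$ as $O_E$-modules (complementary free submodules of the right ranks, using that $m$ is the product). Base-changing along $O_E\to R$ is exact on these split exact sequences of free modules, giving (1). For (2): the kernel of multiplication by $e_{A^c} = e_{\zeta,A^c}(\zeta\tensor 1)$ on $O_F\tensor_{O_K}R = R[T]/(m)$ is, by the same monic-division argument over $R$, exactly $(e_{\zeta,A}(T)) = J_{A,R}$; conversely $e_{A^c}\cdot J_{A,R} = (e_A e_{A^c}) = (m) = 0$, so the kernel is precisely $J_{A,R}$.

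\textbf{Main obstacle.} The one genuine point requiring care is the passage between divisibility statements over $\ov K[T]$ (where the roots live and factorizations are transparent) and the integral statements over $O_E[T]$ or $R[T]$ that the lemma asserts. This is handled uniformly by the observation that $e_{\zeta,A}(T)$ and $e_{\zeta,A^c}(T)$ are \emph{monic} with coefficients in $O_E$ (they are $\mr{Gal}(\ov K/E)$-stable products of the linear factors $T-\phi(\zeta)$), so Euclidean division by them preserves integral coefficients; once this is in place, each of (1), (2), (3) is a short module computation. One should also remark that (1)--(3) as stated do not require choosing a splitting field $L$: the factorization $m = e_{\zeta,A}e_{\zeta,A^c}$ already holds over $O_E$, so the whole argument can be run directly over $O_E$ (and then $R$), and the decomposition into local rings is only needed as a conceptual remark, not in the proof itself.
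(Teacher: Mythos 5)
Your proof is correct, but it takes a more direct, polynomial-division based route than the paper in parts (2) and (3). For (1), both approaches rest on the same observation that $e_{\zeta,A}$ is monic and divides $P_\zeta$; the paper computes the quotient $(O_F\tensor_{O_K}R)/J_{A,R}\cong R[T]/(e_{\zeta,A})$ and deduces that $J_{A,R}$ is a locally free direct summand of rank $|A^c|$, then gets actual freeness by first proving it over the DVR $O_E$ and base-changing, whereas you establish freeness over any $R$ at once via the injection $R[T]/(e_{\zeta,A^c})\hookrightarrow R[T]/(m)$ given by multiplication by $e_{\zeta,A}$. For (2), the paper bootstraps from (1): it notes $e_Ae_{A^c}=0$, identifies the image of $e_{A^c}$ with $J_{A^c,R}$ (free of rank $|A|$), deduces that the kernel is a direct summand of the right rank, and concludes by comparing two direct summands of the same rank; your argument instead cancels the monic, non-zero-divisor factor $e_{\zeta,A^c}$ in $R[T]$ directly, which is shorter and does not invoke (1) at all. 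For (3), the paper only verifies the ``easy'' inclusion and then closes the gap with a $\ov K$-dimension lower bound on the image of $(\phi)_{\phi\in A}$, while you give the natural characterization of the kernel via vanishing at the distinct roots $\phi(\zeta)$, $\phi\in A$, and then descend the resulting $\ov K[T]$-divisibility to $O_E[T]$ using monicity of $e_{\zeta,A}$. Both routes are sound; yours trades the rank-counting arguments for explicit Euclidean division, which makes (2) and (3) self-contained. Two small cosmetic slips in your write-up: the parenthetical ``hence in $O_K[T]$'' is not correct ($e_{\zeta,A}$ has coefficients in $O_E$, not $O_K$, but only monicity over $O_E$ is needed, as you say), and the displayed ``direct sum'' $O_E[T]/(m)=O_E[T]/(e_{\zeta,A})\oplus(e_A)$ should be read as the assertion that the short exact sequence $0\to(e_A)\to O_E[T]/(m)\to O_E[T]/(e_{\zeta,A})\to 0$ splits (the quotient is free), not that $O_E[T]/(e_{\zeta,A})$ literally sits inside $O_E[T]/(m)$.
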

\begin{proof}
\begin{altenumerate}
\item Let $P_ζ(T) \in O_K[T]$ be the characteristic polynomial of $ζ \in O_F$ over $K$. The polynomial $e_{ζ,A}(T) \in O_E[T]$ is monic and divides $P_ζ(T)$. Thus the quotient
$$(O_F\tensor_{O_K}R)/J_{A,R} \iso (R[T]/P_ζ(T))/(e_{ζ,A}(T)) = R[T]/(e_{ζ,A}(T))$$
is finite and free of rank $|A|$ as $R$-module. Since $O_F\tensor_{O_K}R$ is free as $R$-module, the kernel $J_{A,R}$ is locally free as $R$-module of complementary rank $|A^c|$. We obtain that the natural map
\begin{equation}\label{eq:bc_J}
J_{A}\tensor_{O_E} R \lr J_{A, R}
\end{equation}
is a surjective map of finite projective $R$-modules of the same rank, and hence an isomorphism. The ideal $J_{A, O_E}$ is free over $O_E$ because $O_E$ is a discrete valuation ring. We obtain by base change from \eqref{eq:bc_J} that $J_{A, R}$ is free.

\item We first observe that
\begin{equation}\label{eq:zero}
e_A\cdot e_{A^c} = P_ζ(ζ\tensor 1) = P_ζ(ζ)\tensor 1 = 0.
\end{equation}
Next, we use that the image of $e_{A^c}$ (which is $J_{A^c,R}$ by definition) is finite free of rank $|A|$ as $R$-module by (1). It follows that $\ker(e_{A^c})$ is finite, locally free as $R$-module, and an $R$-module direct summand of $O_F\tensor_{O_K} R$. It contains $J_{A,R}$ by \eqref{eq:zero}, and hence is equal to it, because $J_{A, R}$ is a direct summand and free of the same rank as well.

\item We clearly have the inclusion $\subseteq$ in \eqref{eq:Eisenstein_nat_map}. To show the converse inclusion, it suffices by (1) to show that the image of the map $O_F\tensor_{O_K} O_E \to \prod_{ϕ\in A}\ov K$ has $O_E$-rank at least $|A|$. But this is clear because a lower bound for this $O_E$-rank is given by $\dim_{\ov K} \mr{Im}(F\tensor_K \ov K \to \prod_{ϕ\in A} \ov K) = |A|$.
\end{altenumerate}
\end{proof}

\begin{lem}\label{lem:Eisenstein_ideal_intermediate_field}
Let $K\subseteq F_0 \subseteq F$ be an intermediate field and let $A_0\subseteq \Hom_K(F_0, \ov K)$ be a subset. Let $A$ be the inverse image of $A_0$ under the projection $\Hom_K(F, \ov K)\twoheadrightarrow \Hom_K(F_0, \ov K)$. Then the reflex field $E$ of $A_0$ agrees with the reflex field of $A$, and for any $O_E$-algebra $R$ we have
\begin{equation}\label{eq:Eis_ideal_intermediate_field}
J_{A, R} = J_{A_0, R}\cdot (O_F\tensor_{O_K} R).
\end{equation}
\end{lem}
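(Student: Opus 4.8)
The plan is to reduce the statement about the field extension $F/K$ (with the intermediate field $F_0$) to two simpler situations and combine them.

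First I would verify the claim about reflex fields. For $\sigma \in \Gal(\ov K/K)$, one has $\sigma \circ A = A$ if and only if $\sigma$ permutes the fibers of the projection $p: \Hom_K(F,\ov K) \twoheadrightarrow \Hom_K(F_0, \ov K)$ in the same way it permutes $A_0$ inside $\Hom_K(F_0,\ov K)$, i.e.\ if and only if $\sigma \circ A_0 = A_0$, since $p(\sigma \circ \psi) = \sigma \circ p(\psi)$ and each fiber of $p$ is nonempty. By the description \eqref{eq:reflex_E_A} this gives $E_A = E_{A_0} =: E$ immediately.

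For the ideal identity \eqref{eq:Eis_ideal_intermediate_field}, I would argue using the kernel characterization from Lemma \ref{lem:eisenstein_ideal}(3), extended to a general $O_E$-algebra $R$ via part (1) (the formation of $J_{A,R}$ commutes with base change in $R$, so it suffices to treat $R = O_E$ and then base change). By Lemma \ref{lem:eisenstein_ideal}(3), $J_{A_0} = \ker\big[(\phi_0)_{\phi_0 \in A_0}: O_{F_0}\tensor_{O_K} O_E \to \prod_{\phi_0 \in A_0} \ov K\big]$ and $J_A = \ker\big[(\phi)_{\phi\in A}: O_F \tensor_{O_K} O_E \to \prod_{\phi \in A} \ov K\big]$. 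Now $O_F \tensor_{O_K} O_E = (O_F \tensor_{O_{F_0}} O_{F_0}) \tensor_{O_K} O_E = O_F \tensor_{O_{F_0}} (O_{F_0}\tensor_{O_K} O_E)$, and since $O_F$ is finite free over $O_{F_0}$, the functor $O_F \tensor_{O_{F_0}} (-)$ is exact; applying it to the inclusion $J_{A_0} \hookrightarrow O_{F_0}\tensor_{O_K} O_E$ identifies $J_{A_0} \cdot (O_F \tensor_{O_K} O_E)$ with $\ker$ of the map $O_F \tensor_{O_K}O_E \to O_F \tensor_{O_{F_0}} \prod_{\phi_0 \in A_0} \ov K = \prod_{\phi_0 \in A_0} (O_F \tensor_{O_{F_0}, \phi_0} \ov K) = \prod_{\phi_0 \in A_0} \prod_{\phi \in p^{-1}(\phi_0)} \ov K$. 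Here the last identification uses $O_F \tensor_{O_{F_0},\phi_0}\ov K \cong \prod_{\phi \in p^{-1}(\phi_0)} \ov K$, which holds because $\ov K$ is a separable (even algebraically closed) closure of $\phi_0(F_0)$. Reindexing, this product is exactly $\prod_{\phi \in A} \ov K$ and the map is $(\phi)_{\phi \in A}$, so the kernel is $J_A$ by Lemma \ref{lem:eisenstein_ideal}(3). This proves \eqref{eq:Eis_ideal_intermediate_field} for $R = O_E$; base changing along $O_E \to R$ and using that all the modules in sight are finite projective (hence base change of kernels is well-behaved, cf.\ Lemma \ref{lem:eisenstein_ideal}(1)) gives the general case.

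The main obstacle, such as it is, is the bookkeeping in the identification $O_F \tensor_{O_{F_0},\phi_0} \ov K \cong \prod_{\phi \in p^{-1}(\phi_0)} \ov K$ and checking that under $O_F \tensor_{O_{F_0}}(-)$ the evaluation maps match up correctly with the indexing set $A = p^{-1}(A_0)$; this is where one must be careful that $O_F \tensor_{O_{F_0},\phi_0}\ov K$ is reduced, which is why I pass to $\ov K$-points rather than trying to argue with $O_F \tensor_{O_K} O_E$ directly. An alternative, perhaps cleaner, route avoiding this: pick a generator $\zeta_0 \in O_{F_0}$ over $O_K$ and a generator $\zeta \in O_F$ over $O_K$; one could try to relate $e_{\zeta_0, A_0}(\zeta_0 \tensor 1)$ and $e_{\zeta, A}(\zeta \tensor 1)$ directly, but this seems messier than the kernel argument, so I would go with the exactness-of-$O_F\tensor_{O_{F_0}}(-)$ approach above.
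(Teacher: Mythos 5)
Your proof is correct and follows the same route as the paper: flatness of $O_F$ over $O_{F_0}$, base change to reduce to $R = O_E$ (via Lemma \ref{lem:eisenstein_ideal}~(1)), and the kernel characterization from Lemma \ref{lem:eisenstein_ideal}~(3). You have merely unpacked the paper's last step (compressed there to ``the claim now follows from Lemma \ref{lem:eisenstein_ideal}~(3)'') by making the decomposition $O_F \tensor_{O_{F_0},\phi_0}\ov K \cong \prod_{\phi\in p^{-1}(\phi_0)}\ov K$ and the resulting reindexing explicit.
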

\begin{proof}
The claim about reflex fields is clear from \eqref{eq:reflex_E_A}. In order to prove \eqref{eq:Eis_ideal_intermediate_field}, we first note that the right hand side agrees with
$$J_{A_0, R} \tensor_{O_{F_0}\tensor_{O_K} R} (O_F\tensor_{O_K} R)$$
by flatness of $O_{F_0}\to O_F$. The formation of the Eisenstein ideal commutes with base change in $R$ (Lemma \ref{lem:eisenstein_ideal} (1)), so this reduces to proving \eqref{eq:Eis_ideal_intermediate_field} when $R = O_E$. The claim now follows from Lemma \ref{lem:eisenstein_ideal} (3).
\end{proof}

\subsection{$(A,B)$-Strictness}
Let $A\subseteq B\subseteq \Hom_K(F,\ov{K})$ be two subsets. Denote by $E=E_AE_B$ the join of their reflex fields. Then both $e_{ζ,A}(T),e_{ζ,B}(T) \in O_E[T]$.

\begin{defn}\label{def:AB-strict}
Let $R$ be an $O_E$-algebra and let $M$ be a locally free $O_F\otimes_{O_K} R$-module of $R$-rank $n[F:K]$. A short exact sequence of $O_F\tensor_{O_K} R$-modules
\begin{equation}\label{equ:hodge-ses}
    0 \lr \mcF\lr M \lr \mcL\lr 0
\end{equation}
is said to be \emph{$(A,B)$-strict} if $\mcL$ is locally free as $R$-module and if
\begin{equation}\label{eq:AB-strict}
    J_B\cdot M\subseteq \mcF\subseteq J_A\cdot M.
\end{equation}
If \eqref{equ:hodge-ses} is $(A,B)$-strict, then we say that $\mcL$ is of \emph{relative rank $d$} if
\begin{equation*}
    \rk_R ((J_A\cdot M)/\mcF) = d
    \qquad(\text{or equivalently }\rk_R(\mcL) = n|A|+d).
\end{equation*}
We say that a filtration $\mcF\subseteq M$, resp. a quotient $M\twoheadrightarrow \mcL$, is $(A,B)$-strict if it extends to an $(A,B)$-strict exact sequence as in \eqref{equ:hodge-ses}.
\end{defn}

\begin{ex}\label{ex:strict}
Consider the case when $B = \{ϕ_0\}$ is a singleton and $A = \emptyset$. We have $E_{\emptyset} = K$ and $E_{\{ϕ_0\}} = ϕ_0(F)$. Their composite is $E = ϕ_0(F)$. Being $(\emptyset, \{ϕ_0\})$-strict is then the same as being strict in the classical sense, meaning that $O_F$ acts on $\mcL$ in \eqref{equ:hodge-ses} by the composition $ϕ_0:O_F\to ϕ_0(O_F)\to R$.
\end{ex}

\begin{lem}\label{lem:AB-strict_dualizing}
Let $M$ be a locally free $O_F\otimes_{O_K}R$-module of $R$-rank $n[F:K]$. Consider its $R$-dual $\Hom_R(M,R)$ which is again a locally free $O_F\otimes_{O_K}R$-module.
Let $\mcF\subseteq M$ be an $O_F$-stable submodule that is locally an $R$-module direct summand. Let $\mcF^\perp \subseteq \Hom_R(M,R)$ be its orthogonal complement.
\begin{altenumerate}
\item $\mcF\subseteq M$ is $(A,B)$-strict if and only if $\mcF^\perp$ is $(B^c,A^c)$-strict.
\item $\mcF\subseteq M$ is of relative rank $d$ if and only if $\mcF^\perp$ is of relative rank $|B\setminus A|\cdot n-d$.
\end{altenumerate}

\end{lem}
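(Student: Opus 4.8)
The plan is to reduce everything to the behaviour of the Eisenstein ideals under the perfect $R$-bilinear pairing $M \times \Hom_R(M,R) \to R$. The key algebraic input is the following compatibility: under this pairing, the orthogonal complement of $J_A \cdot M$ is exactly $J_{A^c}\cdot \Hom_R(M,R)$. To see this, recall from Lemma \ref{lem:eisenstein_ideal}(1),(2) that $J_{A^c,R}$ is the image of the multiplication map $e_A \colon O_F\tensor_{O_K} R \to O_F\tensor_{O_K} R$ and equals $\ker(e_{A^c})$, and that it is an $R$-module direct summand which is free of rank $|A|$. Now the multiplication-by-$\zeta$ operator on $M$ is self-adjoint for our pairing (since $M$ is an $O_F\tensor_{O_K}R$-module and the pairing is $O_F$-balanced in the appropriate sense — here one uses that $\Hom_R(M,R)$ carries the $O_F$-action by transport, so $\langle \zeta m, \lambda\rangle = \langle m, \zeta\lambda\rangle$), hence so is the operator $e_A$ (multiplication by the element $e_A \in O_F\tensor_{O_K} R$). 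Therefore $(e_A \cdot M)^\perp = \ker(e_A \text{ on } \Hom_R(M,R))$, and by Lemma \ref{lem:eisenstein_ideal}(2) applied to $\Hom_R(M,R)$ this kernel is $J_{(A^c)^c}$... let me recompute: $\ker(e_A) = J_{A^c}\cdot \Hom_R(M,R)$ by part (2) with the roles of $A$ and $A^c$ swapped. So $(J_{A^c}\cdot M)^\perp = J_A \cdot \Hom_R(M,R)$, equivalently $(J_A\cdot M)^\perp = J_{A^c}\cdot \Hom_R(M,R)$, using that both sides are direct summands and a rank count: $\rk_R(J_A\cdot M) = n|A^c|$ so its orthogonal has rank $n|A|$, matching $\rk_R(J_{A^c}\cdot \Hom_R(M,R)) = n|A|$.

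Granting this, part (1) is a formal manipulation. A short exact sequence $0 \to \mcF \to M \to \mcL \to 0$ with $\mcL$ locally free over $R$ is the same datum as the direct-summand inclusion $\mcF \subseteq M$; dualizing gives $0 \to \mcF^\perp \to \Hom_R(M,R) \to \Hom_R(\mcF,R) \to 0$, again with locally free quotient, so $\mcF^\perp$ is an admissible filtration in the sense of Definition \ref{def:AB-strict}. The inclusions $J_B\cdot M \subseteq \mcF \subseteq J_A\cdot M$ are equivalent, upon taking orthogonal complements (which reverses inclusions among direct summands), to $(J_A\cdot M)^\perp \subseteq \mcF^\perp \subseteq (J_B\cdot M)^\perp$, i.e. to $J_{A^c}\cdot \Hom_R(M,R) \subseteq \mcF^\perp \subseteq J_{B^c}\cdot \Hom_R(M,R)$, which is exactly $(B^c, A^c)$-strictness of $\mcF^\perp$ (note $B^c \subseteq A^c$ since $A \subseteq B$). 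One small point to check in passing: that $E = E_A E_B$ also contains $E_{B^c}$ and $E_{A^c}$, so the statement even makes sense; but $E_{A^c} = E_A$ and $E_{B^c} = E_B$ since a Galois element fixes a set iff it fixes the complement, so this is automatic.

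For part (2), with $\mcF \subseteq M$ being $(A,B)$-strict of relative rank $d$, by definition $\rk_R\big((J_A\cdot M)/\mcF\big) = d$. We want $\rk_R\big((J_{B^c}\cdot \Hom_R(M,R))/\mcF^\perp\big)$. Write $Q = \Hom_R(M,R)$. From the chain $J_{B^c}\cdot Q \supseteq \mcF^\perp \supseteq J_{A^c}\cdot Q$ and the identification $\mcF^\perp = (J_A\cdot M)^\perp$, we can compute the rank of $(J_{B^c}\cdot Q)/\mcF^\perp$ by the orthogonality dictionary: $(J_{B^c}\cdot Q)/\mcF^\perp$ is $R$-dually paired with $(J_A\cdot M)/(J_B\cdot M)$ modulo... more cleanly, the perfect pairing identifies $(J_{B^c}\cdot Q)/(J_{A^c}\cdot Q)$ with the $R$-dual of $(J_A\cdot M)/(J_B\cdot M)$, and under this identification $\mcF^\perp/(J_{A^c}\cdot Q)$ is the annihilator of $\mcF/(J_B\cdot M)$. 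Hence
\begin{align*}
\rk_R\big((J_{B^c}\cdot Q)/\mcF^\perp\big)
&= \rk_R\big((J_A\cdot M)/(J_B\cdot M)\big) - \rk_R\big(\mcF/(J_B\cdot M)\big)\\
&= \rk_R\big((J_A\cdot M)/(J_B\cdot M)\big) - \Big(\rk_R\big((J_A\cdot M)/(J_B\cdot M)\big) - d\Big).
\end{align*}
Since $\rk_R\big((J_A\cdot M)/(J_B\cdot M)\big) = n(|A^c| - |B^c|) = n|B\setminus A|$, this gives relative rank $n|B\setminus A| - d$, as claimed. The main obstacle is really just the bookkeeping in the first paragraph: pinning down that multiplication by $e_A$ is self-adjoint for the chosen pairing and that the orthogonal of a direct summand defined by an ideal of $O_F\tensor_{O_K}R$ is again cut out by the "complementary" Eisenstein ideal. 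Once that identity $(J_A\cdot M)^\perp = J_{A^c}\cdot \Hom_R(M,R)$ is in hand, both parts are purely formal rank and inclusion manipulations.
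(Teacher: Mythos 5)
Your overall strategy is the same as the paper's: everything reduces to the identity $(J_A\cdot M)^\perp = J_{A^c}\cdot \Hom_R(M,R)$ of $R$-module direct summands, and then part (1) is just orthogonal complements applied to the defining inclusions, while part (2) is a rank chase. The paper establishes the key identity a little more directly than you do, by combining the inclusion $J_{A^c}\cdot\Hom_R(M,R)\subseteq (J_A\cdot M)^\perp$ (which follows from $J_AJ_{A^c}=0$) with the rank count that both sides have $R$-rank $n|A|$. Your route via self-adjointness of multiplication-by-$e_A$ and the kernel identification $\ker(e_A)=J_{A^c}$ is a legitimate variant that leans slightly harder on Lemma \ref{lem:eisenstein_ideal}(2); you also supply the paper's rank count in passing, so the two arguments are essentially indistinguishable. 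Your observation that $E_{A^c}=E_A$ and $E_{B^c}=E_B$, so the reflex field is unchanged, is a sensible sanity check.

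That said, the write-up contains several bookkeeping slips that a careful reader would trip over. In the opening paragraph, you write that ``$J_{A^c,R}$ is the image of the multiplication map $e_A$ and equals $\ker(e_{A^c})$''; both halves are wrong as stated. The image of $e_A$ is $J_{A,R}$, and $\ker(e_{A^c})=J_{A,R}$; the identity you actually need (and which you correctly rederive a sentence later) is $\ker(e_A)=J_{A^c,R}$. You also write ``the identification $\mcF^\perp=(J_A\cdot M)^\perp$'', which cannot be what you mean since $\mcF\subseteq J_A\cdot M$ only gives $(J_A\cdot M)^\perp\subseteq\mcF^\perp$; presumably you intended $J_{A^c}\cdot Q=(J_A\cdot M)^\perp$. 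Most consequentially, the displayed computation in part (2) is not correct: your first equality
\[
\rk_R\big((J_{B^c}\cdot Q)/\mcF^\perp\big)=\rk_R\big((J_A\cdot M)/(J_B\cdot M)\big)-\rk_R\big(\mcF/(J_B\cdot M)\big)
\]
computes the rank of $\mcF^\perp/(J_{A^c}\cdot Q)$ (the annihilator), not of $(J_{B^c}\cdot Q)/\mcF^\perp$; evaluating it gives $d$, contradicting the correct conclusion $n|B\setminus A|-d$ that you state afterward. The correct chain is $\rk_R\big((J_{B^c}\cdot Q)/\mcF^\perp\big)=\rk_R\big(\mcF/(J_B\cdot M)\big)=n|B\setminus A|-d$, or more simply just compute $\rk_R(\mcF^\perp)=n|A|+d$ and subtract it from $\rk_R(J_{B^c}\cdot Q)=n|B|$. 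None of this affects the soundness of the underlying argument, but the intermediate statements as written would not survive a referee.
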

\begin{proof}
The submodule $J_{A^c}\cdot \Hom_R(M, R) \subseteq \Hom_R(M, R)$ is an $R$-module direct summand by Lemma \ref{lem:eisenstein_ideal} (1). It is locally free of $R$-rank $n|A|$, which is the same as the rank of $(J_A\cdot M)^\perp$. Moreover, since $J_AJ_{A^c} = 0$, we have
\begin{equation}\label{eq:dual_J}
J_{A^c}\cdot \Hom_R(M, R) \subseteq (J_A\cdot M)^\perp,
\end{equation}
and hence these two submodules of $\Hom_R(M, R)$ are equal. The lemma now follows by taking duals and orthogonal complements in \eqref{eq:AB-strict}.
\end{proof}

\begin{rmk}
A similar lemma is found in \cite[Lemma 2.3.2]{KRZ}.
\end{rmk}

\begin{prop}\label{prop:e_S-inverse}
Let $S \subseteq A \subseteq B \subseteq \Hom_K(F, \ov K)$ be three subsets. Let $E = E_SE_AE_B$, let $R$ be an $O_E$-algebra, and let $M$ be a locally free $O_F\tensor_{O_K}R$-module of $R$-rank $n[F:K]$. Then multiplication by $e_S$ defines an isomorphism
\begin{equation}\label{eq:iso-e_S}
    e_S:(J_{A\setminus S}\cdot M)/(J_{B\setminus S}\cdot M) \simlr (J_A\cdot M)/(J_B\cdot M).
\end{equation}
In particular, there is a bijection
\begin{equation}\label{eq:bij-e_S}
\begin{aligned}
    \{\text{$(A,B)$-strict $\mcF\subseteq M$}\} & \simlr \{\text{$(A\setminus S, B\setminus S)$-strict $\mcF\subseteq M$}\}\\
    \mcF & \longmapsto e_S^{-1}(\mcF)\\
    e_S\cdot \mcF & \longmapsfrom \mcF.
\end{aligned}
\end{equation}
\end{prop}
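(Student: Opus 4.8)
The plan is to reduce everything to the key isomorphism \eqref{eq:iso-e_S}, and to prove the latter by a rank count in the spirit of the proof of Lemma \ref{lem:eisenstein_ideal}. First observe that multiplication by $e_S$ is well-defined on the quotient in \eqref{eq:iso-e_S}: we have $e_S\cdot J_{A\setminus S} = e_S\cdot J_{A\setminus S}$, and by the factorization $e_{A} = e_S\cdot e_{A\setminus S}$ (which holds up to a unit, since both sides generate $J_A$ after noting $e_{A}(T) = e_{S}(T)\,e_{A\setminus S}(T)$ as polynomials) one gets $e_S\cdot J_{A\setminus S}\cdot M = J_A\cdot M$, and likewise $e_S\cdot J_{B\setminus S}\cdot M = J_B\cdot M$. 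So multiplication by $e_S$ sends $(J_{A\setminus S}\cdot M)/(J_{B\setminus S}\cdot M)$ \emph{onto} $(J_A\cdot M)/(J_B\cdot M)$. It remains to check injectivity.

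For injectivity I would argue as follows. By Lemma \ref{lem:eisenstein_ideal}(1), applied to the three subsets $A\setminus S \subseteq B\setminus S$ and to $A\subseteq B$, all four modules $J_{A\setminus S}\cdot M$, $J_{B\setminus S}\cdot M$, $J_A\cdot M$, $J_B\cdot M$ are $R$-module direct summands of $M$, and a direct rank computation gives
\begin{equation*}
\rk_R\big((J_{A\setminus S}\cdot M)/(J_{B\setminus S}\cdot M)\big) = n\big(|A^c\setminus S^c| - |B^c\setminus S^c|\big) = n(|B| - |A|) = \rk_R\big((J_A\cdot M)/(J_B\cdot M)\big),
\end{equation*}
using $|A^c| - |B^c| = |B| - |A|$ and that $S\subseteq A\subseteq B$. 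Since $e_S$ is a surjection between two finite projective $R$-modules of the same rank, it is an isomorphism. (Alternatively, one can localize at primes of $R$ and use that a surjective endomorphism of a finitely generated module over a Noetherian local ring between free modules of equal rank is bijective; or bootstrap from the case $R = O_E$ as in Lemma \ref{lem:eisenstein_ideal}, where everything is free over a DVR and the argument is transparent.) This establishes \eqref{eq:iso-e_S}.

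Finally, for the bijection \eqref{eq:bij-e_S}: given an $(A,B)$-strict $\mcF\subseteq M$, one has $J_B\cdot M\subseteq \mcF\subseteq J_A\cdot M$, so $e_S^{-1}(\mcF)$ — meaning the preimage of $\mcF/(J_B\cdot M)$ under the isomorphism \eqref{eq:iso-e_S}, viewed inside $J_{A\setminus S}\cdot M$ — is an $O_F\tensor_{O_K}R$-submodule sitting between $J_{B\setminus S}\cdot M$ and $J_{A\setminus S}\cdot M$, hence is $(A\setminus S, B\setminus S)$-strict once we check it is an $R$-module direct summand; this follows because \eqref{eq:iso-e_S} is an isomorphism of $R$-modules carrying the direct summand $\mcF/(J_B\cdot M)$ of the target to the corresponding submodule of the source, and a submodule $\mcG$ with $J_{B\setminus S}\cdot M \subseteq \mcG\subseteq J_{A\setminus S}\cdot M$ is a direct summand of $M$ iff $\mcG/(J_{B\setminus S}\cdot M)$ is a direct summand of $(J_{A\setminus S}\cdot M)/(J_{B\setminus S}\cdot M)$. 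The inverse map $\mcF\mapsto e_S\cdot\mcF$ is then manifestly inverse to it, again by \eqref{eq:iso-e_S}. The only mild subtlety — and the step I would be most careful about — is keeping the bookkeeping of $R$-module direct summands straight when passing between submodules of $M$ and submodules of the direct-summand quotient $(J_{A\setminus S}\cdot M)/(J_{B\setminus S}\cdot M)$; everything else is the polynomial factorization $e_A = e_S e_{A\setminus S}$ together with the rank count above.
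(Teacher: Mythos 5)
Your proof is correct, but it takes a genuinely different route from the paper's at the injectivity step. The paper identifies the kernel of the induced map on quotients directly: by Lemma~\ref{lem:eisenstein_ideal}~(2) the kernel of $e_S$ on $M$ is $J_{S^c}\cdot M$, and since $A\setminus S \subseteq B\setminus S \subseteq S^c$ forces $J_{S^c}\subseteq J_{B\setminus S}$, the kernel of the quotient map is seen to be zero outright. You instead argue by a rank count: both $(J_{A\setminus S}\cdot M)/(J_{B\setminus S}\cdot M)$ and $(J_A\cdot M)/(J_B\cdot M)$ are finite projective $R$-modules of rank $n(|B|-|A|)$, and a surjection between finite projective modules of equal rank is an isomorphism. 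Both are legitimate; your argument uses only part~(1) of Lemma~\ref{lem:eisenstein_ideal} where the paper leans on part~(2), and is perhaps slightly more conceptual. One small glitch: your intermediate formula $n\big(|A^c\setminus S^c| - |B^c\setminus S^c|\big)$ is garbled — since $S\subseteq A$ gives $A^c\subseteq S^c$, both terms are zero as written; you presumably meant $n\big(|(A\setminus S)^c| - |(B\setminus S)^c|\big)$, which since $(A\setminus S)^c = A^c\sqcup S$ does yield $n(|A^c|-|B^c|) = n(|B|-|A|)$ as you conclude. You should also record explicitly the (easy but not immediate) fact that if $N_1\subseteq N_2\subseteq M$ are both $R$-direct summands of $M$ then $N_2/N_1$ is projective: one sees this because $M/N_1 \cong N_2/N_1 \oplus M/N_2$ as $M/N_2$ is projective, so $N_2/N_1$ is a direct summand of the projective module $M/N_1$. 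With these two fixes your argument is complete and the bijection part is handled with appropriate care about the direct-summand bookkeeping.
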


\begin{proof}
First note that all three elements $e_S$, $e_{A\setminus S}$ and $e_{B\setminus S}$ lie in $O_F\tensor_{O_K} O_E$ by definition of $E$. Moreover,
$$e_A = e_S\cdot e_{A\setminus S}\quad\text{and}\quad e_B = e_S\cdot e_{B\setminus S}.$$
In particular, multiplication by $e_S$ defines surjective maps $J_{A\setminus S}\cdot M \twoheadrightarrow J_A\cdot M$ and $J_{B\setminus S}\cdot M \twoheadrightarrow J_B\cdot M$. From this we obtain a map on quotients as claimed,
$$(J_{A\setminus S}\cdot M)/(J_{B\setminus S}\cdot M) \lr (J_{A}\cdot M)/(J_{B}\cdot M).$$
By Lemma \ref{lem:eisenstein_ideal} (2), its kernel is
\begin{equation}\label{eq:kernel_complicated}
[(J_{B\setminus S}\cdot M) + (J_{A\setminus S}\cdot M) \cap (J_{S^c} \cdot M)] / (J_{B\setminus S}\cdot M).
\end{equation}
Since $(A\setminus S) \subseteq (B\setminus S) \subseteq S^c$, we have
$$J_{S^c} \subseteq J_{B\setminus S}\subseteq J_{A\setminus S}$$
so the kernel in \eqref{eq:kernel_complicated} equals
$$(J_{B\setminus S}\cdot M)/(J_{B\setminus S}\cdot M) = 0.$$
This proves that \eqref{eq:iso-e_S} is an isomorphism. The bijection in \eqref{eq:bij-e_S} is a direct consequence.
\end{proof}

Definition \ref{eq:AB-strict} is phrased in terms of the submodules $J_B\cdot M$ and $J_A\cdot M$, and hence directly refers to the module $M$. We end this section by proving that $(A,B)$-strictness is equivalent to the Eisenstein condition of Rapoport--Zink which is purely in terms of $\mcL$.

\begin{prop}\label{prop:RZ-eisenstein}
Let $A\subseteq B\subseteq \Hom_K(F, \ov K)$ be two subsets and let $E = E_A E_B$. Let $R$ be an $O_E$-algebra and let $M$ be a locally free $O_F\tensor_{O_K} R$-module of $R$-rank $n[F:K]$. Let
\begin{equation}\label{eq:std_ex_seq}
0 \lr \mcF\lr M\lr \mcL\lr 0
\end{equation}
be an exact sequence of $(O_F\tensor_{O_K}R)$-modules such that $\mcL$ is locally free as $R$-module of rank $n|A|+d$. The following two conditions are equivalent:
\begin{altenumerate}
    \item The filtration \eqref{eq:std_ex_seq} is $(A, B)$-strict, i.e.
    $$J_{B} M \subseteq \mcF \subseteq J_{A} M.$$
    \item The quotient $\mcL$ satisfies the Eisenstein condition from \cite{RZ2},
    $$e_B \mcL = 0 \quad\quad \text{and} \quad\quad \bigwedge^{d+1}_R(e_A \mid \mcL) = 0.$$
\end{altenumerate}
\end{prop}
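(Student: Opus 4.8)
The plan is to show each of the two inclusions defining $(A,B)$-strictness corresponds to one of the two clauses of the Eisenstein condition. For the first inclusion, I would argue that $J_B M \subseteq \mcF$ is equivalent to $e_B \mcL = 0$. Indeed, $J_B M$ is generated over $O_F\tensor R$ by $e_B \cdot M$, so $J_B M \subseteq \mcF$ if and only if $e_B$ kills the quotient $M/\mcF = \mcL$; this direction is essentially tautological once one unwinds the definition of $J_B$ as the ideal generated by $e_B = e_{\zeta,B}(\zeta\tensor 1)$.

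For the second inclusion, I would show $\mcF \subseteq J_A M$ is equivalent to $\bigwedge^{d+1}_R(e_A \mid \mcL) = 0$. First, assuming $e_B\mcL = 0$ (granted by clause (2) or by the part of (1) just established), note that $e_A$ acts on $\mcL$ and its image $e_A\mcL$ is naturally a quotient of $\mcL/e_A\mcL$'s complement; more usefully, consider the map $e_A : M \to M$ with image $J_A M$ and the induced map on $\mcL$. The cokernel of $e_A : \mcL \to \mcL$ is $\mcL/e_A\mcL = M/(\mcF + e_A M)$, and the submodule $\mcF \subseteq J_A M = e_A M$ holds precisely when $\mcF + e_A M = e_A M$, i.e. when $\mcL \to \mcL/e_A\mcL$ has the "expected" kernel. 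The cleanest route is: $\mcF \subseteq J_A M$ holds iff the composite $\mcF \hookrightarrow M \twoheadrightarrow M/J_A M$ is zero, and since (by Lemma \ref{lem:eisenstein_ideal}) $M/J_A M \iso (J_{A^c} M)$ via $e_A$ and is $R$-locally free of rank $n|A|$, this composite $\mcL' := M/J_A M$ receives a surjection from $\mcL$; one checks $\mcF\subseteq J_A M$ iff this surjection $\mcL \twoheadrightarrow \mcL'$ has locally free kernel of rank exactly $d$ iff $\bigwedge^{d+1}$ of the map $(e_A\mid \mcL)$ vanishes. Here I would use that $e_A\mcL$ and $\mcL'$ agree (both equal $\Im(e_A : M \to \mcL)$ when $e_B\mcL = 0$, using $e_{B\setminus A}\cdot e_A = e_B$ acting as $0$ together with Lemma \ref{lem:eisenstein_ideal} (2)), so $\bigwedge^{d+1}(e_A\mid\mcL) = 0$ says precisely that the rank of $e_A\mcL = \mcL'$ is at most $|A|n$, equivalently at least... — more precisely, that the elementary-divisor locus where $e_A$ has large image is empty, which over each point translates to $\dim \mcF/(\mcF\cap J_A M) $ being forced to $0$.

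The step I expect to be the main obstacle is making the linear-algebra translation precise and uniform over the $O_E$-algebra $R$ (not just over fields): one must pass between the three statements "$\mcF \subseteq J_A M$", "$\Im(e_A\mid\mcL)$ has $R$-rank $n|A|$ everywhere", and "$\bigwedge^{d+1}(e_A\mid\mcL) = 0$". For this I would reduce to the case $R = \kappa$ a field via the fact that all the relevant modules ($\mcF$, $\mcL$, $J_A M$, $J_B M$) are $R$-locally free and their formation commutes with base change (Lemma \ref{lem:eisenstein_ideal} (1) and the direct-summand hypothesis on $\mcF$), so that each of the three conditions can be checked fiberwise; and over a field, "$\mcF \subseteq J_A M$" becomes a dimension count: $\mcF \subseteq J_A M$ iff $\dim_\kappa \mcF = \dim_\kappa(\mcF\cap J_A M)$, while $\bigwedge^{d+1}(e_A\mid\mcL) = 0$ iff $\rk(e_A\mid\mcL) \leq n|A|$, and one relates the two via the exact sequences $0\to \mcF\cap J_A M \to \mcF\to \mcF/(\mcF\cap J_A M)\to 0$ and the identification of $M/J_A M$ with $e_A M$. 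A secondary subtlety is the role of the rank hypothesis $\rk_R\mcL = n|A|+d$: it is what makes clause (2) self-consistent and is needed to pin down "relative rank $d$" in Definition \ref{def:AB-strict}; I would check at the outset that under clause (1) this rank equality is automatic (it is part of being $(A,B)$-strict of relative rank $d$), so no information is lost. Everything else is bookkeeping with Lemma \ref{lem:eisenstein_ideal} and the factorization $e_B = e_A\cdot e_{B\setminus A}$ inside $O_F\tensor_{O_K} O_E$.
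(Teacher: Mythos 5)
Your treatment of $J_B M \subseteq \mcF \Leftrightarrow e_B\mcL = 0$ agrees with the paper (both regard it as tautological), and your identification of the nontrivial part ($\mcF \subseteq J_A M \Leftrightarrow \bigwedge^{d+1}(e_A\mid\mcL)=0$, with $e_A\mcL = e_AM/(e_AM\cap\mcF)$ playing the central role) is on the right track. However, the plan to ``reduce to $R=\kappa$ a field'' and check all three conditions fiberwise has a genuine gap, precisely in the direction $(2)\Rightarrow(1)$.

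The issue is that ``$\mcF\subseteq J_AM$'' cannot be detected on fibers, even though both $\mcF$ and $J_AM$ are direct summands of $M$ whose formation commutes with base change. Concretely, over $R=k[\epsilon]/(\epsilon^2)$ take $M=R^2$, $\mcF=R\cdot(1,0)$, $J_AM = R\cdot(1,\epsilon)$: these are rank-one direct summands with $\mcF_k = (J_AM)_k$, yet $\mcF\not\subseteq J_AM$. So knowing that $\mcF_\kappa\subseteq J_AM_\kappa$ at every closed point (which is what the exterior-power hypothesis plus a dimension count gives you over a field) does not let you conclude the global inclusion. This is exactly the step the paper handles by invoking \cite[Lemma 4.9]{RZ2}: from $\bigwedge^{d+1}(e_A\mid\mcL)=0$ as a statement \emph{over} $R$, together with the fiberwise rank computation $\dim_k(\mcL_k/e_A\mcL_k)=n|A|$, the lemma concludes that $\mcL/e_A\mcL$ is locally free of rank $n|A|$, hence $e_A\mcL = e_AM/(e_AM\cap\mcF)$ is locally free of rank $d$. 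This upgrades the fiberwise information to a local-freeness statement over $R$. One then compares $e_AM\cap\mcF$ and $\mcF$ as $R$-module direct summands of $M$ with cokernels of the \emph{same} rank, one contained in the other, which forces equality. Your plan omits this lemma (or an equivalent substitute) entirely, so the passage from ``true on all fibers'' to ``true over $R$'' is unjustified. (As a smaller remark: for $(1)\Rightarrow(2)$ you do not actually need to pass through fibers at all; $\mcF\subseteq J_AM$ directly gives $e_A\mcL = e_AM/\mcF$ locally free of rank $d$ over $R$, so $e_A$ factors through a rank-$d$ direct summand and $\bigwedge^{d+1}(e_A\mid\mcL)=0$ globally. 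Checking this only fiberwise would, strictly speaking, leave the same kind of gap.)
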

\begin{proof}
The conditions $J_{B} M \subseteq \mcF$ and $e_B \mcL = 0$ are obviously equivalent. The non-trivial part is to show that
$$\mcF\subseteq J_{A}M \quad \Longleftrightarrow \quad \bigwedge^{d+1}_R(e_A \mid \mcL) = 0.$$
First we note that $e_BM$ and $e_AM$ are $R$-module direct summands of $M$ by Lemma \ref{lem:eisenstein_ideal}. Next, we observe that $e_A \mcL = e_A M / (e_A M \cap \mcF)$. Thus if the condition $\mcF\subseteq e_AM$ holds, then $e_A \mcL$ is locally free of rank $d$ as $R$-module and hence $\bigwedge^{d+1}(e_A \mid \mcL) = 0$. For the converse implication, we first recall a lemma from \cite{RZ2}.
\begin{lem}[\protect{\cite[Lemma 4.9]{RZ2}}]
Let $R$ be a local ring with residue field $k$. Let $\mcL$ be a finitely generated $R$-module of rank $m$. Let $f:\mcL\to \mcL$ be an endomorphism. Let $s = \dim_k(\mcL/f(\mcL))\tensor_R k$ and let $d = m - s$. Assume that $\bigwedge^{d+1} f = 0.$ Then $\mcL/f(\mcL)$ is a free $R$-module of rank $s$.
\end{lem}

We apply this lemma to the endomorphism $e_A:\mcL \to \mcL$. Assuming the Eisenstein condition $\bigwedge^{d+1} (e_A\mid \mcL) = 0$, and using that $\mr{rk}_R(\mcL) = n|A|+d$, a direct computation shows that $\dim_k(\mcL/e_A(\mcL)) = n|A|$ for every closed point $R\to k$. The lemma now implies that $\mcL/e_A(\mcL)$ is locally free of rank $n|A|$ as $R$-module. We deduce that $e_A\mcL$ is locally free of rank $d$ as $R$-module.

Using again that $e_A\mcL = e_A M/ (e_AM \cap \mcF)$ and that $e_AM \subset M$ is a direct summand by Lemma \ref{lem:eisenstein_ideal}, we thus obtain that both
$$\mcF,\quad\text{and}\quad\ e_A M \cap \mcF\ \subseteq\ M$$
are $R$-module direct summands with cokernel of the same rank. It follows that $\mcF = e_A M \cap \mcF$, and the proof of Proposition \ref{prop:RZ-eisenstein} is complete.
\end{proof}

\subsection{Relation with hermitian forms}\label{sec:duality}

\newcommand{\btr}{\boldsymbol{tr}}

With a view towards later applications in a unitary setting, we now study the bijection \eqref{eq:bij-e_S} in the context of hermitian forms. Let $τ \in \mr{Gal}(F/K)$ be an involution with fixed field $F_0$. We also write $\ov a := τ(a)$ for elements $a\in F$. For an $O_K$-algebra $R$, we again use $τ$ and $x\mapsto \ov x$ to denote the involution $τ\tensor 1$ on $O_F\tensor_{O_K} R$.

Given $ϕ\in \Hom_K(F, \ov K)$ or $S\subseteq \Hom_K(F, \ov K)$, we define $\ov{ϕ} = ϕ\circ τ$ resp. $\ov S = \{\ov{ϕ} \mid ϕ\in S\}$. Let $ϕ_0\in \Hom_K(F, \ov K)$ and $A\subset \Hom_K(F, \ov K)$ be such that
$$\Hom_K(F, \ov K) = A \sqcup \{ϕ_0, \ov{ϕ_0}\}\sqcup \ov A.$$
Set $B = A\cup \{ϕ_0, \ov{ϕ_0}\}$ and let $E = E_AE_B$ be the reflex field of this situation. Note that it contains $E_{\{ϕ_0, \ov{ϕ_0}\}} = ϕ_0(F_0)$.

\begin{lem}\label{lem:ideal_conjugation}
The Eisenstein ideals $J_A, J_B\subseteq O_F\tensor_{O_K} O_E$ satisfy $\ov{J_A} = J_{\ov A}$ and $\ov{J_B} = J_{\ov B}$.
\end{lem}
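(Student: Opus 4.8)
The plan is to reduce the statement to a property of the generating element $e_{\zeta,A}(T)$ and its behaviour under the involution $\tau$. First I would fix a generator $\zeta\in O_F$ of $O_F$ as an $O_K$-algebra; then $\bar\zeta = \tau(\zeta)$ is also such a generator, since $\tau$ is an $O_K$-algebra automorphism of $O_F$. By Definition \ref{def:Eisenstein_ideal} and Lemma \ref{lem:indep_of_generator}, the ideal $J_{\ov A}\subseteq O_F\tensor_{O_K} O_E$ is generated by $e_{\bar\zeta,\ov A}(\bar\zeta\tensor 1)$, and I am free to compute with this particular generator. The key elementary observation is the identity of polynomials
\begin{equation*}
e_{\bar\zeta,\ov A}(T) = \prod_{\phi\in \ov A}(T - \phi(\bar\zeta)) = \prod_{\psi\in A}(T - (\psi\circ\tau)(\tau(\zeta))) = \prod_{\psi\in A}(T-\psi(\zeta)) = e_{\zeta,A}(T),
\end{equation*}
using that $\phi\mapsto \phi\circ\tau$ is a bijection $A\to\ov A$ and that $\tau\circ\tau = \id$. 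Hence $e_{\bar\zeta,\ov A}(\bar\zeta\tensor 1) = e_{\zeta,A}(\bar\zeta\tensor 1) = \ov{e_{\zeta,A}(\zeta\tensor 1)}$, where the last equality is because the involution $\tau\tensor 1$ on $O_F\tensor_{O_K}O_E$ sends $\zeta\tensor 1$ to $\bar\zeta\tensor 1$ and fixes the coefficients of $e_{\zeta,A}$ (they lie in $O_{E_A}\subseteq O_E$, and one checks $\tau\tensor 1$ acts as identity on the $O_E$-factor). Therefore $J_{\ov A}$ is generated by $\ov{e_A}$, i.e. $J_{\ov A} = \ov{J_A}$. The same computation with $B$ in place of $A$, noting $\ov B = \ov A\cup\{\bar\phi_0,\phi_0\} = B$ so in fact this case reads $J_B = \ov{J_B}$, i.e. $J_B$ is stable under the involution, gives the second assertion.

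The one point requiring a little care — and the main (minor) obstacle — is to justify that the involution $\tau\tensor 1$ on $O_F\tensor_{O_K} O_E$ does not move the coefficients of $e_{\zeta,A}(T)$, so that $\ov{e_{\zeta,A}(\zeta\tensor 1)} = e_{\zeta,A}(\bar\zeta\tensor 1)$ rather than something with conjugated coefficients. This is clear once one recalls that, by convention, $e_{\zeta,A}(T)\in O_{E_A}[T]$ is regarded inside $(O_F\tensor_{O_K}O_E)[T]$ via the structure map $O_E\to O_F\tensor_{O_K} O_E$, $r\mapsto 1\tensor r$, and that $\tau\tensor 1$ is the identity on the second tensor factor. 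Alternatively, and perhaps cleaner for the writeup, one can avoid mentioning coefficients at all: apply $\tau\tensor 1$ directly to the factored form
\begin{equation*}
\tau\tensor 1\big(e_{\zeta,A}(\zeta\tensor 1)\big) = \tau\tensor 1\Big(\prod_{\phi\in A}(\phi(\zeta)\cdot 1\tensor 1 - \zeta\tensor 1)\Big) = \prod_{\phi\in A}(\phi(\zeta)\cdot 1\tensor 1 - \bar\zeta\tensor 1) = e_{\zeta,A}(\bar\zeta\tensor 1),
\end{equation*}
where each factor $\phi(\zeta)\tensor 1$ (an element of the image of $O_E$) is fixed by $\tau\tensor 1$ and $\zeta\tensor 1$ is sent to $\bar\zeta\tensor 1$. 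Combining with the polynomial identity $e_{\zeta,A}(T) = e_{\bar\zeta,\ov A}(T)$ from the previous paragraph yields $\ov{e_A} = e_{\bar\zeta,\ov A}(\bar\zeta\tensor 1)$, which is exactly a generator of $J_{\ov A}$; since $\tau\tensor 1$ is an automorphism, applying it to the ideal generated by $e_A$ gives the ideal generated by $\ov{e_A}$, i.e. $\ov{J_A} = J_{\ov A}$, and likewise $\ov{J_B} = J_{\ov B}$.
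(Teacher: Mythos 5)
Your proof is correct and follows essentially the same route as the paper: apply $\tau\otimes 1$ to $e_{\zeta,A}(\zeta\otimes 1)$, identify the result with $e_{\bar\zeta,\bar A}(\bar\zeta\otimes 1)$ via the reindexing $\phi\mapsto\bar\phi$ and the identity $\phi(\zeta)=\bar\phi(\bar\zeta)$, and then invoke Lemma~\ref{lem:indep_of_generator} to pass from a particular generator $\bar\zeta$ to the ideal $J_{\bar A}$. The paper phrases the calculation as a single chain of equalities; you split it into a polynomial identity plus a substitution, which is a harmless cosmetic difference. Two small slips that do not affect the argument: in your factored display you have the sign reversed inside each linear factor (the definition gives $\zeta\otimes 1 - 1\otimes\phi(\zeta)$, not the opposite), which only changes the generator by the unit $(-1)^{|A|}$; and more substantively, your parenthetical claim that $\ov B = B$ is false in general. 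Since $\Hom_K(F,\ov K) = A\sqcup\{\phi_0,\ov{\phi_0}\}\sqcup\ov A$ is a \emph{disjoint} union, $\ov B = \ov A\cup\{\phi_0,\ov{\phi_0}\}$ equals $B = A\cup\{\phi_0,\ov{\phi_0}\}$ only when $A=\ov A$, i.e.\ only when $A=\emptyset$. The second assertion of the lemma really is $\ov{J_B}=J_{\ov B}$ (with $\ov B\neq B$ in general), and it does follow by running your argument with $B$ in place of $A$; just drop the incorrect simplification to $J_B=\ov{J_B}$.
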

\begin{proof}
Let $ζ\in O_F$ be an $O_K$-algebra generator. By definitions, see \eqref{eq:def_e_S}, we have
\begin{equation}\label{eq:e_A_conjugate}
\begin{aligned}
    τ(e_{ζ, A}(ζ\tensor 1)) & = e_{ζ, A}(\ov{ζ} \tensor 1)\\
    & = \prod_{ϕ\in A} (\ov{ζ}\tensor 1 - 1\tensor ϕ(ζ))\\
    & = \prod_{ϕ\in A} (\ov{ζ}\tensor 1 - 1\tensor \ov{ϕ}(\ov{ζ}))\\
    & = e_{\ov{ζ}, \ov A}(\ov{ζ}\tensor 1).
\end{aligned}
\end{equation}
Looking at the generated ideals, this means $\ov{J_A} = J_{\ov{A}}$. The argument for $J_B$ is the same.
\end{proof}

Let $R$ be an $O_E$-algebra and let $M_1$, $M_2$ be two locally free $O_F\tensor_{O_K} R$-modules. Assume that
$$ψ:M_1\times M_2\lr R$$
is an $R$-bilinear form that is $O_F$-conjugate linear. By this we mean that for all $a\in O_F$, $r\in R$, and $x_i\in M_i$
\begin{equation}\label{eq:hermitian-definition}
ψ((a\tensor r) x_1, x_2) = ψ(x_1, (\bar a \tensor r)x_2).
\end{equation}
Then $J_{A}\cdot M_i \perp J_{B}\cdot M_{3-i}$ because $A\cup \ov B = \Hom_K(F, \ov K)$ together with Lemma \ref{lem:ideal_conjugation} implies $J_A \ov{J_B} = 0$. In particular, we obtain an induced $R$-bilinear pairing
$$(J_A M_1 / J_B M_1) \times (J_A M_2 / J_B M_2) \lr R$$
which is still $O_F$-conjugate linear in the sense of \eqref{eq:hermitian-definition}. Next, recall the formalism of lifting pairings along the trace. Let $\vartheta \in \mcD^{-1}_{F_0/K}$ be a generator of the inverse different of $F_0/K$, and consider the map
$$\begin{aligned}
    \btr:O_{F_0}\tensor_{O_K}R & \lr R\\
    a\tensor r & \longmapsto \mr{tr}_{F_0/K}(\vartheta a)\tensor r.
\end{aligned}$$
Then $\btr$ has the property that the map
\begin{equation*}
   \btr:\Hom_{O_{F_0}\otimes_{O_K}R}(O_{F_0}\otimes_{O_K}R, O_{F_0}\otimes_{O_K}R)\lr \Hom_{R}(O_{F_0}\otimes_{O_K}R, R),\quad \left[f\longmapsto \btr \circ f\right]
\end{equation*}
is an isomorphism. The next lemma extends this to all modules.
\begin{lem}\label{lem:keep-isotropic}
For every $O_{F_0}\otimes_{O_K}R$-module $M$, the map
\begin{equation*}
    \btr:\Hom_{O_{F_0}\otimes_{O_K}R}(M,O_{F_0}\otimes_{O_K}R)\lr \Hom_R(M,R),\quad f\longmapsto \btr \circ f
\end{equation*}
is an isomorphism of $O_{F_0}\tensor_{O_K} R$-modules.
\end{lem}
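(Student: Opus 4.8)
The plan is to bootstrap from the case $M = O_{F_0}\tensor_{O_K} R$, which has just been recorded above, to an arbitrary module $M$ via the tensor-hom adjunction for the ring map $R\to O_{F_0}\tensor_{O_K} R$. Throughout, write $T := O_{F_0}\tensor_{O_K} R$, a commutative $R$-algebra. The statement preceding the lemma says exactly that the $T$-linear map $T\to\Hom_R(T,R)$, $a\mapsto \btr(a\cdot -)$, is an isomorphism; equivalently, $\Hom_R(T,R)$ is free of rank one over $T$ with generator $\btr$. (This holds because $\vartheta$ generates $\mcD^{-1}_{F_0/K}$, so $(a,b)\mapsto \mr{tr}_{F_0/K}(\vartheta ab)$ is a perfect $O_K$-bilinear pairing on the finite free $O_K$-module $O_{F_0}$; one then base changes along $O_K\to R$.)

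Next I recall the adjunction. For a $T$-module $M$ and an $R$-module $N$, give $\Hom_R(T,N)$ the $T$-module structure $(t\cdot\varphi)(t') := \varphi(tt')$. Then
\begin{equation*}
\Hom_R(M,N)\ \lr\ \Hom_T\!\big(M,\,\Hom_R(T,N)\big),\qquad \varphi\longmapsto\big[\,m\mapsto (t\mapsto \varphi(tm))\,\big]
\end{equation*}
is an isomorphism of $T$-modules, natural in $M$, with inverse $g\mapsto[\,m\mapsto g(m)(1)\,]$; this expresses that $\Hom_R(T,-)$ is right adjoint to restriction of scalars along $R\to T$, and both the $T$-linearity (for the $T$-actions coming from $M$ on each side) and the inverse formula are read off directly from the displayed maps. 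I then specialize to $N = R$.

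Finally I combine the two. Post-composition with the $T$-linear isomorphism $T\simto\Hom_R(T,R)$, $a\mapsto\btr(a\cdot-)$, gives an isomorphism $\Hom_T(M,T)\simto\Hom_T(M,\Hom_R(T,R))$, and chaining it with the inverse of the adjunction isomorphism for $N=R$ produces an isomorphism $\Hom_T(M,T)\simto\Hom_R(M,R)$. Tracing an element $f\in\Hom_T(M,T)$ through these two steps: it is sent first to $m\mapsto \btr(f(m)\cdot -)$, and then, by evaluation at $1\in T$, to $m\mapsto \btr(f(m))$, that is, to $\btr\circ f$, which is precisely the map of the lemma. Each of the two steps is $T$-linear for the $T$-actions induced from $M$, hence so is the composite. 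I do not expect a real obstacle here; the only point needing a moment's care is that the $T$-module structure on $\Hom_R(M,R)$ transported through the adjunction coincides with the one induced from $M$, which is immediate from the formulas, and no finiteness or projectivity hypothesis on $M$ is used.
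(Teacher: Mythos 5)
Your proof is correct, but it takes a genuinely different route from the paper's. The paper chooses a presentation $S^{\oplus J}\to S^{\oplus I}\to M\to 0$ (with $S := O_{F_0}\otimes_{O_K}R$), applies the natural transformation $\Hom_S(-,S)\xrightarrow{\btr}\Hom_R(-,R)$ to get a commutative ladder, reduces the free case to the rank-one base case via $\Hom_R(S^{\oplus I},R)=\prod_I\Hom_R(S,R)$, and concludes by the five lemma. You instead invoke the coinduction adjunction $\Hom_R(M,N)\cong\Hom_T(M,\Hom_R(T,N))$ (right adjoint to restriction of scalars along $R\to T$) and then identify $\Hom_R(T,R)\cong T$ via the trace form. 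Both approaches rest on the same rank-one input; yours avoids choosing a presentation and makes the $T$-linearity and naturality in $M$ come for free from the adjunction formalism, while the paper's is more elementary in the sense of not invoking the coinduction functor. One small bookkeeping point you correctly flag and resolve: the $T$-module structure on $\Hom_R(M,R)$ transported through the adjunction agrees with the one induced from $M$, which is indeed immediate from the explicit formulas (and relies on $T$ being commutative so the two natural $T$-actions on $\Hom_T(M,\Hom_R(T,R))$ coincide). Your element-chase showing the composite is exactly $f\mapsto\btr\circ f$ is also correct.
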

\begin{proof}
Write $S := O_{F_0}\otimes_{O_K}R$ during the proof. Choose a presentation
\begin{equation*}
    S^{\oplus J}\lr S^{\oplus I}\lr M\lr 0.
\end{equation*}
We apply the functors and natural transformation $\Hom_S(-,S)\overset{\btr}{\to} \Hom_R(-,R)$ to get the following commutative diagram:
\begin{equation*}
\begin{tikzcd}
0\arrow[r]&\Hom_{S}(M,S)\arrow[d]\arrow[r]& \prod_I \Hom_{S}(S,S)\arrow[r]\arrow[d,"\cong"]&\prod_J \Hom_{S}(S,S)\arrow[d,"\cong"]\\
0\arrow[r]&\Hom_{R}(M,R)\arrow[r]&\prod_I \Hom_{R}(S,R)\arrow[r]&\prod_J \Hom_{R}(S,R).
\end{tikzcd}
\end{equation*}
Here, the identity $\Hom_R(S^{\oplus I}, R) = \prod_I \Hom_R(S, R)$ comes from the finiteness of $S$ over $R$; analogously for $J$ in place of $I$. The claimed isomorphism now follows from the five lemma.
\end{proof}
Lemma \ref{lem:keep-isotropic} implies that there exists a unique $O_{F_0}\tensor_{O_K}R$-bilinear form
$$ψ_{F_0}:M_1\times M_2\lr O_{F_0}\tensor_{O_K}R$$
with $ψ = \btr \circ ψ_{F_0}$. By functoriality of the construction, it is again $O_F$-conjugate linear in the sense of \eqref{eq:hermitian-definition}. Define
$$A_0 = \{ϕ\vert_{F_0} \mid ϕ\in A\} \subseteq \Hom_{K}(F_0, \ov K).$$
By Lemmas \ref{lem:Eisenstein_ideal_intermediate_field} and \ref{lem:ideal_conjugation}, the Eisenstein ideal $J_{A_0}\subset O_{F_0}\tensor_{O_K} O_E$ satisfies
\begin{equation}\label{eq:Eis_ideal_rel}
J_A\cdot J_{\ov A} = J_{A_0}\cdot (O_F\tensor_{O_K} O_E).
\end{equation}
We have three distinguished kinds of generators for this ideal: the elements $e_{A\cup \bar A}$, the products $e_A\cdot \ov{e_A}$ (both kinds defined for generators $ζ\in O_F$), and the elements $e_{A_0}$ defined for generators $ζ_0\in O_{F_0}$. Note that
\begin{equation}\label{eq:invariants_R}
(O_F\tensor_{O_K}R)^{τ = \mr{id}} = O_{F_0}\tensor_{O_K}R
\end{equation}
whenever $R$ is $O_K$-flat or $F/F_0$ at most tamely ramified. In particular, the products $e_A\cdot \ov{e_A}$ lie in $O_{F_0}\tensor_{O_K} O_E$. Moreover, by Lemma \ref{lem:unit_local_ring}, all mentioned types of generators differ by units from $(O_F\tensor_{O_K} O_E)^\times$.
\begin{lem}\label{lem:restrict_lifted_form}
The lifted pairing $ψ_{F_0}$ restricts to an $R$-bilinear and $O_F$-conjugate linear pairing
$$(J_A M_1 / J_B M_1) \times (J_A M_2 / J_B M_2) \lr J_{A_0, R} \subseteq O_{F_0}\tensor_{O_K} R.$$
\end{lem}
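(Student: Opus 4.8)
My plan is to establish three things in turn: that $\psi_{F_0}$ annihilates both $J_A M_1 \times J_B M_2$ and $J_B M_1 \times J_A M_2$, so that it descends to a pairing on the asserted quotients; that the descended pairing takes values in $J_{A_0,R}$; and that it is $R$-bilinear and $O_F$-conjugate linear. The last of these requires no argument, since everything is obtained from $\psi_{F_0}$ by restriction along $O_F\tensor_{O_K}R$-linear maps and $\psi_{F_0}$ already has these properties.

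For the vanishing, I would fix an $O_K$-algebra generator $\zeta\in O_F$, write $e_A = e_{\zeta,A}(\zeta\tensor1)$ and $e_B = e_{\zeta,B}(\zeta\tensor1)$ so that $J_A = (e_A)$, $J_B = (e_B)$, and $J_B\subseteq J_A$, and then slide the generator across the pairing by $O_F$-conjugate linearity: $\psi_{F_0}(e_A x, e_B y) = \psi_{F_0}(x,\, \ov{e_A}\,e_B\,y)$. Here $\ov{e_A} = \tau(e_A)\in\ov{J_A} = J_{\ov A}$ by Lemma \ref{lem:ideal_conjugation}, so $\ov{e_A}\,e_B$ lies in the ideal $J_{\ov A}\cdot J_B$, which is generated by $e_{\zeta,\ov A}(\zeta\tensor1)\,e_{\zeta,B}(\zeta\tensor1) = e_{\zeta,\ov A\sqcup B}(\zeta\tensor1) = P_\zeta(\zeta\tensor1) = 0$ because $\ov A$ and $B$ are disjoint with union $\Hom_K(F,\ov K)$ (cf.\ \eqref{eq:zero}). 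Thus $\psi_{F_0}(J_A M_1, J_B M_2) = 0$; the symmetric vanishing follows identically, now using $\ov B\sqcup A = \Hom_K(F,\ov K)$ and $\ov{J_B} = J_{\ov B}$.

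For the statement about values, the same sliding trick gives $\psi_{F_0}(e_A x, e_A y) = \psi_{F_0}(x,\,\ov{e_A}\,e_A\,y)$; since $\ov{e_A}\,e_A$ lies in $O_{F_0}\tensor_{O_K}O_E$ (as noted just before the lemma), $O_{F_0}\tensor_{O_K}R$-bilinearity of $\psi_{F_0}$ rewrites this as $(\ov{e_A}\,e_A)\cdot\psi_{F_0}(x,y)$, and since $\psi_{F_0}(x,y)\in O_{F_0}\tensor_{O_K}R$ it suffices to show $\ov{e_A}\,e_A\in J_{A_0,R}$. Its image in $O_F\tensor_{O_K}R$ lies in $J_{A,R}\cdot J_{\ov A,R}$, which equals $J_{A_0,R}\cdot(O_F\tensor_{O_K}R)$ by \eqref{eq:Eis_ideal_rel} base-changed along $O_E\to R$ (legitimate since formation of Eisenstein ideals commutes with base change, Lemma \ref{lem:eisenstein_ideal}(1)). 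As $O_{F_0}\tensor_{O_K}R\to O_F\tensor_{O_K}R$ is faithfully flat ($O_F$ is finite free over $O_{F_0}$), contraction recovers the original ideal, so $J_{A_0,R}\cdot(O_F\tensor_{O_K}R)\cap(O_{F_0}\tensor_{O_K}R) = J_{A_0,R}$ and hence $\ov{e_A}\,e_A\in J_{A_0,R}$.

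The step that I expect to need most care is this last descent: $\ov{e_A}\,e_A$ visibly lies in the correct ideal of the \emph{larger} ring $O_F\tensor_{O_K}R$, but the claim requires membership in the smaller ideal $J_{A_0,R}$ of $O_{F_0}\tensor_{O_K}R$, and it is faithfully flat descent of ideals that bridges this gap. An alternative is to check directly that $\ov{e_A}\,e_A$ and any generator $e_{A_0}$ of $J_{A_0}$ differ by a unit of $O_{F_0}\tensor_{O_K}O_E$: they differ by a unit of $O_F\tensor_{O_K}O_E$ (Lemma \ref{lem:unit_local_ring}, applied in each local factor, since they generate the same ideal there), and that unit is $\tau$-fixed, hence lies in $O_{F_0}\tensor_{O_K}O_E$.
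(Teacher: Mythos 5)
Your main argument is correct, but it takes a slightly different route than the paper's. For the vanishing, you verify it directly at the level of $\psi_{F_0}$ via conjugate linearity of the Eisenstein generators; the paper instead recycles the already-established vanishing $\psi(J_A M_1, J_B M_2)=0$ and deduces the same for $\psi_{F_0}$ from the uniqueness in Lemma \ref{lem:keep-isotropic} (the unique $\btr$-lift of the zero pairing is zero). Both are valid. For the range of values, you reduce to showing that the specific element $\ov{e_A}e_A\in O_{F_0}\tensor_{O_K}R$ lies in $J_{A_0,R}$ and close the gap with faithfully flat descent along $O_{F_0}\tensor R\to O_F\tensor R$; the paper avoids this step entirely by staying at the level of ideals acting on modules: it observes that $J_A\ov{J_A}M_1 = J_{A_0}\cdot(O_F\tensor_{O_K}O_E)\cdot M_1 = J_{A_0}M_1$, upon which $O_{F_0}\tensor R$-bilinearity immediately gives $\psi_{F_0}(J_{A_0}M_1,M_2)\subseteq J_{A_0,R}$. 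So you correctly identified the descent as the delicate point, but it is an artifact of the generator-centric phrasing; the paper's ideal-theoretic phrasing sidesteps it.

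The alternative you offer at the end does have a genuine gap: you assert that the unit $u\in(O_F\tensor_{O_K}O_E)^\times$ with $\ov{e_A}e_A = u\cdot e_{A_0}$ is $\tau$-fixed. Applying $\tau$ only yields $(u-\tau(u))\cdot e_{A_0}=0$, and one cannot cancel $e_{A_0}$ because it is a zero divisor: $e_{A_0}\cdot e_{A_0^c}=0$, and here $A_0^c=\{\phi_0\vert_{F_0}\}\neq\emptyset$. Averaging $u$ with $\tau(u)$ does not obviously preserve unitality (and fails outright at $p=2$, which the paper explicitly allows in \S\ref{s:Eis cond}). So the faithfully flat descent argument you gave as the primary route is the one to keep.
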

\begin{proof}
As noted before, we have $ψ(J_AM_1, J_BM_2) = ψ(J_BM_1, J_AM_2) = 0$. By the uniqueness of the lifted pairing from Lemma \ref{lem:keep-isotropic}, this implies $ψ_{F_0}(J_AM_1, J_BM_2) = ψ_{F_0}(J_BM_1, J_AM_2) = 0$. Moreover, we have seen in Lemma \ref{lem:ideal_conjugation} and \eqref{eq:Eis_ideal_rel} that $J_A\ov{J_A} = J_{A_0}\cdot (O_F\tensor_{O_K} O_E)$. So we obtain
$$\begin{aligned}
ψ_{F_0}(J_A M_1, J_A M_2) & = ψ_{F_0}(J_A \ov{J_A} M_1, M_2)\\
& = ψ_{F_0}(J_{A_0} M_1, M_2)\\
& \subseteq J_{A_0,R}
\end{aligned}$$
as was to be shown.
\end{proof}

\begin{prop}[Properties of $ψ_{F_0}$]\label{prop:duality-comp}
Let $R$ be an $O_E$-algebra and let $M_1$, $M_2$, $ψ$ and $ψ_{F_0}$ be as above.
\begin{altenumerate}
\item The pairing $ψ$ is perfect if and only if $ψ_{F_0}$ is perfect.
\item Assume $ψ$ and $ψ_{F_0}$ are perfect and let $S \in \{A, B\}$. Then for each $i\in \{1,2\}$,
\begin{equation}\label{eq:simple_orthognality}
(J_S\cdot M_i)^\perp = J_{\bar S^c}\cdot M_{3-i}.
\end{equation}
In particular, the restrictions of $ψ$ and $ψ_{F_0}$ define perfect pairings of $R$-modules that fit into the commutative diagram
\begin{equation}\label{eq:restricted_pairings_diagram}
\xymatrix{
& J_{A_0, R} \ar[dd]^{\btr}_{\iso} & & R \ar[ll]_-{e_A\ov{e_A}}^-{\iso}\\
(J_{A} M_1)/(J_{B}M_1)\times (J_{A} M_2)/(J_{B}M_2) \ar[ru]^-{ψ_{F_0}} \ar[rd]_-{ψ} & & &\\
& R. & &}
\end{equation}
Here, the upper right isomorphism comes from Lemma \ref{lem:eisenstein_ideal} (2) and the natural identifications
$$(O_{F_0} \tensor_{O_K} R)/J_{\{ϕ_0\vert F_0\}, R} \simlr R.$$
\item Assume that $\mcF_i\subseteq M_i$ are two $(A, B)$-strict filtrations in the sense of Definition \ref{def:AB-strict}. Then
$$\begin{aligned}
\mcF_1 \perp_{ψ} \mcF_2 & \ \Longleftrightarrow\ \mcF_1 \perp_{ψ_{F_0}} \mcF_2\\[1mm]
& \ \Longleftrightarrow\ (\mcF_1/J_{B}M_1) \perp (\mcF_2/J_{B}M_2)
\end{aligned}$$
where the third orthogonality relation is with respect to either of the restricted pairings in \eqref{eq:restricted_pairings_diagram}.
\end{altenumerate}
\end{prop}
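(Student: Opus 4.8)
The plan is to prove the three parts in sequence, each building on the Eisenstein-ideal bookkeeping already established. I begin with part (1). The key identity is the one from Lemma \ref{lem:keep-isotropic}: the trace form $\btr$ induces an isomorphism $\Hom_{O_{F_0}\tensor_{O_K}R}(M_2, O_{F_0}\tensor_{O_K}R)\simlr \Hom_R(M_2, R)$, and this is compatible with the adjunction isomorphisms attached to $\psi$ and $\psi_{F_0}$. Concretely, the adjoint of $\psi$ is the composite of the adjoint of $\psi_{F_0}$ (a map $M_1\to \Hom_{O_{F_0}\tensor_{O_K}R}(M_2, O_{F_0}\tensor_{O_K}R)$) with the $\btr$-isomorphism. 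Since an isomorphism composed with a map is an isomorphism iff that map is, perfectness of $\psi$ is equivalent to perfectness of $\psi_{F_0}$. One should be slightly careful that ``perfect'' means the adjoint is an isomorphism of modules (not just that the pairing is nondegenerate), but with that reading the argument is formal.

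For part (2), I would first establish the orthogonality relation \eqref{eq:simple_orthognality}. The containment $(J_S M_i)^\perp \supseteq J_{\bar S^c}M_{3-i}$ follows from the conjugate-linearity \eqref{eq:hermitian-definition} together with Lemma \ref{lem:ideal_conjugation}: $\psi((J_S M_i), (J_{\bar S^c} M_{3-i})) = \psi(M_i, (\ov{J_S} J_{\bar S^c}) M_{3-i}) = \psi(M_i, (J_{\bar S} J_{\bar S^c})M_{3-i}) = 0$ since $J_{\bar S}J_{\bar S^c}=0$ by Lemma \ref{lem:eisenstein_ideal}(2) and \eqref{eq:zero}. For equality, I compare ranks: $J_{\bar S^c}M_{3-i}$ is an $R$-module direct summand of $R$-rank $n|\bar S^c| = n|S^c|$ by Lemma \ref{lem:eisenstein_ideal}(1); meanwhile, since $\psi$ is perfect, $(J_S M_i)^\perp$ is a direct summand whose rank is the $R$-corank of $J_S M_i$ in $M_i$, namely $n[F:K] - n|S| = n|S^c|$. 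Two comparable direct summands of the same rank coincide, so we get \eqref{eq:simple_orthognality}. Applying this with $S = A$ and then quotienting by $J_B M_i$ gives that $\psi$ descends to a pairing on $(J_A M_i)/(J_B M_i)$, and the perfectness of this descended pairing follows since $(J_A M_i)^\perp = J_{\bar A^c}M_{3-i} = J_B M_{3-i}$ (here using $\bar A^c = B$, which holds because $\Hom_K(F,\ov K) = A\sqcup\{\phi_0,\ov{\phi_0}\}\sqcup\ov A$ forces $\ov A^c = A\sqcup\{\phi_0,\ov\phi_0\} = B$). The analogous statement for $\psi_{F_0}$ uses Lemma \ref{lem:restrict_lifted_form} to land in $J_{A_0,R}$, and part (1) applied to the restricted modules gives perfectness. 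Finally, the commutativity of diagram \eqref{eq:restricted_pairings_diagram} follows by unwinding definitions: $\psi = \btr\circ\psi_{F_0}$ by construction, and on the quotient the map $\btr\colon J_{A_0,R}\to R$ factors through $(O_{F_0}\tensor_{O_K}R)/J_{\{\phi_0\vert F_0\},R}\simlr R$ after multiplication by the unit $e_A\ov{e_A}$ (all three flavors of generator of $J_{A_0}$ differing by units, as noted before the proposition).

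For part (3), the chain of equivalences is almost formal given (1) and (2). The equivalence $\mcF_1 \perp_\psi \mcF_2 \Leftrightarrow \mcF_1\perp_{\psi_{F_0}}\mcF_2$ is immediate from $\psi = \btr\circ\psi_{F_0}$ and the fact that $\btr$ is injective (indeed an isomorphism) on $\Hom$-modules, hence $\psi_{F_0}(x,y)=0 \Leftrightarrow \btr(\psi_{F_0}(x,y))=0$ for all relevant $x,y$ — or rather, $\psi_{F_0}(\mcF_1,\mcF_2)\subseteq O_{F_0}\tensor_{O_K}R$ vanishes iff its image under the injective map $\btr$ vanishes. For the second equivalence, since $J_B M_i \subseteq \mcF_i$ and $J_B M_{3-i}$ is contained in the $\psi$- (resp. $\psi_{F_0}$-) radical of $J_A M_{3-i}\supseteq\mcF_{3-i}$ by \eqref{eq:simple_orthognality} with $S=A$, the pairing of $\mcF_1$ with $\mcF_2$ only depends on their images in the quotients $(J_A M_i)/(J_B M_i)$, and vanishes iff those images are orthogonal under the descended pairing. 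I expect the main obstacle to be purely notational: keeping straight which of $\bar S^c$, $\bar S$, $S^c$, $B^c$, $\bar A^c$ appears where, and checking the rank counts carefully enough that ``two direct summands of equal rank, one contained in the other, are equal'' is legitimately applied (which needs both to be direct summands, supplied by Lemma \ref{lem:eisenstein_ideal}(1) and perfectness of $\psi$). There is no serious mathematical difficulty beyond the careful combinatorics of the embedding decomposition.
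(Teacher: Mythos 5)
Your approach matches the paper's: establish \eqref{eq:simple_orthognality} by the containment-plus-rank-count argument around \eqref{eq:dual_J}, deduce the restricted perfect pairings, and use Lemma \ref{lem:keep-isotropic} for parts (1) and (3). Two slips are worth correcting, though neither changes the outcome. First, in the rank count: Lemma \ref{lem:eisenstein_ideal}(1) gives $\rk_R J_T = |T^c|$, not $|T|$, so $J_{\bar S^c} M_{3-i}$ has $R$-rank $n|\bar S| = n|S|$ and $J_S M_i$ has rank $n|S^c|$, whence $(J_S M_i)^\perp$ has rank $n[F{:}K] - n|S^c| = n|S|$. You wrote $n|S^c|$ for both; the two errors swap $S \leftrightarrow S^c$ in the same way and therefore cancel, so the equality of ranks — and hence \eqref{eq:simple_orthognality} — survives, but the intermediate numbers are off. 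Second, in part (3), the map $\btr : O_{F_0}\otimes_{O_K} R \to R$ is not injective (it has a kernel of $R$-rank $[F_0{:}K]-1$), so the pointwise formulation "$\psi_{F_0}(\mcF_1,\mcF_2)$ vanishes iff its $\btr$-image vanishes" is not literally justified. The correct argument, which is clearly what you intend, invokes Lemma \ref{lem:keep-isotropic} on the level of pairings: the restriction $\psi_{F_0}|_{\mcF_1\times\mcF_2}$ is the \emph{unique} $O_{F_0}\otimes_{O_K}R$-bilinear lift of $\psi|_{\mcF_1\times\mcF_2}$ along $\btr$, and the zero pairing is the unique lift of the zero pairing. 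With those two adjustments the proof is correct and is the same as the paper's.
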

\begin{proof}
Statement (1) is a direct consequence of the fact that $\btr$ in Lemma \ref{lem:keep-isotropic} is an isomorphism. The fact that $J_{S}M_i \perp J_{\bar S^c}M_{3-i}$ was already noted during the proof of Lemma \ref{lem:restrict_lifted_form} before. A simple variation of the arguments around \eqref{eq:dual_J} allows to conclude that the two submodules are mutual orthogonal complements. The first equivalence in statement (3) comes from Lemma \ref{lem:keep-isotropic} applied to $\mcF_1$ and $\mcF_2$: the unique lifting along $\btr$ of the $0$-pairing is the $0$-pairing. The second equivalence is an immediate consequence using $J_{B}M_i \perp J_{A}M_{3-i}$ as in statement (2).
\end{proof}

\section{Local models}\label{s:LM}

In this section, we define and compare relative and absolute local models for general linear and unitary groups. 

\subsection{The linear case}\label{sec:linear-LM}
Let $F/K$ be a finite extension of $p$-adic local fields and let $V$ be an $F$-vector space of dimension $n$. Let $π\in F$ denote a uniformizer.
\begin{defn}\label{defn:lattice-chain}
A \emph{lattice chain in $V$} is a set $L$ of $O_F$-lattices $\Lambda\subset V$ which form a chain, that is,
\begin{enumerate}
\item For any two lattices $\Lambda_1$ and $\Lambda_2\in L$, either $\Lambda_1\subseteq \Lambda_2$ or $\Lambda_2\subseteq \Lambda_1$;
\item For any $\Lambda\in L$, we have $\pi\Lambda\in L$.
\end{enumerate}
\end{defn}
Fix a lattice chain $L$ in $V$ and an integer $0 \leq d \leq n$. For the following definitions, we adhere to the terminology from \cite{PRI} and call local models for the general linear group \emph{standard local models}.

\begin{defn}\label{def:rel_loc_mod_GL}
The \emph{relative standard model} $\bfM_n^L(d)$ is the projective scheme over $\Spec O_F$ whose $R$-points for an $O_F$-algebra $R$ are the set of tuples $(\msF_\Lambda)_{\Lambda\in L}$ that satisfy:
\begin{altenumerate}

\item $\msF_\Lambda \subseteq Λ\tensor_{O_F} R$ is an $R$-module direct summand of rank $n-d$;
\item For each inclusion $i:\Lambda_1\subseteq \Lambda_2$ in $L$, the induced morphism 
\begin{equation*}
i\tensor \mr{id}_R:\Lambda_1\otimes_{O_F}R\lr \Lambda_2\otimes_{O_F}R
\end{equation*}
satisfies $(i\tensor \mr{id}_R)(\msF_{\Lambda_1}) \subseteq \msF_{\Lambda_2}$;
\item For every $Λ\in L$, the multiplication map $π:\Lambda\simto πΛ$ induces an isomorphism 
\begin{equation*}
    (π\tensor \mr{id}_R)\vert_{\msF_Λ}:\msF_{\Lambda}\simlr\msF_{\pi\Lambda}.
\end{equation*}
\end{altenumerate}
\end{defn}

\begin{thm}\label{thm:Goertz}
The relative standard model $\bfM_n^L(d)$ is flat normal and Cohen--Macaulay. When $d=1$ or $d=n-1$, the relative standard model is semi-stable. When $d = 0$ or $d = n$, it is isomorphic to $\Spec O_F$.
\end{thm}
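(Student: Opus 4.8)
The plan is to reduce everything to the known structure theory of standard (local) models for the linear group. First I would recall that $\bfM_n^L(d)$ is precisely the local model $\bfM^{\mathrm{loc}}$ attached to $\mathrm{GL}_{n,F}$, the lattice chain $L$ in $V$, and the cocharacter $\mu = (1^{(n-d)}, 0^{(d)})$, in the sense of Rapoport--Zink \cite{RZ1} and G\"ortz \cite{PRI}. The crucial point is the \emph{local model diagram}: the naive local model defined by the direct-summand conditions (i)--(iii) above coincides with the honest local model because for $\mathrm{GL}_n$ the coherence issues that arise for other groups are absent; flatness of the naive model is G\"ortz's theorem. Concretely, flatness, normality and Cohen--Macaulayness are exactly \cite[Theorem 4.4 and its corollaries]{PRI}, and the reduction to this reference is the main content of the argument. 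The identification of the moduli problem in Definition \ref{def:rel_loc_mod_GL} with G\"ortz's local model is a routine unwinding of definitions once one fixes the dictionary between filtrations $\msF_\Lambda$ and quotients $\Lambda\otimes_{O_F}R \twoheadrightarrow \Lambda\otimes_{O_F}R/\msF_\Lambda$.

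Next I would treat the two extreme cases by direct computation. When $d = 0$ the only allowed direct summand of rank $n$ in $\Lambda\otimes_{O_F}R$ is the full module, so $\msF_\Lambda = \Lambda\otimes_{O_F}R$ for all $\Lambda$, conditions (ii) and (iii) are automatic, and the functor is represented by $\Spec O_F$; the case $d=n$ is identical with $\msF_\Lambda = 0$. For $d=1$ (and $d=n-1$ by the standard duality $\msF_\Lambda \mapsto \msF_\Lambda^\perp$, which exchanges $d$ and $n-d$ and preserves the chain conditions), semistability is G\"ortz's result that the local model for a minuscule coweight of the form $(1,0,\dots,0)$ has semistable reduction; I would cite \cite{PRI} (or, for a single lattice, \cite{RZ1}) for this. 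Here one should be slightly careful about what ``semistable'' means at a chain of lattices versus a single lattice: for the maximal parahoric (one lattice up to scaling) the model is smooth, and for a general chain it acquires normal crossings along the boundary strata indexed by the chain; both are covered by the cited structure theory.

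The main obstacle, such as it is, is bookkeeping rather than mathematics: one must make sure the normalization conventions (the periodicity condition (iii), the shift by $\pi$, the indexing of the chain $L$, and the precise cocharacter $\mu$) match those of \cite{PRI} and \cite{RZ1}, so that the cited flatness/normality/Cohen--Macaulay/semistability statements apply verbatim. A secondary point worth a sentence is that flatness over $\Spec O_F$ is what makes base change to $O_E$ (used later in Theorem \ref{thm:intro_LM}) well behaved, so I would state flatness first and note that the scheme is $O_F$-flat of relative dimension $d(n-d)$, matching $\dim \mathrm{Gr}(n-d, n)$, which also serves as a sanity check on the extreme cases. No step here requires new ideas; the theorem is essentially a repackaging of G\"ortz's work in the notation of this paper.
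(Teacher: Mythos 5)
Your approach matches the paper's: everything is a repackaging of known facts about standard $\mathrm{GL}_n$ local models, plus a direct inspection of the extreme cases $d=0,n$. The extreme cases and the semistability reasoning (single lattice gives the Grassmannian, which is smooth; general chain has semistable reduction of the form $O_{\breve F}[\![x_1,\ldots,x_s,y_{s+1},\ldots,y_n]\!]/(x_1\cdots x_s - \pi)$) are fine.

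The one concrete issue is your citation for normality and Cohen--Macaulayness. You attribute all three of flatness, normality and Cohen--Macaulayness to \cite[Theorem 4.4 and its corollaries]{PRI}. The paper instead cites \cite[Corollary 4.20]{Gortz} for flatness and \cite[Theorem 2.1]{TT23} (a much more recent reference) for normality and Cohen--Macaulayness. This is not a stylistic difference: Cohen--Macaulayness of $\mathrm{GL}_n$ local models at arbitrary parahoric level is \emph{not} established in the early Pappas--Rapoport or G\"ortz papers (G\"ortz handles the Iwahori case and related cases, but the general parahoric level remained open for some time). If you want to close the gap, you need to either invoke the recent general result the paper cites, or restrict the CM claim to cases where G\"ortz's original work applies. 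Secondarily, since $F/K$ is allowed to be ramified but the relative local model lives entirely over $O_F$, the relevant $\mathrm{GL}_n$ structure theory is the unramified one, and the direct reference is G\"ortz's paper rather than the ramified-case Pappas--Rapoport paper; this is only a bookkeeping point but worth fixing so the cited theorems actually apply verbatim, as you yourself flagged.
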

\begin{proof}
The flatness is \cite[Corollary 4.20]{Gortz}.
The normality and Cohen-Macaulayness is \cite[Theorem 2.1]{TT23}.
When $d=1$, a standard computation (see \cite[Proposition 4.13]{Gortz} for the Iwahori level case) shows that the strict complete local ring in a closed point of $\bfM_n^L(d)$ is isomorphic to
\begin{equation*}
    O_{\breve F}[\![x_1,\cdots,x_s,y_{s+1},\cdots,y_n]\!]/(x_1\cdots x_s-\pi)
\end{equation*}
for some $1\leq s\leq n$. Therefore, it is semi-stable. The case $d=n-1$ follows by taking the dual. When $d = 0$ resp. $d = n$, then Definition \ref{def:rel_loc_mod_GL} specializes to $(\msF_Λ)_Λ = (Λ\tensor_{O_F}R)_Λ$ resp. $(\msF_Λ)_Λ = (0)_Λ$.
\end{proof}

Next, we define the absolute version. 

\begin{defn}\label{def:abs_loc_model_GL}
Let $ϕ_0\in \Hom_K(F, \ov K)$ be an element and let $A\subset \Hom_K(F, \ov K)$ be a subset with $ϕ_0\notin A$. Set $B = A\cup \{ϕ_0\}$. Denote by $E = E_A ϕ_0(F)$ the reflex field.\footnote{See Remark \ref{rmk:reflex}.} The \emph{absolute standard model} $\bfM^L_{n,A}(d)$ is the projective scheme over $\Spec O_E$ whose $R$-points for an $O_E$-algebra $R$ are the set of tuples $(\mcF_\Lambda)_{\Lambda\in L}$ that satisfy:
\begin{altenumerate}

\item $\mcF_\Lambda \subseteq Λ\tensor_{O_K}R$ is an $R$-module direct summand of $n|A^c|-d$. It is required to be stable under the $O_F$-action;

\item For each inclusion $i:\Lambda_1\subseteq \Lambda_2$ of lattices from $L$, the induced morphism 
\begin{equation*}
    (i\tensor \mr{id}_R):\Lambda_1\otimes_{O_K}R\lr \Lambda_2\otimes_{O_K}R
\end{equation*}
satisfies $(i\tensor \mr{id}_R)(\mcF_{\Lambda_1}) \subseteq \mcF_{\Lambda_2}$;
\item For every $Λ\in L$, the multiplication map $π:Λ\simto πΛ$ induces an isomorphism
\begin{equation*}
(π\tensor \mr{id}_R):\mcF_{\Lambda}\simlr\mcF_{\pi\Lambda};
\end{equation*}
\item (Eisenstein condition) Finally, we demand that each $\mcF_Λ$ is $(A, B)$-strict in the sense of Definition \ref{def:AB-strict}. That is, there are the inclusions
\begin{equation*}
    J_B\cdot (\Lambda\otimes_{O_K}R) \subseteq \mcF_\Lambda \subseteq J_A \cdot (\Lambda\otimes_{O_K}R).
\end{equation*}
\end{altenumerate}
\end{defn}
\begin{rmk}[Banal case \protect{\cite[Appendix B]{RSZ-AGGP}}]\label{rmk:reflex}
The reflex field is $E_A$ if $d = 0$, and it is $E_B$ if $d = n$. Indeed, one can define $\bfM^L_{n,A}(0)$ already over $O_{E_A}$ by the moduli description
\begin{equation}\label{eq:trivial_model_A}
\bfM^L_{n,A}(0)(R) = \{(J_A\cdot (Λ\tensor_{O_K}R))_{Λ\in L}\}.
\end{equation}
This set is a singleton, so one then gets $\bfM^L_{n,A}(0) = \Spec(O_{E_A})$. Note that \eqref{eq:trivial_model_A} does not require the choice/existence of an element $ϕ_0\in \Hom_K(F, \ov K)\setminus A$. Similarly, also given $ϕ_0$, one can define $\bfM^L_{n,A}(n)$ over $O_{E_B}$ by
$$\bfM^L_{n,A}(n)(R) = \{(J_B\cdot (Λ\tensor_{O_K}R))_{Λ\in L}\}.$$
This results in $\bfM^L_{n,A}(n) = \Spec(O_{E_B})$. For simplicity, we will however work with $\bfM^L_{n,A}(d)$ over $O_E$ in all cases.
\end{rmk}
Recall that there are isomorphisms
\begin{equation}\label{eq:standard-model-lattice-iso}
\begin{aligned}
\xymatrix{
J_A (\Lambda\otimes_{O_K} R)\,/\,J_B (\Lambda\otimes_{O_K}R)\ar[r]_-\sim^-{e_A^{-1}}& \Lambda\otimes_{O_K}R\,/\,J_{\{\phi_0\}}(\Lambda\otimes_{O_K}R)\ar@{=}[r]& \Lambda\otimes_{O_F}R,
}
\end{aligned}
\end{equation}
where the isomorphism $e_A^{-1}$ is defined in \eqref{eq:iso-e_S}, and where the second equality follows from Example \ref{ex:strict}. 

\begin{thm}\label{thm:local_model_comparison}
View $F$ as a subfield of $E$ via $ϕ_0$. There exists an isomorphism of $O_E$-schemes from the absolute standard model to the relative standard model:
\begin{equation}\label{eq:iso_loc_mod_GL}
\begin{aligned}
\xymatrix{
\bfM^L_{n,A}(d)\ar[r]^-{\sim}& O_E\tensor_{O_F} \bfM_{n}^L(d),&
(\mcF_\Lambda)_{Λ\in L}\ar@{|->}[r]&(\msF_\Lambda)_{Λ\in L}:=(e_A^{-1}(\mcF_\Lambda))_{Λ\in L}
}
\end{aligned}
\end{equation}
In particular, the absolute standard model $\bfM^{L}_{n,A}(d)$ is flat, normal and Cohen-Macaulay.
\end{thm}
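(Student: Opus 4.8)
The plan is to verify directly that the assignment $(\mcF_\Lambda)_{\Lambda\in L}\mapsto (e_A^{-1}(\mcF_\Lambda))_{\Lambda\in L}$ is a well-defined morphism of functors over $\Spec O_E$ and to exhibit an inverse by the obvious rule $(\msF_\Lambda)_\Lambda\mapsto (e_A\cdot\msF_\Lambda)_\Lambda$, using the identifications \eqref{eq:standard-model-lattice-iso}. The only substantive point is that these two operations pass between the $R$-point sets of the two moduli problems; once that is checked, the five conditions (direct summand, rank, inclusions, $\pi$-compatibility) match up essentially tautologically, since $e_A^{-1}$ is an $R$-module isomorphism that is simultaneously compatible with all transition maps $i\tensor\mr{id}_R$ and with multiplication by $\pi$ (these commute with multiplication by $e_A$). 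Flatness, normality and Cohen--Macaulayness of $\bfM^L_{n,A}(d)$ then follow from Theorem \ref{thm:Goertz} because base change along the finite flat extension $O_F\hookrightarrow O_E$ (via $\phi_0$) preserves all three properties.

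First I would fix an $O_E$-algebra $R$ and, given an $R$-point $(\mcF_\Lambda)_\Lambda$ of $\bfM^L_{n,A}(d)$, set $\msF_\Lambda := e_A^{-1}(\mcF_\Lambda)$, where the inverse is taken inside the isomorphism of \eqref{eq:iso-e_S} (with $S=A$, $A=B$ there). By Definition \ref{def:abs_loc_model_GL}(iv) each $\mcF_\Lambda$ is $(A,B)$-strict, so $J_B(\Lambda\tensor_{O_K}R)\subseteq \mcF_\Lambda\subseteq J_A(\Lambda\tensor_{O_K}R)$, which means $\mcF_\Lambda$ lies in the domain of the bijection \eqref{eq:bij-e_S} and $\msF_\Lambda$ is a well-defined $(O_F\tensor_{O_K}R)/J_{\{\phi_0\}}$-submodule of $\Lambda\tensor_{O_F}R$, using the identification of \eqref{eq:standard-model-lattice-iso} and Example \ref{ex:strict}. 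Since $e_A^{-1}$ is an $R$-module isomorphism $J_A(\Lambda\tensor R)/J_B(\Lambda\tensor R)\simto \Lambda\tensor_{O_F}R$ (Proposition \ref{prop:e_S-inverse}), the submodule $\msF_\Lambda$ is an $R$-module direct summand, and its $R$-rank is $n|A^c|-d-\rk_R(J_B(\Lambda\tensor R)) = n|A^c|-d-n|A^c\setminus\{\phi_0\}| \cdot$\,\dots\,; a direct count using Lemma \ref{lem:eisenstein_ideal}(1) gives exactly $n-d$, matching Definition \ref{def:rel_loc_mod_GL}(i). The conditions (ii) and (iii) transport verbatim because multiplication by $e_A\in O_F\tensor_{O_K}O_E$ commutes with the maps $i\tensor\mr{id}_R$ and with $\pi\tensor\mr{id}_R$; hence $(\msF_\Lambda)_\Lambda$ is an $R$-point of $O_E\tensor_{O_F}\bfM^L_n(d)$.

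Conversely, given an $R$-point $(\msF_\Lambda)_\Lambda$ of $O_E\tensor_{O_F}\bfM^L_n(d)$, I would set $\mcF_\Lambda := e_A\cdot\msF_\Lambda$, regarded via \eqref{eq:standard-model-lattice-iso} as a submodule of $\Lambda\tensor_{O_K}R$ sitting between $J_B(\Lambda\tensor_{O_K}R)$ and $J_A(\Lambda\tensor_{O_K}R)$. This is automatically $(A,B)$-strict, $O_F$-stable (it is an $O_F\tensor_{O_K}R$-submodule by construction), an $R$-module direct summand of the correct rank $n|A^c|-d$ by the same rank count run backwards, and compatible with the transition and $\pi$-maps for the same reason as above; so it is an $R$-point of $\bfM^L_{n,A}(d)$. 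The two constructions are mutually inverse by \eqref{eq:bij-e_S}, they are visibly functorial in $R$, and they are morphisms of $O_E$-schemes. Therefore \eqref{eq:iso_loc_mod_GL} is an isomorphism, and the final sentence follows from Theorem \ref{thm:Goertz} together with the stability of flatness, normality and Cohen--Macaulayness under the finite flat base change $O_F\to O_E$.

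\textbf{Main obstacle.} There is no serious obstacle; the content is entirely bookkeeping. The one place to be careful is to check that the rank condition in Definition \ref{def:abs_loc_model_GL}(i) (namely $\rk_R\mcF_\Lambda = n|A^c|-d$) is the \emph{correct} translate of the relative rank condition $\rk_R\msF_\Lambda = n-d$ through the isomorphism \eqref{eq:standard-model-lattice-iso}; this uses $\rk_R(J_A(\Lambda\tensor_{O_K}R)) = n|A^c|$ and $\rk_R(J_B(\Lambda\tensor_{O_K}R)) = n|A^c|-n$ from Lemma \ref{lem:eisenstein_ideal}(1), since $|B^c| = |A^c|-1$ forces the quotient $J_A(\Lambda\tensor R)/J_B(\Lambda\tensor R)$ to have rank $n$, matching $\Lambda\tensor_{O_F}R$. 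Everything else is formal.
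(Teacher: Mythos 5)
Your construction of the isomorphism is exactly the paper's: apply $e_A^{-1}$ via \eqref{eq:standard-model-lattice-iso} in one direction, $e_A\cdot(-)$ in the other, and observe that conditions (i)--(iii) of the two moduli problems transport through these mutually inverse $R$-module isomorphisms that commute with $i\tensor\mr{id}_R$ and $\pi\tensor\mr{id}_R$. The rank bookkeeping is also as intended: $\rk_R\mcF_\Lambda = n|A^c|-d$ and $\rk_R J_B(\Lambda\tensor_{O_K}R) = n|B^c| = n(|A^c|-1)$, giving $\rk_R\msF_\Lambda = n-d$. No issues there.

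There is, however, a genuine gap in your last sentence. You assert that flatness, normality, and Cohen--Macaulayness are preserved under the finite flat base change $O_F\to O_E$. For flatness this is trivially true, and for Cohen--Macaulayness it follows because the fibers of $O_E\tensor_{O_F}(-)\to(-)$ are Artinian hence CM. But normality is \emph{not} stable under finite flat base change when $O_E/O_F$ is ramified: for instance $\mbZ_p[x]/(x^2-p)$ is a DVR, but after base change to $O_E = \mbZ_p[\pi_E]/(\pi_E^2-p)$ it becomes $O_E[x]/((x-\pi_E)(x+\pi_E))$, a connected ring with two minimal primes, hence not normal. So your general preservation claim fails, and you need a targeted argument specific to local models. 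The paper deduces normality (and CM) of $O_E\tensor_{O_F}\bfM_n^L(d)$ from Theorem \ref{thm:Goertz} together with \cite[Lemma 5.7]{TT23}; an alternative route used elsewhere in the paper (see the proof of Corollary \ref{cor:loc_mod_CSA}) is to observe that the base change is flat with smooth generic fiber and reduced special fiber and then cite \cite[Proposition 9.2]{Pappas-Zhu}. Either way, some input beyond the naive ``stability under flat base change'' is required at this step.
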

\begin{proof}
Let $(\mcF_\Lambda)_{\Lambda\in L}$ be an $R$-point of the absolute standard model $\bfM_{n,A}^L(d)$. Due to the $(A,B)$-strictness in Definition \ref{def:abs_loc_model_GL}, we may use \eqref{eq:standard-model-lattice-iso} to define a tuple $(\msF_Λ)_{Λ\in L}$ of $O_F$-stable $R$-module direct summands of $(Λ\tensor_{O_F}R)$, also see \eqref{eq:bij-e_S}. 

Since each $\mcF_Λ$ is locally free of $R$-rank $n|A^c| -d$, it is clear that each $\msF_Λ$ is locally free of $R$-rank $n-d$. It is also clear that this definition translates between the functoriality conditions (2) and (3) of Definitions \ref{def:rel_loc_mod_GL} and \ref{def:abs_loc_model_GL}. Hence \eqref{eq:iso_loc_mod_GL} is defined.

The inverse functor is given by $(\msF_Λ) \mapsto (e_A\cdot \msF_Λ)$. The flatness property follows from Theorem \ref{thm:Goertz}. The normality and Cohen--Macaulayness follow from Theorem \ref{thm:Goertz} and \cite[Lemma 5.7]{TT23}.
\end{proof}

\begin{rmk}
The absolute standard model we defined here is a special case of the splitting model defined by Pappas--Rapoport in \cite{PRII}, except that we worked over $E$ instead of the Galois closure $F^{\mr{Gal}}$. More precisely, in the terminology of \cite{PRII}, the model $\bfM_{n,A}^L(d)$ arises by enumerating $\Hom_K(F, \ov K)$ as $A\sqcup \{ϕ_0\}\sqcup \text{(complement)}$ and choosing the tuple $(r_1, \ldots, r_{[F:K]})$ as $(n, \ldots, n, d, 0, \ldots, 0)$, where the middle entry is for $\{ϕ_0\}$.

A basic observation in \cite{PRII} is that the splitting model can be understood as an iterated fibration of $[F:K]$ many relative local models, see \cite[p. 3]{PRII}. Theorem \ref{thm:local_model_comparison} is essentially the special case when all but one of these relative local models is étale.
\end{rmk}

We explain how to apply Theorem \ref{thm:local_model_comparison} in the context of \cite{RZ2}. Let $m\geq 1$ and let $D/F$ be a central division algebra of degree $c = (\dim_F(D))^{1/2}$. Let $O_D\subseteq D$ be the ring of integers and denote by $\mr{charred}_{D/F}:O_D\to O_F[T]$ the reduced characteristic polynomial.

\begin{defn}[Special $O_D$-actions]\label{def:special}
Let $R$ be an $O_F$-algebra and let $M$ be an $O_D\tensor_{O_F}R$-module which is locally on $\Spec R$ free of rank $m$ as $O_D\tensor_{O_F}R$-module. A filtration $\msF\subseteq M$ or, equivalently, a quotient map $M\twoheadrightarrow \msL = M/\msF$, is called \emph{special} if $\msF$ is $O_D$-stable, if $\msL$ is locally free of rank $c$ as $R$-module, and if for every $a\in O_D$,
\begin{equation}\label{eq:special_LM}
\mr{char}(a\mid \msL; T) = \mr{charred}_{D/F}(a; T).
\end{equation}
\end{defn}
Let $A$, $ϕ_0$ and $B = A\sqcup \{ϕ_0\}$ be as before, and again set $E = E_A ϕ_0(F)$.
\begin{defn}[$(A, ϕ_0)$-special $O_D$-actions]\label{def:AB_special}
Let $R$ be an $O_E$-algebra and let $M$ be an $O_D\tensor_{O_K}R$-module which is locally on $\Spec R$ free of rank $m$ as $O_D\tensor_{O_K}R$-module. A filtration $\mcF\subseteq M$ or, equivalently, a quotient map $M\twoheadrightarrow \mcL = M/\mcF$, is called $(A, ϕ_0)$-\emph{special} if $\mcF$ is $O_D$-stable, if $\mcL$ is $(A,B)$-strict and locally free of rank $mc^2\cdot |A| + c$ as $R$-module, and if for every $a\in O_D$,
\begin{equation}\label{eq:A_phi_0_special}
\mr{char}(a\mid J_A\cdot \mcL; T) = \mr{charred}_{D/F}(a; T).
\end{equation}
Here, note that $J_A\cdot \mcL$ is locally free of rank $c$ over $R$ by the required $(A, B)$-strictness. 
\end{defn}

Let $Λ$ be an $O_D$-module that is free of rank $m$. Denote by $\bfM^D_m$ the projective scheme over $O_F$ whose $R$-points is the set of special filtrations on $Λ\tensor_{O_F} R$. Denote by $\bfM^D_{m,A}$ the projective $O_E$-scheme whose $R$-points is the set of $(A, ϕ_0)$-special filtrations on $Λ\tensor_{O_K} R$.

\begin{cor}[to Theorem \ref{thm:local_model_comparison}]\label{cor:loc_mod_CSA}
View $F$ as a subfield of $E$ via $ϕ_0$. There exists an isomorphism of $O_E$-schemes
\begin{equation}\label{eq:iso_comparison_loc_mod_CSA}
\bfM^D_{m,A} \simlr O_E\tensor_{O_F} \bfM^D_m.
\end{equation}
In particular, $\bfM^D_{m,A}$ is flat, normal and Cohen--Macaulay.
\end{cor}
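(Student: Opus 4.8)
The plan is to reduce Corollary~\ref{cor:loc_mod_CSA} to Theorem~\ref{thm:local_model_comparison} by forgetting the $O_D$-action down to its centre, exactly as Definitions~\ref{def:special} and \ref{def:AB_special} are modelled on Definitions~\ref{def:rel_loc_mod_GL} and \ref{def:abs_loc_model_GL} with a single lattice. First I would fix a presentation: writing $c = (\dim_F D)^{1/2}$, a rank-$m$ free $O_D$-module $\Lambda$ is in particular a free $O_F$-module of rank $mc^2$, and $\bfM^D_m$ (resp.\ $\bfM^D_{m,A}$) is a closed subscheme of the one-lattice standard model $\bfM^{\{\Lambda\}}_{mc^2}(mc^2 - c)$ over $O_F$ (resp.\ of $\bfM^{\{\Lambda\}}_{mc^2,A}(mc^2 - c)$ over $O_E$), cut out by the extra $O_D$-stability and the reduced-characteristic-polynomial condition \eqref{eq:special_LM} (resp.\ \eqref{eq:A_phi_0_special}). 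Here the relative rank is computed so that on the target side $\msL = M/\msF$ has $R$-rank $c$, and on the absolute side $J_A\cdot\mcL$ has $R$-rank $c$ while $\mcL$ has rank $mc^2|A| + c$; this matches the ``relative rank $d$'' bookkeeping in Definition~\ref{def:AB-strict} with $d = c$ and $n$ replaced by $mc^2$.

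Next I would apply the isomorphism \eqref{eq:standard-model-lattice-iso}, i.e.\ multiplication by $e_A$ and its inverse, pointwise in $R$: for an $(A,B)$-strict $\mcF \subseteq \Lambda\otimes_{O_K}R$ set $\msF := e_A^{-1}(\mcF) \subseteq \Lambda\otimes_{O_F}R$ as in Theorem~\ref{thm:local_model_comparison}. Because $e_A \in O_F\otimes_{O_K}O_E$ commutes with the $O_D$-action (the $O_D$-action is $O_F$-linear and $e_A$ lies in the image of $O_F$), the map $\mcF \mapsto \msF$ sends $O_D$-stable submodules to $O_D$-stable submodules, and conversely. It remains to check that the two flavours of characteristic polynomial condition correspond under this identification. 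This is where the one substantive computation lies: by Proposition~\ref{prop:e_S-inverse} (with $S = \emptyset$, or directly from \eqref{eq:iso-e_S}) multiplication by $e_A$ induces an $O_F\otimes_{O_K}R$-linear, hence $O_D\otimes_{O_K}R$-linear, isomorphism $\Lambda\otimes_{O_F}R = (J_\emptyset\Lambda)/(J_{\{\phi_0\}}\Lambda) \otimes_{O_K}R \simto (J_A\Lambda\otimes_{O_K}R)/(J_B\Lambda\otimes_{O_K}R)$, and it carries $\msL = (\Lambda\otimes_{O_F}R)/\msF$ isomorphically onto $J_A\cdot\mcL = (J_A\Lambda\otimes_{O_K}R)/(\mcF + J_B\Lambda\otimes_{O_K}R)$ compatibly with the $O_D$-action. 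Hence $\mr{char}(a\mid \msL;T) = \mr{char}(a\mid J_A\cdot\mcL;T)$ for every $a\in O_D$, so \eqref{eq:special_LM} holds for $(\msF_\Lambda)$ if and only if \eqref{eq:A_phi_0_special} holds for $(\mcF_\Lambda)$.

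This shows that the isomorphism \eqref{eq:iso_loc_mod_GL} of Theorem~\ref{thm:local_model_comparison}, restricted from $\bfM^{\{\Lambda\}}_{mc^2,A}(mc^2-c)$ to the closed subscheme $\bfM^D_{m,A}$, lands in $O_E\tensor_{O_F}\bfM^D_m$ and is an isomorphism onto it, which is \eqref{eq:iso_comparison_loc_mod_CSA}; functoriality in $R$ is inherited from Theorem~\ref{thm:local_model_comparison} and needs no separate argument. Finally, flatness, normality and Cohen--Macaulayness of $\bfM^D_{m,A}$ follow from the corresponding properties of $\bfM^D_m$ over $O_F$ exactly as in the proof of Theorem~\ref{thm:local_model_comparison}: base change along the (finite, flat) extension $O_F \hookrightarrow O_E$ preserves flatness and Cohen--Macaulayness, and preserves normality by \cite[Lemma 5.7]{TT23}, once one knows $\bfM^D_m$ itself is flat, normal and Cohen--Macaulay over $O_F$ --- which is \cite[Theorem 4.5, \S4.4]{RZ2} (or its more recent reproofs). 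The only mild obstacle is purely bookkeeping: making sure the ranks in Definition~\ref{def:AB_special} are the ones forced by $(A,B)$-strictness of relative rank $c$ on a rank-$mc^2$ lattice, so that the single-lattice case of Theorem~\ref{thm:local_model_comparison} applies verbatim; there is no genuine geometric difficulty beyond that.
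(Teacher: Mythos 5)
Your proof is essentially the paper's: reduce to Theorem~\ref{thm:local_model_comparison} on the $\pi$-lattice chain $\{\pi^k\Lambda\}_{k\in\mbZ}$ generated by $\Lambda$ (note that a single lattice $\{\Lambda\}$ is not a lattice chain in the sense of Definition~\ref{defn:lattice-chain}, since it is not $\pi$-stable, so you do need the full $\pi$-chain — a minor imprecision in your write-up), observe that the identification $\msF = e_A^{-1}(\mcF)$ respects $O_D$-stability because $e_A$ lies in the image of $O_F\otimes_{O_K}O_E$, and check that the two characteristic-polynomial conditions match under the induced isomorphism $\msL \simto J_A\cdot\mcL$. All of this agrees with the paper's argument.

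Where you diverge is only in the bookkeeping for the final flatness/normality/Cohen--Macaulayness claim, and here you should be a bit careful. Base change along a ramified DVR extension does \emph{not} automatically preserve normality; the paper instead argues that $\bfM^D_m$ is regular with semi-stable reduction (by splitting $D$ over an unramified $Q/F$ of degree $c$ and reducing to Theorem~\ref{thm:Goertz}), so that its base change to $O_E$ is finite flat over a regular scheme (hence Cohen--Macaulay), has smooth generic fiber and reduced special fiber, and is therefore normal by \cite[Proposition~9.2]{Pappas-Zhu}. Invoking \cite[Lemma~5.7]{TT23} as you do is plausible (that is what the paper uses in Theorem~\ref{thm:local_model_comparison}), but if you cite it you must verify its hypotheses, which in particular hinge on reducedness of the special fiber — this is the one piece of input you cannot simply wave away by saying ``base change preserves normality.''
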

\begin{proof}
Consider Theorem \ref{thm:local_model_comparison} for $V = F\cdot Λ$ with lattice chain $\{π^kΛ\mid k\in \mbZ\}$, and with quotient of rank $mc^2\cdot |A| + c$ for the absolute standard local model (resp. of rank $d = c$ for the relative standard local model). Source and target in \eqref{eq:iso_comparison_loc_mod_CSA} identify with the closed subfunctors of $(A, ϕ_0)$-special $O_D$-stable (LHS) and special $O_D$-stable (RHS) filtrations of \eqref{eq:iso_loc_mod_GL}. Clearly, \eqref{eq:iso-e_S} restricts to an isomorphism of these closed subfunctors, and hence to an isomorphism as in \eqref{eq:iso_comparison_loc_mod_CSA}.

It is well-known that $\bfM^D_m$ is regular with semi-stable reduction (Drinfeld). For example, let $Q/F$ be an unramified field extension of degree $c$. Then $Q$ splits $D$, and $O_Q\tensor_{O_F} \bfM^D_m$ is isomorphic to the standard local model $\bfM^{L}_{mc}(1)$ over $O_Q$ for a suitable choice of lattice chain $L$. The regularity and semi-stable reduction property then follow from Theorem \ref{thm:Goertz}.

The base change $O_E\tensor_{O_F} \bfM^D_m$ in \eqref{eq:iso_comparison_loc_mod_CSA} is finite and flat over $\bfM^D_m$, hence at least Cohen--Macaulay. Its generic fiber is smooth while its special fiber is reduced, so $O_E\tensor_{O_F} \bfM^D_m$ is normal by \cite[Proposition 9.2]{Pappas-Zhu}. This finishes the proof.
\end{proof}
\begin{rmk}\label{rmk:rel_to_RZ}
Consider the case where $m = 1$ and where the Hasse invariant of $D$ is $1/c$. Then $\bfM^D_{m,A}$ agrees with the scheme $\mbM_r$ from \cite[\S5]{RZ2}. (Here, the data $A$ and $r$ that go into the definitions are in direct correspondence.) Corollary \ref{cor:loc_mod_CSA} gives a concrete description of $\bfM^D_{m,A}$ that allows to obtain some of the statements in \cite[\S5]{RZ2} directly. For example, over the special fiber, \eqref{eq:iso_comparison_loc_mod_CSA} reduces to the isomorphism \cite[(5.4)]{RZ2}. Note that for general $m$ and $D$, Corollary \ref{cor:loc_mod_CSA} also gives a concrete description of the singularities of $\bfM^D_{m,A}$: its strict complete local rings in closed points are of the form
$$O_{\breve E}[\![X_1, \ldots, X_{c\cdot m}]\!]/(X_1\cdots X_r - π_E^e),$$
where $π_E\in E$ is a uniformizer and where $e$ is the ramification index of $E/F$. In particular, $\bfM^D_{m,A}$ is not regular if $e \geq 2$.
\end{rmk}

\subsection{The unitary case}
\label{ss:unitary_local_model}
Recall the notation from \S \ref{sec:duality}: let $K\subseteq F_0 \subset F$ be field extensions with $[F:F_0] = 2$, and let $τ\in \Gal(F/F_0)$ be the non-trivial element. We also write $\ob a = τ(a)$ for elements $a\in F$.

Let $V$ be an $n$-dimensional $F$-vector space and let $\psi: V\times V\to K$ be a perfect $K$-bilinear pairing that is $F$-conjugate linear in the sense of \eqref{eq:hermitian-definition}. By Lemma \ref{lem:keep-isotropic}, there exists a unique perfect $F_0$-bilinear pairing $\psi_{F_0}:V\times V\to F_0$ with $\psi=\btr\circ\psi_{F_0}$. It is again $F$-conjugate linear. We denote the dual lattice of an $O_F$-lattice $\Lambda\subset V$ by
\begin{equation*}
    \Lambda^\vee:=\{x\in V\mid \psi(x,v)\in O_K\quad\text{for all }v\in \Lambda\}.
\end{equation*}
By Lemma \ref{lem:keep-isotropic}, one can equivalently take the dual with respect to $ψ_{F_0}$, that is,
\begin{equation*}
Λ^\vee = \{x\in V\mid \psi_{F_0}(x,v)\in O_{F_0}\quad\text{for all }v\in \Lambda\}.
\end{equation*}

\begin{defn}\label{def:self-dual-chain}
A \emph{self-dual lattice chain} is a set $L$ of $O_F$-lattices $\Lambda\subset V$ such that:
\begin{altenumerate}
\item For any two lattices $\Lambda_1$ and  $\Lambda_2\in L$, we have either $\Lambda_1\subseteq \Lambda_2$ or $\Lambda_2\subseteq \Lambda_1$;
\item For any $\Lambda\in L$, we have $\pi\Lambda\in L$;
\item For any $\Lambda\in L$, we have $\Lambda^\vee\in L$.
\end{altenumerate}
\end{defn}

Let $L$ be a self-dual lattice chain in $V$ and let $0\leq r \leq n$ be an integer. Set $s = n - r$. We now recall the definition and properties of the relative local model for $L$ and signature $(r,s)$. Let $E_0$ denote the reflex field, defined as
\begin{equation}\label{eq:reflex_field_unitary_relative}
E_0 = \begin{cases}
F &\text{ if $r\neq s$};\\
F_0 & \text{ if $r=s$}.
\end{cases}
\end{equation}
\newcommand{\naive}{\mathrm{naive}}
\begin{defn}\label{def:unitary_loc_mod_relative}
(1) The \emph{naive relative local model} $\bfM_n^{L,\naive}(r,s)$ 
is the projective scheme over $\Spec O_{E_0}$ whose $R$-points for an $O_{E_0}$-algebra are the set of tuples $(\msF_\Lambda)_{\Lambda\in L}$ that satisfy the following conditions:
\begin{enumerate}[label=(\roman*)\,]

\item $\msF_\Lambda \subseteq Λ\tensor_{O_{F_0}}R$ is an $O_F$-stable $R$-module direct summand of rank $n$.

\item For each inclusion $i:\Lambda_1\subseteq \Lambda_2$ of lattices from $L$, the induced morphism 
\begin{equation*}
(i\tensor \mr{id}_R):\Lambda_1\otimes_{O_{F_0}}R\lr \Lambda_2\otimes_{O_{F_0}}R
\end{equation*}
satisfies $(i\tensor \mr{id}_R)(\msF_{\Lambda_1}) \subseteq \msF_{\Lambda_2}$.
\item For each $Λ \in L$, the multiplication map $π:Λ\simto πΛ$ induces an isomorphism
\begin{equation*}
(π\tensor \mr{id}_R):\msF_{\Lambda}\simlr\msF_{\pi\Lambda}.
\end{equation*}
\item For each lattice $Λ\in L$, the two submodules $\msF_\Lambda$ and $\msF_{\Lambda^{\vee}}$ are orthogonal to each other with respect to $(\psi_{F_0}\vert_{Λ\times Λ^\vee})\tensor \mr{id}_R$.
\item (Kottwitz condition) For every $Λ\in L$, let $\msL_Λ := (Λ\tensor_{O_F} R)/\msF_Λ$ denote the quotient. Then we demand that for each $a\in O_F$, we have the following identity of polynomials:
\begin{equation*}
    \mr{char}_R(a\otimes 1\mid \msL_{\Lambda})=(T-a)^r(T-\ov{a})^s.
\end{equation*}
Here, the right hand side defines a polynomial in $O_{E_0}[T]$ which is viewed over $R$ by the structure map $O_{E_0}\to R$.
\end{enumerate}

\smallskip \noindent (2) The \emph{relative local model} $\bfM_n^{L}(r,s)$ is defined as the flat closure of the generic fiber of $\bfM^{L,\naive}_n(r,s)$ in itself:
\begin{equation*}
\begin{aligned}
\xymatrix{
\bfM_n^{L,\naive}(r,s)\otimes_{O_{E_0}}E_0\ \ar@{^(->}[r]\ar@{^(->}[d]& \bfM_n^{L,\naive}(r,s)\\
\bfM_n^{L}(r,s).\ar@{^(->}[ur]&
}
\end{aligned}
\end{equation*}
Then $\bfM_n^L(r,s)$ is flat over $O_{E_0}$ by definition, see \cite[Proposition 14.14]{GW}.
\end{defn}

The relative unitary local model has been studied extensively. We summarize the most important results here:
\begin{thm}\label{thm:rel_LM_flat}
\begin{altenumerate}
\item When $F/F_0$ is unramified, for any $(r,s)$, the naive local model $\bfM_{n}^{L,\naive}(r,s)$ is flat.

\item When $F/F_0$ is ramified and $p\neq 2$, the naive local model $\bfM_n^{L,\naive}(r,s)$ is not flat in genreal.

\item When $F/F_0$ is unramified, the local model $\bfM^L_n(r,s)$ is normal and Cohen--Macaulay. When $(r,s)=(n-1,1)$ or $(1,n-1)$, it is regular with semi-stable reduction.

\item When $F/F_0$ is ramified and $p\neq 2$, the local model $\bfM^L_n(r,s)$ is normal and Cohen--Macaulay.
\end{altenumerate}
\end{thm}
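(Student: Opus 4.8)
The theorem collects results that are already available in the literature, so the plan is to reduce each part to known statements while recording the mechanism. The four assertions split naturally according to whether $F/F_0$ is unramified or ramified, and within each case one has a flatness statement, a normality/Cohen--Macaulayness statement, and (for signature $(n-1,1)$) a semi-stability statement.

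For (1), when $F/F_0$ is unramified I would trivialize the étale $O_{F_0}$-algebra $O_F$ over a suitable unramified extension and identify $\bfM_n^{L,\naive}(r,s)$ with a closed subscheme of a product of standard linear models $\bfM_{N}^{L'}(\bullet)$ of the type in Theorem \ref{thm:Goertz}; flatness then follows from Görtz's analysis of naive PEL local models in the unramified case, the essential point being that there the Kottwitz condition already cuts out a flat scheme. For (2), I would recall Pappas's counterexamples: when $F/F_0$ is ramified the naive special fiber acquires extra components or embedded points that are not visible in the generic fiber -- concretely, the ``wedge'' condition of Pappas--Rapoport is not implied by the Kottwitz condition -- so $\bfM_n^{L,\naive}(r,s)$ fails to be flat for suitable $n$ and $(r,s)$. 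Both facts would be cited essentially verbatim.

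For (3) and (4) the substance is normality and Cohen--Macaulayness of the flat closure $\bfM_n^{L}(r,s)$, plus semi-stability in signature $(n-1,1)$. My plan has three steps. First, identify the scheme: by Pappas--Rapoport in both cases, and by Smithling's moduli description together with his topological flatness theorem in the ramified unitary case, $\bfM_n^{L}(r,s)$ coincides with the ``honest'' local model, whose geometric special fiber is a union of Schubert varieties in the twisted affine flag variety attached to the parahoric determined by $L$, indexed by the $\mu$-admissible set; here the coherence conjecture of Pappas--Rapoport, proved by Zhu, ensures that this honest model is flat and hence equals the flat closure of the naive model. Second, invoke that these Schubert varieties -- being global Schubert varieties in the mixed-characteristic affine Grassmannian of Pappas--Zhu -- are reduced, normal, Cohen--Macaulay, and Frobenius split (Faltings, Pappas--Zhu, Zhu); thus the special fiber of $\bfM_n^{L}(r,s)$ is reduced, normal and Cohen--Macaulay. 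Third, descend these properties to the total scheme exactly as in the proofs of Theorems \ref{thm:Goertz} and \ref{thm:local_model_comparison}: flatness over the discrete valuation ring $O_{E_0}$ with Cohen--Macaulay special fiber forces $\bfM_n^{L}(r,s)$ to be Cohen--Macaulay, while smoothness of the generic fiber gives $R_1$ on a dense open and Serre's criterion (using $S_2$ from Cohen--Macaulayness and reducedness of the special fiber, as in \cite[Proposition 9.2]{Pappas-Zhu}) then yields normality; alternatively, for the general normality and Cohen--Macaulayness one may simply quote \cite{TT23}. For the semi-stability claim in signature $(n-1,1)$ I would carry out the explicit local computation: in the unramified case this reduces after the trivialization above to the $d = 1$ case of Theorem \ref{thm:Goertz}, and in the ramified case one follows Arzdorf to show that the strict complete local ring at a closed point of $\bfM_n^{L}(n-1,1)$ has the shape $O_{\breve F}[\![x_1,\dots,x_m]\!]/(x_1 x_2 - \pi)$, which is regular with semi-stable reduction.

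\textbf{Main obstacle.} The hard part is the identification in step one for part (4). In the ramified case the naive model is genuinely non-flat, so one must first pass to the honest model (imposing the wedge and spin conditions of Pappas--Rapoport--Smithling), then prove both that it equals the flat closure of the naive model -- this is Smithling's topological flatness theorem, itself a substantial combinatorial and geometric argument -- and that its special fiber is precisely the expected union of Schubert varieties, which rests on Zhu's proof of the coherence conjecture and on the Pappas--Zhu construction of the affine Grassmannian in mixed characteristic. Once this dictionary is in place, normality and Cohen--Macaulayness follow formally from the good singularities of Schubert varieties; all the difficulty is concentrated in importing the dictionary, which I would treat as a black box from \cite{PRII}, Smithling's papers, and Zhu's work on the coherence conjecture.
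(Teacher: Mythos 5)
Your treatment of (1), (2) and of semi-stability in the unramified case matches the paper: reduce to linear local models over an unramified base change (the paper states the precise form $O_F\otimes_{O_{E_0}}\bfM_n^L(r,s)\cong \bfM_n^{L_0}(r)$ via \cite[Prop.~2.14]{HPR}, slightly sharper than ``closed subscheme of a product''), then cite Görtz for flatness and semi-stability, and cite Pappas for the ramified counterexample. Your fallback citation of \cite[Thm.~2.1]{TT23} for (3)--(4) is also exactly what the paper does; it does not unpack the Schubert-variety machinery at all.

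Where your proposal is off is in your primary route for (3) and (4). You invoke that the relevant affine Schubert varieties are ``reduced, normal, Cohen--Macaulay, and Frobenius split (Faltings, Pappas--Zhu, Zhu)'' and then say that, once the flat-closure/honest-model identification is in place, ``normality and Cohen--Macaulayness follow formally from the good singularities of Schubert varieties.'' This misplaces the difficulty: Faltings and Pappas--Zhu give reducedness, normality, and Frobenius splitting, but \emph{Cohen--Macaulayness} of Schubert varieties in (twisted) affine flag varieties was a long-standing open problem and is not contained in those references; it is precisely the nontrivial content behind \cite{TT23} (building on Haines--Lourenço). So your step 2 is not ``formal'' --- it is the hard part, and your ``main obstacle'' paragraph attributes the hardness to step 1 instead. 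None of this affects the validity of the theorem since your alternative of simply citing \cite{TT23} (which the paper does) is sound, but as written your primary argument has a genuine gap in the Cohen--Macaulay step.
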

\begin{proof}
When $F/F_0$ is unramified, a standard result shows that there is an isomorphism between local models $O_F\tensor_{O_{E_0}}\bfM_n^L(r,s)\cong \bfM_n^{L_0}(r)$, where $L_0$ is the set of sub-lattices $\Lambda_0\subset \Lambda\otimes_{O_{F_0}}O_F$, $\Lambda\in L$ such that $O_F$ acts strictly. See for example \S \ref{sec:KR-strata} when $(r,s)=(1,n-1)$ and \cite[Prop. 2.14]{HPR} for general. Now part (1) follows from \cite[Cor. 4.20]{Gortz}.

For part (2), the failure of the flatness follows from \cite[Prop. 3.8]{Pa}, see also \cite[P.27]{PRS13}.
The normality and Cohen--Macaulay property in parts (3) and (4) follow from the general result of \cite[Thm. 2.1]{TT23}.

Finally, when $F/F_0$ is unramified and $(r,s)=(n-1,1)$ or $(1,n-1)$, the semi-stability follows the comparison with $\bfM_n^{L_0}(1)$ resp. $\bfM_n^{L_0}(n-1)$ and \cite[Prop. 4.13]{Gortz} as in (1).
\end{proof}

\begin{rmk}\label{rmk:Luo_spin_condition}
Suppose $F/F_0$ is ramified and $p\neq 2$.
When $(r,s)=(n-1,1)$ or $(1,n-1)$, the relative local model $\bfM^L_{n}(r,s)$ can be represented as the closed subscheme of $\bfM_n^{L,\naive}(r,s)$ that satisfies the \emph{strengthened spin condition}, see \cite[\S 2.4]{Luo}. This was previously conjectured by \cite{Sm15}, inspired by the work of \cite{PRIII}, and known to hold for almost $\pi$-modular lattice chains by \cite[Theorem 1.4]{Sm15}.
When $(r,s)=(n-1,1)$ or $(1,n-1)$, for self-dual lattice chain, the relative local model $\mbM_n^L(r,s)$ can be represented by imposing the \emph{wedge condition} \cite{Pa}. For $\pi$-modular lattice chain, the relative local model can be represented by imposing the \emph{spin condition} \cite{PRIII,RSZ-Annalen}.
\end{rmk}

Recall the CM type data from \S \ref{sec:duality}: given $ϕ\in \Hom_K(F, \ov K)$ or $S\subseteq \Hom_K(F, \ov K)$ we define $\ov{ϕ} = ϕ\circ τ$ resp. $\ov S = \{\ov{ϕ} \mid ϕ\in S\}$. Let $ϕ_0\in \Hom_K(F, \ov K)$ and $A\subset \Hom_K(F, \ov K)$ be such that
$$\Hom_K(F, \ov K) = A \sqcup \{ϕ_0, \ov{ϕ_0}\}\sqcup \ov A$$
and set $B = A\sqcup \{ϕ_0, \ov{ϕ_0}\}$. Let $L$ (a lattice chain in $V$) and $(r,s)$ be as before. We define the reflex field
\begin{equation*}
E = \begin{cases}
E_AE_Bϕ_0(F) & \text{ if $r \neq s$};\\
E_AE_B & \text{ if $r = s$}.
\end{cases}
\end{equation*}

\begin{defn}\label{def:abs-LM}
(1) The \emph{naive absolute local model} $\bfM_{n,A}^{L,\naive}(r,s)$ is the projective scheme over $\Spec O_E$ whose $R$-point for an $O_E$-algebra $R$ are the set of tuples $(\mcF_\Lambda)_{\Lambda\in L}$ that satisfy the following conditions:
\begin{enumerate}[label=(\roman*)\,]
\item $\mcF_\Lambda \subset Λ\tensor_{O_K} R$ is a locally free $R$-submodule direct summand of rank $n[F_0:K]$. We require it to be stable under the $O_F$-action.

\item For each inclusion $i:\Lambda_1\subseteq \Lambda_2$ of lattices from $L$, the induced map
\begin{equation*}
i\tensor \mr{id}_R:\Lambda_1\otimes_{O_K}R\lr \Lambda_2\otimes_{O_K}R
\end{equation*}
satisfies $(i\tensor \mr{id}_R)(\mcF_{\Lambda_1}) \subseteq \mcF_{\Lambda_2}$.
\item For every $Λ\in L$, the multiplication map $π:\Lambda \simto πΛ$ induces an isomorphism
\begin{equation*}
    (π\tensor \mr{id}_R):\mcF_{\Lambda}\simlr\mcF_{\pi\Lambda}.
\end{equation*}
\item For every $Λ\in L$, the submodules $\mcF_\Lambda$ and $\mcF_{\Lambda^{\vee}}$ are orthogonal to each other with respect to $(\psi\vert_{Λ\times Λ^\vee})\tensor \mr{id}_R$.
\item (Eisenstein condition) For every $Λ\in L$, the submodule $\mcF_\Lambda$ is $(A,B)$-strict in the sense of Definition \ref{def:AB-strict}. That is, there are the inclusions
\begin{equation*}
    J_B\cdot (\Lambda\otimes_{O_K}R)\subseteq \mcF_\Lambda\subseteq J_A\cdot (\Lambda\otimes_{O_K}R).
\end{equation*}
\item (Kottwitz condition) By the Eisenstein condition, for every $Λ \in L$, the quotient $\mcL_Λ := (Λ\tensor_{O_K}R)/\mcF_Λ$ has the property that $e_A\cdot \mcL_Λ$ is locally free of rank $n$ over $R$. For any $a\in O_F$, we now require the following identity of polynomial functions:
\begin{equation*}
\mr{char}_R(a\tensor 1 \mid e_A\mcL_Λ;\,T) = (T-\phi_0(a))^r (T-\phi_0(\ov a))^s.
\end{equation*}
\end{enumerate}

\smallskip\noindent (2) The \emph{absolute local model} $\bfM_{n,A}^{L}(r,s)$ 
is defined as the flat closure of the generic fiber of the naive absolute local model:
\begin{equation*}
\begin{aligned}
\xymatrix{
\bfM_{n,A}^{L,\naive}(r,s)\otimes_{O_E}E\ \ar@{^(->}[r]\ar@{^(->}[d]&\bfM_{n,A}^{L,\naive}(r,s)\\
 \bfM_{n,A}^{L}(r,s)\ar@{^(->}[ur]&
}
\end{aligned}
\end{equation*}
Then $\bfM_{n,A}^L(r,s)$ is flat over $O_E$ by definition, see \cite[Proposition 14.14]{GW}.
\end{defn}

Our main result in this section is that the relative and the absolute local model become isomorphic after base change:
\begin{thm}\label{thm:LM-comparison}
\begin{altenumerate}
View $E_0$ as a subfield of $E$ via $ϕ_0$.
\item There exists an isomorphism between the naive absolute local model and the base change of the naive relative local model:
\begin{equation}\label{eq:iso_loc_mod_unit_naive}
\begin{aligned}
\xymatrix{
\bfM^{L,\naive}_{n,A}(r,s)\ar[r]^-{\sim} & O_E\tensor_{O_{E_0}} \bfM_{n}^{L,\naive}(r,s),&
(\mcF_\Lambda)_{Λ\in L}\ar@{|->}[r]&(\msF_\Lambda)_{Λ\in L}:=(e_A^{-1}(\mcF_\Lambda))_{Λ\in L}.
}
\end{aligned}
\end{equation}
\item It restricts to an isomorphism between absolute and relative local model:
\begin{equation}\label{eq:iso_loc_mod_unit}
\begin{aligned}
\xymatrix{
\bfM^{L}_{n,A}(r,s)\ar[r]^-{\sim} & O_E\tensor_{O_{E_0}} \bfM_{n}^{L}(r,s).
}
\end{aligned}
\end{equation}
\end{altenumerate}
\end{thm}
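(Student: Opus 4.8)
The plan is to deduce Theorem~\ref{thm:LM-comparison} from the linear comparison Theorem~\ref{thm:local_model_comparison}, with the hermitian structure handled by the material of \S\ref{sec:duality}. \emph{For part (1)}, fix an $O_E$-algebra $R$ and a lattice $\Lambda\in L$, and put $M := \Lambda\tensor_{O_K}R$. Since $B\setminus A = \{\phi_0,\ov{\phi_0}\}$, Proposition~\ref{prop:e_S-inverse} applied with $S = A$ shows that multiplication by $e_A$ is a bijection between the $(A,B)$-strict $O_F\tensor_{O_K}R$-submodule direct summands of $M$ and the $(\emptyset,\{\phi_0,\ov{\phi_0}\})$-strict ones. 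A $(\emptyset,\{\phi_0,\ov{\phi_0}\})$-strict submodule is exactly an $R$-module direct summand of $M/J_{\{\phi_0,\ov{\phi_0}\},R}M$, and by Lemma~\ref{lem:Eisenstein_ideal_intermediate_field} (with intermediate field $F_0$ and $A_0 = \{\phi_0\vert_{F_0}\}$) together with the identification $(O_{F_0}\tensor_{O_K}R)/J_{\{\phi_0\vert_{F_0}\},R}\simto R$, this quotient is canonically $\Lambda\tensor_{O_{F_0}}R$. Hence the rule $(\mcF_\Lambda)_\Lambda\mapsto(\msF_\Lambda)_\Lambda$ with $\msF_\Lambda := e_A^{-1}(\mcF_\Lambda)$ produces $O_F$-stable $R$-direct summands $\msF_\Lambda\subseteq\Lambda\tensor_{O_{F_0}}R$. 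The remaining conditions of Definitions~\ref{def:abs-LM}(1) and \ref{def:unitary_loc_mod_relative}(1) then match for formal reasons: the ranks $n[F_0:K] = n(|A|+1)$ of $\mcF_\Lambda$ and $n|B^c| = n|A|$ of $J_BM$ force $\msF_\Lambda$ to have $R$-rank $n$; conditions (ii), (iii) are preserved because $e_A$ is $O_F$-linear and commutes with the maps $i\tensor\mr{id}_R$ and $\pi\tensor\mr{id}_R$; and for the Kottwitz condition one uses $e_A\mcL_\Lambda = J_AM/\mcF_\Lambda$ (as $\mcF_\Lambda\subseteq e_AM = J_AM$) together with the $O_F$-equivariant isomorphism $\msL_\Lambda = (\Lambda\tensor_{O_{F_0}}R)/\msF_\Lambda\simto (J_AM/J_BM)/(\mcF_\Lambda/J_BM) = J_AM/\mcF_\Lambda$ induced by $e_A$, so the characteristic polynomials of $a\tensor1$ on $\msL_\Lambda$ and on $e_A\mcL_\Lambda$ agree, and under $E_0\hookrightarrow E$ via $\phi_0$ the relative Kottwitz polynomial $(T-a)^r(T-\ov a)^s$ becomes exactly the absolute one $(T-\phi_0(a))^r(T-\phi_0(\ov a))^s$.

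The step that genuinely uses the unitary structure is the self-duality condition (iv). First, by Lemma~\ref{lem:keep-isotropic} the dual lattice $\Lambda^\vee$ is the same whether formed with respect to $\psi$ or to $\psi_{F_0}$, so both models are built from the identical self-dual chain $L$. I would then apply Proposition~\ref{prop:duality-comp} to $M_1 = \Lambda\tensor_{O_K}R$, $M_2 = \Lambda^\vee\tensor_{O_K}R$ and the perfect, $O_F$-conjugate linear $R$-pairing $\psi = (\psi\vert_{\Lambda\times\Lambda^\vee})\tensor\mr{id}_R$, whose $\btr$-lift is $\psi_{F_0} = (\psi_{F_0}\vert_{\Lambda\times\Lambda^\vee})\tensor\mr{id}_R$. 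Part (3) of that proposition says that $\mcF_\Lambda$ and $\mcF_{\Lambda^\vee}$ are $\psi$-orthogonal precisely when $\mcF_\Lambda/J_BM_1$ and $\mcF_{\Lambda^\vee}/J_BM_2$ are orthogonal for the restricted pairing of diagram~\eqref{eq:restricted_pairings_diagram}; pulling that restricted pairing back along $e_A\colon\Lambda\tensor_{O_{F_0}}R\simto J_AM_i/J_BM_i$ and using $J_A\ov{J_A} = J_{A_0}(O_F\tensor_{O_K}O_E)$ and the identifications in \eqref{eq:restricted_pairings_diagram}, it is a unit multiple of $(\psi_{F_0}\vert_{\Lambda\times\Lambda^\vee})\tensor\mr{id}_R$. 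Therefore $\mcF_\Lambda\perp_\psi\mcF_{\Lambda^\vee}$ if and only if $\msF_\Lambda$ and $\msF_{\Lambda^\vee}$ are orthogonal in the sense of Definition~\ref{def:unitary_loc_mod_relative}(1)(iv). This yields the natural isomorphism of moduli functors, hence the isomorphism \eqref{eq:iso_loc_mod_unit_naive} of projective $O_E$-schemes, with inverse $(\msF_\Lambda)_\Lambda\mapsto(e_A\cdot\msF_\Lambda)_\Lambda$.

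\emph{For part (2)}, I would use that both (non-naive) models are, by construction, the scheme-theoretic closure of their generic fibre inside the corresponding naive model, and that over a discrete valuation ring this closure is the \emph{unique} closed subscheme that is flat over the base with full generic fibre. Since $O_{E_0}\to O_E$ is a finite, hence flat, extension of discrete valuation rings, $O_E\tensor_{O_{E_0}}\bfM_n^L(r,s)$ is $O_E$-flat; it is a closed subscheme of $O_E\tensor_{O_{E_0}}\bfM_n^{L,\naive}(r,s)$, and its generic fibre is $E\tensor_{E_0}\bfM_n^{L,\naive}(r,s)_{E_0}$, i.e.\ all of the generic fibre. By the uniqueness just recalled it is therefore the flat closure inside $O_E\tensor_{O_{E_0}}\bfM_n^{L,\naive}(r,s)$, and transporting along the part (1) isomorphism gives \eqref{eq:iso_loc_mod_unit}.

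The one point that needs care is the pairing comparison in part (1): one must check that multiplication by $e_A$ carries the ``absolute'' form $\psi$ on $\Lambda\tensor_{O_K}R$ to a unit multiple of the ``relative'' form $\psi_{F_0}\tensor\mr{id}_R$ on $\Lambda\tensor_{O_{F_0}}R$, which amounts to keeping precise track of the trace form $\btr$ and of the identity $J_A\ov{J_A} = J_{A_0}(O_F\tensor_{O_K}O_E)$ under the conjugation $\tau$. This is exactly what \S\ref{sec:duality} was built to supply — Lemmas~\ref{lem:keep-isotropic}, \ref{lem:ideal_conjugation}, \ref{lem:restrict_lifted_form} and, above all, Proposition~\ref{prop:duality-comp} — so once those are in place the verification is mechanical, and everything else is either the linear comparison or pure bookkeeping.
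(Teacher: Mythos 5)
Your proposal is correct and takes essentially the same approach as the paper: reduce the conditions (i)–(iii), (v), (vi) to the $\GL_n$ comparison via Proposition~\ref{prop:e_S-inverse} and Theorem~\ref{thm:local_model_comparison}, handle the self-duality condition (iv) by the pairing comparison of \S\ref{sec:duality} (Proposition~\ref{prop:duality-comp} and the trace/conjugation bookkeeping that the paper packages into the extended diagram~\eqref{eq:restricted_pairings_diagram_extended}), and deduce part (2) from the fact that scheme-theoretic flat closure commutes with the flat base change $O_{E_0}\to O_E$. The only cosmetic caveat is that the pulled-back restricted pairing lands in $J_{A_0,R}$ rather than $R$, so ``unit multiple of $\psi_{F_0}\tensor\mathrm{id}_R$'' should be read after the identification $J_{A_0,R}\cong R$; this is immaterial for orthogonality and matches what the paper's diagram encodes.
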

\begin{proof}
Consider first the naive local models. We have already explained in Proposition \ref{prop:e_S-inverse} and Theorem \ref{thm:local_model_comparison} how the formula in \eqref{eq:iso_loc_mod_unit_naive} provides a bijection between ``absolute'' tuples $(\mcF_Λ)_{Λ\in L}$ that satisfy (i), (ii), (iii) and (v) of Definition \ref{def:abs-LM} and ``relative'' tuples $(\msF_Λ)_{Λ\in L}$ that satisfy (i), (ii) and (iii) of Definition \ref{def:unitary_loc_mod_relative}. Moreover, it is clear that this construction identifies $(e_A\cdot \mcL_Λ)_{Λ\in L}$ with $(\msL_Λ)_{Λ\in L}$. So the Kottwitz condition holds for $(\mcF_Λ)_{Λ\in L}$ if and only if it holds for the corresponding tuple $(\msF_Λ)_{Λ\in L}$. It is now only left to check that $\mcF_Λ\perp \mcF_{Λ^\vee}$ if and only if $\msF_Λ\perp \msF_{Λ^\perp}$. To this end, we construct a slight extension of the diagram \eqref{eq:restricted_pairings_diagram}.

Assume that $Λ_1 \subseteq Λ_2^\vee$ for two lattices $Λ_1, Λ_2\in L$. We can view $ψ_{F_0}$ as $O_K$-bilinear pairing and obtain by scalar extension, for every $O_E$-algebra $R$, an $O_F$-conjugate linear pairing
\begin{equation}\label{eq:flexible_base_change}
Λ_1\tensor_{O_K} R\, \times\, Λ_2\tensor_{O_K} R \,\lr\, O_{F_0}\tensor_{O_K}R.
\end{equation}
If we base change this pairing along the quotient map $O_{F_0}\tensor_{O_K}R \twoheadrightarrow R$ using $ϕ_0$, then we recover the ``usual'' base chang pairing
$$Λ_1\tensor_{O_{F_0}} R\, \times\, Λ_2\tensor_{O_{F_0}} R \,\lr\, R.$$
The pairing from \eqref{eq:flexible_base_change} can also be restricted to submodules, however. This yields the upper square in the promised extension of \eqref{eq:restricted_pairings_diagram}:
\begin{equation}\label{eq:restricted_pairings_diagram_extended}
\xymatrix{
Λ_1 \tensor_{O_{F_0}} R \times Λ_2 \tensor_{O_{F_0}} R \ar[rr]^-{\psi_{F_0}}\ar[d]_-{\iso}^-{(e_A\ ,\,e_A\ )} && R \ar[d]_-{\iso}^-{e_A\ov{e_A}}\\
\frac{J_A(Λ_1\tensor_{O_K}R)}{J_B(Λ_1\otimes_{O_K}R)}\times \frac{J_A(Λ_2\tensor_{O_K}R)}{J_B(Λ_2\tensor_{O_K}R)}\ar[rr]^-{\psi_{F_0}} \ar[rrd]_{ψ} && J_{A_0, R} \ar[d]^{\btr}_{\iso}\\
&& R.}
\end{equation}
Consider now the situation $(Λ_1, Λ_2) = (Λ, Λ^\vee)$. Then a pair of filtrations $(\msF_Λ, \msF_{Λ^\vee})$ in the top left of \eqref{eq:restricted_pairings_diagram_extended} is mutually perpendicular if and only if $(e_A\msF_Λ, e_A\msF_{Λ^\vee})$ in the middle left has this property. Since always $J_B(Λ\tensor_{O_K} R) \perp J_B(Λ\tensor_{O_K} R)$ as in Lemma \ref{lem:restrict_lifted_form}, this is equivalent to $\mcF_Λ \perp \mcF_{Λ^\vee}$. The proof of (1) is then complete.

Statement (2) follows immediately because the flat closure construction that defines $\bfM^L_{n}(r,s) \subseteq \bfM^{L,\naive}_n(r,s)$ commutes with the base change along $O_{E_0} \to O_E$.
\end{proof}

\begin{cor}\label{cor:LM-comparison}
\begin{altenumerate}
\item When $F/F_0$ is unramified, for any $(r,s)$, the naive absolute local model $\bfM_{n}^{L,\naive}(r,s)$ is flat, normal, and Cohen--Macaulay.
\item When $F/F_0$ is ramified and $p\neq 2$, the absolute local model $\bfM_{n}^{L}(r,s)$ is flat, normal, and Cohen--Macaulay.
\item When $(r,s)=(n-1,1)$ or $(1,n-1)$, the absolute local model $\bfM_{n,A}^{L}$ can be characterized as the closed subscheme of $\bfM_{n,A}^{L,\naive}$ where the \emph{relative} strengthened spin condition holds. To be more precise, for any point $(\mcF_\Lambda)\in \bfM^{L,\naive}_{n,A}$, the relative filtration $(e_A^{-1}(\mcF_\Lambda))$ determines a point in $O_E\otimes_{O_{E_0}}\bfM_n^{L,\naive}(r,s)$, which is required to satisfy the strengthened spin condition from \cite[\S 2.4]{Luo}.
\end{altenumerate}
\end{cor}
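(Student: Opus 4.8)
The plan is to deduce all three statements formally from Theorem~\ref{thm:LM-comparison}, the properties of the relative local models recorded in Theorem~\ref{thm:rel_LM_flat}, and the strengthened spin description in Remark~\ref{rmk:Luo_spin_condition}.

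For part (1), since $F/F_0$ is unramified the naive relative local model $\bfM_n^{L,\naive}(r,s)$ is already flat over $O_{E_0}$ by Theorem~\ref{thm:rel_LM_flat}(1), hence coincides with $\bfM_n^L(r,s)$, which is normal and Cohen--Macaulay by Theorem~\ref{thm:rel_LM_flat}(3). The isomorphism \eqref{eq:iso_loc_mod_unit_naive} then identifies $\bfM_{n,A}^{L,\naive}(r,s)$ with the base change $O_E\tensor_{O_{E_0}}\bfM_n^L(r,s)$. Since $O_E$ is flat (and finite) over $O_{E_0}$, this base change is flat over $O_E$ and finite flat over $\bfM_n^L(r,s)$, hence Cohen--Macaulay; for normality I would invoke \cite[Lemma 5.7]{TT23}, exactly as in the proof of Theorem~\ref{thm:local_model_comparison}. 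Part (2) is entirely parallel: one uses \eqref{eq:iso_loc_mod_unit} to identify $\bfM_{n,A}^L(r,s)$ directly with $O_E\tensor_{O_{E_0}}\bfM_n^L(r,s)$ and invokes Theorem~\ref{thm:rel_LM_flat}(4) in place of (3); flatness, normality, and Cohen--Macaulayness then descend along the base change as above. (Regularity is genuinely lost once $E/E_0$ is ramified, in accordance with Remark~\ref{rmk:rel_to_RZ}.)

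For part (3), I would combine Theorem~\ref{thm:LM-comparison}(1) with Remark~\ref{rmk:Luo_spin_condition}. By the latter, when $(r,s)=(n-1,1)$ or $(1,n-1)$ the relative local model $\bfM_n^L(r,s)$ is the closed subscheme of $\bfM_n^{L,\naive}(r,s)$ cut out by the strengthened spin condition of \cite[\S2.4]{Luo}. Forming this closed subscheme commutes with the flat base change $O_{E_0}\to O_E$, so $O_E\tensor_{O_{E_0}}\bfM_n^L(r,s)$ is the strengthened-spin closed subscheme of $O_E\tensor_{O_{E_0}}\bfM_n^{L,\naive}(r,s)$. Transporting this description along the isomorphism \eqref{eq:iso_loc_mod_unit_naive}, which carries a point $(\mcF_\Lambda)$ to the relative point $(e_A^{-1}(\mcF_\Lambda))$, and using that \eqref{eq:iso_loc_mod_unit} identifies $\bfM_{n,A}^L(r,s)$ with $O_E\tensor_{O_{E_0}}\bfM_n^L(r,s)$, one obtains exactly the asserted characterization of $\bfM_{n,A}^L(r,s)$ inside $\bfM_{n,A}^{L,\naive}(r,s)$.

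The genuinely automatic ingredients are the compatibilities of the flat-closure and the spin closed-subscheme constructions with the flat base change $O_{E_0}\to O_E$. The one point that requires care is the descent of normality in parts (1) and (2): normality is not preserved under arbitrary base change, so one must use that $\Spec O_E\to\Spec O_{E_0}$ is flat and that $\bfM_n^L(r,s)$ is Cohen--Macaulay with reduced special fiber, which is the content of \cite[Lemma 5.7]{TT23}. This is the same subtlety already handled in the proofs of Theorem~\ref{thm:local_model_comparison} and Corollary~\ref{cor:loc_mod_CSA}, so I do not expect it to be a serious obstacle.
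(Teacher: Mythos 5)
Your proposal is correct and takes essentially the same route as the paper: the paper's own proof is the one-line instruction "Combine Theorems \ref{thm:rel_LM_flat} and \ref{thm:LM-comparison}, as well as Remark \ref{rmk:Luo_spin_condition}," and you have simply spelled out how that combination works, including the genuine subtlety that normality does not descend under arbitrary base change and must be handled via \cite[Lemma 5.7]{TT23} (exactly as in Theorem \ref{thm:local_model_comparison} and Corollary \ref{cor:loc_mod_CSA}). The only cosmetic difference is that in part (3) you transport the strengthened-spin subscheme along the flat base change $O_{E_0}\to O_E$, whereas the proof of Theorem \ref{thm:LM-comparison}(2) phrases this as compatibility of the flat-closure construction with flat base change; since Remark \ref{rmk:Luo_spin_condition} identifies the two subschemes, these are equivalent.
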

\begin{proof}
Combine Theorems \ref{thm:rel_LM_flat} and \ref{thm:LM-comparison}, as well as Remark \ref{rmk:Luo_spin_condition}.
\end{proof}

\begin{rmk}\label{rmk:kramer model}
\begin{altenumerate}
\item  Corollary \ref{cor:LM-comparison} (1) can also be deduced from Theorem \ref{thm:local_model_comparison}.

\item Definition \ref{def:abs-LM} and Theorem \ref{thm:LM-comparison} can also be formulated for Krämer models. Concretely, the Krämer datum is added for the $O_F\tensor_{O_{F_0}} R$-module $e_A\mcL$ in Definition \ref{def:abs-LM}.
\end{altenumerate}
\end{rmk}

We have commented in the $\mathrm{GL}_n$-case, see Remark \ref{rmk:reflex}, that the definition of $\bfM^L_{n,A}(0)$ does not refer to the element $ϕ_0$. A similar ``banal'' variant comes up in the unitary case: Assume $A\subset \Hom_K(F, \ov K)$ satisfies
$$\Hom_K(F, \ov K) = A \sqcup \ov A,$$
and let $L$ be a self-dual lattice chain as before. Then we define the banal local model over $O_{E_A}$ by
\begin{equation}\label{eq:banal_LM_unitary}
\bfM^L_{n,A, \mr{banal}}(R) = \{(J_A\cdot (R\tensor_{O_K} Λ))_{Λ\in L}\}\quad\quad\text{(a singleton)}.
\end{equation}
Clearly, this definition just recovers $\Spec(O_{E_A})$. We have made this comment here mainly for later reference.

\section{$O$-Displays}
\label{s:displays}

\subsection{$O$-Displays}
\label{ss:displays}
We first recall some necessary background on relative displays. Our main references for this will be \cite{ACZ} and \cite[\S3 and \S4]{KRZ}. Let $K/\mbQ_p$ be a finite extension with ring of integers $O = O_K$. Let $q$ be the residue cardinality of $K$ and let $π\in O$ be a uniformizer. We consider $π$-adically complete $O$-algebras with the $π$-adic topology, and we simply call these \emph{$π$-adic $O$-algebras}. With this convention, at the level of topological spaces, $\Spf(R) = \Spec(R/π)$ for any $π$-adic $O$-algebra $R$. 

We denote by $W_O(R)$ the ring of $O$-Witt vectors of a $π$-adic $O$-algebra $R$. We write $I_O(R) = \ker(W_O(R) \twoheadrightarrow R)$ for the kernel of the projection to $R$ as well as $σ,V:W_O(R)\to W_O(R)$ for Frobenius and Verschiebung. Recall that $W_O(R)$ is an $O$-algebra, that $σ$ is an $O$-algebra endomorphism, and that $σ$ and $V$ satisfy $σ\circ V = π$. Also recall that $V$ is injective with image $I_O(R)$. In particular, we may define the $σ$-linear surjection
$$\dot{σ}:I_O(R) \lr W_O(R),\quad \dot{σ}(x) := V^{-1}(x).$$

\begin{defn}[$O$-displays, \protect{\cite[\S2.1]{ACZ}}]\label{def:O_display}
Let $R$ be a $π$-adic $O$-algebra. An \emph{$O$-display over $R$} is a quadruple $\mcP = (P, Q, \bfF, \dF)$ whose entries are of the following kind:
\begin{altitemize}
    \item $P$ is a finite projective $W_O(R)$-module;
    \item $Q\subseteq P$ is a submodule with $I_O(R)P\subseteq Q$ and such that $P/Q$ is a projective $R$-module;
    \item $\bfF:P\to P$ and $\dF:Q\to P$ are two $σ$-linear maps in the sense that
    $$\bfF(ξx) = σ(ξ) \bfF(x),\quad \dF(ξy) = σ(ξ)\dF(y)\quad \text{for $ξ\in W_O(R)$, $x\in P$ and $y\in Q$.}$$
\end{altitemize}
We require that these data satisfy the following two axioms:
\begin{altenumerate}
    \item $\dF(Q)$ generates $P$ as $W_O(R)$-module;
    \item For $x\in P$ and $ξ\in I_O(R)$, we have
    \begin{equation}\label{eq:display_axiom}
    \dF(ξx) = \dot{σ}(ξ)\bfF(x).
    \end{equation}
\end{altenumerate}
The rank of $P$ as $W_O(R)$-module is called the \emph{height of $\mcP$}. The quotient $P/Q$ is called the \emph{Lie algebra of $\mcP$} and denoted by $\Lie(\mcP)$. Its rank as $R$-module is called the \emph{dimension of $\mcP$}. Height and dimension of $\mcP$ are locally constant functions on $\Spf R$. The filtration $Q/I_O(R)\subseteq P/I_O(R)P$ is called the \emph{Hodge filtration} of $\mcP$. Note that there is an exact sequence of finite projective $R$-modules
\begin{equation}\label{eq:Hodge_O_display}
0 \lr Q/I_O(R)P \lr P/I_O(R)P \lr \Lie(\mcP)\lr 0.
\end{equation}
\end{defn}

\begin{ex}\label{ex:displays}
(1) The quadruple $\mcW_O(R) := (W_O(R), I_O(R), σ, \dot{σ})$ is an $O$-display over $R$. It is sometimes called the \emph{multiplicative $O$-display} because its role in the category of $O$-displays is analogous to that of $μ_{p^\infty}$ in the category of $p$-divisible groups.

\smallskip \noindent (2) If $R$ is a perfect field, or more generally a perfect $O$-algebra, then $O$-displays over $R$ are the same as $O$-Dieudonné modules over $R$, see \cite[Proposition 3.1.9]{KRZ}.

\smallskip \noindent (3) $O$-Displays $(P, Q, \bfF, \dF)$ such that $P/Q \iso R^{\oplus d}$ and $Q/I_O(R) \iso R^{\oplus h-d}$ are both free modules can be described explicitly in terms of invertible $(h\times h)$-matrices, see \cite[after Definition 1]{Zink}. Note that $P$ and $P/Q$ are always locally free over $\Spf R$, so in principle this provides an explicit description of all $O$-displays.
\end{ex}

The operator $\bfF$ is in fact uniquely determined by $\dF$: For every $x\in P$, by the second display axiom \eqref{eq:display_axiom},
\begin{equation}\label{eq:Frobenius_determined}
\bfF(x) = \dot{σ}(V(1))\bfF(x) = \dF(V(1)\cdot x).
\end{equation}
This is convenient because it allows to define homomorphisms and bilinear pairings of $O$-displays entirely in terms of $\dF$.

\begin{defn}\label{def:hom_display}
Let $\mcP_i = (P_i, Q_i, \bfF_i, \dF_i)$ be two $O$-displays. A homomorphism from $\mcP_1$ to $\mcP_2$ is a $W_O(R)$-linear map $f:P_1\to P_2$ such that $f(Q_1)\subseteq Q_2$ and such that $f\circ \dF_1 = \dF_2 \circ f$.
\end{defn}

\begin{defn}[\protect{\cite[\S3.2]{KRZ}}]\label{def:bihom_display}
Let $\mcP_i = (P_i, Q_i, \bfF_i, \dF_i)$ for $i = 1,2,3$, be three $O$-displays. A bilinear form of displays
$$(\ ,\ ):\mcP_1\times \mcP_2 \lr \mcP_3$$
is a $W_O(R)$-bilinear pairing $P_1\times P_2 \to P_3$ that satisfies $(Q_1, Q_2) \subseteq Q_3$ and
\begin{equation}\label{eq:bilin_pairing_display}
(\dF_1(x_1), \dF_2(x_2)) = \dF_3((x_1, x_2))\quad \text{for all $x_1\in Q_1$, $x_2\in Q_2$}.
\end{equation}
\end{defn}
Given an $O$-display $\mcP = (P, Q, \bfF, \dF)$, there is a unique $O$-display structure $\mcP^\vee := (P^\vee, Q^\vee, \bfF^\vee, \dF^\vee)$ on $P^\vee := \Hom_{W_O(R)}(P, W_O(R))$ such that the canonical pairing $P\times P^\vee \to W_O(R)$ defines a bilinear form of displays $\mcP\times \mcP^\vee \to \mcW_O(R)$. This form is universal in the sense that for every other $O$-display $\mcP'$ over $R$,
$$\Hom(\mcP', \mcP^\vee) \simlr \mr{BiHom}(\mcP\times \mcP', \mcW_O(R)).$$
The $O$-display $\mcP^\vee$ is called the dual display of $\mcP$. It can be described explicitly by the relations $Q^\vee/I_O(R)P^\vee = (Q/I_O(R)P)^\perp$ and
\begin{equation}\label{eq:dual_display_key}
(\dF^\vee(\ell))(\dF(x)) = \dot{σ}(\ell(x))\quad \text{for all $x\in Q,\ \ \ell\in Q^\vee$}.
\end{equation}
In particular, the Hodge filtration of $\mcP^\vee$ is the orthogonal complement of the Hodge filtration of $\mcP$. Dualization defines a self anti-equivalence of the category of $O$-displays. It is an involution in the sense that the natural map $P\simto (P^\vee)^\vee$ defines an isomorphism $\mcP\to (\mcP^\vee)^\vee$.

\begin{defn}[Base change of $O$-displays, \protect{\cite[\S3.1]{KRZ}}]
Let $R\to R'$ be a map of $π$-adic $O$-algebras and let $\mcP = (P, Q, \bfF, \dF)$ be an $O$-display over $R$. The \emph{base change $R'\tensor_R \mcP$ of $\mcP$ to $R'$} is defined as the $O$-display $(P', Q', \bfF', \dF')$ with
\begin{equation}
P' = W_O(R')\tensor_{W_O(R)}P,\quad Q' = \mr{Im}(W_O(R') \tensor_{W_O(R)} Q \lr P')
\end{equation}
and with $\bfF'$, $\dF'$ the unique $σ$-linear maps that are given on $W_O(R')$-module generators by
\begin{equation}
\bfF'(1\tensor x) = 1\tensor \bfF(x),\quad \dF'(1\tensor y) = 1\tensor \dF(y)\quad x\in P,\ y\in Q.
\end{equation}
Clearly, base change is functorial in $\mcP$. Moreover, its effect on the Hodge exact sequence \eqref{eq:Hodge_O_display} is given by
\begin{multline}
    R'\tensor_R [0 \lr Q/I_O(R)P \lr P/I_O(R)P \lr \Lie(\mcP) \lr 0] \\[1mm]
    \simlr [0 \lr Q'/I_O(R')P' \lr P'/I_O(R')P' \lr \Lie(R'\tensor_R \mcP) \lr 0].
\end{multline}
Finally, base change induces a natural map
$$\mr{BiHom}(\mcP_1 \times \mcP_2, \mcP_3) \lr \mr{BiHom}(R'\tensor_R \mcP_1 \times R'\tensor_R\mcP_2, R'\tensor_R\mcP_3)$$
and commutes with dualization up to the natural isomorphism given by
$$W_O(R')\tensor_{W_O(R)} (P^\vee)\simlr (W_O(R')\tensor_{W_O(R)} P)^\vee.$$
\end{defn}

\begin{ex}[Étale $O$-displays]\label{ex:etale_displays}
An $O$-display $(P, Q, \bfF, \dF)$ over $R$ is said to be \emph{étale} if $P = Q$. Then, by definition, $\dF$ is a \emph{$σ$-linear isomorphism}. That is, its linearization defines an isomorphism
$$\dF^\# := \mr{id}\tensor \dF:W_O(R) \tensor_{σ,W_O(R)} P \simlr P.$$
Conversely, let $(P, \dF)$ be a finite projective $W_O(R)$-module together with a $σ$-linear isomorphism $\dF:P\to P$. Then $(P, P, \bfF, \dF)$ is an étale $O$-display, where $\bfF$ is defined by \eqref{eq:Frobenius_determined}. Pairs $(P, \dF)$ of this kind are called \emph{étale Frobenius modules} in \cite{KRZ}, and the above constructs an equivalence with étale $O$-displays. In a slight abuse of terminology, we continue to call such pairs $(P, \dF)$ étale $O$-displays. By \cite[Proposition 4.5.2]{KRZ}, there is an equivalence of fibered categories over $π$-adic $O$-algebras
\begin{equation}\label{eq:equiv_etale}
\begin{aligned}
\left\{\text{\begin{varwidth}{\textwidth}\centering Étale $O$-displays \\ over $R$\end{varwidth}}\right\} &\ \overset{\sim}{\longleftrightarrow}\ \left\{\text{\begin{varwidth}{\textwidth} \centering Étale local systems of finite\\projective $O$-modules over $R$\end{varwidth}}\right\}\\[1mm]
(P, \dF) &\ \,\longmapsto\ P^{\dF = \mr{id}}\\
(W_O \tensor_O \mbP, σ\tensor \mr{id})(R) & \ \,\longmapsfrom\ \mbP.
\end{aligned}
\end{equation}
Here, several objects have to be understood as sheaves for the fpqc (or proétale) topology. For example, if $(P, \dF)$ is an étale $O$-display over $R$, then $P^{\dF = \mr{id}}$ is the functor on $π$-adic $R$-algebras given by
$$P^{\dF = \mr{id}}(A) := \big(W_O(A)\tensor_{W_O(R)} P\big)^{(\mr{id}\tensor \dF) = \mr{id}}.$$
For the quasi-inverse functor, if $\mbP$ is a local system over $R$, then $W_O\tensor_O \mbP$ denotes the sheafification 
$$\big[A\longmapsto W_O(A)\tensor_{\underline{O}(A)} \mbP(A)\big]^{\mr{Sh}}$$
and \eqref{eq:equiv_etale} denotes the evaluation at $R$.

Assume that $\mcP_i = (P_i, \dF_i)$ for $i = 1,2,3$ are three étale $O$-displays and that $λ:\mcP_1\times \mcP_2 \to \mcP_3$ is a bilinear form of $O$-displays. Then we obtain from \eqref{eq:bilin_pairing_display} that $λ$ restricts to a bilinear form of local systems
$$\mcP_1^{\dF_1 = \mr{id}} \times \mcP_2^{\dF_2 = \mr{id}} \lr \mcP_3^{\dF_3 = \mr{id}}.$$
It is clear from the explicit description in \eqref{eq:equiv_etale} that this defines a bijection
\begin{equation}\label{eq:iso_duality_etale}
\mr{BiHom}(\mcP_1\times \mcP_2, \mcP_3) \simlr \mr{BiHom}(\mcP_1^{\dF_1 = \mr{id}} \times \mcP_2^{\dF_2 = \mr{id}}, \mcP_3^{\dF_3 = \mr{id}}).
\end{equation}
Moreover, we have a notion of internal homomorphisms: there is a unique étale $O$-display structure $\mcH om(\mcP_1, \mcP_3) = (\Hom(P_1, P_3), α)$ such that the natural pairing $P_1\times \Hom(P_1, P_3) \to P_3$ defines a bilinear pairing of $O$-displays
$$\mcP_1\times \mcH om(\mcP_1, \mcP_3) \lr \mcP_3.$$
Then $\mcH om(\mcP_1, \mcP_3)$ represents the functor of bilinear pairings with $\mcP_1$ to $\mcP_3$ in the sense that the following natural map is an isomorphism
\begin{equation}\label{eq:univ_pairing_etale}
\Hom(\mcP_2, \mcH om(\mcP_1, \mcP_3)) \simlr \mr{BiHom}(\mcP_1\times \mcP_2, \mcP_3).
\end{equation}
\end{ex}

\subsection{The modification functor}
\label{ss:modification_functor}
Let $F/K$ be a finite field extension and let $A\subseteq B\subseteq \Hom_K(F,\ob K)$ be two subsets. Let $E = E_A E_B \subseteq \ov K$ be the join of their reflex fields.

Assume that $R$ is a $π$-adic $O$-algebra and that $\mcP = (P, Q, \bfF, \dF)$ is an $O$-display over $R$. In the following, we will consider such $O$-displays with an additional $O$-linear $O_F$-actions $ι:O_F\to \End(\mcP)$. When we speak of \emph{pairs} $(\mcP, ι)$, then we always mean such a datum. By \cite[Lemma 3.1.15]{KRZ}, for every pair $(\mcP, ι)$, the module $P$ is locally free as $O_F\tensor_O W_O(R)$-module.

\begin{defn}\label{def:ABdis}
Let $R$ be a $π$-adic $O_E$-algebra and let $\mcP = (P, Q, \bfF, \dF)$ be an $O$-display over $R$. An $O_F$-action $ι:O_F\to \End(\mcP)$ is said to be \emph{$(A,B)$-strict} if the induced $O_F$-action on the Lie algebra quotient $P/I_O(R)P \twoheadrightarrow \Lie(\mcP)$ is $(A, B)$-strict in the sense of Definition \ref{def:AB-strict}.

For brevity, we simply speak of \emph{$(A,B)$-strict pairs} $(\mcP, ι)$. Let $(A,B)\text{-Disp}$ denote the fibered category of $(A, B)$-strict pairs on the category of $π$-adic $O_E$-algebras. The morphisms in this category are the $O_F$-linear maps of $O$-displays.
\end{defn}

\begin{lem}\label{lem:dual_display_strict}
Let $(\mcP, ι)$ be an $(A, B)$-strict pair over $R$. Let $\mcP^\vee$ be the dual $O$-display with dual action $ι^\vee(a) := ι(a)^\vee$. Then $(\mcP^\vee, ι^\vee)$ is $(B^c, A^c)$-strict. In other words, duality defines an anti-equivalence
\begin{equation}\label{equ:LT-dual}
(A,B)\text{-Disp} \simlr (B^c,A^c)\text{-Disp}.
\end{equation}
\end{lem}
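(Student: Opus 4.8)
The statement to prove is Lemma \ref{lem:dual_display_strict}: if $(\mcP, ι)$ is an $(A,B)$-strict pair over a $π$-adic $O_E$-algebra $R$, then $(\mcP^\vee, ι^\vee)$ is $(B^c, A^c)$-strict, and duality is an anti-equivalence $(A,B)\text{-Disp} \simlr (B^c,A^c)\text{-Disp}$. The plan is to reduce the $(A,B)$-strictness of $\mcP^\vee$ to a purely module-theoretic statement about Hodge filtrations, which is exactly Lemma \ref{lem:AB-strict_dualizing}, and then to note that the anti-equivalence property is a formal consequence of general display duality.

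First I would recall the structure of the dual display. By the discussion after Definition \ref{def:bihom_display}, $\mcP^\vee$ has underlying module $P^\vee = \Hom_{W_O(R)}(P, W_O(R))$, the $O_F$-action $ι^\vee$ is given by $ι^\vee(a) = ι(a)^\vee$ (the transpose), and the Hodge filtration of $\mcP^\vee$ is the orthogonal complement of the Hodge filtration of $\mcP$ under the perfect pairing $P/I_O(R)P \times P^\vee/I_O(R)P^\vee \to R$ induced by the canonical $W_O(R)$-bilinear pairing. Since duality commutes with base change, and since $W_O(R)$ is again an $O_E$-algebra (because $R$ is), the relevant module $M := P/I_O(R)P$ is a locally free $O_F\tensor_{O_K} R$-module of $R$-rank $n[F:K]$ for the appropriate $n$, and $M^\vee := P^\vee/I_O(R)P^\vee$ is canonically identified with $\Hom_R(M, R)$ as $O_F\tensor_{O_K} R$-module, with the $O_F$-action on $M^\vee$ being the transpose of that on $M$ (this is the conjugation-free setting of \S\ref{s:displays}, so no conjugation appears — the EL case). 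The Hodge filtration $\mcF := Q/I_O(R)P \subseteq M$ of $\mcP$ is an $O_F$-stable $R$-module direct summand, and by construction the Hodge filtration of $\mcP^\vee$ is $\mcF^\perp \subseteq M^\vee$.

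Now the $(A,B)$-strictness of $ι$ means precisely, by Definition \ref{def:ABdis} and Definition \ref{def:AB-strict}, that $J_B M \subseteq \mcF \subseteq J_A M$. By Lemma \ref{lem:AB-strict_dualizing}(1), this is equivalent to $\mcF^\perp$ being $(B^c, A^c)$-strict as a submodule of $M^\vee$, i.e. $J_{A^c} M^\vee \subseteq \mcF^\perp \subseteq J_{B^c} M^\vee$. Hence $(\mcP^\vee, ι^\vee)$ is $(B^c, A^c)$-strict, which gives the functor in \eqref{equ:LT-dual}. It remains to check that it is an anti-equivalence: this is immediate from the general facts recalled in \S\ref{ss:displays}, namely that dualization is a self-anti-equivalence of the category of all $O$-displays with $\mcP \simto (\mcP^\vee)^\vee$ naturally, that it carries $O_F$-actions to $O_F$-actions (transpose of transpose is the identity), and that on the subcategory $(A,B)\text{-Disp}$ it lands in $(B^c,A^c)\text{-Disp}$ by the first part; applying the first part again with $(A,B)$ replaced by $(B^c, A^c)$ shows the double dual lands back in $(A,B)\text{-Disp} = ((B^c)^c, (A^c)^c)\text{-Disp}$, and the natural isomorphism $\mcP \simto (\mcP^\vee)^\vee$ is $O_F$-linear, so duality is an anti-equivalence as claimed.

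I do not expect a real obstacle here — the lemma is essentially bookkeeping. The one point to be slightly careful about is that the duality of $O$-displays is genuine module duality (as set up in \S\ref{ss:displays}, following \cite[\S3.2]{KRZ}), so that the Hodge filtration of $\mcP^\vee$ really is the $R$-linear orthogonal complement of that of $\mcP$ inside $\Hom_R(M,R)$; once that identification is in hand, Lemma \ref{lem:AB-strict_dualizing} does all the work. (One should also note in passing that $J_{A^c} M^\vee$ and $(J_A M)^\perp$ coincide, which is exactly \eqref{eq:dual_J} applied inside the proof of Lemma \ref{lem:AB-strict_dualizing}, so there is no gap.)
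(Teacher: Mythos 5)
Your proof is correct and takes essentially the same route as the paper's: both identify the Hodge filtration (equivalently, the Lie algebra quotient) of $\mcP^\vee$ as the orthogonal complement of that of $\mcP$ and then invoke Lemma \ref{lem:AB-strict_dualizing}. The paper's version is terser but relies on the same key lemma; the anti-equivalence statement is, as you say, a formal consequence.
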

\begin{proof}
Write $\mcP = (P, Q, \bfF, \dF)$. The Lie algebra of $\mcP^\vee$ by definition equals $\Hom(Q/I_O(R)P, R)$. It is $(B^c, A^c)$-strict by Lemma \ref{lem:AB-strict_dualizing}.
\end{proof}

Assume that $S\subseteq A\subseteq \Hom_K(F, \ob K)$ is a further subset. Our aim will be to compare $(A,B)$-strict and $(A\setminus S, B\setminus S)$-strict pairs which requires a field extension. So we redefine $E$ as $E := E_SE_AE_B$. Note that $E$ also contains the join $E_{A\setminus S}E_{B\setminus S}$. The categories $(A, B)$-Disp and $(A\setminus S, B\setminus S)$-Disp are from now on understood as fibered categories over $π$-adic $O_E$-algebras for this new $E$. We remark that in applications, one is usually interested in the situation $S = A$ in which no enlargement of $E$ is necessary.

\begin{thm}\label{thm:AB_equiv_displays}
There is an equivalence of fibered categories on $π$-adic $O_E$-algebras
$$Φ_S:(A,B)\text{-Disp} \simlr (A\setminus S, B\setminus S)\text{-Disp}.$$
\end{thm}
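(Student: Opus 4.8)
The plan is to construct $Φ_S$ by mimicking, at the level of $O$-displays, the ``multiplication by $e_S$'' isomorphism of Proposition \ref{prop:e_S-inverse}. Given an $(A,B)$-strict pair $(\mcP, ι)$ over a $π$-adic $O_E$-algebra $R$, with $\mcP = (P, Q, \bfF, \dF)$, I would first note that $P$ is locally free as $O_F\tensor_O W_O(R)$-module by \cite[Lemma 3.1.15]{KRZ}. Since $e_S \in O_F\tensor_{O_K} O_E$ maps naturally to $O_F\tensor_O W_O(R)$ (via $O_E\to R\to W_O(R)$), one gets a $W_O(R)$-linear, $O_F$-equivariant map $e_S:P\to P$, and I would set $P' := e_S\cdot P = \Im(e_S:P\to P)$. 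The key algebraic input is that, by Lemma \ref{lem:eisenstein_ideal}, multiplication by $e_S$ on the free $O_F\tensor_O W_O(R)$-module $P$ has kernel $J_{S^c}\cdot P$ and image $J_S\cdot P$, both $W_O(R)$-module direct summands of $P$, and the same holds modulo $I_O(R)$ and on $\Lie(\mcP)$. One then defines $Q' := Q\cap P'$ (equivalently $e_S\cdot Q$, which should be checked to coincide with $Q\cap P'$ using that $e_S$ is a direct-summand projector up to the reductions just mentioned), and lets $\bfF', \dF'$ be the restrictions of $\bfF, \dF$; these land in $P'$ because $\bfF$ and $\dF$ are $O_F$-linear and $σ$ fixes $O_F\tensor_{O_K}O_E$ pointwise inside $W_O(R)$ (here one uses that $E/K$ is unramified over the part of $O_F\tensor O_E$ that matters, i.e.\ that $σ$ acts $O_F\tensor_{O_K}O_E$-linearly, which follows since $O_E\subseteq W_O(R)$ lands in the Teichmüller-type subring fixed by $σ$ — more precisely, the structure map $O_E\to W_O(R)$ composed with $σ$ equals itself, as $σ$ is $O$-linear and $O_E/O$ is... this needs the usual argument that $σ$ is the identity on the image of an unramified extension). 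I would then verify the two display axioms for $\mcP' := (P', Q', \bfF', \dF')$: axiom (ii) is immediate from that for $\mcP$ by restriction, and axiom (i), that $\dF'(Q')$ generates $P'$, follows because $e_S$ is surjective onto $P'$ and commutes with $\dF$, so $\dF'(Q') = e_S\dF(Q)$ generates $e_S P = P'$. Finally, $(A,B)$-strictness of $ι$ on $\Lie(\mcP)$ translates under $e_S$ into $(A\setminus S, B\setminus S)$-strictness of the induced action on $\Lie(\mcP') = e_S\Lie(\mcP)$ by exactly the bijection \eqref{eq:bij-e_S} of Proposition \ref{prop:e_S-inverse}. This defines $Φ_S(\mcP,ι) := (\mcP', ι')$ on objects, and on morphisms one just restricts, using $O_F$-equivariance to see that an $O_F$-linear display map $f$ satisfies $f(P'_1)\subseteq P'_2$.

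For the quasi-inverse, I would go in the reverse direction: given an $(A\setminus S, B\setminus S)$-strict pair $(\mcP'', ι'')$ with $\mcP'' = (P'', Q'', \bfF'', \dF'')$, one wants to ``divide by $e_S$'', i.e.\ embed $P''$ into a larger module on which $e_S$ acts as an isomorphism onto $P''$. The natural guess is $P := (O_F\tensor_O W_O(R))\otimes_{J_{S^c}^{??}} \cdots$ — but cleaner is to use the fact, from Lemma \ref{lem:eisenstein_ideal}, that $J_S$ is an invertible $O_F\tensor_{O_K}O_E$-module (being a direct summand that is projective of constant rank, in fact locally free of rank $|S^c|$... actually $J_S$ is an ideal, locally principal, hence invertible), so there is a ``fractional inverse''. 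Concretely I would define $P := J_S^{-1}\otimes_{O_F\tensor_{O_K}O_E} (O_F\tensor_O W_O(R))\otimes_{\cdots} P''$ where $J_S^{-1}$ is the inverse fractional ideal, arranged so that $e_S: P\to P''\subseteq P$ is an isomorphism onto $J_S\cdot P = P''$; one transports $Q, \bfF, \dF$ along $e_S^{-1}$ from $\mcP''$, defines $Q$ as the preimage, and checks the axioms. Alternatively — and this is probably how I would actually write it to avoid fractional-ideal bookkeeping — I would instead mimic the construction on the \emph{dual} side: Lemma \ref{lem:dual_display_strict} shows dualization interchanges $(A,B)$-strict with $(B^c, A^c)$-strict, and under dualization the operation ``multiply by $e_S$'' should correspond to ``multiply by $e_{\ov S}$'' (or $e_{S}$ with roles swapped) on the dual, because of the orthogonality $J_S^\perp = J_{S^c}$ from \eqref{eq:dual_J}. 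Then $Φ_S^{-1}$ can be realized as (dualize) $\circ$ $Φ_{(\text{appropriate } S')}$ $\circ$ (dualize), which is automatically well-defined. I would then check $Φ_S\circ Φ_S^{-1}\cong \mr{id}$ and $Φ_S^{-1}\circ Φ_S\cong \mr{id}$ via the canonical comparison isomorphisms $e_S^{-1}(e_S\cdot M)\cong M$ and $e_S\cdot(e_S^{-1}M)\cong M$ underlying \eqref{eq:bij-e_S}, which are compatible with $\dF$ by $O_F$-linearity.

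The two main obstacles I anticipate are: (a) making precise that $σ$ acts $(O_F\tensor_{O_K}O_E)$-linearly on $W_O(R)$-modules coming from $O_E$-algebras, so that $\dF$ and $\bfF$ genuinely preserve $e_S\cdot P$ — this is where the hypothesis that $E/K$ (or at least the relevant subextension) causes no trouble enters, and it is the place where one must be careful that $O_E\hookrightarrow W_O(R)$ lands in the $σ$-fixed part; the cleanest formulation is that the composite $O_E\to R\to W_O(R)$ agrees with its $σ$-twist because $σ$ is an $O$-algebra map and $O_E$ is generated over $O$ by roots of unity / unramified elements whose Witt-vector lifts are $σ$-fixed — but since the paper does \emph{not} assume $E/K$ unramified, one actually needs the Teichmüller lift argument only after observing that $e_S\in O_F\tensor_{O_K}O_E$ and that the structure map factors $O_E\to R$, so what is really needed is just that $e_S\otimes 1 \in O_F\tensor_{O_K}W_O(R)$ is $σ$-fixed, equivalently that the image of $O_E$ in $W_O(R)$ is $σ$-fixed; this is true because the structure morphism $O_E\to W_O(R)$ can be taken to be the canonical one sending $O_E$ into $W_O(R)$ via the ring map, and $σ$ restricted to this image is the Frobenius which... honestly this is the subtle point and I would treat it carefully, likely by reducing to $R$ being an $O_E$-algebra where $O_E\to R\to W_O(R)$ and noting $σ$ fixes the subring $O_E\subseteq W_O(R)$ when $O_E$ is sent in via its own multiplicative/ring structure compatibly); and (b) checking that $Q' = e_S\cdot Q$ really equals $Q\cap P'$ and has projective quotient $P'/Q' = e_S\Lie(\mcP)$, which is exactly the content of the isomorphism \eqref{eq:iso-e_S} applied to $M = P/I_O(R)P$ together with the fact that $e_S$ is, modulo the reductions, a projection onto a direct summand. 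Everything else is formal bookkeeping with the display axioms and functoriality.
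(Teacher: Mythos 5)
Your proposal has the right germ of an idea (``multiply by $e_S$'' at the level of displays, and build the quasi-inverse by dualizing), and the dualization trick is indeed how the paper closes the argument. But the way you implement the forward functor runs into several genuine problems.

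First, there is no natural ring map $O_E \to W_O(R)$ for an arbitrary $\pi$-adic $O_E$-algebra $R$; the canonical map goes the other way, $W_O(R)\twoheadrightarrow R$. So the element $e_S \in O_F\otimes_{O_K}O_E$ does not naturally live in $O_F\otimes_O W_O(R)$ and cannot act on $P$ as you propose. The paper fixes this by hand: it picks a generator $\zeta = \mu + \pi_F$ and defines an explicit \emph{lift} $\wt{e_S} := \prod_{\phi\in S}(\zeta\otimes 1 - 1\otimes[\phi(\zeta)]) \in O_F\otimes_O W_O(O_E)$ using Teichm\"uller representatives of the roots $\phi(\zeta)$. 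Second --- and this is the fatal step --- even with such a lift, $\sigma$ does \emph{not} fix $\wt{e_S}$: the Frobenius sends $[\phi(\zeta)]\mapsto[\phi(\zeta)^q]$, so $\sigma(\wt{e_S}) = \prod_{\phi\in S}(\zeta\otimes 1 - 1\otimes[\phi(\zeta^q)])$, which neither equals $\wt{e_S}$ nor is divisible by it (the valuations of its images in the $\psi$-factors of $O_F\otimes_O W_O(\ov k)$ are governed by $|S_{\psi\circ\mathrm{Frob}^{-1}}|$, not $|S_\psi|$). Consequently the restriction $\dF|_{Q'}$ does \emph{not} land in $P' = \wt{e_S}P$, since $\dF(\wt{e_S}Q)$ generates $\sigma(\wt{e_S})P\not\subseteq\wt{e_S}P$ in general. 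Your ``subtle point'' is not merely a detail to be tidied; the claim is actually false. Third, replacing $P$ by $P'=e_S P$ is the wrong move structurally: $\Phi_S$ must preserve the height of the display, whereas $e_S P$ is a proper submodule once you reduce mod $I_O(R)$ (its kernel there is $J_{S^c}\cdot P/I_O(R)P$ by Lemma~\ref{lem:eisenstein_ideal}). The paper instead keeps $P$ unchanged and \emph{enlarges} the filtration: it sets $Q_0 := \ker[\wt{e_S}:P\to P/Q] = \wt{e_S}^{-1}(Q)\supseteq Q$, and defines $\dF_0(x) := \dF(\wt{e_S}x)$, $\bfF_0(x) := \bfF(\wt{e_S}x)$. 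Precomposing with $\wt{e_S}$ --- rather than restricting to a submodule --- is what makes $\dF_0$ automatically $\sigma$-linear with no need for $\wt{e_S}$ to be $\sigma$-fixed, and the valuation estimate of Lemma~\ref{lem:valuation_lift_W} is exactly what is needed to verify that $\dF_0(Q_0)$ still generates $P$. You would do well to abandon $P'=e_SP$ and adopt $(P, Q_0, \bfF_0, \dF_0)$; the rest of your plan (strictness via \eqref{eq:bij-e_S}, quasi-inverse by conjugating with duality as in Proposition~\ref{prop:commutativ_square}) then goes through.
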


\begin{defn}\label{def:modification_functor}
We call the functor $Φ_S$ that will be constructed during the proof of Theorem \ref{thm:AB_equiv_displays} the \emph{modification functor}. 
\end{defn}

Similarly to the proofs in \cite{KRZ, Mih}, the idea is to choose a suitable lifting $\wt{e_S}$ of one of the elements $e_S$ from \eqref{eq:def_e_S} along the surjection
$$O_F\tensor_O W_O(O_E) \twoheadrightarrow O_F\tensor_O O_E,$$
and then to ``divide'' the display structure of an $(A,B)$-strict pair $(\mcP,ι)$ by $\wt{e_S}$. To this end, given an element $ζ\in O_F$, define the polynomial $\wt {e_{ζ,S}}(T) \in W_O(O_{E_S})[T]$ by the formula
\begin{equation}
\wt{e_{ζ,S}}(T) = \prod_{ϕ\in S} (T - 1\tensor [ϕ(ζ)]).
\end{equation}
A priori, $\wt{e_{ζ,S}}(T)$ is defined in $W_O(O_{F^{\mr{Gal}}})[T]$, but all its coefficients are fixed under $\mr{Gal}(\ob K/E_S)$ and hence lie in $W_O(O_{E_S})$. Note that $\wt{e_{ζ,S}}(T)$ lifts the polynomial $e_{ζ,S}(T)$ from \eqref{eq:def_eisenstein_element}.

We introduce some notation. Let $k_F$ be the residue field of $F$, and let $\ov k$ be the residue field of $\ov K$. There is a decomposition
\begin{equation}\label{eq:decomp_psi}
O_F\tensor_O W_O(\ov k) \simlr \prod_{ψ:k_F\to k} O_F\tensor_{W_O(k_F), ψ} W_O(\ov k).
\end{equation}
Here, the embedding $W_O(k_F)\to O_F$ is the one that induces the identity on residue fields. Each factor in \eqref{eq:decomp_psi} is a DVR (the completion of a maximal unramified extension of $O_F$, more precisely). Denote by $S_ψ\subseteq S$ the subset of homomorphisms that specialize to $ψ:k_F\to \ov k$. 
\begin{lem}\label{lem:valuation_lift_W}
Assume that $ζ\in O_F$ is an $O$-algebra generator of the form $ζ = μ + π_F$, where $μ\in O_F$ is a root of unity and $π_F\in O_F$ a uniformizer. Let $\ov e_ψ$ denote the image of $\wt{e_{ζ,S}}(ζ\tensor 1)$ in the $ψ$-factor of \eqref{eq:decomp_psi}. Then the normalized valuation of $\ov e_ψ$ is $|S_ψ|$.
\end{lem}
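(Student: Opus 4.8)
The plan is to compute the valuation $v$ of $\ov e_\psi$ factor by factor. Write $\mcO_\psi := O_F\tensor_{W_O(k_F),\psi} W_O(\ov k)$ for the $\psi$-factor of \eqref{eq:decomp_psi}; it is a discrete valuation ring, totally ramified of degree $e := e(F/K)$ over $W_O(\ov k)$, with the image of $\pi_F$ as a uniformizer, and I normalize $v$ by $v(\pi_F) = 1$ (so $v(\pi) = e$). Since $\wt{e_{\zeta,S}}(\zeta\tensor 1) = \prod_{\phi\in S}(\zeta\tensor 1 - 1\tensor[\phi(\zeta)])$, its image is $\ov e_\psi = \prod_{\phi\in S}(\zeta_\psi - c_\phi)$, where $\zeta_\psi\in\mcO_\psi$ is the image of $\zeta$ and $c_\phi\in W_O(\ov k)\subseteq\mcO_\psi$ is the image of $[\phi(\zeta)]$. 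By functoriality of Teichm\"uller lifts, $c_\phi$ is the Teichm\"uller representative of $\ov{\phi(\zeta)}\in\ov k$; since $\pi_F$ lies in the maximal ideal, $\ov{\phi(\zeta)} = \ov{\phi(\mu)} = \ov\phi(\ov\mu)$ where $\ov\phi\colon k_F\to\ov k$ is the reduction of $\phi$, and likewise $\zeta_\psi$ reduces to $\psi(\ov\mu)$. By additivity of $v$ it then suffices to evaluate $v(\zeta_\psi - c_\phi)$ for each $\phi\in S$.

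For $\phi\in S\setminus S_\psi$, i.e.\ $\ov\phi\neq\psi$: because $\zeta$ generates $O_F$ over $O$, its residue $\ov\zeta = \ov\mu$ generates $k_F$ over $k$, so $\ov\phi\neq\psi$ forces $\ov\phi(\ov\mu)\neq\psi(\ov\mu)$; hence $\zeta_\psi - c_\phi$ has nonzero residue and $v(\zeta_\psi - c_\phi) = 0$.

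The crux is the diagonal case $\phi\in S_\psi$, where $\ov\phi = \psi$, so that $\zeta_\psi$ and $c_\phi$ share the residue $\psi(\ov\mu)$ and therefore $v(\zeta_\psi - c_\phi)\ge 1$; the point is to prove equality. Here $c_\phi$ is the Teichm\"uller lift of $\psi(\ov\mu)$, while the image $\mu_\psi$ of the root of unity $\mu$ in $\mcO_\psi$ is a root of unity with that same residue. When $\mu$ has order prime to $p$ --- the situation of interest, since one is free to take $\mu$ a Teichm\"uller lift of a generator of $k_F/k$ --- a root of unity is its own Teichm\"uller representative, so $c_\phi = \mu_\psi$, whence $\zeta_\psi - c_\phi$ equals the image of the uniformizer $\pi_F$ and $v(\zeta_\psi - c_\phi) = 1$. (For a general root of unity $\mu$ with $e\ge 2$ one argues instead that, since $O_F = W_O(k_F)[\zeta]$, one has $\mcO_\psi = W_O(\ov k)[\zeta_\psi]$ with minimal polynomial $\prod_{\ov\phi = \psi}(T - \phi(\zeta))$ of degree $e$, whose reduction modulo $\pi$ is $(T - \psi(\ov\mu))^e$; hence $\zeta_\psi - c_\phi$ reduces to a generator of the maximal ideal of $\mcO_\psi/\pi\iso\ov k[T]/(T-\psi(\ov\mu))^e$, nilpotent of order exactly $e$, which squeezes the positive integer $v(\zeta_\psi - c_\phi)$ into $[1,\,e/(e-1))$ and so equals $1$.)

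Summing the two cases, $v(\ov e_\psi) = \sum_{\phi\in S_\psi} 1 + \sum_{\phi\in S\setminus S_\psi} 0 = |S_\psi|$. The one genuine obstacle is the exactness $v(\zeta_\psi - c_\phi) = 1$ in the diagonal case: this is precisely where the special shape $\zeta = \mu + \pi_F$ with $\mu$ a root of unity is used, to identify $c_\phi$ with the image of $\mu$ (up to a $p$-power root of unity, invisible to $v$ once $e\ge 2$); everything else is bookkeeping of the decomposition \eqref{eq:decomp_psi} and of Teichm\"uller representatives.
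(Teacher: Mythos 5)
Your proof is correct and, in the main case where $\mu$ is a Teichm\"uller lift (a prime-to-$p$ root of unity), it is the paper's argument: compute each factor $\zeta\tensor 1 - 1\tensor[\phi(\zeta)]$ in the $\psi$-component, observe that when $\ov\phi = \psi$ the two Teichm\"uller terms cancel and leave the image of $\pi_F$ (valuation $1$), while when $\ov\phi\neq\psi$ the constant term is a unit (valuation $0$), and sum. The parenthetical argument for a general root of unity when $e\ge 2$, via the minimal polynomial $\prod_{\ov\phi=\psi}(T-\phi(\zeta))$ of $\zeta_\psi$ over $W_O(\ov k)$ and the nilpotency order of the reduction of $\zeta_\psi - c_\phi$ in $\mcO_\psi/\pi$, is a genuine addition not in the paper; it tidily closes a small gap, since the paper's reduction $\mu\tensor 1 = 1\tensor[\psi(\mu)]$ silently assumes $\mu$ is Teichm\"uller --- which is all that is needed when the lemma is invoked, but is slightly stronger than the stated hypothesis ``$\mu$ a root of unity.''
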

\begin{proof}
Let $ϕ\in S$ be any element. The $ψ$-component $ξ_{ϕ,ψ}$ of the image of $ζ\tensor 1 - 1\tensor [ϕ(ζ)]$ in \eqref{eq:decomp_psi} is
$$\begin{aligned}
(μ + π_F)\tensor 1 - 1\tensor [ϕ(μ)] = π_F\tensor 1 + 1\tensor ([ψ(μ)] - [ϕ(μ)]).
\end{aligned}$$
The Teichmüller lifting is multiplicative, so $[ψ(μ)]$ and $[ϕ(μ)]$ are the unique roots of unity that agree with the images of $ψ(μ)$ resp. $ϕ(μ)$ in $\ov k$. It follows that $[ψ(μ)] = [ϕ(μ)]$ if $ϕ\in S_ψ$, or $[ψ(μ)] - [ϕ(μ)] \in W_O(\ov k)^\times$ if $ϕ\notin S_ψ$. So we find that $ξ_{ϕ,ψ}$ has normalized valuation
$$\begin{cases}
    1 & \text{if $ϕ\in S_ψ$}\\
    0 & \text{if $ϕ\notin S_ψ$.}
\end{cases}$$
Taking the product over all $ϕ\in S$, we see that the normalized valuation of $\ov e_ψ$ is $|S_ψ|$ as claimed.
\end{proof}

\begin{proof}[Proof of Theorem \ref{thm:AB_equiv_displays}.] Let $S\subseteq A \subseteq B$ and $E = E_SE_AE_B$ be as in the theorem. Fix an $O$-algebra generator $ζ\in O_F$ of the form $μ + π_F$ as in Lemma \ref{lem:valuation_lift_W}. Put
\begin{equation}\label{eq:def_e_S_tilde}
\wt{e_S} := \wt{e_{ζ, S}}(ζ\tensor 1)\in O_F\tensor_OW_O(O_E).
\end{equation}
Let $(P,Q,F,\dot F, ι)$ be an $(A, B)$-strict pair over a $π$-adic $O_E$-algebra $R$. Define
\begin{equation}\label{equ:const-Q_0}
Q_0 := \ker \left[\wt{e_S}: P \lr P/Q \right].
\end{equation}
Recall from Proposition \ref{prop:e_S-inverse} that if $M$ is a locally free $O_F\tensor_O R$-module, then multiplication by $e_S$ defines an isomorphism
$$e_S:(e_{A\setminus S}M)/(e_{B\setminus S}M) \simlr (e_AM)/(e_BM).$$
Applying this with $M = P/I_O(R)P$ shows that $P/I_O(R)P \twoheadrightarrow P/Q_0$ is $(A\setminus S, B\setminus S)$-strict. We define
\begin{equation}\label{equ:const-F_0}
\dF_{0}:Q_0\longrightarrow P,\ x\longmapsto \dF(\wt{e}_S x).
\end{equation}
Then $\dF_0:Q_0\to P$ is a $σ$-linear epimorphism because this property can be checked in geometric points of $\Spf(R)$ where it reduces to a simple Dieudonné module calculation. This calculation is just like the one in \cite[from (4.3.14) to (4.3.18)]{KRZ}; it is here that Lemma \ref{lem:valuation_lift_W} is used.

In the same way, define $\bfF_0(x) = \bfF(\wt{e_S} x).$ Then $\Phi_S(\mcP) := (P, Q_0, \bfF_0, \dF_0)$ is a display with $(A\setminus S, B\setminus S)$-strict $O_F$-action. We claim that $Φ_S$ is an equivalence. Since it is tricky to invert the construction in \eqref{equ:const-F_0} directly, we prove this with a dualizing argument.
\begin{prop}\label{prop:commutativ_square}
The following square commutes up to the natural isomorphism $\mcP\simto (\mcP^\vee)^\vee$
\begin{equation*}
\xymatrixcolsep{2cm}
\xymatrix{
(A,B)\mathrm{-Disp} \ar[r]^-{Φ_S} \ar[d]_{\cong}^{\mr{dualize}} & (A\setminus S, B\setminus S)\mathrm{-Disp} \ar[d]^{\cong}_{\mr{dualize}}\\
(B^c,A^c)\mathrm{-Disp} & (B^c\cup S,A^c\cup S)\mathrm{-Disp}. \ar[l]_-{Φ_{S}}
}
\end{equation*}
\end{prop}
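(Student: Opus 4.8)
The plan is to prove the square commutes by unwinding both composite functors $(A,B)\mathrm{-Disp}\to(B^c,A^c)\mathrm{-Disp}$ into completely explicit data and checking that they agree; this uses only the explicit construction of $Φ_S$ just given (not Theorem~\ref{thm:AB_equiv_displays}, which is being proved) together with the description of dualization from \S\ref{ss:displays}. Fix a pair $(\mcP,ι)=(P,Q,\bfF,\dF,ι)$ over a $π$-adic $O_E$-algebra $R$ and use one and the same lift $\wt{e_S}$ from \eqref{eq:def_e_S_tilde} at every occurrence of $Φ_S$. The left edge sends $\mcP$ to $\mcP^\vee=(P^\vee,Q^\vee,\bfF^\vee,\dF^\vee,ι^\vee)$. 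Going along the top, right, and bottom edges: first $Φ_S(\mcP)=(P,Q_0,\bfF_0,\dF_0)$ with $Q_0$, $\dF_0$ as in \eqref{equ:const-Q_0}, \eqref{equ:const-F_0}; then its dual $Φ_S(\mcP)^\vee=(P^\vee,Q_0^\vee,\bfF_0^\vee,\dF_0^\vee)$; then, applying $Φ_S$ once more, an $O$-display on the same module $P^\vee$ whose Hodge submodule is $\ker[\wt{e_S}\colon P^\vee\to P^\vee/Q_0^\vee]$ and whose divided operator is $m\mapsto\dF_0^\vee(\wt{e_S}m)$. (The strictness bookkeeping — that $ι^\vee$ is $(B^c\cup S,A^c\cup S)$-strict and that the bottom $Φ_S$ lands in $(B^c,A^c)\mathrm{-Disp}$ — is Lemmas~\ref{lem:dual_display_strict} and \ref{lem:AB-strict_dualizing}.)

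The first step is to identify the Hodge submodules. Reducing modulo $I_O(R)P^\vee$ and using that $ι^\vee$ is defined so that $(\wt{e_S}\ell)(x)=\ell(\wt{e_S}x)$, one sees that $\ker[\wt{e_S}\colon P^\vee\to P^\vee/Q_0^\vee]$ reduces to the set of functionals orthogonal to $\wt{e_S}\cdot(Q_0/I_O(R)P)$. Proposition~\ref{prop:e_S-inverse} applied to $M=P/I_O(R)P$ gives $\wt{e_S}\cdot(Q_0/I_O(R)P)=Q/I_O(R)P$, so this orthogonal complement is $(Q/I_O(R)P)^\perp=Q^\vee/I_O(R)P^\vee$; since a display recovers its $Q$ as the preimage of its Hodge submodule, the two $Q$'s coincide. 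The orthogonality manipulations here are those already used in the proof of Lemma~\ref{lem:AB-strict_dualizing}. The second step is to identify the operators: for $m\in Q^\vee$ one has $\wt{e_S}m\in Q_0^\vee$ by the first step, and the defining relation~\eqref{eq:dual_display_key} for $\dF_0^\vee$ gives $\bigl(\dF_0^\vee(\wt{e_S}m)\bigr)\bigl(\dF_0(x)\bigr)=\dot\sigma\bigl((\wt{e_S}m)(x)\bigr)=\dot\sigma\bigl(m(\wt{e_S}x)\bigr)$ for every $x\in Q_0$, whereas $\bigl(\dF^\vee(m)\bigr)\bigl(\dF(y)\bigr)=\dot\sigma(m(y))$ for every $y\in Q$, and the specialization $y=\wt{e_S}x\in Q$ together with $\dF_0(x)=\dF(\wt{e_S}x)$ yields the same value. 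Thus $\dF_0^\vee(\wt{e_S}m)$ and $\dF^\vee(m)$ agree on all $\dF_0(x)$ with $x\in Q_0$; these generate $P$ over $W_O(R)$ by the first display axiom for $Φ_S(\mcP)$, so the two $W_O(R)$-linear functionals coincide. The operator $\bfF$ is recovered from $\dF$ by \eqref{eq:Frobenius_determined} and $O_F$-linearity is automatic, so the two composite $O$-displays are equal; one then checks that the $2$-isomorphism witnessing this is compatible with the involutivity isomorphism $\mcP\simto(\mcP^\vee)^\vee$ of dualization, which is the form in which the statement is needed to later build a quasi-inverse of $Φ_S$ and finish the proof of Theorem~\ref{thm:AB_equiv_displays}.

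I expect no conceptual obstacle: the content is almost entirely organizational — faithfully tracking the three nested applications of $Φ_S$ and of dualization over their successively enlarged reflex fields, and not conflating the various modules $Q$, $Q_0$, $Q^\vee$, $Q_0^\vee$ and operators $\dF$, $\dF_0$, $\dF^\vee$, $\dF_0^\vee$. The single non-formal input is that the operator identification in the second step must hold \emph{integrally}, not merely rationally or modulo $I_O(R)$; this is exactly where the display axiom ``$\dF_0(Q_0)$ generates $P$'' enters — established, in the course of proving Theorem~\ref{thm:AB_equiv_displays}, by the Dieudonné-module computation of the type running from (4.3.14) to (4.3.18) in \cite{KRZ}, which itself rests on Lemma~\ref{lem:valuation_lift_W} — together with the use of Proposition~\ref{prop:e_S-inverse} for the filtrations.
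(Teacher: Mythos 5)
Your proposal is correct and follows essentially the same route as the paper's own proof: trace $(P,Q,\bfF,\dF)$ around the square, identify $Q_{00}^\vee$ with $Q^\vee$ by reducing modulo $I_O(R)P^\vee$ and applying Proposition~\ref{prop:e_S-inverse} together with the orthogonality $Q_0^\vee/IP^\vee=(Q_0/IP)^\perp$, then identify $\dF_{00}^\vee$ with $\dF^\vee$ by pairing against the $W_O(R)$-generating set $\{\dF(\wt{e_S}x)\mid x\in Q_0\}$ and using \eqref{eq:dual_display_key}. The paper states the conclusion more concretely as the claim that the canonical pairing $P\times P^\vee\to W_O(R)$ gives an isomorphism $\mcP\simto Φ_S\bigl(Φ_S(\mcP)^\vee\bigr)^\vee$, which is tidier than your closing remark about a ``$2$-isomorphism compatible with involutivity,'' but your computations are the same.
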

\begin{proof}
We consider an object $(P, Q, \bfF, \dF)$ in the source category and its images. We omit the $O_F$-action in the notation:
\begin{equation}
\xymatrixcolsep{1.5cm}
\xymatrix{
(P, Q, \bfF, \dF) \ar@{|->}[r]^-{Φ_S} & (P, Q_0, \bfF_0, \dF_0) \ar@{|->}[d]\\
(P^\vee, Q^\vee_{00}, \bfF^\vee_{00}, \dF^\vee_{00}) & (P^\vee, Q_0^\vee, \bfF_0^\vee, \dF_0^\vee). \ar@{|->}[l]_-{Φ_S}
}
\end{equation}
We claim that the natural pairing $P\times P^\vee\to W_O(R)$ defines an isomorphism
$$(P, Q, \bfF, \dF) \simlr (P^\vee, Q^\vee_{00}, \bfF^\vee_{00}, \dF^\vee_{00})^\vee.$$
Set $I = I_O(R)$ in the following. First we check
$$\begin{array}{rcl}
Q^\vee_{00}/IP^\vee &\ \overset{\text{Def. of $Φ_S$}}{=}\ & e_S^{-1} (Q_0^\vee/IP)\\
&\ \overset{\text{Def. of $\vee$}}{=}\ & e_S^{-1} (Q_0/IP)^\perp\\
& = & (e_SQ_0/IP)^\perp\\
& \overset{\text{Def. of $Φ_S$}}{=} & (Q/IP)^\perp\\
& \overset{\text{Def. of $\vee$}}{=} & Q^\vee/IP^\vee
\end{array}$$
and thus $Q^\vee_{00} = Q^\vee$. We claim that $\dF^\vee_{00} = \dF^\vee$. These are both homomorphisms $Q^\vee_{00} = Q^\vee \to P^\vee$. By $W_O(R)$-$σ$-linearity, it suffices to check the identity
\begin{equation}\label{eq:dual_to_check}
(\dF^\vee_{00}(\ell))(w) = (\dF^\vee(\ell))(w)
\end{equation}
for all $\ell\in Q^\vee$ and all $w$ within a set of $W_O(R)$-module generators of $P$. Such a set is given by $\{\dF(\wt {e_S} x) \mid x\in Q_0\}$. We have
$$\begin{array}{rcl}
\big(\dF_{00}^\vee (\ell)\big)(\dF(\wt {e_S} x)) & = & \big(\dF_0^\vee(\wt {e_S}\ell)\big)(\dF_0(x))\\[1mm]
& \overset{\eqref{eq:dual_display_key}}{=} & V^{-1}\big((\wt {e_S}\ell)(x)\big)\\[1mm]
& = & V^{-1}\big(\ell(\wt {e_S} x)\big)\\[1mm]
& \overset{\eqref{eq:dual_display_key}}{=} & \big(\dF^\vee(\ell)\big)(\dF(\wt {e_S} x)).
\end{array}$$
The proof of the proposition is complete.
\end{proof}
Proposition \ref{prop:commutativ_square} shows that the functor $Φ_S$ has the quasi-inverse $(\text{dual})\circ Φ_S\circ (\text{dual})$ and is hence an equivalence. This concludes the proof of Theorem \ref{thm:AB_equiv_displays}.
\end{proof}

\subsection{Lubin--Tate displays}
\label{ss:LT}
Let $F_0/K$ be a finite extension and let $R$ be a $π$-adic $O_{F_0}$-algebra. (We will later work with a quadratic extension $F/F_0$ which explains this choice of notation.) Then we have a notion of \emph{strict pair} $(\mcP, ι_0)$ over $R$, that is, an $O$-display $\mcP$ over $R$ together with a strict $O_{F_0}$-action $ι_0$ in the sense of Example \ref{ex:strict}. An equivalent condition is as follows: Write $\mcP = (P, Q, \bfF, \dF)$ and recall that for any $O_{F_0}$-action on $\mcP$, the module $P$ is projective over $O_{F_0}\tensor_OW_O(R)$. Define $J_O(R)$ as the kernel of the composition
$$J_O(R) = \ker\big[O_{F_0}\tensor_O W_O(R) \lr O_{F_0}\tensor_O R \lr R\big],$$
where the second map is given by multiplication. Then an $O_{F_0}$-action on $\mcP$ is strict if and only if
$$J_O(R)\cdot P \subseteq Q \subseteq P.$$
\begin{defn}\label{def:LT_display_general}
Let $R$ be a $π$-adic $O_{F_0}$-algebra. A \emph{Lubin--Tate $O$-display} (with respect to $F_0$) is a strict pair $(\mcL, ι_0)$ over $R$ where $\mcL$ has height $[F_0:K]$ and dimension $1$.
\end{defn}
If $\mcL = (L, Q, \bfF, \dot \bfF, ι_0)$ is a Lubin--Tate display, then $L$ is a line bundle over $O_{F_0}\tensor_O W_O(R)$. It follows that $Q$ is uniquely determined as $Q = J_O(R)\cdot L$.

Given strict pairs $(\mcP_i = (P_i, Q_i, \bfF_i, \dF_i), ι_{0,i})$ for $i = 1,2,3$, we write $\Hom_{O_{F_0}}(\mcP_1, \mcP_2)$ for the module of $O_{F_0}$-linear homomorphisms of $O$-displays, and $\mr{BiHom}_{O_{F_0}}(\mcP_1\times \mcP_2, \mcP_3)$ for the module of $O_{F_0}$-bilinear pairings of $O$-displays.
\begin{prop}\label{prop:dual_LT}
Let $R$ be a $π$-adic $O_{F_0}$-algebra and let $\mcL = (L, J_O(R)L, α, \dot{α})$ be a Lubin--Tate display over $R$. For every strict pair $(\mcP, ι_0)$ over $R$, there exists a unique strict pair structure $(\mcP^\dagger, ι^\dagger_0)$ on
$$P^\dagger := \Hom_{O_{F_0}\tensor_OW_O(R)}(P, L)$$
such that the natural pairing $P\times P^\dagger \to L$ defines an bilinear pairing $\mcP\times \mcP^\dagger \to \mcL$ that is universal: for every strict pair $(\mcP', ι'_0)$, the induced map
$$\Hom_{O_{F_0}}(\mcP',\mcP^\dagger) \simlr \mr{BiHom}_{O_{F_0}}(\mcP \times \mcP', \mcL)$$
is an isomorphism.
\end{prop}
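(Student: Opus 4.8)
The plan is to construct $\mcP^\dagger$ in close analogy with the absolute dual display $\mcP^\vee$ recalled in \S\ref{ss:displays} (see \cite[\S3.2]{KRZ} and \cite{Zink}), replacing throughout the multiplicative display $\mcW_O(R)$ and $W_O(R)$-linearity by the Lubin--Tate display $\mcL$ and $S$-linearity, where $S := O_{F_0}\tensor_O W_O(R)$. Write $\mcP = (P, Q, \bfF, \dF)$ and $\mcL = (L, J_O(R)L, α, \dot{α})$ as in the statement. By \cite[Lemma 3.1.15]{KRZ} the module $P$ is finite projective over $S$, and $L$ is invertible over $S$, so $P^\dagger = \Hom_S(P, L)$ is finite projective over $S$, hence over $W_O(R)$; it carries the $O_{F_0}$-action $ι^\dagger$ coming from its $S$-module structure, and the evaluation map $ev\colon P\tensor_S P^\dagger \to L$ is a perfect $S$-bilinear pairing for which $ev(ax, f) = a\,ev(x,f) = ev(x, af)$.

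First I would define the Hodge filtration $Q^\dagger\subseteq P^\dagger$. Write $(-)_R := (-)\tensor_S R$; strictness of $(\mcP,ι_0)$ says that $J_O(R)\cdot P\subseteq Q$ and that the image $Q_R$ of $Q$ in $P_R$ is an $R$-module direct summand with quotient $\Lie(\mcP)$. Reducing $ev$ modulo $I_O(R)$ and then modulo $J_O(R)$ produces a perfect $R$-bilinear pairing $P_R\times P^\dagger_R\to L_R$ into the invertible $R$-module $L_R = L\otimes_S R$, and I would set $Q^\dagger$ to be the preimage in $P^\dagger$ of the orthogonal complement $(Q_R)^\perp\subseteq P^\dagger_R$. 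Using that $\ker(L\to L_R) = J_O(R)L$ is exactly the Hodge filtration $Q_\mcL$ of $\mcL$, one checks that $Q^\dagger = \{\ell\in P^\dagger \mid ev(Q,\ell)\subseteq Q_\mcL\}$, that $I_O(R)P^\dagger\subseteq Q^\dagger\subseteq P^\dagger$ with $P^\dagger/Q^\dagger\cong\Hom_R(Q_R, L_R)$ projective over $R$, that $ev(Q,Q^\dagger)\subseteq Q_\mcL$, and --- since $J_O(R)P^\dagger\subseteq Q^\dagger$ --- that the induced $O_{F_0}$-action on $P^\dagger/Q^\dagger$ is strict. This is the relative analogue of the identity $Q^\vee/I_O(R)P^\vee = (Q/I_O(R)P)^\perp$ for the absolute dual; compare also Lemma \ref{lem:dual_display_strict}.

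Next I would construct $\dF^\dagger\colon Q^\dagger\to P^\dagger$, and then set $\bfF^\dagger(\ell) := \dF^\dagger(V(1)\cdot\ell)$ as in \eqref{eq:Frobenius_determined}. Since $ev(Q,Q^\dagger)\subseteq Q_\mcL$, for $\ell\in Q^\dagger$ the assignment $\dF(x)\mapsto \dot{α}(ev(x,\ell))$, $x\in Q$, is meaningful, and because $\dF(Q)$ generates $P$ over $W_O(R)$ (first display axiom for $\mcP$) there is at most one $σ$-linear element $\dF^\dagger(\ell)\in P^\dagger = \Hom_S(P,L)$ extending it. Showing that such an extension exists --- that the assignment is well defined, $σ$-linear, and $S$-linear in the remaining argument --- is \emph{the one genuinely technical point}; I would carry it out exactly as the construction of $\dF^\vee$ in \cite[\S3.2]{KRZ}, using the second display axiom \eqref{eq:display_axiom} for $\mcP$, the display axioms for $\mcL$, and perfectness of $ev$ (equivalently, after choosing a normal decomposition of $P$ over $S$ and a local trivialization of $L$, by the explicit matrix computation of \cite{Zink}). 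With $\mcP^\dagger := (P^\dagger, Q^\dagger, \bfF^\dagger, \dF^\dagger)$ so defined, the remaining display axiom for $\mcP^\dagger$, that $\dF^\dagger(Q^\dagger)$ generates $P^\dagger$, follows again from perfectness of $ev$ and the first display axioms for $\mcP$ and $\mcL$; and by construction $ev$ is a bilinear form of displays $\mcP\times\mcP^\dagger\to\mcL$ in the sense of Definition \ref{def:bihom_display}, compatible with the $O_{F_0}$-actions.

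Finally I would verify universality, from which uniqueness is formal. For any strict pair $(\mcP',ι'_0)$ with $\mcP' = (P', Q', \bfF', \dF')$, perfectness of $ev$ over $S$ identifies $O_{F_0}$-linear maps $f\colon P'\to P^\dagger$ with $O_{F_0}$-bilinear pairings $b\colon P\times P'\to L$ via $b(x,x') := ev(x, f(x'))$. Under this dictionary $f(Q')\subseteq Q^\dagger$ corresponds to $b(Q,Q')\subseteq Q_\mcL$ by the transporter description of $Q^\dagger$, and $\dF^\dagger\circ f = f\circ\dF'$ on $Q'$ corresponds --- after pairing against the generating set $\dF(Q)$ and invoking the defining relation of $\dF^\dagger$ --- to $b(\dF x,\dF' x') = \dot{α}(b(x,x'))$ for all $x\in Q$, $x'\in Q'$. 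Hence $f$ is an $O_{F_0}$-linear morphism of displays $\mcP'\to\mcP^\dagger$ if and only if $b$ is an $O_{F_0}$-bilinear form of displays $\mcP\times\mcP'\to\mcL$, which gives the asserted isomorphism $\Hom_{O_{F_0}}(\mcP',\mcP^\dagger)\simlr\mr{BiHom}_{O_{F_0}}(\mcP\times\mcP',\mcL)$. For uniqueness, the requirement that the chosen pairing $P\times P^\dagger\to L$ be a bilinear form of displays into $\mcL$ forces $Q^\dagger$ to be the above transporter and forces the relation $ev(\dF x,\dF^\dagger\ell) = \dot{α}(ev(x,\ell))$, and the latter pins down $\dF^\dagger$ by the first display axiom for $\mcP$. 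The entire difficulty thus sits in the second step --- the existence and $σ$-linearity of $\dF^\dagger$ together with the generation axiom for $\mcP^\dagger$ --- and I expect it to require nothing beyond transcribing the corresponding argument of \cite[\S3.2]{KRZ} with $\mcW_O(R)$ replaced by $\mcL$.
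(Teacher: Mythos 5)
Your construction of $\mcP^\dagger$ coincides with the paper's: $Q^\dagger$ via the orthogonal complement/transporter of $Q$ modulo $J_O(R)$, $\dF^\dagger$ via the formula $\dF(x)\mapsto\dot\alpha(ev(x,\ell))$ on the generating set $\dF(Q)$, $\bfF^\dagger$ via \eqref{eq:Frobenius_determined}, and universality via the perfect $S$-pairing $\Hom_S(P',P^\dagger)\simeq\mathrm{BiHom}_S(P\times P',L)$ combined with the display-compatibility relations. The paper is equally terse at the technical point you flag (it simply asserts $\sigma$-linearity is "immediately checked" and proves the generation axiom by reduction to Dieudonné modules over geometric points rather than via a matrix computation), so your deferral to the absolute case in \cite[\S3.2]{KRZ} is in line with the paper's own treatment.
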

\begin{proof}
There is a perfect bilinear pairing of finite projective $R$-modules
$$(P/J_O(R)P) \times (P^\dagger/J_O(R)P^\dagger) \lr L/J_O(R)L.$$
Define $Q^\dagger \subseteq P^\dagger$ by the two conditions
$$J_O(R)P^\dagger \subseteq Q^\dagger\subseteq P^\dagger,\quad Q^\dagger/J_O(R)P^\dagger = (Q/J_O(R)P)^\perp.$$
Then the natural pairing $P\times P^\dagger \to L$ tautologically satisfies $(Q, Q^\dagger)\subseteq J_O(R)L$. We need to define a $σ$-linear surjection $\dF^\dagger:Q^\dagger \to P^\dagger$ such that for every $\ell\in Q^\dagger$ and $x\in Q$,
\begin{equation}\label{eq:def_dagger}
(\dF^\dagger(\ell))(\dF(x)) = \dot{α}(\ell(x)).
\end{equation}
It is enough to characterize $\dF^\dagger(\ell)$ on a $W_O(R)$-generating set of $P$ and the image $\dF(Q)\subseteq P$ is such a set. So \eqref{eq:def_dagger} already pins down a unique map of sets $\dF^\dagger:Q^\dagger \to P^\dagger$. It is immediately checked that this map is $σ$-linear. Moreover, it is checked by reduction to geometric points of $\Spf(R)$ that $\dF^\dagger(Q^\dagger)$ generates $P^\dagger$ as $W_O(R)$-module; this is a simple calculation with Dieudonné modules which we omit, see Example \ref{ex:displays} (2).

Recall that if there exists a Frobenius map $\bfF^\dagger: P^\dagger \to P^\dagger$ such that $(P^\dagger, Q^\dagger, \bfF^\dagger, \dF^\dagger)$ is a display, then it is uniquely determined by $\bfF^\dagger(\ell) = \dF^\dagger(V(1)\cdot \ell)$, see \eqref{eq:Frobenius_determined}. Also, clearly, this formula defines a $σ$-linear map $\bfF^\dagger:P^\dagger \to P^\dagger$. It is left to check that with this definition the display axiom \eqref{eq:display_axiom} is satisfied, that is, $\dF^\dagger(ξ\cdot \ell) = \dot{σ}(ξ) \bfF(\ell)$ for all $ξ\in I_O(R)$ and $\ell\in P^\dagger$. We again check this by pairing against the generating set $\dF(Q)\subseteq P$:
\begin{equation}\label{eq:verify_dual}
\begin{array}{rcl}
(\dF^\dagger(ξ\cdot \ell))(\dF(x)) & = & \dot{α}(ξ\ell(x))\\[1mm]
& \overset{\eqref{eq:display_axiom} \text{ for }\mcL}{=} &\dot{σ}(ξ) α(\ell(x))\\[1mm]
& \overset{\eqref{eq:Frobenius_determined} \text{ for }\mcL}{=} & \dot{σ}(ξ) \dot{α}(V(1)\cdot \ell(x))\\[2mm]
& = & (\dot{σ}(ξ)\cdot \dF^\dagger(V(1)\cdot \ell))(\dF(x))\\[1mm]
& \overset{\text{Def. of }\bfF^\dagger}{=} & (\dot{σ}(ξ)\cdot \bfF^\dagger(\ell))(\dF(x)).
\end{array}
\end{equation}
This proves the existence of a strict pair structure $(\mcP^\dagger, ι^\dagger_0)$ on $P^\dagger$ such that the map $P\times P^\dagger \to L$ induces an $O_{F_0}$-bilinear pairing $\mcP\times \mcP^\dagger \to \mcL$. The universality (and hence uniqueness) of this construction is seen directly from definitions: first, note that for every projective $O_{F_0}\tensor_O W_O(R)$-module $P'$ there is an isomorphism
$$\Hom_{O_{F_0}\tensor_O W_O(R)}(P', P^\dagger) \simlr \mr{BiHom}_{O_{F_0}\tensor_O W_O(R)}(P \times P', L).$$
This already implies that for every strict pair $(\mcP' = (P', Q', \bfF', \dF'), ι_0')$ the map $\Hom_{O_{F_0}}(\mcP', \mcP^\dagger) \to \mr{BiHom}_{O_{F_0}}(\mcP\times \mcP', \mcL)$ is injective. For surjectivity, assume an $O_{F_0}$-bilinear pairing $(\ ,\ ):\mcP\times \mcP'\to \mcL$ is given. By definition, see Definition \ref{def:bihom_display}, this in particular means $(Q, Q')\subseteq J_O(R)L$, so the induced map $f:P'\to P^\dagger$ satisfies $f(Q')\subseteq Q^\dagger$. The compatibility $f\circ \dF' = \dF^\dagger \circ f$ follows from the definition of $\dF^\dagger$ in \eqref{eq:def_dagger}. 
\end{proof}

In order to define PEL type moduli spaces of $O$-displays or $p$-divisible $O$-modules, one usually needs to fix a Lubin--Tate object to pin down a notion of duality. Our last statement in this section implies that this choice often does not matter:

\begin{lem}\label{lem:LT_object_essentially_unique}
Any two Lubin--Tate displays are pro-(finite étale) locally isomorphic.
\end{lem}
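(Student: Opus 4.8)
The plan is to move the statement, by means of the Ahsendorf functor, into an assertion about rank-one étale $O_{F_0}$-local systems, where it becomes elementary.

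First I would recall from \cite{ACZ} and \cite[\S3]{KRZ} that the Ahsendorf functor is an equivalence between the category of strict pairs $(\mcP,ι_0)$ over a $π$-adic $O_{F_0}$-algebra $R$ and the category of $O_{F_0}$-displays over $R$, compatible with base change in $R$. Under this equivalence a Lubin--Tate display --- which has height $[F_0:K]$ and dimension $1$ as an $O$-display --- corresponds to an $O_{F_0}$-display of height $1$ and dimension $1$, and vice versa. Hence it suffices to prove that any two $O_{F_0}$-displays of height $1$ and dimension $1$ over $R$ are pro-(finite étale) locally isomorphic.

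Next I would apply the duality formalism of \S\ref{ss:displays}, read with $O$ replaced by $O_{F_0}$. Dualization of $O_{F_0}$-displays is an anti-equivalence, it commutes with base change, and it sends an $O_{F_0}$-display of height $1$ and dimension $1$ to one of height $1$ and dimension $0$, i.e. to a rank-one étale $O_{F_0}$-display in the sense of Example \ref{ex:etale_displays} (with $O$ replaced by $O_{F_0}$). Since an anti-equivalence preserves the relation of being ``pro-(finite étale) locally isomorphic'', the problem reduces to the same statement for rank-one étale $O_{F_0}$-displays. Finally I would use the $O_{F_0}$-analogue of the equivalence \eqref{eq:equiv_etale}: a rank-one étale $O_{F_0}$-display over $R$ is the same thing as a rank-one étale local system of $O_{F_0}$-modules on $\Spf R=\Spec(R/π)$, equivalently the twisted form of the constant sheaf $\underline{O_{F_0}}$ attached to an $\underline{O_{F_0}^{\times}}$-torsor. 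Writing $O_{F_0}^{\times}=\varprojlim_n (O_{F_0}/π^nO_{F_0})^{\times}$, such a torsor is an inverse system of torsors under the finite constant group schemes $\underline{(O_{F_0}/π^nO_{F_0})^{\times}}$; each of these is finite étale over $\Spec(R/π)$ and becomes trivial after pulling back along itself, so the torsor becomes trivial over the associated pro-(finite étale) cover. Thus every rank-one étale $O_{F_0}$-display becomes trivial after a pro-(finite étale) cover, so any two become isomorphic over the fiber product of their trivializing covers, which is again pro-(finite étale). Unwinding through the duality functor and the Ahsendorf functor then yields the lemma.

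Since the lemma is essentially a bookkeeping statement, there is no deep obstacle; the step requiring the most care is the first one, namely verifying that the Ahsendorf functor identifies Lubin--Tate displays precisely with height-one, dimension-one $O_{F_0}$-displays, which is where one invokes external input about that functor. After that the argument is formal, the only geometric ingredient being the elementary fact that torsors under the profinite group $O_{F_0}^{\times}$ are pro-(finite étale) locally trivial. One could instead argue directly that $\underline{\mathrm{Isom}}_{O_{F_0}}(\mcL_1,\mcL_2)$ is a torsor under $\underline{\Aut}_{O_{F_0}}(\mcL_1)=\underline{O_{F_0}^{\times}}$ represented by a pro-(finite étale) scheme, but identifying the automorphism sheaf with $\underline{O_{F_0}^{\times}}$ appears to require the same kind of input.
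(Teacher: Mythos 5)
Your proposal is correct, but it takes a genuinely different route from the paper, and the comparison is instructive. The paper stays entirely within the category of $O$-displays: it observes that a Lubin--Tate display is the same as a $(\{\phi_0\},\{\phi_0\})$-strict pair of the right height, applies the modification functor $\Phi_{\{\phi_0\}}$ of Theorem \ref{thm:AB_equiv_displays} (which is exactly the tool developed in \S\ref{ss:modification_functor}) to pass to $(\emptyset,\emptyset)$-strict pairs, i.e.\ \'etale $O$-displays with $O_{F_0}$-action, and then invokes the \'etale equivalence \eqref{eq:equiv_etale} to land on rank-one \'etale $O_{F_0}$-local systems. You instead change the coefficient ring right away via the Ahsendorf functor $\mfA$, landing on height-one dimension-one $O_{F_0}$-displays, and then apply $O_{F_0}$-display duality to reach \'etale $O_{F_0}$-displays of rank one. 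Both routes funnel into the same endgame: rank-one \'etale $O_{F_0}$-local systems are twists by $\underline{O_{F_0}^\times}$-torsors, and $O_{F_0}^\times$ is profinite, so the torsors are pro-(finite \'etale) locally trivial. Your approach costs you two pieces of machinery (Ahsendorf plus duality) in place of the paper's single modification functor, and it invokes $\mfA$ as external input from \cite{KRZ}/\cite{ACZ}, whereas the paper's proof is self-contained in its own framework; on the other hand your argument clarifies that the statement is really an elementary fact about $O_{F_0}$-line bundles once one descends the coefficients, and it is the perspective that becomes essential anyway later in \S\ref{sec:uni-RZ}. Two small points worth making explicit if you were to write this up: (i) you should verify (or at least remark) that the Lubin--Tate display and its image $\mfA(\mcL)$ are formal, which is what the cited form \eqref{eq:Ahsendorf_functor} of the Ahsendorf equivalence requires --- this is automatic because the associated one-dimensional $p$-divisible group is connected; and (ii) the Ahsendorf functor is stated over $p$-nilpotent rings whereas Lubin--Tate displays live over $\pi$-adic rings, so one needs the usual limit argument, which is harmless and implicit elsewhere in the paper.
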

\begin{proof}
Let $\phi_0:F_0\to \ov K$ be an embedding and identify $F_0$ with $\phi_0(F_0)$. As remarked after Definition \ref{def:LT_display_general}, any Lubin--Tate display $(L, Q, \bfF, \dF, \iota_0)$ satisfies $Q = J_O(R)L$. In the terminology of \S\ref{ss:modification_functor} (with $F_0$ in place of $F$), this means that Lubin--Tate displays are the same as $(\{\phi_0\}, \{\phi_0\})$-strict pairs $(\mcP, \iota_0)$ of height $[F_0:K]$. Via Theorem \ref{thm:AB_equiv_displays}, these are equivalent to $(\emptyset,\emptyset)$-strict pairs of height $[F_0:K]$.

A pair $(\mcP = (P, Q, \bfF, \dF), \iota_0)$ is $(\emptyset,\emptyset)$-strict if and only if $Q = P$. That is, those pairs are nothing but \'etale $O$-displays with $O_{F_0}$-action as in Example \ref{ex:etale_displays}. By the equivalence from \eqref{eq:equiv_etale}, all such pairs of height $[F_0:K]$ are pro-(finite \'etale) locally isomorphic.
\end{proof}

\subsection{Duality I: The strict case}
\label{ss:duality_and_modification}
We begin with a simple observation about strictness and duality. Let $F_0/K$ be a finite field extension and let $F/F_0$ be a quadratic field extension. We denote the Galois conjugation by $τ$ or by $a\mapsto \ov a$. Let $S \subseteq A\subseteq B \subseteq \Hom_K(F, \ov K)$ be any subsets and let $E \subseteq \ov K$ contain the reflex fields of $S$, $A$, $B$ as well as their $F/F_0$-conjugates.

\newcommand{\hv}{\text{herm-$\vee$}}
\newcommand{\hdagger}{\text{herm-$\dagger$}}

\begin{defn}[Hermitian dual]\label{def:herm_dual}
Let $(\mcP, ι)$ be a pair, that is, an $O$-display with $O_F$-action, over a $π$-adic $O_E$-algebra $R$. We define the \emph{hermitian $\vee$-dual} of $(\mcP, ι)$ as
$$(\mcP^\vee, \ob{ι}^\vee(a) := ι^\vee(\ob a)),\quad a\in O_F.$$
\end{defn}
It is clear from Lemma \ref{lem:dual_display_strict} that if $(\mcP, ι)$ is $(A, B)$-strict, then $(\mcP^\vee, \ob{ι}^\vee)$ is $(\ob B^c, \ob A^c)$-strict. By Proposition \ref{prop:commutativ_square} and Theorem \ref{thm:AB_equiv_displays}, we now have a commutative square (up to the natural isomorphism $\mcP\simto (\mcP^\vee)^\vee$) of equivalences 
\begin{equation*}
\xymatrixcolsep{2cm}
\xymatrix{
(A,B)\mathrm{-Disp} \ar[r]^-{Φ_S} \ar[d]_{\cong}^{\text{herm.~dual}} & (A\setminus S, B\setminus S)\mathrm{-Disp} \ar[d]^{\cong}_{\text{herm.~dual}}\\
(\ov B^c,\ov A^c)\mathrm{-Disp} & (\ov B^c\cup \ov S,\ov A^c\cup \ov S)\mathrm{-Disp}. \ar[l]_-{Φ_{\ov S}}
}
\end{equation*}

Our main case of interest is when $S = A$ and when there is a pair $\{ϕ_0, \ov{ϕ_0}\} \subseteq \Hom_K(F, \ov K)$ such that
\begin{equation}\label{eq:CM_type_unitary}
\Hom_K(F, \ov K) = A \sqcup \ov A \sqcup \{ϕ_0, \ov{ϕ_0}\},\quad B = A \sqcup \{ϕ_0, \ov{ϕ_0}\}.
\end{equation}
Then $(\ob B^c, \ob A^c) = (A, B)$ and hence $(\mcP, ι)\mapsto (\mcP^\vee, \ob{ι}^\vee)$ is a self anti-equivalence of the category $(A,B)$-Disp. Our aim in this section is to show that the modification functor $Φ_A$ is compatible with this notion of duality in the sense that $Φ_A(\mcP^\hv) \simto Φ_A(\mcP)^\hdagger$, where the hermitian $\dagger$-dual is with respect to a suitable Lubin--Tate display.

We begin by defining this Lubin--Tate display. As during the construction of $Φ_S$, let $ζ\in O_F$ denote an $O$-algebra generator of the form $ϕ = μ + π_F$, see Lemma \ref{lem:valuation_lift_W}. Let again $E = E_A E_B \subseteq K$. We will always view $F_0$ as a subfield of $E$ via $\{ϕ_0, \ov{ϕ_0}\}$. Let $\wt{e_A} \in O_F\tensor_OW_O(O_E)$ be as in \eqref{eq:def_e_S_tilde}. Let $\wt{f_A}$ be its $O_F/O_{F_0}$-conjugate. Equivalently, in the notation of \eqref{eq:def_e_S_tilde},
$$\wt{f_A} = \wt{e_{\ob{ζ}, \ob A}}$$
where $\ob{ζ}$ and $\ob A$ denote the Galois conjugates of $ζ$ and $A$. Let
\begin{equation}\label{def:theta_tilde}
\wt{θ} := \wt{e_A} \wt{f_A}
\end{equation}
denote the product and let $θ = e_{ζ, A} e_{\ov{ζ}, \ov A}$ be the image in $O_F\tensor_O O_E$. By Lemma \ref{lem:indep_of_generator}, the two elements $e_{\ov{ζ}, \ov A}$ and $e_{ζ, \ov A}$ differ by a unit. Hence $θ$ and $e_{A\cup \ov A}$ differ by a unit.\footnote{Beware that this does not imply that $\wt{θ}$ and $\wt{e_{A\cup \ov A}}$ differ by a unit.} The advantage of $θ$ and $\wt{θ}$ over $e_{A\cup \ov A}$ and $\wt{e_{A\cup \ov A}}$ is that they are already defined over $F_0$ in the following sense.
\begin{lem}\label{lem:theta_in_F_0}
The elements $θ$ and $\wt{θ}$ satisfy
\begin{equation}\label{eq:thetas}
θ \in O_{F_0}\tensor_O O_E\quad\text{and}\quad \wt{θ}\in O_{F_0}\tensor_O W_O(O_E).
\end{equation}
\end{lem}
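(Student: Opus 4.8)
The plan is to show that $\theta$ and $\widetilde{\theta}$ are fixed by the involution $\tau\tensor 1$ on $O_F\tensor_O O_E$, resp.\ on $O_F\tensor_O W_O(O_E)$, and then to invoke \eqref{eq:invariants_R} to conclude that such invariant elements automatically lie in the corresponding $F_0$-subring. Both ingredients are essentially formal.

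For the invariance, I would first record that, by its very definition, $\widetilde{f_A}$ is the $\tau\tensor 1$-conjugate of $\widetilde{e_A}$, and that its image $f_A$ in $O_F\tensor_O O_E$ is the $\tau$-conjugate of $e_A$; the latter is exactly the content of the termwise computation \eqref{eq:e_A_conjugate}, and the former is its evident Witt-vector analogue — one applies $\tau\tensor 1$ factor by factor to $\widetilde{e_{\zeta,A}}(\zeta\tensor 1)=\prod_{\phi\in A}(\zeta\tensor 1-1\tensor[\phi(\zeta)])$ and uses that Teichm\"uller lifts are multiplicative, so that $\tau\tensor 1$ sends $1\tensor[\phi(\zeta)]$ to $1\tensor[\overline{\phi}(\overline{\zeta})]$. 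Granting this, since $\tau$ is an involution we obtain from \eqref{def:theta_tilde} that
\[
(\tau\tensor 1)(\widetilde{\theta})\;=\;(\tau\tensor 1)(\widetilde{e_A})\cdot\widetilde{e_A}\;=\;\widetilde{f_A}\cdot\widetilde{e_A}\;=\;\widetilde{\theta},
\]
and likewise $\tau(\theta)=\tau(e_A)\cdot e_A=f_A\cdot e_A=\theta$.

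To finish, I would apply \eqref{eq:invariants_R} with $R=O_E$ and with $R=W_O(O_E)$. The ring $O_E$ is a discrete valuation ring, hence flat (indeed torsion-free) over $O$, and $W_O(O_E)$ is flat over $O$ as well; this is standard, e.g.\ it follows from the fact that the graded pieces of the Verschiebung filtration on $W_O(O_E)$ are copies of $O_E$, so $\pi$-torsion in $W_O(O_E)$ would force $\pi$-torsion in $O_E$. Hence $(O_F\tensor_O O_E)^{\tau=\mathrm{id}}=O_{F_0}\tensor_O O_E$ and $(O_F\tensor_O W_O(O_E))^{\tau\tensor 1=\mathrm{id}}=O_{F_0}\tensor_O W_O(O_E)$, and combining these with the invariance established above yields \eqref{eq:thetas}. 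The only step that is not purely formal is the $O$-flatness of $W_O(O_E)$ needed to apply \eqref{eq:invariants_R} in the Witt-vector setting, but as just indicated this is standard and poses no real obstacle.
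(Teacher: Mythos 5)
Your proof is correct and follows essentially the same line as the paper's: both establish conjugation-invariance of $\theta$ and $\widetilde{\theta}$ (via $\widetilde{f_A} = (\tau\tensor 1)(\widetilde{e_A})$) and then pass from $(O_F)^{\tau=\mathrm{id}} = O_{F_0}$ to the tensor products using $\pi$-torsion freeness of $O_E$ and $W_O(O_E)$, i.e.\ the hypothesis of \eqref{eq:invariants_R}. One small remark: the step $(\tau\tensor 1)(1\tensor[\phi(\zeta)]) = 1\tensor[\ov\phi(\ov\zeta)]$ is really just the identity map on the second factor combined with the equality $\phi(\zeta)=\ov\phi(\ov\zeta)$ in $O_E$, so Teichm\"uller multiplicativity is not actually what is being used there (it is needed elsewhere, e.g.\ in Lemma \ref{lem:valuation_lift_W}, but not for the conjugation-invariance).
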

\begin{proof}
The subring $O_{F_0}\subset O_F$ is the set of conjugation invariant elements. Both $O_E$ and $W_O(O_E)$ are $π$-torsion free, so by scalar extension $O_{F_0}\tensor_O O_E$ and $O_{F_0}\tensor_O W_O(O_E)$ are the rings of conjugation invariants in $O_F\tensor_O O_E$ and $O_F\tensor_OW_O(O_E)$. By construction, $θ$ and $\wt{θ}$ are conjugation invariant and we obtain \eqref{eq:thetas}.
\end{proof}

\begin{defn}\label{def:LT_display}
Let $R$ be a $π$-adic $O_E$-algebra. We define from $\wt{θ}$ the Lubin--Tate display
\begin{equation}\label{eq:LT_display}
\mcL(R) := \mcL_{O_{F_0}/O, \wt{θ}}(R) := (O_{F_0} \tensor_O W_O(R), J_O(R), α, \dot{α})
\end{equation}
as follows. The second Frobenius $\dot{α}$ is defined as
\begin{equation}\label{eq:def_sigma_dot}
\begin{aligned}
\dot{α}:J_O(R) & \lr O_{F_0}\tensor_O W(R)\\
x & \longmapsto (\mr{id}\tensor V^{-1})(\wt{θ} x).
\end{aligned}
\end{equation}
The Frobenius $α$ is determined by $α(x) = \dot{α}(V(1)\cdot x)$, see \eqref{eq:Frobenius_determined}.
\end{defn}
We explain why \eqref{eq:def_sigma_dot} is indeed defined. Namely, for every choice of $O$-algebra generator $ζ_0\in O_{F_0}$, we can described $J_O(R)$ by
\begin{equation}\label{eq:J_O_R}
J_O(R) = (ζ_0\tensor 1 - 1 \tensor [ζ_0]) + O_{F_0}\tensor_O I_O(R).
\end{equation}
By Lemma \ref{lem:Eisenstein_ideal_intermediate_field}, the ideal $(ζ_0\tensor 1 - 1\tensor ζ_0)\cdot (O_F\tensor_O O_E)$ agrees with the Eisenstein ideal $J_{\{ϕ_0, \ov{ϕ_0}\}}$. Since $A\cup \ov A \cup \{ϕ_0, \ov{ϕ_0}\} = \Hom_K(F, \ov K)$, we obtain $θ\cdot (ζ_0 \tensor 1 - 1\tensor ζ_0) = 0$ in $O_{F_0}\tensor_O O_E$. In terms of \eqref{eq:J_O_R}, this means that $\wt{θ}J_O(R) \subseteq O_{F_0}\tensor_O I_O(R)$ so \eqref{eq:def_sigma_dot} makes sense. It is shown by specialization to geometric points that $\dot{α}(J_O(R))$ generates $O_{F_0}\tensor_O W_O(R)$. Finally, we see for $ξ\in I_O(R)$ and $x\in O_{F_0}\tensor_O W_O(R)$ that
\begin{equation}\label{eq:def_LT_display}
\begin{aligned}
\dot{α}(ξx) & = (\mr{id}\tensor V^{-1})(\wt{θ}\cdot ξ x)\\[1mm]
& = V^{-1}(ξ)\cdot (\mr{id}\tensor σ)(\wt{θ}\cdot x)\\[1mm]
& = V^{-1}(ξ)\cdot (\mr{id}\tensor V^{-1})(V(1)\cdot \wt{θ} \cdot x)\\[1mm]
& = V^{-1}(ξ)\cdot \dot{α}(V(1)\cdot x)\\[1mm]
& = V^{-1}(ξ)\cdot α(x)
\end{aligned}
\end{equation}
as required by the display axioms. This shows that Definition \ref{def:LT_display} indeed constructs a display.

\begin{defn}\label{def:herm_dagger_dual}
Let $(\mcP, ι)$ be a pair over a $π$-adic $O_{F_0}$-algebra such that the restriction $ι\vert_{O_{F_0}}$ is strict. (In other words, $(\mcP, ι\vert_{O_{F_0}})$ is a strict pair in the sense of \S\ref{ss:LT}.) We define the \emph{hermitian $\dagger$-dual} of $(\mcP, ι)$ as the $\dagger$-dual from Proposition \ref{prop:dual_LT} with respect to the Lubin--Tate display from Definition \ref{def:LT_display}, together with conjugated $O_F$-action:
$$(\mcP^\dagger, \ob{ι}^\dagger(a) := ι^\dagger(\ov a)),\quad a\in O_F.$$
\end{defn}

Note that an $O$-display with $O_F$-action $(\mcP, ι)$ is $(\emptyset, \{ϕ_0, \ov {ϕ_0}\})$-strict if and only if the restriction $ι\vert_{O_{F_0}}$ is a strict $O_{F_0}$-action. This follows from Lemma \ref{lem:Eisenstein_ideal_intermediate_field}. Thus the modification functor $Φ_A$ from Theorem \ref{thm:AB_equiv_displays} is an equivalence
\begin{equation}\label{eq:Phi_A_display}
Φ_A:(A,B)\text{-Disp} \simlr \left\{\text{\begin{varwidth}{\textwidth} \centering $O$-displays with $O_F$-action $(\mcP, ι)$\\
s.t. $ι\vert_{O_{F_0}}$ is strict\end{varwidth}}\right\}.
\end{equation}
Combining Theorem \ref{thm:AB_equiv_displays} and Proposition \ref{prop:dual_LT} we obtain a diagram of functors, all of which are equivalences:
\begin{equation}\label{eq:compatibility_duality}
\xymatrix{
(A,B)\text{-Disp} \ar[rr]^-{\text{herm.~$\vee$-dual}} \ar[d]_{Φ_A} & &(A,B)\text{-Disp} \ar[d]^{Φ_A}\\
\big\{\text{$(\mcP, ι)$ s.t. $ι\vert_{O_{F_0}}$ strict}\big\} \ar[rr]^{\text{herm.~$\dagger$-dual}} & & \big\{\text{$(\mcP, ι)$ s.t. $ι\vert_{O_{F_0}}$ strict}\big\}.
}
\end{equation}

Fix a generator $\vartheta$ of the inverse different of $F_0/K$ and define the trace map by
\begin{equation}\label{eq:def_trace}
\btr : O_{F_0} \lr O,\quad a  \longmapsto \mr{tr}_{F_0/K}(\vartheta a).
\end{equation}
For every $O$-algebra $W$, we use $\btr$ to denote the $W$-linear extension
$$\btr \tensor \mr{id}_W:O_{F_0}\tensor_OW\lr W.$$
Given a finite projective $O_{F_0}\tensor_OW$-module $P$, define $P^\dagger := \Hom_{O_{F_0}\tensor_OW}(P, O_{F_0}\tensor_OW)$. By Lemma \ref{lem:keep-isotropic}, for every such $P$, composition with $\btr$ defines an isomorphism of $O_{F_0}\tensor_O W$-modules
\begin{equation}\label{eq:trace_iso}
\btr: P^\dagger\simlr P^\vee,\quad \ell \longmapsto \btr\circ \ell.
\end{equation}

\begin{thm}\label{thm:comp-display}
For every $(A,B)$-strict pair $(\mcP, ι)$, the trace isomorphism $\btr:P^\dagger \simlr P^\vee$ is an $O_F$-linear isomorphism of displays,
\begin{equation}\label{eq:nat}
Φ_A(\mcP, ι)^\hdagger \simlr Φ_A(\mcP^\hv).
\end{equation}
In particular, the diagram \eqref{eq:compatibility_duality} commutes up to the natural isomorphism defined by \eqref{eq:nat}.
\end{thm}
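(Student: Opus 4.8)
The plan is to produce the isomorphism \eqref{eq:nat} from the universal property of the $\dagger$-dual (Proposition \ref{prop:dual_LT}), after which the commutativity of \eqref{eq:compatibility_duality} is automatic. First, by Lemma \ref{lem:keep-isotropic} applied with $W_O(R)$ in place of $R$, the map $\btr\colon P^\dagger\to P^\vee$, $\ell\mapsto\btr\circ\ell$, is an isomorphism of $O_F\tensor_O W_O(R)$-modules, and it is $O_F$-linear for the hermitian actions because $\btr(\ell\circ\iota(\ob a))=(\btr\circ\ell)\circ\iota(\ob a)$; so it remains to check that $\btr$ carries the $O$-display structure of $\Phi_A(\mcP)^\hdagger$ onto that of $\Phi_A(\mcP^\hv)$. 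Writing $\mcP=(P,Q,\bfF,\dF)$ and $\Phi_A(\mcP)=(P,Q_0,\bfF_0,\dF_0)$ with $Q_0=\ker[\wt{e_A}\colon P\to P/Q]$ and $\dF_0(x)=\dF(\wt{e_A}x)$, the crucial preliminary point is how $\Phi_A$ interacts with hermitian conjugation: since $\Phi_A$ modifies a pair $(\mcQ,\kappa)$ by the operator $\wt{e_{\zeta,A}}(\kappa(\zeta))$ and $\ob\iota^\vee(\zeta)=\iota^\vee(\ob\zeta)$, a computation as in Lemma \ref{lem:ideal_conjugation} shows that $\wt{e_{\zeta,A}}(\ob\iota^\vee(\zeta))$ is the operator $\widehat{f_A}\colon m\mapsto m\circ\iota(\wt{f_A})$ on $P^\vee$, the conjugate $\wt{f_A}$ of $\wt{e_A}$ replacing $\wt{e_A}$. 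Hence $\Phi_A(\mcP^\hv)=(P^\vee,(Q^\vee)_0,\bfF^\vee_0,\dF^\vee_0)$ with $(Q^\vee)_0=\{m:\widehat{f_A}(m)\in Q^\vee\}$ and $\dF^\vee_0(m)=\dF^\vee(\widehat{f_A}(m))$, and the relevant product is $\wt{e_A}\wt{f_A}=\wt\theta\in O_{F_0}\tensor_O W_O(O_E)$ from \eqref{def:theta_tilde} and Lemma \ref{lem:theta_in_F_0}.

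Next I would show that the unique $O_{F_0}\tensor_O W_O(R)$-bilinear lift $\langle\,,\rangle_{F_0}\colon P\times P^\vee\to L:=O_{F_0}\tensor_O W_O(R)$ along $\btr$ of the tautological evaluation pairing $\langle\,,\rangle_W$ (so $\langle\,,\rangle_W=\btr\circ\langle\,,\rangle_{F_0}$) restricts to a bilinear pairing of $O$-displays $\Phi_A(\mcP)\times\Phi_A(\mcP^\hv)\to\mcL=\mcL_{O_{F_0}/O,\wt\theta}$ in the sense of Definition \ref{def:bihom_display}. Granting this, Proposition \ref{prop:dual_LT} turns it into a homomorphism of $O$-displays $\Phi_A(\mcP^\hv)\to\mcP_0^\dagger:=\Phi_A(\mcP)^\dagger$ given on modules by $m\mapsto\langle-,m\rangle_{F_0}=\btr^{-1}(m)$; being an isomorphism on modules it is an isomorphism of displays, and its inverse, which is $O_F$-linear for the hermitian actions by the remark above, is the map $\btr$ of \eqref{eq:nat}.

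To verify the two conditions of Definition \ref{def:bihom_display}, fix $x\in Q_0$ and $m\in(Q^\vee)_0$ and set $\ell:=\btr^{-1}(m)$, so $\btr\circ\ell=m$ and $\ell$ is $O_{F_0}\tensor_O W_O(R)$-linear. Since $\wt{e_A}x\in Q$ and $\widehat{f_A}(m)\in Q^\vee$ one has $\langle\wt{e_A}x,\widehat{f_A}(m)\rangle_W\in I_O(R)$; expanding the left side as $(\widehat{f_A}(m))(\wt{e_A}x)=m(\wt{f_A}\wt{e_A}x)=m(\wt\theta x)=\btr(\wt\theta\cdot\langle x,m\rangle_{F_0})$, and running the same computation with $a_0x$ ($a_0\in O_{F_0}$) in place of $x$, gives $\btr(a_0\wt\theta\langle x,m\rangle_{F_0})\in I_O(R)$ for all $a_0$; by perfectness of the trace form on $O_{F_0}\tensor_O R$ this forces $\wt\theta\cdot\langle x,m\rangle_{F_0}\in O_{F_0}\tensor_O I_O(R)$, i.e.\ $\theta\cdot\overline{\langle x,m\rangle_{F_0}}=0$ in $O_{F_0}\tensor_O R$. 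As $A_0^c=\{\phi_0\vert_{F_0}\}$ and $\theta$ generates $J_{A_0,R}$ up to a unit, Lemma \ref{lem:eisenstein_ideal}(2) identifies the annihilator of $\theta$ with $J_{\{\phi_0\vert_{F_0}\},R}=\ker[O_{F_0}\tensor_O R\to R]$, whence $\langle x,m\rangle_{F_0}\in J_O(R)$; this is the first condition. For the second, using the defining relation $(\dF^\vee(m'))(\dF(x'))=\dot\sigma(m'(x'))$ of the dual display together with $\wt{e_A}x\in Q$ and $\widehat{f_A}(m)\in Q^\vee$, then $\dot\alpha(y)=(\mr{id}\tensor\dot\sigma)(\wt\theta y)$ for $y\in J_O(R)$ and the identity $\btr\circ(\mr{id}\tensor\dot\sigma)=\dot\sigma\circ\btr$ on $O_{F_0}\tensor_O I_O(R)$ (valid since $\sigma$ fixes $O$, so $\dot\sigma$ is $O$-linear on $I_O(R)$), one computes
\[
\btr\bigl(\langle\dF_0(x),\dF^\vee_0(m)\rangle_{F_0}\bigr)=(\dF^\vee_0(m))(\dF_0(x))=\dot\sigma\bigl(m(\wt\theta x)\bigr)
=\dot\sigma\bigl(\btr(\wt\theta\cdot\langle x,m\rangle_{F_0})\bigr)=\btr\bigl(\dot\alpha(\langle x,m\rangle_{F_0})\bigr),
\]
and injectivity of $\btr$ gives $\langle\dF_0(x),\dF^\vee_0(m)\rangle_{F_0}=\dot\alpha(\langle x,m\rangle_{F_0})$.

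The main obstacle, and the heart of the matter, is the bookkeeping of the first two paragraphs: one must recognise that conjugating the $O_F$-action replaces the modification element $\wt{e_A}$ of $\Phi_A$ by its $F/F_0$-conjugate $\wt{f_A}$, and that the product $\wt{e_A}\wt{f_A}$ is exactly the element $\wt\theta$ defining the Lubin--Tate display $\mcL$ in Definition \ref{def:LT_display}. This is the whole reason $\mcL$ was built from $\wt\theta$ and not from $\wt{e_{A\cup\ob A}}$: although $\theta$ and $e_{A\cup\ob A}$ differ by a unit, their Witt-vector lifts in general do not, and it is the unreduced identity $\wt{e_A}\wt{f_A}=\wt\theta$ that makes both the orthogonality computation and the Frobenius computation close up. The remaining points --- that $\langle\,,\rangle_{F_0}$ is $O_{F_0}$-balanced and perfect (Lemma \ref{lem:keep-isotropic} and Proposition \ref{prop:duality-comp}(1)), that $\Phi_A(\mcP^\hv)$ is a strict pair so Proposition \ref{prop:dual_LT} applies, and that $\Phi_A$ of the hermitian $\vee$-dual again lands in $(A,B)\text{-Disp}$ since $(\ob B^c,\ob A^c)=(A,B)$ under \eqref{eq:CM_type_unitary} --- are routine.
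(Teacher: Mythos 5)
Your proposal is correct and does establish the theorem, but it routes through Proposition \ref{prop:dual_LT} in a way the paper's own proof does not. The paper fixes an arbitrary $\ell \in (Q_0)^\dagger$, verifies directly that $\btr\circ\ell$ lies in $(Q^\vee)_0$ via the easy inclusion $\btr(\wt\theta J_O(R)) \subseteq \btr(O_{F_0}\otimes_O I_O(R)) \subseteq I_O(R)$, closes the gap by a rank count, and then checks the Frobenius identity by pairing against the generating set $\{\dF(\wt{e_A}y) : y \in Q_0\}$. You instead package the evaluation pairing as a bilinear form of $O$-displays $\Phi_A(\mcP)\times\Phi_A(\mcP^\hv)\to\mcL$ and invoke the universal property of Proposition \ref{prop:dual_LT}, which is more conceptual but makes you prove the \emph{converse} inclusion $\btr^{-1}((Q^\vee)_0)\subseteq(Q_0)^\dagger$; this is why you need the extra detour through perfectness of the trace form (quantifying over $a_0 \in O_{F_0}$) and the identification $\ker[\theta\colon O_{F_0}\otimes_O R\to O_{F_0}\otimes_O R] = J_{\{\phi_0|_{F_0}\},R}$, whereas the paper gets by with the one containment $\wt\theta J_O(R)\subseteq O_{F_0}\otimes_O I_O(R)$ already recorded after Definition \ref{def:LT_display}. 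Both proofs ultimately hinge on the same bookkeeping you isolate as the heart of the matter: conjugating the $O_F$-action swaps $\wt{e_A}$ for $\wt{f_A}$, and the unreduced identity $\wt{e_A}\wt{f_A}=\wt\theta$ in $O_{F_0}\otimes_O W_O(O_E)$ is what makes $\mcL=\mcL_{O_{F_0}/O,\wt\theta}$ the correct dualizing object. One place where you are a little terse: the claim that a display homomorphism which is bijective on underlying modules is automatically an isomorphism of displays is not true in general (one still needs $Q_1\mapsto Q_2$ to be onto), and the paper supplies the missing rank comparison of $P^\dagger/(Q_0)^\dagger$ and $P^\vee/(Q^\vee)_0$ explicitly; you should add that sentence.
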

\begin{proof}
Fix a $π$-adic $O_E$-algebra $R$ and set $W = W_O(R)$, $I = I_O(R)$ as well as $J = J_O(R)$. All $O$-displays in the following will be over $R$. It will be clear that all constructions and identifications below are natural in $R$ and functorial.

Let $(P, Q, \bfF, \dF, ι)$ be an $O$-display with $(A,B)$-strict $O_F$-action over $R$. Tracing its images along the equivalences from \eqref{eq:compatibility_duality}, we obtain the objects of the following diagram.
\begin{equation}\label{eq:display_chase}
\xymatrix{
(P, Q, \bfF, \dF, ι) \ar@{|->}[rr]^-{\text{herm.~$\vee$-dual}} \ar@{|->}[d]_{Φ_A} & & (P^\vee, Q^\vee, \bfF^\vee, \dF^\vee, \ob{ι}^\vee) \ar@{|->}[d]^{Φ_A}\\
(P, Q_0, \bfF_0, \dF_0, ι) \ar@{|->}[rd]^{\ \ \text{herm.~$\dagger$-dual}} & & (P^\vee, (Q^\vee)_0, (\bfF^\vee)_0, (\dF^\vee)_0, \ob{ι}^\vee)\\
& (P^\dagger, (Q_0)^\dagger, (\bfF_0)^\dagger, (\dF_0)^\dagger, \ob{ι}^\dagger) \ar@{-->}[ru]^{\btr}. &
}
\end{equation}
The claim is that the dotted arrow $\btr:P^\dagger \to P^\vee$ in the lower right is an isomorphism of $O$-displays. By definition, this means that it has the two properties
\begin{enumerate}
    \item[(a)] $\btr((Q_0)^\dagger) = (Q^\vee)_0$;
    \item[(b)] $\btr \circ (\dF_0)^\dagger = (\dF^\vee)_0 \circ \btr$ as maps $(Q_0)^\dagger \to P^\vee$.
\end{enumerate}
We first check (a). Recall the elements $\wt{e_A}$, $\wt{f_A}$ and $\wt{θ}$ from \eqref{def:theta_tilde}. By definition of $(Q^\vee)_0$, see \eqref{equ:const-Q_0}, and because we have taken the hermitian dual instead of the plain dual, having $\btr((Q_0)^\dagger)\subseteq (Q^\vee)_0$ as in (a) means that $\wt{f_A}\btr((Q_0)^\dagger) \subseteq Q^\vee$. By definition of $Q^\vee$, this holds if and only if for all $x\in Q$ and all $\ell\in (Q_0)^\dagger$,
\begin{equation}\label{eq:to_show_Qs}
(\wt{f_A}(\btr\circ \ell))(x) = \btr(\ell(\wt{f_A}x)) \in I.
\end{equation}
Since $Q = \wt{e_A}Q_0 + IP$, and since \eqref{eq:to_show_Qs} is obviously satisfied for $x\in IP$, we may assume $x = \wt{e_A}y$ for some $y\in Q_0$. Recall that $\ell\in P^\dagger$ lies in $(Q_0)^\dagger$ if and only if $\ell(Q_0)\subseteq J$. So we obtain
$$\begin{aligned}
\btr(\ell(\wt{f_A}\wt{e_A}y)) & = \btr(\wt{θ}\cdot \ell(y))\\
& \in \btr(\wt{θ}\cdot J) \subseteq \btr(O_{F_0}\tensor_O I) \subseteq I.
\end{aligned}$$
This proves the inclusion relation $\btr((Q_0)^\dagger)\subseteq (Q^\vee)_0$. The quotients $P^\dagger/(Q_0)^\dagger$ and $P^\vee / (Q^\vee)_0$ are both projective $R$-modules of the same rank. So we obtain $\btr((Q_0)^\dagger) = (Q^\vee)_0$ and completed the proof of (a).

We now show (b) which states that for every $x\in P$ and every $\ell\in (Q_0)^\dagger$,
\begin{equation}\label{eq:to_show_dot}
\btr(((\dF_0)^\dagger(\ell))(x)) = ((\dF^\vee)_0(\btr\circ \ell))(x).
\end{equation}
Just like when we checked \eqref{eq:dual_to_check} before, it suffices to verify \eqref{eq:to_show_dot} for all $x$ in a $W$-module generating set of $P$. Such a set is given by $\{\dF(\wt{e_A}y) \mid y \in Q_0\}$. The left hand side in \eqref{eq:to_show_dot} for a generator $x = \dF(\wt{e_A}y)$ equals
\begin{equation}\label{eq:ts_lhs}
\begin{array}{rcl}
\btr(((\dF_0)^\dagger(\ell))(\dF(\wt{e_A}y))) & = & \btr(((\dF_0)^\dagger(\ell))(\dF_0(y))\\[1mm]
& \overset{\eqref{eq:bilin_pairing_display}}{=} & \btr(\dot{α}(\ell(y)))\\[1mm]
& \overset{\eqref{eq:def_sigma_dot}}{=} & \btr((\mr{id}\tensor V^{-1})(\wt{θ}\cdot \ell(y)))\\[1mm]
& = & \btr((\mr{id}\tensor V^{-1})(\ell(\wt{θ}y))).
\end{array}
\end{equation}
We now unpack the right hand side of \eqref{eq:to_show_dot}. Recall for this that by definitions
$$(\dF^\vee)_0(α) = \dF^\vee(\wt{f_A}α).$$
So for a generator $x = \dF(\wt{e_A}y)$, we obtain
\begin{equation}\label{eq:ts_rhs}
\begin{array}{rcl}
((\dF^\vee)_0(\btr\circ\ell))(x) & = & (\dF^\vee(\wt{f_A}\cdot (\btr\circ \ell)))(\dF(\wt{e_A} y))\\[1mm]
& \overset{\eqref{eq:dual_display_key}}{=} & (\mr{id}\tensor V^{-1})((\wt{f_A} \cdot (\btr \circ \ell))(\wt{e_A}y))\\[1mm]
& = & (\mr{id}\tensor V^{-1})(\btr(\ell(\wt{θ}y))).
\end{array}
\end{equation}
Since $\btr$ is defined as the $W$-linear extension of a map $\btr:O_{F_0}\to O$, it commutes with $\mr{id}\tensor V^{-1}$. Hence \eqref{eq:ts_lhs} equals \eqref{eq:ts_rhs} and the proof of Theorem \ref{thm:comp-display} is complete.
\end{proof}

\begin{rmk}
Let $W$ be an $O_{F_0}$-algebra and let $P$ be a finite projective $O_{F_0}\tensor_O W$-module. There are natural isomorphisms
\begin{equation}\label{eq:dual_dual}
P\simlr (P^\vee)^\vee\quad\text{and}\quad P\simlr (P^\dagger)^\dagger
\end{equation}
given by evaluation. Recall that an $O_{F_0}$-linear homomorphism
$$λ:P\lr P^\vee\quad\text{or}\quad λ:P\lr P^\dagger$$
is called symmetric resp. skew-symmetric if it satisfies $λ^\vee = λ$ or $λ^\dagger = λ$ (resp. $λ^\vee = -λ$ or $λ^\dagger = -λ$) with respect to \eqref{eq:dual_dual}. The trace isomorphism $\btr$ is compatible with this notion in the sense that $λ:P\to P^\dagger$ is (skew-)symmetric if and only if $\btr\circ λ:P\to P^\vee$ is (skew-)symmetric.
\end{rmk}

\subsection{Duality II: The étale case}
\newcommand{\Hdagger}{\text{$\mcH$-$\dagger$}}

For applications to unitary Shimura varieties, we also need to consider Theorem \ref{thm:AB_equiv_displays} for $(A, A)$-strict pairs. In this case, too, we need a compatibility of $Φ_A$ with duality in the sense of Theorem \ref{thm:comp-display}. The basic observation is that this can be achieved with the same arguments as before. In fact, it would have been possible to give a unified treatment of the two cases in \S\ref{ss:LT} and \S\ref{ss:duality_and_modification}, but for better readability we decided to present the main case of interest first.

We begin by constructing a specific étale $O$-display with $O_{F_0}$-action that will serve as our dualizing object. Its definition will be analogous to that in \eqref{eq:LT_display}. Let $A\subseteq \Hom_K(F, \ov K)$ be such that
$$\Hom_K(F, \ov K) = A \sqcup \ov A$$
and denote by $E = E_A$ the reflex field. Fix an $O$-algebra generator $ζ\in O_F$ of the form $ζ = μ + π_F$ as before to define $\wt{e_A} \in O_F\tensor_O W_O(O_E)$ as in \eqref{eq:def_e_S_tilde}. Let $\wt{f_A}$ be its $F/F_0$-conjugate and define $\wt {θ} = \wt{e_A}\wt{f_A}$. Then $\wt{θ}$ lies in $O_{F_0}\tensor_O I_O(O_E)$ by the reasoning of Lemma \ref{lem:theta_in_F_0} and because $e_Af_A = 0$ in $O_{F_0}\tensor_O O_E$. For a $π$-adic $O_E$-algebra $R$, define the étale $O$-display
\begin{equation}\label{eq:def_H_et_LT_disp}
\mcH(R) := \mcH_{O_{F_0}/O, \wt{θ}}(R) := \big(O_{F_0}\tensor_O W_O(R),\ \dot{β}(x) = (1\tensor V^{-1})(\wt{θ}\cdot x)\big).
\end{equation}

\begin{defn}[Hermitian $\mcH$-$\dagger$-dual]\label{def:duality_et}
Let $R$ be a $π$-adic $O_E$-algebra. An \emph{étale pair} over $R$ is a pair $(\mcP = (P, \dF), ι)$ consisting of an étale $O$-display $\mcP$ over $R$ with an $O_F$-action $ι:O_F\to \End(\mcP)$. Here, we have followed the notation convention from Example \ref{ex:etale_displays}.

Given an étale pair $(\mcP = (P,\dF), ι_0)$ over $R$, consider again the module
$$P^\dagger := \Hom_{O_{F_0}\tensor_O W_O(R)}(P, O_{F_0}\tensor_O W_O(R)).$$
Then one shows as indicated in Example \ref{ex:etale_displays} or as in Proposition \ref{prop:dual_LT} that there is a (necessarily unique) étale $O$-display structure $\mcP^\Hdagger$ on $P^\dagger$ such that the natural pairing $P\times P^\dagger \to O_{F_0}\tensor_OW_O(R)$ defines a pairing of $O$-displays
\begin{equation}\label{eq:H_dagger_dual}
    \mcP\times \mcP^\Hdagger \lr \mcH(R).
\end{equation}
The \emph{hermitian $\mcH$-$\dagger$-dual} of $(\mcP, ι)$ is defined as $(\mcP^\Hdagger, \ob{ι}^\Hdagger)$.
\end{defn}

Recall that we defined a hermitian $\vee$-dual in Definition \ref{def:herm_dual}. Also recall that we defined the trace isomorphism $\btr:P^\dagger \simto P^\vee$ in \eqref{eq:trace_iso}.
\begin{thm}\label{thm:comp-display-banal}
For every $(A, A)$-strict pair $(\mcP, ι)$, the trace isomorphism $\btr:P^\dagger \simto P^\vee$ defines an $O_F$-linear isomorphism of displays,
$$Φ_A(\mcP, ι)^\text{herm-$\mcH$-$\dagger$} \simlr Φ_A(\mcP^\text{herm-$\vee$}).$$
\end{thm}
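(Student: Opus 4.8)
The plan is to rerun the proof of Theorem \ref{thm:comp-display} essentially verbatim, replacing the Lubin--Tate display $\mcL$ by the étale display $\mcH$ of \eqref{eq:def_H_et_LT_disp}, and exploiting the simplification that every $O$-display now in play is étale.

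First I would record the structural facts that make the assertion meaningful. Since $\Hom_K(F,\ov K) = A\sqcup\ov A$ one has $\ov A^c = A$, so by Lemma \ref{lem:dual_display_strict} the hermitian $\vee$-dual of Definition \ref{def:herm_dual} is a self-anti-equivalence of $(A,A)$-Disp, and by Definition \ref{def:duality_et} the hermitian $\mcH$-$\dagger$-dual is a self-anti-equivalence of the category of étale pairs. Moreover $Φ_A$ (the modification functor of Theorem \ref{thm:AB_equiv_displays} with $S = B = A$) carries $(A,A)$-strict pairs to $(\emptyset,\emptyset)$-strict ones, and a pair is $(\emptyset,\emptyset)$-strict precisely when its underlying $O$-display is étale in the sense of Example \ref{ex:etale_displays} (indeed $J_\emptyset = (1)$ forces $Q = P$); thus $Φ_A$ restricts to an equivalence from $(A,A)$-Disp onto the category of étale pairs, which is exactly the domain of the $\mcH$-$\dagger$-dual. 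One also has to see that $\mcH$ is a well-defined étale $O$-display: just as in \S\ref{ss:duality_and_modification}, the element $\wt{θ} = \wt{e_A}\wt{f_A}$ lies in $O_{F_0}\tensor_O I_O(O_E)$ (by the conjugation-invariance argument of Lemma \ref{lem:theta_in_F_0} and because $e_Af_A = 0$ in $O_{F_0}\tensor_O O_E$), so the operator $\dot{β}$ of \eqref{eq:def_H_et_LT_disp} is defined; that its linearization is an isomorphism may be checked at the geometric points of $\Spf R$, where it is an elementary $O$-Dieudonné module computation.

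The heart of the matter is a display chase parallel to \eqref{eq:display_chase}. Fix an $(A,A)$-strict pair $(\mcP, ι) = (P, Q, \bfF, \dF, ι)$ over a $π$-adic $O_E$-algebra $R$. Then $Φ_A(\mcP, ι)$ is the étale pair $(P, P, \bfF_0, \dF_0, ι)$ with $\dF_0(x) = \dF(\wt{e_A}x)$, and its hermitian $\mcH$-$\dagger$-dual has underlying module $P^\dagger = \Hom_{O_{F_0}\tensor_O W_O(R)}(P, O_{F_0}\tensor_O W_O(R))$, with second Frobenius $(\dF_0)^\dagger$ determined by $((\dF_0)^\dagger(\ell))(\dF_0(x)) = \dot{β}(\ell(x))$ as in \eqref{eq:H_dagger_dual}. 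On the other hand, $Φ_A$ applied to the hermitian $\vee$-dual $(\mcP^\vee, \ov{ι}^\vee)$ has underlying module $P^\vee$; as in the proof of Theorem \ref{thm:comp-display}, dividing by the lift $\wt{e_A}$ computed for the conjugated action $\ov{ι}^\vee$ amounts, via $\phi(ζ) = \ov{\phi}(\ov ζ)$, to dividing by $\wt{f_A} = \wt{e_{\ov ζ,\ov A}}$ for $ι^\vee$, so its second Frobenius sends $α$ to $\dF^\vee(\wt{f_A}α)$. I claim that the trace isomorphism $\btr:P^\dagger\simto P^\vee$, $\ell\mapsto\btr\circ\ell$, of \eqref{eq:trace_iso} is an $O_F$-linear isomorphism between these two étale displays. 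It is $O_F$-linear by functoriality of the construction, and since both displays are étale the only thing to check is the compatibility of second Frobenii. As in \eqref{eq:ts_lhs}--\eqref{eq:ts_rhs}, it suffices to test on the $W_O(R)$-module generators $\dF(\wt{e_A}y) = \dF_0(y)$ of $P$; using \eqref{eq:dual_display_key}, the definition of $\dot{β}$, and the fact that $\btr$, being the $W_O(R)$-linear extension of a map $O_{F_0}\to O$, commutes with $\mathrm{id}\tensor V^{-1}$, one finds that both $\btr\circ(\dF_0)^\dagger(\ell)$ and $\dF^\vee(\wt{f_A}\cdot(\btr\circ\ell))$ carry $\dF_0(y)$ to $(\mathrm{id}\tensor V^{-1})(\btr(\ell(\wt{θ}y)))$. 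Naturality in $R$ and in $(\mcP, ι)$ is evident.

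I do not expect a genuine obstacle beyond a faithful transport of the Theorem \ref{thm:comp-display} argument. The one point that requires care is the bookkeeping of the $O_F$-action under the hermitian $\vee$-dual --- that $Φ_A$ of $(\mcP^\vee, \ov{ι}^\vee)$ involves division by $\wt{f_A}$ rather than $\wt{e_A}$ --- together with the observation that, because in the étale setting the ``$\dagger$-Hodge filtration'' is all of $P^\dagger$, the inclusion statement analogous to property (a) in the proof of Theorem \ref{thm:comp-display} is now automatic, so that only the Frobenius identity remains to be verified.
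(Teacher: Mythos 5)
Your proposal is correct and follows the paper's own route: both reduce to the same display chase as in Theorem \ref{thm:comp-display}, observing that after applying $Φ_A$ all displays in play are étale so the analogue of step (a) is automatic, and then verifying the Frobenius compatibility (b) by the identical formal computation (now with $\dot{β}$ in place of $\dot{α}$). The only cosmetic difference is that you spell out the manipulation explicitly, whereas the paper simply observes that \eqref{eq:ts_lhs}--\eqref{eq:ts_rhs} go through unchanged.
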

\begin{proof}
Let $(\mcP = (P, Q, \bfF, \dF), ι)$ be an $(A,A)$-strict pair. Apply the functors $Φ_A$, hermitian $\vee$-dual, and hermitian $\mcH$-$\dagger$-dual to obtain objects as in \eqref{eq:display_chase}. (Replace the hermition $\dagger$-dual with the hermitian $\mcH$-$\dagger$-dual for our current situation.) Note that after applying $Φ_A$, we are working with étale objects. That is, in the notation of \eqref{eq:display_chase}, we have $Q_0 = P$, $(Q^\vee)_0 = P^\vee$, and $(Q_0)^\Hdagger = P^\dagger$.

From here on, the remainder of the proof is literally the same as that for Theorem \ref{thm:comp-display}. Even simpler, (a) is immediate because the displays are étale. Concerning (b), the manipulations in \eqref{eq:ts_lhs} and \eqref{eq:ts_rhs} that prove \eqref{eq:to_show_dot} are purely formal and go through without change.
\end{proof}

\section{Rapoport--Zink spaces}
\label{s:RZ}

\subsection{$O$-displays and formal $p$-divisible $O$-modules}

Let $K$ be a $p$-adic local field with ring of integers $O = O_K$, uniformizer $\pi\in O$, and residue field $\mbF_q$. Let $R$ be a $π$-adic $O$-algebra. By \emph{$p$-divisible $O$-module over $R$}, we mean a $p$-divisible group $X/R$ together with an action $O\to \End(X)$ that is strict on $\Lie(X)$. A $p$-divisible $O$-module is called \emph{formal} if its underlying $p$-divisible group is formal.

Let $\mcP = (P, Q, \bfF, \dot \bfF)$ be an $O$-display over $R$. There is also a definition of \emph{nilpotent} $O$-displays. A simple characterization of nilpotency is that for every homomorphism to a perfect field $R\to k$, the Dieudonné module obtained by specialization is $V$-nilpotent, see \cite[before Definition 3.1.10]{KRZ}.

The main theorem of Ahsendorf--Cheng--Zink \cite{ACZ} establishes an equivalence from nilpotent $O$-displays to formal $p$-divisible $O$-modules. For $O = \mbZ_p$, this equivalence was previously proved by Zink \cite{Zink} and Lau \cite{Lau}. We now recall their result. Afterwards, we will combine it with our results in \S\ref{s:displays} to construct equivalences between categories of $p$-divisible groups with $O_F$-action.

\begin{thm}[\protect{\cite[Theorem 1.1]{ACZ}}]\label{thm:equiv_displays}
There is an equivalence of fibered categories over $π$-adic $O$-algebras
\begin{equation}\label{eq:equiv_display}
\BT:\left\{\text{nilpotent $O$-displays}\right\} \simlr \left\{\text{\begin{varwidth}{\textwidth} \centering formal $p$-divisible\\$O$-modules \end{varwidth}}\right\}.
\end{equation}
It is compatible with Hodge filtrations in the sense that for any $O$-display $\mcP = (P, Q, \bfF, \dF)$, there is a natural isomorphism
\begin{equation}\label{eq:compatib_Hodge_filt}
\big[P/I_O\cdot P\, \twoheadrightarrow\, P/Q\big] \simlr \big[\mcD_{\BT(\mcP)}(R) \,\twoheadrightarrow\, \Lie(\BT(\mcP))\big].
\end{equation}
Here, $\mcD_X$ denotes the relative covariant Grothendieck--Messing crystal of a $p$-divisible $O$-module $X$.
\end{thm}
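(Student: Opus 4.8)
The plan is to invoke the existing literature: the equivalence $\BT$ in \eqref{eq:equiv_display} is exactly \cite[Theorem 1.1]{ACZ}, which for $O = \Zp$ recovers the earlier equivalences of Zink \cite{Zink} and Lau \cite{Lau}. So the only thing left to do is to read off the compatibility with Hodge filtrations in \eqref{eq:compatib_Hodge_filt}, and I would extract this from the construction of $\BT$ rather than reprove anything. Recall that $\BT$ is built in two steps: one first attaches to an $O$-display $\mcP = (P, Q, \bfF, \dF)$ a crystal on the crystalline site of $\Spec(R/π)$ --- Zink's ``display crystal'', obtained from $(P,Q)$ by base change along PD thickenings using the divided $O$-Witt vectors --- and then defines $\BT(\mcP)$ to be the formal $p$-divisible $O$-module whose relative covariant Grothendieck--Messing crystal is this crystal. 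The value of the display crystal on the trivial PD thickening $R \to R$ is $P/I_O(R)P$, and its Hodge filtration is $Q/I_O(R)P$. Since $\mcD_{\BT(\mcP)}(R)$ is by definition this value and $\Lie(\BT(\mcP)) = P/Q$, the exact sequence \eqref{eq:compatib_Hodge_filt} drops out, naturally in $\mcP$; for $O = \Zp$ this is spelled out in \cite{Zink}, and its $O$-linear analogue is part of \cite{ACZ}.

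If one instead wanted to reprove Theorem \ref{thm:equiv_displays} from scratch, the skeleton would be: present $O$-displays as windows over the $O$-frame $(W_O(R), I_O(R), σ, \dot{σ})$; verify by a Witt-vector computation that $\mcP \mapsto$ (display crystal) is well defined and is a crystal; use Grothendieck--Messing deformation theory, with the classification of $O$-displays over perfect $O$-algebras (Example \ref{ex:displays} (2)) as the base case, to show this crystal is the Dieudonné crystal of an honest $p$-divisible $O$-module; match nilpotence of $\mcP$ with $V$-nilpotence of the special-fibre $O$-Dieudonné module, i.e.\ formality of $\BT(\mcP)$; and deduce full faithfulness from rigidity of quasi-isogenies together with the crystalline description of homomorphisms. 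The main obstacle along this route --- exactly as in \cite{ACZ} --- is establishing effectivity of the crystal and full faithfulness of the functor, which requires the full deformation-theoretic apparatus developed by Lau \cite{Lau} for $O = \Zp$ and extended by Ahsendorf--Cheng--Zink \cite{ACZ} in general. Since all of this is available in the literature, we simply cite \cite[Theorem 1.1]{ACZ}.
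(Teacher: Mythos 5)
Your proposal is correct and matches the paper's treatment: the paper states Theorem \ref{thm:equiv_displays} as a pure citation of \cite[Theorem 1.1]{ACZ} and provides no independent proof, which is exactly what you do. Your additional remarks on how the Hodge-filtration compatibility \eqref{eq:compatib_Hodge_filt} drops out of the display-crystal construction are accurate and go slightly beyond what the paper makes explicit, but they amount to unpacking the cited result rather than a different route.
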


Theorem \ref{thm:equiv_displays} will be all that is needed in the EL context (\S\ref{ss:EL_RZ}). For applications to PEL moduli problems (\S\ref{sec:uni-RZ}), we show in the appendix that $\BT$ is compatible with duality (Proposition \ref{prop:Faltings_display_compatible}).

\subsection{EL type moduli spaces}
\label{ss:EL_RZ}

Our notation is the same as in \S\ref{s:displays}: $F/K$ is a finite extension, $A \subseteq B \subseteq \Hom_K(F, \ov K)$ are two subsets, and $E = E_AE_B$ is the join of their reflex fields.

\begin{defn}
Let $S$ be a scheme over $\Spf O_E$. An $O_F$-action $ι:O_F\to \End(X)$ on a $p$-divisible $O$-module $X/S$ is called \emph{$(A, B)$-strict} if the induced $O_F$-action on $\Lie(X)$ is $(A,B)$-strict in the sense of the Eisenstein condition from Proposition \ref{prop:RZ-eisenstein} (2). Here, the integer $d$ with $\mr{rk}_{\mcO_S}(\Lie(X)) = n|A| + d$ is allowed to vary on $S$.
\end{defn}

\begin{lem}\label{lem:AB_strict_implies_formal}
Assume that $A \neq \emptyset$ and that $(X, ι)/S$ is a $p$-divisible $O$-module with $(A,B)$-strict $O_F$-action. Then $X$ is formal.

Similarly, if $A\neq \emptyset$ and if $(\mcP, ι)$ is an $O$-display with $(A, B)$-strict $O_F$-action, then $\mcP$ is nilpotent.
\end{lem}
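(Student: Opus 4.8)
The plan is to reduce both assertions to a statement about Dieudonné modules over a perfect field, where nilpotency of the Verschiebung can be checked directly. First I would recall that nilpotency of an $O$-display, and formality of a $p$-divisible $O$-module, are properties that can be tested after base change to every homomorphism $R\to k$ (resp. $S$-point with values in a perfect field $k$), by the characterization quoted from \cite[before Definition 3.1.10]{KRZ} and the fact that formality of a $p$-divisible group is a condition on geometric fibres (no étale part over the closed points of $\Spf$). Moreover, by Theorem \ref{thm:equiv_displays} the two claims are equivalent to each other: $\mathrm{BT}$ matches nilpotent $O$-displays with formal $p$-divisible $O$-modules, so it suffices to prove the display version. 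So I am reduced to: if $k$ is a perfect field of characteristic $p$ which is an $O_E$-algebra, and $(\mcP,ι)$ is an $O$-Dieudonné module over $k$ (Example \ref{ex:displays}(2)) with $(A,B)$-strict $O_F$-action and $A\neq\emptyset$, then $V$ is topologically nilpotent on $\mcP$.

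Next I would make the strictness hypothesis concrete over $k$. Write $P$ for the underlying $W_O(k)$-module; it is finite projective over $O_F\tensor_O W_O(k)$ by \cite[Lemma 3.1.15]{KRZ}. Decompose $P$ according to the embeddings $\psi: k_F\to k$, i.e. along the factors of $O_F\tensor_O W_O(k) \simeq \prod_{\psi} O_F\tensor_{W_O(k_F),\psi} W_O(k)$ as in \eqref{eq:decomp_psi}; each factor $O_{F,\psi}$ is a complete DVR with uniformizer $π_F$ and Frobenius $σ$ permutes the factors cyclically. The Hodge filtration $Q/I_O(k)P \subseteq P/I_O(k)P$ being $(A,B)$-strict means, via the Eisenstein ideal description, that on each $\psi$-component the quotient $\Lie$ is killed by $e_B$ and that $e_A\cdot\Lie$ has bounded rank; in particular, since $A\neq\emptyset$, the element $e_A$ — whose $\psi$-component has positive $π_F$-adic valuation $|A_\psi|\geq 0$ with $\sum_\psi |A_\psi| = |A| > 0$ — annihilates a nonzero piece. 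Concretely I want to extract that $\Lie(\mcP)$ is annihilated by a positive power of $π_F$ on at least one (hence, by $σ$-equivariance cycling through the components, on a controlled pattern of) $\psi$-component(s).

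Then comes the core computation, which is the analogue of the Dieudonné calculation referenced around \cite[(4.3.14)--(4.3.18)]{KRZ}. Over $k$ one has $\dot F: Q\to P$ a $σ$-linear isomorphism and $V = $ its inverse composed with the inclusion $Q\hookrightarrow P$; topological nilpotency of $V$ amounts to $\bigcap_n V^n(P) = 0$. Using that $Q = I_O(k)P + (\text{part of }P\text{ cut out by the Hodge filtration})$ and that the Hodge filtration is forced by $(A,B)$-strictness to contain $e_B P$ and to be contained in $e_A P$, I would estimate the $π_F$-adic valuation growth of $V^n$ on each $\psi$-component: each application of $V$ moves between components and picks up a factor of $π_F$ to a power equal to the corank of the Hodge filtration there, and strictness guarantees this corank is positive on at least one component in every $σ$-orbit (because $A$ is non-empty and $σ$-stable only as a whole set $\Hom_K(F,\ov K)$, not orbit-by-orbit — but $e_A$ has total positive valuation, so going around the Frobenius cycle once the valuation strictly increases). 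Hence $V^{|k_F:\mbF_q|}$ is divisible by a positive power of $π_F$ on $P$, which gives $\bigcap_n V^n(P)=0$, i.e. nilpotency.

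The main obstacle I anticipate is the bookkeeping in that last step: translating "$A\neq\emptyset$, $A$ is $\mr{Gal}$-stable as a set together with its complement" into the precise statement that along each Frobenius cycle of residue embeddings the Hodge coranks sum to something positive, so that one full turn of $V$ strictly raises the $π_F$-valuation. Once that is pinned down — and it is exactly what Lemma \ref{lem:valuation_lift_W} is doing at the level of the element $e_A$ — the nilpotency is immediate, and this is presumably why the authors point to the $\mathrm{KRZ}$ computation rather than redoing it. I would therefore structure the proof as: (i) reduce to $\mcP$ over a perfect field via the cited characterizations and Theorem \ref{thm:equiv_displays}; (ii) decompose along residue embeddings and record the $(A,B)$-strictness as positivity of Hodge coranks with positive total; (iii) conclude $V$-nilpotency by the valuation estimate, citing the parallel computation in \cite{KRZ}.
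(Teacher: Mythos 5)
Your proposal is correct and takes essentially the same approach as the paper: reduce to an algebraically closed field, decompose the Dieudonné module along the residue embeddings $\psi:k_F\to k$ as in \eqref{eq:decomp_psi}, observe that $A\neq\emptyset$ makes $e_A$ a non-unit in $O_F\tensor_{\mbF_q}k$, and conclude that $V$ is topologically nilpotent from the resulting positive Hodge corank along each Frobenius cycle. One small caveat in your reduction step: invoking Theorem \ref{thm:equiv_displays} to equate the two assertions is slightly circular, since $\BT$ only matches \emph{nilpotent} displays with \emph{formal} modules and you do not yet know formality/nilpotency; the paper instead reduces the two claims independently to the same Dieudonné-module statement over the perfect field, using classical Dieudonné theory for $X$ and Example \ref{ex:displays}(2) (i.e.\ \cite[Prop.\ 3.1.9]{KRZ}) for $\mcP$ — which is also what your argument actually does once you are over $k$, so the $\BT$ remark should simply be dropped.
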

\begin{proof}
Being formal means that $X$ has no étale part which can be checked point by point. This reduces to the case $S = \Spec k$ for an algebraically closed $O_E$-field $k$. We fix an embedding of the residue field of $\ov K$ into $k$. If $A\neq \emptyset$, then clearly $e_A\in O_F\tensor_{\mbF_q} k$ is not a unit: indeed, for $φ\in A$, the image of $ζ \tensor 1 - 1\tensor φ(ζ)$ along $φ\tensor 1:O_F\tensor k \to k$ is zero.

Consider now the Dieudonné module $(M, V, σ)$ of $X$. Note that $M$ is free over $O_F\tensor_{O} W_O(k)$. In particular, it decomposes into a direct sum $M = \bigoplus_{ψ}\,M_ψ$ indexed by the embeddings $ψ:k_F\to k$ as in \eqref{eq:decomp_psi}. Frobenius and Verschiebung are homogeneous of degree $\pm 1$ with respect to this decomposition. Thus $e_A$ not being a unit implies that $V$ is topologically nilpotent.

In the exact same way, an $O$-display is nilpotent if and only if each of its fibers in geometric points is, see \cite[before Definition 3.1.10]{KRZ}. By \cite[Proposition 3.1.9]{KRZ}, the claim reduces to Dieudonné modules as before.
\end{proof}

Recall that $(A,B)$-Disp is our notation for the fibered category of $(A,B)$-strict pairs $(\mcP, ι)$ over $\Spf(O_E)$-schemes, where $E = E_AE_B$. Similarly let $(A, B)$-Mod denote the fibered category of $p$-divisible $O$-modules with $(A, B)$-strict $O_F$-action $(X, ι)$.

\begin{thm}\label{thm:AB_strict_p_div}
(1) Assume $B = A$, in particular $E = E_A$. Then there is an equivalence of fibered categories over $\Spf(O_E)$-schemes
\begin{equation}\label{eq:equiv_AB_strict_banal}
\Phi_A:(A,A)\text{-}\mathrm{Mod} \simlr \left\{\text{\begin{varwidth}{\textwidth} \centering étale $p$-divisible\\$O_F$-modules \end{varwidth}}\right\}.
\end{equation}
(2) Assume that $B = A \sqcup \{ϕ_0\}$. In this case, $ϕ_0$ embeds $F$ into $E$. There is an equivalence of fibered categories over $\Spf(O_E)$-schemes
\begin{equation}\label{eq:equiv_AB_strict}
\Phi_A:(A,B)\text{-}\mathrm{Mod} \simlr \left\{\text{\begin{varwidth}{\textwidth} \centering strict $p$-divisible\\$O_F$-modules \end{varwidth}}\right\}
\end{equation}
such that the effect of $Φ_A$ on the Hodge filtration is given by \eqref{eq:bij-e_S}. In particular, for every $(A,B)$-strict pair $(\mcP, ι)$, there is a natural isomorphism
$$J_A\cdot \Lie(\mcP) \simlr \Lie(Φ_A(\mcP)).$$
\end{thm}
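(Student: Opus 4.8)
The plan is to deduce Theorem \ref{thm:AB_strict_p_div} from the display-level equivalence (Theorem \ref{thm:AB_equiv_displays}) by transporting it through the Ahsendorf--Cheng--Zink functor $\BT$ (Theorem \ref{thm:equiv_displays}), after first arranging that everything in sight is formal (resp.\ nilpotent) so that $\BT$ applies. The two cases $B=A$ and $B=A\sqcup\{ϕ_0\}$ are handled in parallel, the only difference being whether the target consists of \'etale or of strict $p$-divisible $O_F$-modules.

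First I would treat part (2). The key reduction is Lemma \ref{lem:AB_strict_implies_formal}: since $A\neq\emptyset$ in the interesting signature range, any $(A,B)$-strict $p$-divisible $O$-module is automatically formal, and any $(A,B)$-strict $O$-display is automatically nilpotent. Hence $\BT$ restricts to an equivalence between $(A,B)$-strict pairs of $O$-displays and $(A,B)$-strict $p$-divisible $O$-modules, because the compatibility \eqref{eq:compatib_Hodge_filt} of $\BT$ with Hodge filtrations shows that the $O_F$-action on $\Lie$ is $(A,B)$-strict on one side iff it is on the other. The same remark, applied with $B=A\sqcup\{ϕ_0\}$ replaced by its image under $Φ_A$, identifies strict $p$-divisible $O_F$-modules with the $\BT$-image of $(\emptyset,\{ϕ_0\})$-strict pairs, which by Example \ref{ex:strict} are literally $O$-displays with a strict $O_F$-action, i.e.\ $p$-divisible $O_F$-modules in the sense of \S\ref{s:RZ}. (Here one must also check that the ACZ functor for the base ring $O$ and the ACZ functor for the base ring $O_F$ agree on these objects; this is exactly the transitivity/compatibility of $\BT$ with change of the coefficient ring, which is part of the ACZ package, see \cite[\S3]{KRZ}.) Composing $\BT^{-1}$, the display modification functor $Φ_A$ from Theorem \ref{thm:AB_equiv_displays} (with $S=A$), and $\BT$ again yields the desired equivalence \eqref{eq:equiv_AB_strict}. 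That the effect on Hodge filtrations is \eqref{eq:bij-e_S}, and in particular that $J_A\cdot\Lie(\mcP)\simlr \Lie(Φ_A(\mcP))$, is immediate from the construction of $Φ_S$ at the display level — there $Q_0=\ker[\wt{e_S}\colon P\to P/Q]$ and multiplication by $e_A$ was shown in Proposition \ref{prop:e_S-inverse} to induce the isomorphism $(J_A M/J_B M)\simlr(e_A M/e_B M)$ of the relevant subquotients of $\Lie$ — combined once more with the Hodge-filtration compatibility \eqref{eq:compatib_Hodge_filt} of $\BT$.

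For part (1), $B=A$, the situation is even simpler: Lemma \ref{lem:AB_strict_implies_formal} again forces formality/nilpotence, $\BT$ identifies $(A,A)$-strict pairs of $O$-displays with $(A,A)$-strict $p$-divisible $O$-modules, and Theorem \ref{thm:AB_equiv_displays} with $S=A$ sends these to $(\emptyset,\emptyset)$-strict pairs, i.e.\ to \'etale $O$-displays with $O_F$-action (Example \ref{ex:etale_displays}). Under $\BT$ these correspond to \'etale $p$-divisible $O_F$-modules. One small point to verify is that an \'etale $O$-display does go to an \'etale $p$-divisible group under $\BT$ and conversely; this follows from \eqref{eq:compatib_Hodge_filt} since in the \'etale case $Q=P$ forces $\Lie=0$.

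The main obstacle I anticipate is \emph{bookkeeping of coefficient rings and base rings} rather than any deep new input. One has to be careful that "strict $p$-divisible $O_F$-module" genuinely means: a $p$-divisible $O$-module with an $O_F$-action that is strict in the classical (Drinfeld) sense, and that $\BT$ intertwines the $O$-level ACZ equivalence with the $O_F$-level ACZ equivalence on this subcategory — i.e.\ the square relating $\BT_{O}$ and $\BT_{O_F}$ commutes. Granting that (it is the content of the functoriality of the Ahsendorf functor in the coefficient ring, used already in \cite[\S3]{KRZ}), the proof is a formal concatenation of equivalences. A secondary, genuinely necessary check is that $\BT$ respects the integer $d$ (the relative rank of $\Lie$), so that the "allowed to vary on $S$" clause matches on both sides; this is again immediate from \eqref{eq:compatib_Hodge_filt}.
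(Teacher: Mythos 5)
There is a genuine gap, and it sits exactly at the point you flag as a ``small point to verify''. The equivalence $\BT$ of Theorem~\ref{thm:equiv_displays} is only an equivalence between \emph{nilpotent} $O$-displays and \emph{formal} $p$-divisible $O$-modules. Lemma~\ref{lem:AB_strict_implies_formal} guarantees that the $(A,B)$-strict side (with $A\neq\emptyset$) is nilpotent/formal, so the leftmost vertical passage $(A,B)\text{-Mod}\to(A,B)\text{-Disp}$ is fine. But after applying $\Phi_A$ you land in $(\emptyset,\{\phi_0\})\text{-Disp}$, and this category is \emph{not} contained in the nilpotent displays: $\Phi_A$ is an equivalence from a purely nilpotent category onto \emph{all} of $(\emptyset,\{\phi_0\})\text{-Disp}$, including displays that after base change to a field are not $V$-nilpotent. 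Correspondingly, the target ``strict $p$-divisible $O_F$-modules'' of part (2) is not the category of \emph{formal} $O_F$-modules; it includes \'etale and mixed objects. So your final step ``apply $\BT$ to get strict $p$-divisible $O_F$-modules'' is applying $\BT$ outside its domain, and the proposed chain does not close. The secondary concern about compatibility of $\BT_O$ and $\BT_{O_F}$ is a red herring here (that concern is real but belongs to the PEL statement, Theorem~\ref{thm:AB_strict_p_div_PEL}, where the Ahsendorf functor appears); it plays no role in the paper's proof of this EL statement.

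The missing idea, and the crux of the paper's proof of part (2), is a \emph{dualization detour}: rather than mapping into $(\emptyset,\{\phi_0\})\text{-Disp}$ and then trying to apply $\BT$, one composes $\Phi_A$ with the display dual to land in $((B\setminus A)^c,\Hom_K(F,\overline K))\text{-Disp}$, which is again entirely nilpotent because $(B\setminus A)^c\supseteq A\neq\emptyset$. There $\BT$ applies, giving formal $p$-divisible $O$-modules with $(A^c, \Hom)$-strict action, and one finishes by taking the Faltings $O$-dual (Definition~\ref{def:dual}) to come back to $(\emptyset,\{\phi_0\})\text{-Mod}$. This is the diagram \eqref{eq:diag_proof_EL}; the dotted arrow in its top row is the sought equivalence. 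Your proof skips this detour entirely.

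Part (1) has the same problem in a starker form: after $\Phi_A$ you are in $(\emptyset,\emptyset)\text{-Disp}$, the category of \'etale displays, which $\BT$ does not see at all (every nonzero \'etale display fails nilpotence, and $\BT$ lands in formal groups, which have no \'etale part). Your parenthetical ``an \'etale $O$-display does go to an \'etale $p$-divisible group under $\BT$'' is false. The paper instead invokes the separate equivalence for \'etale objects from Example~\ref{ex:etale_displays} (via \eqref{eq:equiv_etale}, \'etale displays $\leftrightarrow$ \'etale local systems $\leftrightarrow$ \'etale $p$-divisible $O_F$-modules), not $\BT$, for the final step of the chain \eqref{eq:chain_1}.
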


\begin{proof}
For the proof, we choose any quasi-inverse $Ψ$ to the equivalence $\BT$ from Theorem \ref{thm:equiv_displays} together with an isomorphism $γ:\mr{id} \simto \BT\circ Ψ$. In particular, via $γ$, $Ψ$ is again compatible with Hodge filtrations in a sense analogous to \eqref{eq:compatib_Hodge_filt}.

We first prove statement (1). If $A = \emptyset$, then there is nothing to show because $(\emptyset, \emptyset)$-Mod is the same as the category of étale $p$-divisible $O$-modules with $O_F$-action. (These are the same as étale $p$-divisible $O_F$-modules.) If $A \neq \emptyset$, then by Lemma \ref{lem:AB_strict_implies_formal} we can use the equivalence $Ψ$ and the functor $Φ_A$ from Theorem \ref{thm:AB_equiv_displays} to reduce to that case:
\begin{equation}\label{eq:chain_1}
(A, A)\text{-Mod}\underset{Ψ}{\simlr} (A, A)\text{-Disp}\underset{Φ_A}{\simlr} (\emptyset, \emptyset)\text{-Disp}\underset{\text{Ex. \ref{ex:etale_displays}}}{\simlr} (\emptyset, \emptyset)\text{-Mod}.\end{equation}

Next, we prove statement (2). Again, if $A = \emptyset$, then there is nothing to show because $(\emptyset, \{ϕ_0\})$-strictness is the same as the usual notion of strictness. So the two sides in \eqref{eq:equiv_AB_strict} are equal.

Having settled this special case, we assume from now on that $A\neq \emptyset$. Since $A\subsetneq B$ in statement (2), this also implies $(B\setminus A)^c \neq \emptyset$. So we can consider the following equivalences of categories:

\begin{equation}\label{eq:diag_proof_EL}
\xymatrix{
(A,B)\text{-Mod} \ar[d]^-{Ψ}_-{\text{Thm. \ref{thm:equiv_displays}}} \ar@{-->}[rr]^-{\cong}&&
(\emptyset,B\setminus A)\text{-Mod} &&
((B\setminus A)^c, \Hom_K(F, \ov K))\text{-Mod} \ar[ll]_-{\text{$O$-dual}}^-{\text{Def. \ref{def:dual}}}\\
(A,B)\text{-Disp} \ar[rr]^-{Φ_A}_-{\text{Thm. \ref{thm:AB_equiv_displays}}} &&
(\emptyset,B\setminus A)\text{-Disp} \ar[rr]^-{\text{display dual}}_-{\text{Lem. \ref{lem:dual_display_strict}}} &&
((B\setminus A)^c, \Hom_K(F, \ov K))\text{-Disp} \ar[u]_-{\BT}^-{\text{Thm. \ref{thm:equiv_displays}}}
}
\end{equation}

Here, the two applications of Theorem \ref{thm:equiv_displays} are justified by Lemma \ref{lem:AB_strict_implies_formal}. The lower ``display dual'' arrow denotes the dualization of $O$-displays. The upper ``$O$-dual'' arrow denotes the Faltings dual of $p$-divisible $O$-modules from Definition \ref{def:dual} (fix any Lubin--Tate object over $O_K$ for the definition). All named arrows in this diagram are equivalences; the dotted arrow in the upper left then provides the proof of Theorem \ref{thm:AB_strict_p_div}. The compatibility with Hodge filtrations in case (2) follows from similar compatibilities for all functors in \eqref{eq:diag_proof_EL}.
\end{proof}

Due to the categorical nature of Theorem \ref{thm:AB_strict_p_div}, we obtain without difficulties comparison isomorphisms for a broad class of EL type RZ spaces. Let $D/F$ be a central division algebra of degree $d = (\dim_F(D))^{1/2}$. Let $O_D\subseteq D$ denote its ring of integers and let $\mr{charred}_{D/F}:O_D\to O_F[T]$ denote the reduced characteristic polynomial. Let $m\geq 1$ be an integer.

\begin{defn}[Special $O_D$-modules]
Let $X$ be a $p$-divisible $O_F$-module over a $\Spf(O_F)$-scheme $S$. Assume that $O_F\text{-ht}(X) = md^2$ and that $\dim(X) = d$. An $O_D$-action $ι:O_D\to \End(X)$ is called \emph{special} if for all $a\in O_D$,
\begin{equation}\label{eq:special}
\mr{char}(ι(a)\mid \Lie(X); T) = \mr{charred}_{D/F}(a; T)
\end{equation}
where the identity is meant as identity of elements of $\mcO_S[T]$. The pair $(X, ι)$ is then called a \emph{special $O_D$-module}.

In fact, let $L/F$ be an unramified field extension of degree $d$ and fix an embedding $O_L\to O_D$. Then it suffices to demand \eqref{eq:special} only for $a\in O_L$. This weaker condition is equivalent to the following rank condition: Over $S' = O_L\tensor_{O_F} S$, the Lie algebra of $X$ decomposes into eigenspaces,
\begin{equation}\label{eq:special_rank}
\begin{aligned}
\Lie(X) & = \bigoplus_{ψ \in \mr{Gal}(L/F)} \Lie(X)_ψ,\\[1mm]
\Lie(X)_ψ & = \{x\in \Lie(X) \mid ι(a)(x) = (ψ(a)\tensor 1)\cdot x\text{ for }a\in O_L\}.
\end{aligned}
\end{equation}
The rank condition then is $\mr{rk}_{\mcO_{S'}}(\Lie(X)_ψ) = 1$ for all $ψ\in \Gal(L/F)$.
\end{defn}

\begin{defn}[$(A, ϕ_0)$-special $O_D$-modules]
Let $A\subseteq \Hom_K(F, \ov K)$ be a subset with $ϕ_0\notin A$ and let $B = A\sqcup \{ϕ_0\}$. Set $E = E_AE_B$. Let $X$ be a $p$-divisible $O$-module over an $O_E$-scheme $S$. Assume that $X$ is of height $md^2[F:K]$ and that $\dim(X) = md^2|A| + d$. An $O_D$-action $ι:O_D\to \End(X)$ is called \emph{$(A,\{ϕ_0\})$-special} if
\begin{altenumerate}
    \item The action $ι\vert_{O_F}$ is $(A,B)$-strict. In particular, $J_A\cdot \Lie(X)$ is a vector bundle of rank $d$ on $S$.
    \item The action of $O_D$ on $J_A\cdot \Lie(X)$ is special in the sense of \eqref{eq:special}.
\end{altenumerate}
The pair $(X, ι)$ is then called an \emph{$(A,ϕ_0)$-special $O_D$-module}.
\end{defn}

\begin{cor}[to Theorem \ref{thm:AB_strict_p_div}]\label{cor:CDA}
The functor $Φ_A$ from \eqref{eq:equiv_AB_strict} defines an equivalence of fibered categories over $\Spf (O_E)$-schemes
\begin{equation}\label{eq:equiv_AB_special}
\Phi_A:\left\{\text{$(A,ϕ_0)$-special $O_D$-modules}\right\} \simlr \left\{\text{special $O_D$-modules}\right\}.
\end{equation}
\end{cor}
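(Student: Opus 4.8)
The plan is to upgrade the equivalence $\Phi_A$ of Theorem~\ref{thm:AB_strict_p_div}~(2) from the setting of a single $O_F$-action to that of an $O_D$-action. The key observation is that, since $F$ is the center of $D$, giving an $O_D$-action on a $p$-divisible $O$-module $X$ whose restriction to $O_F$ equals a fixed $(A,B)$-strict action $\iota|_{O_F}$ is exactly the datum of an $O_F$-algebra homomorphism $O_D\to\End_{O_F}(X)$ into the centralizer of $\iota(O_F)$, i.e.\ into the endomorphism ring of $X$ computed in the category $(A,B)$-$\mathrm{Mod}$ (since $(A,B)$-strictness is a property of objects). Thus a $p$-divisible $O$-module with $O_D$-action restricting to an $(A,B)$-strict $O_F$-action is precisely a ``module over $O_D$ internal to $(A,B)$-$\mathrm{Mod}$'', and likewise a strict $p$-divisible $O_F$-module with $O_D$-action is a module over $O_D$ internal to the target category of $\Phi_A$.

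First I would use that $\Phi_A$ is fully faithful: for an object $(X,\iota|_{O_F})$ of $(A,B)$-$\mathrm{Mod}$ it induces an isomorphism of rings, in fact of $O_F$-algebras, $\End_{O_F}(X)\simlr\End_{O_F}(\Phi_A X)$, compatible with composition and natural in $X$. Applying the functor $\mathrm{Hom}_{O_F\text{-alg}}(O_D,-)$ then gives a natural bijection between extensions of $\iota|_{O_F}$ to an $O_D$-action on $X$ and extensions of the transported action to an $O_D$-action on $\Phi_A X$; together with the fact that $O_D$-linear morphisms are exactly the $O_D$-linear morphisms of $(A,B)$-$\mathrm{Mod}$, this already produces an equivalence at the level of $O_D$-actions. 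Next I would match the numerical invariants: the modification functor leaves the underlying $W_O(R)$-module $P$ of a display unchanged and hence preserves $O$-height, while the natural isomorphism $J_A\cdot\Lie(X)\simlr\Lie(\Phi_A X)$ of Theorem~\ref{thm:AB_strict_p_div}~(2), combined with the $(A,B)$-strictness, identifies $\dim(\Phi_A X)$ with $\rk(J_A\cdot\Lie X)$. So the normalization in the definition of an $(A,\phi_0)$-special $O_D$-module ($O$-height $md^2[F:K]$, $\dim md^2|A|+d$) corresponds under $\Phi_A$ exactly to that of a special $O_D$-module ($O_F$-height $md^2$, $\dim d$), and conversely.

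It then remains to match the ``special'' condition on the $O_D$-action. For $a\in O_D$ the endomorphism $\iota(a):X\to X$ is $O_F$-linear, hence a morphism in $(A,B)$-$\mathrm{Mod}$, so naturality of $J_A\cdot\Lie(-)\simlr\Lie(\Phi_A(-))$ supplies a commuting square identifying the action of $\iota(a)$ on $J_A\cdot\Lie(X)$ with that of $\Phi_A(\iota)(a)$ on $\Lie(\Phi_A X)$, functorially and hence $S$-locally. Therefore $\mr{char}(\iota(a)\mid J_A\cdot\Lie X;T)=\mr{char}(\Phi_A(\iota)(a)\mid\Lie(\Phi_A X);T)$ as identities of polynomial functions on $S$, so the requirement that this polynomial equal $\mr{charred}_{D/F}(a;T)$ for all $a\in O_D$ holds for $(X,\iota)$ if and only if it holds for $\Phi_A(X,\iota)$. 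Combining this with the previous two steps upgrades the equivalence of Theorem~\ref{thm:AB_strict_p_div}~(2) to the asserted equivalence~\eqref{eq:equiv_AB_special}.

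The main obstacle is bookkeeping rather than a new idea: one has to verify carefully that every $\iota(a)$ is genuinely a morphism in $(A,B)$-$\mathrm{Mod}$ so that naturality applies, and that all comparisons are performed locally on the base so that the reduced characteristic polynomials—identities of polynomial functions—can legitimately be compared. One could alternatively carry out the last step through the weaker eigenspace rank condition \eqref{eq:special_rank} after base change to $O_L\tensor_{O_F}S$, which makes the comparison even more transparent. Beyond Theorem~\ref{thm:AB_strict_p_div}, everything is formal.
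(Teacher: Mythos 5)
Your proposal is correct and follows essentially the same approach as the paper: the paper's proof is a two-line sketch that applies $\Phi_A$ to obtain a strict $O_F$-module with $O_D$-action and then invokes the effect of $\Phi_A$ on Hodge filtrations (equivalently, the natural isomorphism $J_A\cdot\Lie(X)\simeq\Lie(\Phi_A X)$) to translate the $(A,\phi_0)$-special condition into the special condition. You have simply spelled out the formal bookkeeping—the identification of $O_D$-actions via the centralizer interpretation and full faithfulness, the matching of height and dimension, and the comparison of reduced characteristic polynomials—that the paper leaves implicit.
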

\begin{proof}
Given an $(A,ϕ_0)$-special $O_D$-module $(X, ι)$, application of the equivalence $Φ_A$ from Theorem \ref{thm:AB_strict_p_div} produces a strict $O_F$-module with $O_D$-action $Φ(X, ι)$. Its effect on Hodge filtrations is given by \eqref{eq:bij-e_S} which translates the ``$(A, ϕ_0)$-special'' property into the ``special'' property.
\end{proof}
Let $\breve E/E$ be the completion of a maximal unramified extension of $E$. Let $(\mbX_A, ι)$ be an $(A,ϕ_0)$-special $O_D$-module over the residue field $\mbF$ of $\breve E$. Then $(\mbX_A,ι)$ defines an ``absolute'' RZ space $\mcM_A$ over $\Spf O_{\breve E}$. It is the formal scheme with moduli description
\begin{equation}\label{eq:def_RZ_AB_Drinfeld}
\mcM_A(S) = \left\{(X_A, ι, ρ) \left\vert \text{\begin{varwidth}{\textwidth} \centering $(X_A, ι)/S$ an $(A, ϕ_0)$-special $O_D$-module\\[1mm] $ρ:\ov S \times_{\Spec \mbF} (\mbX_A, ι) \dashrightarrow \ov S \times_S (X_A, ι)$ a quasi-isogeny\end{varwidth}}\right.\right\}.
\end{equation}
Here, $\ov S$ denotes the special fiber $\mbF\tensor_{O_{\breve E}} S$. The right hand side of \eqref{eq:def_RZ_AB_Drinfeld} is meant as the set of isomorphism classes. The quasi-isogeny $ρ$ is required to be $O_D$-linear.

Now we apply the functor $Φ_A$ to obtain a special $O_D$-module $(\mbX, ι) := Φ_A(\mbX_A, ι)$ over $\mbF$. Since $Φ_A$ is an equivalence of categories, it also defines an isomorphism $\mcQ\mcI sog(\mbX_A, ι)\simto \mcQ\mcI sog(\mbX, ι)$ between the groups of $D$-linear self quasi-isogenies of $\mbX_A$ and $\mbX$.

Let $\breve F/F$ be the completion of a maximal unramified extension. Extend $ϕ_0:F\to E$ to an embedding $\breve F\to \breve E$. In particular, $\mbF$ is also viewed as the residue field of $\breve F$. Thus we can define the ``relative'' RZ space $\mcM$ over $\Spf O_{\breve F}$ by
\begin{equation}\label{eq:def_RZ_Drinfeld}
\mcM(S) = \left\{(X, ι, ρ) \left\vert \text{\begin{varwidth}{\textwidth} \centering $(X, ι)/S$ a special $O_D$-module\\[1mm] $ρ:\ov S \times_{\Spec \mbF} (\mbX, ι) \dashrightarrow \ov S \times_S (X, ι)$ a quasi-isogeny\end{varwidth}}\right.\right\}.
\end{equation}
\begin{cor}[to Corollary \ref{cor:CDA}]\label{cor:Drinfeld_comparison}
There is an $\mcQ\mcI sog(\mbX_A, ι)$-equivariant isomorphism of formal schemes over $\Spf(O_{\breve E})$,
\begin{equation}\label{eq:RZ_Drinfeld_iso}
Φ_A:\mcM_A \simlr O_{\breve E}\wh{\tensor}_{O_{\breve F}} \mcM.
\end{equation}
\end{cor}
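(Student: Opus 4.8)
The plan is to deduce the isomorphism \eqref{eq:RZ_Drinfeld_iso} directly from the categorical equivalence $\Phi_A$ of Corollary \ref{cor:CDA} by transporting framing data, with no further geometric input. First I would record that restriction of scalars along the inclusion $O_{\breve F}\hookrightarrow O_{\breve E}$ (which exists since we extended $ϕ_0$ to an embedding $\breve F\to\breve E$) identifies $O_{\breve E}\wh{\tensor}_{O_{\breve F}}\mcM$ with the functor on $\Spf(O_{\breve E})$-schemes sending $S$ to $\mcM(S)$, where on the right $S$ is viewed as an $O_{\breve F}$-scheme via $O_{\breve F}\to O_{\breve E}\to\mcO_S$. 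Then, for such an $S$, I would define a map $\mcM_A(S)\to\mcM(S)$ by $(X_A,ι,ρ)\mapsto(\Phi_A(X_A,ι),\Phi_A(ρ))$, applying $\Phi_A$ to the $(A,ϕ_0)$-special $O_D$-module over $S$ and to the quasi-isogeny $ρ$ over the special fibre $\ov S$ (which is an $\mbF$-scheme, hence an $O_E$-scheme, so $\Phi_A$ is defined over it).

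The key points to verify are well-definedness and bijectivity of this map. Well-definedness rests on two features of $\Phi_A$: it is base-change compatible, being an equivalence of fibered categories, so that $\Phi_A(\ov S\times_S X_A)\iso\ov S\times_S\Phi_A(X_A)$ and $\Phi_A(\ov S\times_{\Spec\mbF}\mbX_A)\iso\ov S\times_{\Spec\mbF}\mbX$ by the very definition $\mbX:=\Phi_A(\mbX_A,ι)$; and it is an additive, $\mbZ_p$-linear, $O_D$-linear equivalence, hence induces an equivalence on the associated isogeny categories and so carries $D$-linear quasi-isogenies to $D$-linear quasi-isogenies (these being precisely the morphisms that become invertible after inverting $p$). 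Thus $\Phi_A(ρ)$ is a legitimate framing $\ov S\times_{\Spec\mbF}\mbX\dashrightarrow\ov S\times_S\Phi_A(X_A)$ for a point of $\mcM(S)$. Bijectivity follows by running the same construction with a quasi-inverse of $\Phi_A$ (again an additive, base-change compatible equivalence preserving quasi-isogenies); functoriality in $S$ is immediate, so one obtains an isomorphism of functors, hence an isomorphism of the representing formal schemes over $\Spf(O_{\breve E})$.

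For the equivariance I would use that $\Phi_A$ induces a group isomorphism $\mcQ\mcI sog(\mbX_A,ι)\simto\mcQ\mcI sog(\mbX,ι)$, as already recorded before the statement. Since $\mcQ\mcI sog(\mbX_A,ι)$ acts on $\mcM_A$ by change of framing, $g\cdot(X_A,ι,ρ)=(X_A,ι,ρ\circ(\ov S\times g)^{-1})$, and likewise for $\mcM$, the functoriality and base-change compatibility of $\Phi_A$ give $\Phi_A\bigl(ρ\circ(\ov S\times g)^{-1}\bigr)=\Phi_A(ρ)\circ\bigl(\ov S\times\Phi_A(g)\bigr)^{-1}$, which is exactly the intertwining needed. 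The only genuine difficulty is bookkeeping: one must be sure that $\Phi_A$ is compatible with base change to the special fibre and with passage to the isogeny category. But this is built in, because $\Phi_A$ is assembled in Theorem \ref{thm:AB_strict_p_div} and Corollary \ref{cor:CDA} out of $\BT$ (Theorem \ref{thm:equiv_displays}), the modification functor of Theorem \ref{thm:AB_equiv_displays}, and display/Faltings duality, each of which is an additive, base-change compatible equivalence preserving quasi-isogenies; in particular no deformation theory is required. An alternative phrasing entirely in terms of $O$-displays would proceed identically, but going through $p$-divisible $O$-modules and Corollary \ref{cor:CDA} keeps the framing data most transparent.
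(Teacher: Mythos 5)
Your proof is correct and is exactly the argument the paper implicitly invokes: the corollary is stated as a direct consequence of Corollary \ref{cor:CDA} with no written proof, and the intended content is precisely to apply the fibered-category equivalence $\Phi_A$ pointwise to the moduli description, using that it commutes with base change (in particular to the special fibre $\ov S$), that it carries the framing object $(\mbX_A,ι)$ to $(\mbX,ι)$ by definition, and that as an additive $O_D$-linear equivalence it induces an equivalence on isogeny categories and hence preserves $D$-linear quasi-isogenies. The equivariance under $\mcQ\mcI sog(\mbX_A,ι)$ follows from functoriality of $\Phi_A$ applied to the quasi-isogeny entry, as you say. The one bookkeeping point worth being scrupulous about — and which you correctly identify and handle — is that the Yoneda identification $(O_{\breve E}\wh{\tensor}_{O_{\breve F}}\mcM)(S)=\mcM(S)$, with $S$ viewed as an $O_{\breve F}$-scheme via the chosen extension $\breve F\hookrightarrow\breve E$, is exactly what lets a comparison of functors on $\Spf(O_{\breve E})$-schemes produce an isomorphism of the base-changed formal scheme rather than of $\mcM$ itself.
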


Historically, the isomorphism \eqref{eq:RZ_Drinfeld_iso} was first conjectured by Rapoport--Zink \cite{RZ2} for Drinfeld's half space (meaning $D$ of Hasse invariant $1/d$ and $m = 1$). Their main result, which relies on display theory as well, proved \eqref{eq:RZ_Drinfeld_iso} when $F/K$ is unramified. Moreover, for general $F$, they constructed $Φ_A$ on the maximal reduced subscheme and on the generic fiber separately. Scholze--Weinstein \cite[Theorem 25.4.1]{SW} later gave a full proof using the theory of integral models of moduli spaces of local shtukas. Corollary \ref{cor:Drinfeld_comparison} extends these results to all $D$ and $m$. Moreover, our Corollary \ref{cor:CDA} goes beyond RZ spaces in that it also applies to families of $p$-divisible $O$-modules whose fiberwise isogeny classes vary.

\subsection{PEL type moduli spaces}\label{sec:uni-RZ}
Our next objective is to formulate and prove comparison isomorphisms for RZ spaces in the unitary setting. Let $X$ be a $p$-divisible $O$-module. We denote by $X^\vee$ its Faltings dual with respect to the Lubin--Tate object $\BT((W_O(O), I_O(O), σ, V^{-1}))$ defined by the multiplicative $O$-display (Example \ref{ex:displays}). Our intermediate aim is to construct a variant of $Φ_A$ as in Theorem \ref{thm:AB_strict_p_div} that is compatible with duality.

We call a $p$-divisible $O$-module $X$ \emph{biformal} if both $X$ and $X^\vee$ are formal. We call an $O$-display $\mcP$ \emph{binilpotent} if both $\mcP$ and $\mcP^\vee$ are nilpotent. For every binilpotent $O$-display $\mcP$, we have a natural isomorphism $α_\mcP:\BT(\mcP^\vee)\simto \BT(\mcP)^\vee$, see Proposition \ref{prop:Faltings_display_compatible}. Let
\begin{equation}
Ψ: \left\{\text{\begin{varwidth}{\textwidth} \centering biformal $p$-divisible\\$O$-modules \end{varwidth}}\right\} \simlr \left\{\text{binilpotent $O$-displays}\right\}
\end{equation}
be any choice of quasi-inverse to $\BT$ and let $γ:\mr{id} \simto \BT\circ Ψ$ be any choice of isomorphism. Via pullback of \eqref{eq:compatib_Hodge_filt} and \eqref{eq:resulting_alpha} along $γ$, $Ψ$ is compatible with dualization and passage to the Hodge filtration in the sense that there are natural isomorphisms $Ψ(X^\vee) \simto Ψ(X)^\vee$ and
$$[\mcD_X(R)\twoheadrightarrow \Lie(X)]\simlr [P_{Ψ(X)}/I_O\cdot P_{Ψ(X)} \twoheadrightarrow P_{Ψ(X)}/Q_{Ψ(X)}].$$
Here, we briefly used the notation $Ψ(X) = (P_{Ψ(X)}, Q_{Ψ(X)}, \bfF_{Ψ(X)}, \dF_{Ψ(X)})$.

\begin{rmk}\label{rmk:assumption_psi}
When $K = \mbQ_p$, an explicit functor $Ψ$ was constructed by Lau \cite{Lau_smoothness}.
\end{rmk}

For clarity, we now consider the two cases of Theorem \ref{thm:AB_strict_p_div} separately.

\medskip \noindent \emph{(1) The case of $(A, A)$-$\mathrm{Mod}$.} Let $F_0/K$ be an extension of $p$-adic local fields and let $F/F_0$ be a quadratic field extension with Galois conjugation $a\mapsto \bar a$. Let $A\subseteq \Hom_K(F, \ov K)$ be a subset such that
$$\Hom_K(F, \ov K) = A \sqcup \ov A$$
and let $E = E_A$. This is a special case of the setting for Theorem \ref{thm:AB_strict_p_div} (1) and we consider the equivalence $Φ_A$ defined there. Recall that, by definition, it is given by the chain of equivalences \eqref{eq:chain_1}:
\begin{equation}\label{eq:chain_2}
(A, A)\text{-Mod}\underset{Ψ}{\simlr} (A, A)\text{-Disp}\underset{\text{Thm. \ref{thm:AB_equiv_displays}}}{\simlr} (\emptyset, \emptyset)\text{-Disp}\underset{\text{Ex. \ref{ex:etale_displays}}}{\simlr} (\emptyset, \emptyset)\text{-Mod}.\end{equation}
These four categories are endowed with the following notions of duality:
\begin{itemize}
    \item On $(A, A)$-Mod, we consider hermitian duality $(X_A, ι)\mapsto (X_A^\vee, \ov{ι}^\vee)$, where $X_A\mapsto X_A^\vee$ denotes Faltings $O$-duality with respect to the Lubin--Tate object $\BT((W_O(O), I_O(O), σ, V^{-1}))$ as fixed at the beginning of the section.

    \item On $(A, A)$-Disp, we consider hermitian $O$-display duality as in Definition \ref{def:herm_dual}.

    \item Let $\mcH = \mcH(O_E)$ be the étale dualizing object from \eqref{eq:def_H_et_LT_disp}. On $(\emptyset, \emptyset)$-Disp, we consider $\mcH$-$\dagger$-duality as in \eqref{eq:H_dagger_dual}.

    \item Let $\mbH = \mcH^{\dF = \mr{id}}$ be the étale local system defined from $\mcH$ under the equivalence from \eqref{eq:equiv_etale}. It is an étale local system of $O_F$-modules of rank $1$. On étale $p$-divisible $O_F$-modules, we consider hermitian duality with respect to $\mbH$.
\end{itemize}

\begin{prop}\label{prop:duality_compatibility_etale}
The equivalence $Φ_A$ from Theorem \ref{thm:AB_strict_p_div} (1) is compatible with duality in the sense that there are natural isomorphisms $Φ_A((X_A, ι)^\vee) \simlr Φ_A(X_A, ι)^\vee$ that are compatible with the natural isomorphisms $(X_A, ι) \simto ((X_A, ι)^\vee)^\vee$ and $Φ_A(X_A, ι)\simto (Φ_A(X_A, ι)^\vee)^\vee$
\end{prop}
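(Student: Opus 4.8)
The plan is to chase the duality through each of the three equivalences making up $\Phi_A$ in \eqref{eq:chain_2} and verify that each is compatible with the chosen notion of duality on source and target. Concretely, $\Phi_A$ is the composite $(A,A)\text{-Mod}\xrightarrow{\Psi}(A,A)\text{-Disp}\xrightarrow{\Phi_A}(\emptyset,\emptyset)\text{-Disp}\xrightarrow{\text{Ex.\ \ref{ex:etale_displays}}}(\emptyset,\emptyset)\text{-Mod}$, and we have already equipped each of these four categories with a duality. So it suffices to produce, for each arrow, a natural isomorphism intertwining the two dualities, compatibly with the canonical double-duality identifications, and then paste these together.

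First I would treat the middle arrow, which is the heart of the matter: this is Theorem \ref{thm:comp-display-banal}, which says precisely that the modification functor $\Phi_A$ on displays carries hermitian $\vee$-duality to hermitian $\mcH$-$\dagger$-duality via the trace isomorphism $\btr:P^\dagger\simto P^\vee$, with the $\mcH$-$\dagger$-dual taken with respect to the étale dualizing display $\mcH(O_E)$ from \eqref{eq:def_H_et_LT_disp}. That theorem also records (via the remark after Theorem \ref{thm:comp-display}) that $\btr$ is compatible with the evaluation identifications $P\simto(P^\vee)^\vee$ and $P\simto(P^\dagger)^\dagger$, which is what ``compatible with the natural double-duality isomorphisms'' amounts to. Second, the outermost arrow $\Psi$ from biformal $p$-divisible $O$-modules to (bi)nilpotent $O$-displays was chosen to be compatible with Faltings $O$-duality: via $\gamma$ and Proposition \ref{prop:Faltings_display_compatible} we have $\Psi(X^\vee)\simto\Psi(X)^\vee$, and passing to $O_F$-actions and conjugating by $\bar{(\cdot)}$ turns this into a comparison of hermitian $\vee$-dualities on $(A,A)\text{-Mod}$ and $(A,A)\text{-Disp}$; since the Lubin--Tate object used to define Faltings $O$-duality is the fixed multiplicative one $\BT((W_O(O),I_O(O),\sigma,V^{-1}))$, there is no ambiguity. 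Third, the bottom arrow is the equivalence of Example \ref{ex:etale_displays} between étale $O$-displays and étale local systems of $O$-modules; by \eqref{eq:iso_duality_etale} this carries bilinear forms of displays bijectively to bilinear forms of local systems, so it sends the $\mcH$-$\dagger$-dual (defined by the universal pairing into $\mcH$) to the $\mathbb{H}$-hermitian dual, where $\mathbb{H}=\mcH^{\dF=\mr{id}}$ — and again the internal-Hom formalism \eqref{eq:univ_pairing_etale} guarantees compatibility with double duality.

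Pasting the three natural isomorphisms gives the desired $\Phi_A((X_A,\iota)^\vee)\simto\Phi_A(X_A,\iota)^\vee$, and the compatibility with $(X_A,\iota)\simto((X_A,\iota)^\vee)^\vee$ and $\Phi_A(X_A,\iota)\simto(\Phi_A(X_A,\iota)^\vee)^\vee$ follows because each factor is so compatible (each intertwines an involutive anti-equivalence with another, commuting with the biduality unit). The only genuine subtlety — and the step I expect to be the main obstacle — is bookkeeping the conjugation $a\mapsto\bar a$ and checking that the various dualizing objects line up: one must verify that the $\mcH$-$\dagger$-dual used in Theorem \ref{thm:comp-display-banal} is built from the \emph{same} $\wt\theta=\wt{e_A}\wt{f_A}$ that makes $\mcH(O_E)$ into the étale local system $\mathbb{H}$ on the target, i.e.\ that the chosen Lubin--Tate/dualizing data are consistent across \S\ref{ss:duality_and_modification} and the present section. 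This is exactly what Theorem \ref{thm:comp-display-banal} was set up to supply, so modulo carefully transporting it along $\Psi$ and along Example \ref{ex:etale_displays} the proposition follows formally; no new computation beyond those already done is required.
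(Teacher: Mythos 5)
Your proposal is correct and takes essentially the same route as the paper: it decomposes $\Phi_A$ into the same three equivalences from \eqref{eq:chain_2} and for each invokes exactly the compatibility the paper cites (Proposition \ref{prop:Faltings_display_compatible} together with $\gamma$ and Lemma \ref{lem:symmetry} for $\Psi$, Theorem \ref{thm:comp-display-banal} for the modification functor, and \eqref{eq:iso_duality_etale} for the passage to \'etale local systems). The ``subtlety'' you flag at the end about the dualizing objects lining up is precisely what the bulleted list preceding the proposition sets up, so nothing is missing.
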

\begin{proof}
By definition, $Φ_A$ is the composition of the three equivalences in \eqref{eq:chain_2}. Each of these has an analogous compatibility propperty: (1) For $Ψ$, this is by the choice of $γ:\mr{id}\simto \BT\circ Ψ$ and the compatibility results for $\BT$ in Lemma \ref{lem:symmetry} and Proposition \ref{prop:Faltings_display_compatible}. (2) For $Φ_A$, this is the compatibility from Theorem \ref{thm:comp-display-banal}. (3) For the passage to étale local systems, this follows from the isomorphism in \eqref{eq:iso_duality_etale}.
\end{proof}

\medskip \noindent \emph{(2) The case of $(A, B)$-$\mathrm{Mod}$.} Let $F_0/K$ and $F/F_0$ be as before; further assume that
$$\Hom_K(F, \ov K) = A \sqcup \ov A \sqcup \{ϕ_0, \ov{ϕ_0}\}$$
and set $B = A\sqcup \{ϕ_0, \ov{ϕ_0}\}$. Let $E = E_AE_B$ be the reflex field. In principle, we now again consider the diagram \eqref{eq:diag_proof_EL}. However, it is not immediately clear how to formulate a compatibility with duality for its right hand side square. For this reason, we now restrict to \emph{relatively} biformal objects:
\begin{defn}\label{def:binilpotent}
We call a pair $(\mcP, ι) \in (A,B)$-Disp \emph{relatively binilpotent} if $Φ_A(\mcP, ι)$ is binilpotent. We call a pair $(X, ι) \in (A, B)$-Mod \emph{relatively biformal} if $Ψ(X, ι)$ is relatively binilpotent.
\end{defn}
Our aim is to construct an equivalence
$$Φ_A:\left\{\text{\begin{varwidth}{\textwidth}\centering Relatively biformal\\$(X, ι)\in (A,B)$-Mod\end{varwidth}}\right\} \underset{?}{\simlr} \left\{\text{\begin{varwidth}{\textwidth}\centering Biformal strict $O_{F_0}$-modules\\
with $O_F$-action\end{varwidth}}\right\}$$
that is compatible with hermitian duality. This will require the Ahsendorf functor from \cite[Theorem 3.3.2]{KRZ} (originally due to Ahsendorf \cite{ACZ}) as intermediate step. Let $R$ be an $O_{F_0}$-algebra in which $p$ is nilpotent. The Ahsendorf functor is an equivalence of
\begin{equation}\label{eq:Ahsendorf_functor}
\mfA:\left\{\text{\begin{varwidth}{\textwidth}\centering Formal $O$-displays with\\strict $O_{F_0}$-action over $R$\end{varwidth}}\right\} \simlr \left\{\text{\begin{varwidth}{\textwidth}\centering Formal $O_{F_0}$-displays\\over $R$\end{varwidth}}\right\}.
\end{equation}
Let $\mcL$ be a Lubin--Tate display for $O_{F_0}/O$ over $R$ and let $\mcP\mapsto \mcP^{\mcL\text{-}\dagger}$ denote duality with respect to $\mcL$ as in Proposition \ref{prop:dual_LT}. Still by \cite[Theorem 3.3.2]{KRZ}, the Ahsendorf functor is compatible with duality in the sense that it comes with a natural isomorphism
\begin{equation}\label{eq:Ahsendorf_compatibility}
\mfA(\mcP^{\mcL\text{-}\dagger}) \simlr \mfA(\mcP)^{\mfA(\mcL)\text{-}\dagger}.
\end{equation}
This identity makes sense whenever $\mcP^{\mcL\text{-}\dagger}$ is again formal. We again call such $\mcP$ \emph{relatively binilpotent}. Note that this notion does not depend on $\mcL$ by Lemma \ref{lem:LT_object_essentially_unique}. Finally, we note that the Ahsendorf functor and the formation of the isomorphism \eqref{eq:Ahsendorf_compatibility} commute with base change in $R$.

After these preparations, we come to our main arguments. We work from now on over the ring $O_E$, where $E = E_AE_B$. Note that $\{ϕ_0, \ov{ϕ_0}\}:F_0\hookrightarrow E$, so the notion of strict $O_{F_0}$-actions makes sense for objects over $O_E$-schemes. Define $Φ_A$ as the composition of the following functors:
\begin{equation}\label{eq:Phi_A_square}
\xymatrix{
\left\{\text{\begin{varwidth}{\textwidth}\centering Relatively biformal\\$(X, ι)\in (A,B)$-Mod\end{varwidth}}\right\} \ar@{-->}[rr]^-{\sim}_-{\text{Def. of }Φ_A} \ar[d]^-{Ψ}&
&
\left\{\text{\begin{varwidth}{\textwidth}\centering Biformal strict $O_{F_0}$-modules\\with $O_F$-action\end{varwidth}}\right\}\\
\left\{\text{\begin{varwidth}{\textwidth}\centering Relatively binilpotent\\$(\mcP, ι)\in (A,B)$-Disp\end{varwidth}}\right\} \ar[r]^-{Φ_A}_-{\text{Thm. \ref{thm:AB_equiv_displays}}} &
\left\{\text{\begin{varwidth}{\textwidth}\centering Relatively binilpotent\\$(\mcP, ι) \in (\emptyset, \{ϕ_0, \ov{ϕ_0}\})$-Disp\end{varwidth}}\right\} \ar[r]^-{\mfA}_-{\eqref{eq:Ahsendorf_functor}} &
\left\{\text{\begin{varwidth}{\textwidth}\centering Biformal $O_{F_0}$-displays\\with $O_F$-action\end{varwidth}}\right\} \ar[u]^-{\BT}_-{\text{Thm. \ref{thm:equiv_displays}}}
}
\end{equation}
The five categories of the diagram are endowed with the following notions of duality:
\begin{itemize}
    \item On $(A, B)$-Mod, we consider hermitian duality $(X, ι)\mapsto (X^\vee, \ov{ι}^\vee)$, where $X\mapsto X^\vee$ denotes Faltings $O$-duality with respect to the Lubin--Tate object $\BT((W_O(O), I_O(O), σ, V^{-1}))$ as fixed at the beginning of the section.

    \item On $(A, B)$-Disp, we consider hermitian $O$-display duality as in Definition \ref{def:herm_dual}.

    \item Let $\mcL = \mcL(O_E)$ be the Lubin--Tate display from Definition \ref{def:LT_display}. On $(\emptyset, \{ϕ_0, \ov{ϕ_0}\})$-Disp, we consider $\mcL$-$\dagger$-duality as in Definition \ref{def:herm_dagger_dual}.

    \item Let $\mbL = \mfA(\mcL)$ be the $O_{F_0}$-display obtained from $\mcL$ by the Ahsendorf functor. On $O_{F_0}$-displays, we consider duality with respect to $\mbL$.
    
    \item Finally, on strict formal $O_{F_0}$-modules, we consider Faltings duality with respect to $\BT(\mbL)$.
\end{itemize}

\begin{thm}\label{thm:AB_strict_p_div_PEL}
The functor $Φ_A$ defined in \eqref{eq:Phi_A_square} is compatible with duality in the sense that there are natural isomorphisms $Φ_A((X_A, ι)^\vee)\simto Φ_A(X_A, ι)^\vee$ that are compatible with the natural isomorphisms $(X_A, ι)\simto ((X_A, ι)^\vee)^\vee$ and $Φ_A(X_A, ι) \simto (Φ_A(X_A, ι)^\vee)^\vee$.
\end{thm}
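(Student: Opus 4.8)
The plan is to observe that $Φ_A$ of \eqref{eq:Phi_A_square} is, by construction, a composite of four equivalences, and that each one intertwines the pair of duality operations attached to its source and target; the desired natural isomorphism is then the composite of the four resulting compatibility isomorphisms, read down the left edge, along the bottom, and up the right edge of \eqref{eq:Phi_A_square}. Explicitly: first, $Ψ$ intertwines hermitian Faltings $O$-duality on $(A,B)$-Mod with hermitian $O$-display duality on $(A,B)$-Disp, which follows from the chosen isomorphism $γ:\mr{id}\simto \BT\circ Ψ$ together with Lemma \ref{lem:symmetry} and Proposition \ref{prop:Faltings_display_compatible} (both operations being obtained from the plain $O$-linear dualities by conjugating the $O_F$-action). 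Second, the display modification functor of Theorem \ref{thm:AB_equiv_displays} intertwines hermitian $\vee$-duality with hermitian $\mcL$-$\dagger$-duality for $\mcL = \mcL(O_E)$ the Lubin--Tate display of Definition \ref{def:LT_display} --- this is exactly Theorem \ref{thm:comp-display}. Third, the Ahsendorf functor $\mfA$ intertwines $\mcL$-$\dagger$-duality with $\mbL$-duality, where $\mbL = \mfA(\mcL)$, by the compatibility \eqref{eq:Ahsendorf_compatibility}. Fourth, $\BT$ over $O_{F_0}$ intertwines $\mbL$-duality on $O_{F_0}$-displays with Faltings $\BT(\mbL)$-duality on formal $p$-divisible $O_{F_0}$-modules, by Proposition \ref{prop:Faltings_display_compatible} applied with $O_{F_0}$ in place of $O$. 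In each of these steps the extra $O_F$-action, and hence its conjugate, is preserved by functoriality, so the hermitian refinements match up throughout.

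Granting this, I would compose the four compatibility isomorphisms to obtain a natural isomorphism $Φ_A((X_A,ι)^\vee)\simto Φ_A(X_A,ι)^\vee$ on relatively biformal pairs. Before doing so one should record that this subcategory is stable under hermitian duality: on the target side, biformal strict $O_{F_0}$-modules with $O_F$-action are visibly closed under Faltings $O_{F_0}$-duality (as $(X^\vee)^\vee\simeq X$), and since the four functors are equivalences this stability pulls back; alternatively one checks it at the level of $O_{F_0}$-displays using \eqref{eq:Ahsendorf_compatibility}. Each of the four compatibility isomorphisms is, by the statements cited, compatible with the canonical identifications $\mr{id}\simto (-)^{\vee\vee}$ on its source and target --- this being part of the assertion that each duality in play is an involutive anti-equivalence --- so the composite inherits this property, which yields the last clause of the theorem.

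I expect the only real work to be the bookkeeping of dualizing objects, and this is where I would be careful. The point is that the dualizing data is transported along the functors rather than chosen afresh: the Lubin--Tate display $\mcL(O_E)$ entering the second step is precisely the one produced by Theorem \ref{thm:comp-display}; its Ahsendorf image $\mbL = \mfA(\mcL)$ is the $O_{F_0}$-display used in the third and fourth steps; and $\BT(\mbL)$ is the object defining Faltings duality of $O_{F_0}$-modules on the target. Since at each stage the dualizing object used is the image of the previous one, the four isomorphisms splice together with no discrepancy. A minor auxiliary point is that the Ahsendorf functor of \cite[Theorem 3.3.2]{KRZ}, stated for $O$-displays with strict $O_{F_0}$-action, is $O_F$-linear, so that the residual $O_F$-action is carried along correctly; this is immediate from applying functoriality to the endomorphisms $ι(a)$, $a\in O_F$. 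No difficulty beyond these compatibility checks should arise.
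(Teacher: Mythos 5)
Your proposal is essentially identical in structure to the paper's proof: it decomposes $Φ_A$ into the four functors of \eqref{eq:Phi_A_square}, invokes the same four compatibility statements (Lemma \ref{lem:symmetry}/Proposition \ref{prop:Faltings_display_compatible} for $Ψ$, Theorem \ref{thm:comp-display} for the display modification, \eqref{eq:Ahsendorf_compatibility} for $\mfA$, and a version of Proposition \ref{prop:Faltings_display_compatible} for the final $\BT$), and splices them together while tracking the dualizing objects. The one place where you are a bit too quick is the fourth step. Proposition \ref{prop:Faltings_display_compatible} is stated for the \emph{standard} $O_{F_0}$-display dual (with respect to the multiplicative $O_{F_0}$-display) and Faltings duality with respect to the fixed Lubin--Tate object $\BT(\mcW_{O_{F_0}})$. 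In the fourth step you instead need compatibility for $\mbL$-$\dagger$-duality on displays and Faltings duality with respect to $\BT(\mbL)$, and $\mbL = \mfA(\mcL)$ need not equal the multiplicative $O_{F_0}$-display over $O_E$ (they only become isomorphic after an unramified extension of $O_{F_0}$). So ``Proposition \ref{prop:Faltings_display_compatible} applied with $O_{F_0}$ in place of $O$'' is not literally the right citation: one must first run Construction \ref{construction:alpha} \emph{mutatis mutandis} with $\mbL$ and $\BT(\mbL)$ as dualizing objects to produce a natural map $β_\mbP\colon \BT(\mbP^{\mbL\text{-}\dagger}) \to \BT(\mbP)^{\BT(\mbL)\text{-}\dagger}$, then observe that étale locally $\mbL$ trivializes to the multiplicative display so that $β_\mbP$ coincides (up to isomorphism) with $α_\mbP$, and finally deduce the isomorphism property from Proposition \ref{prop:Faltings_display_compatible}. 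The double-duality compatibility for $β_\mbP$ likewise needs to be rechecked by the formal computation of Lemma \ref{lem:symmetry}, rather than cited directly. Apart from this imprecision, your argument and the paper's are the same.
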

\begin{proof}
Each of the four functors in \eqref{eq:Phi_A_square} has an analogous compatibility property: (1) For $Ψ$, this compatibility comes from Lemma \ref{lem:symmetry}, Proposition \ref{prop:Faltings_display_compatible}, and its definition through $γ$. (2) For $Φ_A$, this compatibility is Theorem \ref{thm:comp-display}. (3) For $\mfA$, the compatibility is part of \cite[Theorem 3.3.2]{KRZ}. (4) For the equivalence $\BT$, the compatibility again follows from the appendix: For every biformal $O_{F_0}$-display with $O_F$-action $\mbP$, Construction \ref{construction:alpha} \emph{mutatis mutandis} defines a natural homomorphism $β_\mbP:\BT(\mbP^\text{$\mbL$-$\dagger$})\to \BT(\mbP)^\text{$\BT(\mbL)$-$\dagger$}$. Over an unramified extension of $O_{F_0}$, the $O_{F_0}$-display $\mbL$ becomes isomorphic to the multiplicative $O_{F_0}$-display $(W_{O_{F_0}}(O_{F_0}), I_{O_{F_0}}(O_{F_0}), σ, V^{-1})$. So $β_\mbP$ is equal to $α_\mbP$ étale locally (up to isomorphism), which shows that $β_\mbP$ is an isomorphism (Proposition \ref{prop:Faltings_display_compatible}). The claim about compatibility with double-duality for $β_\mbP$ follows by the same formal computation as for Lemma \ref{lem:symmetry}.
\end{proof}

We now deduce comparison isomorphisms of ``unitary vertex level RZ spaces for signature $(r,s)$ in the unramified case''. Let $n\geq 1$ be an integer and assume that $F/F_0$ is unramified. Let $A$ and $ϕ_0$ be as before; that is,
\begin{equation}\label{eq:setting_unitary_RZ}
\Hom_K(F, \ov K) = A\sqcup \ob A \sqcup \{ϕ_0, \ov{ϕ_0}\},\quad B = A\sqcup \{ϕ_0, \ov{ϕ_0}\}.
\end{equation}
Set
$$E = \begin{cases}
    ϕ_0(F_0)\cdot E_A & \text{if $r = s$}\\
    ϕ_0(F)\cdot E_A & \text{if $r \neq s$}.
\end{cases}$$
Then $E$ is the reflex field for the following moduli problem.
\begin{defn}\label{def:AB_strict_PEL} Let $R$ be an $O_E$-algebra.

\medskip \noindent (1) Let $(X_A, ι)$ be an $(A, B)$-strict pair of dimension $n\cdot [F_0:K]$ above $R$. In particular, $J_A\cdot \Lie(X_A)$ is finite projective of rank $n$. We say that $(X_A, ι)$ is \emph{of signature $(r,s)$} if for all $a\in O_F$,
$$\mr{char}(ι(a)\mid J_A\cdot \Lie(X_A); T) = (T-a)^r (T-\ov a)^s.$$
Here the right hand side lies in $O_E[T]$ and is viewed as an element $\mcO_S[T]$ by the map $O_E\to \mcO_S$.

\medskip \noindent (2) Let $(X_A, ι)$ be $(A, B)$-strict and of signature $(r,s)$ as in (1). When we consider polarizations $λ:X_A\to X_A^\vee$, we always implicitly assume that they are \emph{compatible with the $O_F$-action} in the sense that $λ\circ ι(a) = ι^\vee(\ob a)\circ λ$ for all $a\in O_F$. We say that $λ$ is of type $t$ with $0\leq t \leq n$ if
$$\ker(λ) \subseteq X_A[π_F],\quad \deg(λ) = q_F^t.$$
\end{defn}

Let $\mbF$ be the residue field of a maximal unramified extension $\breve E/E$ and let $(\mbX_A, ι, λ)$ over $\mbF$ be an $(A, B)$-strict pair of signature $(r,s)$ with polarization of type $t$. The absolute RZ space of $(\mbX_A, ι, λ)$ is the formal scheme over $\Spf O_{\breve E}$ with moduli description
\begin{equation}\label{eq:def_RZ_AB_PEL}
\mcN^{[t]}_{A,(r,s)}(S) = \left\{(X_A, ι, λ, ρ) \left\vert \text{\begin{varwidth}{\textwidth} \centering $(X_A, ι)/S$ an $(A, B)$-strict pair of signature $(r,s)$\\[1mm]
$λ:X_A\to X_A^\vee$ a polarization of type $t$\\[1mm]
$ρ:\ov S \times_{\Spec \mbF} (\mbX_A, ι, λ) \dashrightarrow \ov S \times_S (X_A, ι, λ)$ a quasi-isogeny\end{varwidth}}\right.\right\}.
\end{equation}

Fix an isomorphism $δ$ between $\mbL$ and $\BT((W_{O_{F_0}}(O_{F_0}), I_{O_{F_0}}(O_{F_0}), σ, V^{-1}))$ over $O_{\breve E}$. Then, if $λ_\mbL:\mbP\to \mbP^\text{$\mbL$-$\vee$}$ is an $\mbL$-polarization, the composition $λ = δ\circ λ_\mbL$ is a polarization $\mbP\to \mbP^\vee$ with respect to the ``standard'' choice of Lubin--Tate object.

Assume from now on that $\mbX_A$ is relatively biformal. Using $δ$, we construct a $p$-divisible $O_{F_0}$-module $(\mbX, ι, λ) := Φ_A(\mbX_A, ι, λ)$ of $O_{F_0}$-height $2n$, dimension $n$, together with an $O_F$-action $ι$ of signature $(r,s)$ and a polarization of type $t$. We define the relative RZ space over $O_{\breve F}$ by
\begin{equation}\label{eq:def_RZ_PEL}
\mcN^{[t]}_{(r,s)}(S) = \left\{(X, ι, λ, ρ) \left\vert \text{\begin{varwidth}{\textwidth} \centering $(X, ι)/S$ a $p$-divisible $O_{F_0}$-module with $O_F$-action of signature $(r,s)$\\[1mm]
$λ:X\to X^\vee$ a polarization of type $t$\\[1mm]
$ρ:\ov S \times_{\Spec \mbF} (\mbX, ι, λ) \dashrightarrow \ov S \times_S (X, ι, λ)$ a quasi-isogeny\end{varwidth}}\right.\right\}.
\end{equation}
The group $\mcQ \mcI sog(\mbX_A, ι, λ)$ acts on $\mcN^{[t]}_{A,(r,s)}$ by composition in $ρ$. Moreover, $Φ_A$ also provides an isomorphism $Φ_A:\mcQ \mcI sog(\mbX_A, ι, λ) \to \mcQ \mcI sog(\mbX, ι, λ)$.
\begin{cor}[to Theorem \ref{thm:AB_strict_p_div_PEL}]\label{cor:unitary-comp}
There is a $\mcQ \mcI sog(\mbX_A, ι, λ)$-equivariant isomorphism
$$Φ_A:\mcN^{[t]}_{A,(r,s)} \simlr O_{\breve E} \widehat{\tensor}_{O_{\breve F}} \mcN^{[t]}_{(r,s)}.$$
\end{cor}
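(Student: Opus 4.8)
\textbf{Proof proposal for Corollary \ref{cor:unitary-comp}.}
The plan is to read off the isomorphism directly from the equivalence $\Phi_A$ of Theorem \ref{thm:AB_strict_p_div_PEL}, by checking that $\Phi_A$ matches up all the decorations entering the moduli descriptions \eqref{eq:def_RZ_AB_PEL} and \eqref{eq:def_RZ_PEL}. Throughout I fix an extension $O_{\breve F}\hookrightarrow O_{\breve E}$ of $\phi_0$ — the two rings share the residue field $\mbF$, so this is unambiguous — and regard every $\Spf O_{\breve E}$-scheme $S$ also as a $\Spf O_{\breve F}$-scheme.

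The first step is to see that every $S$-point $(X_A,\iota,\lambda,\rho)$ of $\mcN^{[t]}_{A,(r,s)}$ is relatively biformal, so that $\Phi_A$ from \eqref{eq:Phi_A_square} applies. By Definition \ref{def:binilpotent}, relative biformality of $(X_A,\iota)$ is the binilpotency of the display-level modification $\Phi_A(\Psi(X_A,\iota))$, a condition that can be tested at geometric points (cf.\ \cite[before Definition 3.1.10]{KRZ}) and is invariant under quasi-isogeny. Since $\Psi$ and the display-level $\Phi_A$ commute with base change and pass to isogeny categories, and since $\mbX_A$ is relatively biformal while $\rho$ exhibits $X_A$ as quasi-isogenous to $\mbX_A$ over $\ov S$, this holds. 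Applying $\Phi_A$ produces $(X:=\Phi_A(X_A),\iota)$, a biformal $p$-divisible $O_{F_0}$-module with $O_F$-action over $S$. Theorem \ref{thm:AB_strict_p_div} (2), transported through $\Psi$, the display-level $\Phi_A$, the Ahsendorf functor $\mfA$ of \cite[Theorem 3.3.2]{KRZ}, and $\BT$ — each of which respects Lie algebras — gives an $O_F$-equivariant identification $J_A\cdot\Lie(X_A)\simlr\Lie(X)$, so that signature $(r,s)$ in the sense of Definition \ref{def:AB_strict_PEL} (1) is preserved. By the duality compatibility in Theorem \ref{thm:AB_strict_p_div_PEL}, the polarization $\lambda\colon X_A\to X_A^\vee$ — which is exactly a morphism to the hermitian $\vee$-dual — is carried to a polarization $X\to X^\vee$ compatible with $\iota$, the target being Faltings duality with respect to $\BT(\mbL)$; composing with the fixed isomorphism $\delta$ relating $\mbL$ to the standard multiplicative Lubin--Tate $O_{F_0}$-display gives the polarization $\lambda\colon X\to X^\vee$ of \eqref{eq:def_RZ_PEL}. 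As $\Phi_A$ is an $O_F$-linear equivalence it preserves $X[\pi_F]$ and the length of $\ker\lambda$, so the type is still $t$. Finally $\Phi_A$ translates $\rho$ into a quasi-isogeny $\ov S\times_{\Spec\mbF}(\mbX,\iota,\lambda)\dashrightarrow\ov S\times_S(X,\iota,\lambda)$ with $(\mbX,\iota,\lambda)=\Phi_A(\mbX_A,\iota,\lambda)$, which is precisely the framing object defining $\mcN^{[t]}_{(r,s)}$. This assigns to $(X_A,\iota,\lambda,\rho)$ a point of $\mcN^{[t]}_{(r,s)}(S)$.

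Running the same construction with the quasi-inverse of $\Phi_A$, and using the natural double-duality isomorphisms to see that the two directions are mutually inverse, shows that the assignment is a bijection $\mcN^{[t]}_{A,(r,s)}(S)\simto\mcN^{[t]}_{(r,s)}(S)$, functorial in $S$. Since $\mcN^{[t]}_{A,(r,s)}$ is a formal scheme over $\Spf O_{\breve E}$ and $O_{\breve E}\widehat{\tensor}_{O_{\breve F}}\mcN^{[t]}_{(r,s)}$ represents $S\mapsto\mcN^{[t]}_{(r,s)}(S)$ on $O_{\breve E}$-schemes, this is an isomorphism of formal schemes over $\Spf O_{\breve E}$. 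Equivariance under $\mcQ\mcI sog(\mbX_A,\iota,\lambda)$ is immediate: that group acts on both sides by post-composition in $\rho$, $\Phi_A$ intertwines composition of quasi-isogenies, and $\Phi_A\colon\mcQ\mcI sog(\mbX_A,\iota,\lambda)\simto\mcQ\mcI sog(\mbX,\iota,\lambda)$ is the accompanying group isomorphism.

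I expect the only real work to be the bookkeeping in the second step — tracking the signature, the polarization together with its $O_F$-compatibility, and the type through the composite $\BT\circ\mfA\circ\Phi_A\circ\Psi$. The signature part is the Lie-algebra compatibility already packaged into Theorems \ref{thm:equiv_displays}, \ref{thm:AB_equiv_displays}, \ref{thm:AB_strict_p_div} and the Ahsendorf functor; the polarization part is exactly Theorem \ref{thm:AB_strict_p_div_PEL} combined with the trivialization $\delta$ of the auxiliary object $\mbL$; and the preservation of the type $t$ uses only $O_F$-linearity and exactness of the functors involved. A secondary point needing care is the automatic relative biformality of objects of $\mcN^{[t]}_{A,(r,s)}$ established in the first step, which is where the hypothesis that $\mbX_A$ is relatively biformal is used.
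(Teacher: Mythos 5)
Your proposal is correct and takes the same route as the paper: the paper's proof of this corollary is the one-line "This follows directly from Theorem \ref{thm:AB_strict_p_div_PEL}," leaving the bookkeeping (automatic relative biformality of all $S$-points via geometric points of $\ov S$, transport of signature through $J_A\cdot\Lie$, transport of the polarization via the $\dagger$-duality and the trivialization $\delta$, and preservation of the type $t$) implicit, which is exactly what you spell out. Your explicit treatment of relative biformality via nilpotency at geometric points and invariance under quasi-isogeny is the key observation the paper assumes when it writes ``Assume from now on that $\mbX_A$ is relatively biformal.''
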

\begin{proof}
This follows directly from Theorem \ref{thm:AB_strict_p_div_PEL}.
\end{proof}

\section{Unitary Shimura varieties}\label{sec:Shimura}

We now turn to the global theory. We define integral models for RSZ unitary Shimura varieties following \cite{RSZ-AGGP,RSZ} and \cite[\S C]{liu2021fourier}. The novelty of our definition is that we drop all the ramification assumptions on $F_0/\mbQ$ from \S5 of \cite{RSZ}. Even though this definition also works at ramified places of $F/F_0$, we only consider the unramified places for simplicity. (Only these will come up in later our application to arithmetic transfer.) We also restrict to vertex level structure, but our definition directly extends to lattice chain moduli problems as in Corollary \ref{cor:LM-comparison}.

\subsection{RSZ Shimura varieties}\label{sec:RSZ}
Let $F / F_0 $ be a quadratic CM extension of a totally real number field with non-trivial Galois involution $x \mapsto \ov{x}$. Let $\ov{\mbQ}$ be the algebraic closure of $\mbQ$ in $\mbC$, and let $Φ \subset \Hom(F, \ov{\mbQ})$ be a CM type. Let $V$ be a hermitian $F$-vector space of dimension $n\geq 2$. We assume that there exists an embedding $ϕ_0\in Φ$ such that for all $ϕ\in Φ$,
\begin{equation}\label{eq:sign_V}
\mr{sign}(V_ϕ) = \begin{cases}
(n-1,1) & \text{if $ϕ = ϕ_0$}\\
\text{$(n,0)$ or $(0,n)$} & \text{if $ϕ\neq ϕ_0$}.\end{cases}
\end{equation}
Starting from \S\ref{sec:global_intersection_numbers}, we will assume that only the positive definite case occurs for $ϕ\neq ϕ_0$, but we include the more general case before that.

Let $G=\mathrm{U}(V)$ be the unitary group of $V$, viewed as an algebraic group over $F_0$. As in \cite[\S 3]{RSZ}, we consider the following reductive groups over $\mathbb Q$. The notation $c(g)$ denotes the similitude factor of $g$:
$$\begin{aligned}
G^{\mathbb Q} & := \{g \in \Res_{F_0/\mathbb Q} \GU(V) | \, c(g) \in  \mathbb G_m \},\\
Z^{\mathbb Q} & :=\{ z \in  \Res_{F/\mathbb Q} \mathbb G_m | \, \mathrm{Nm}_{F/F_0}(z) \in \mathbb G_m \},\\
\widetilde{G} & :=Z^{\mathbb Q} \times_{\mr{Nm}_{F/F_0}, \mathbb G_m, c} G^{\mathbb Q}.
\end{aligned}$$
Note that $(z,g)\mapsto (z, z^{-1}g)$ defines an isomorphism $\widetilde{G}\simto Z^{\mathbb Q} \times \Res_{F_0/\mathbb Q} G$. Consider the RSZ Shimura datum from \cite[\S3.2]{RSZ},
\[
( \widetilde{G}, \{ h_{\widetilde{G}} \} ) = ( Z^{\mbQ}, h_{\Phi}) \times ( \Res_{F_0/\mathbb Q}G, \{ h_{\Res_{F_0/\mathbb Q}G} \}). 
\]
Let $E$ be the reflex field of $\{h_{\wt G}\}$. Then, for any neat compact open subgroup
$$\widetilde{K}=K_{Z^{\mbQ}} \times K \subset Z^{\mathbb Q}(\mathbb A_f) \times \Res_{F_0/\mathbb Q} G(\mathbb A_f) = \widetilde{G}(\mathbb A_f),$$
we have the \emph{RSZ unitary Shimura variety} attached to $( \widetilde{G}, \{ h_{\widetilde{G}} \} )$ with level $\wt{K}$
\[
M := \mathrm{Sh}_{\wt{K}}( \widetilde{G}, \{ h_{\widetilde{G}} \} ) \lr \Spec E.
\]
Define a function $r:\Hom(F, \mbQ)\to \mbZ_{\geq 0}$ through the formula
\begin{equation}\label{eq:def_r}
\mr{sign}(V_ϕ) = (r_ϕ, r_{\ov{ϕ}}),\quad ϕ\in Φ.
\end{equation}
By \cite[(3.4)]{RSZ}, the above reflex field $E$ can be described by
\begin{align*}
\Gal(\ov{\mbQ}/E)={}&\{\sigma\in \Gal(\ov{\mbQ}/\mbQ)\mid  σ\circ Φ = Φ \text{ and }r = r\circ σ\}.
\end{align*}
Note that we always have $ϕ_0(F)\subseteq E$. If $F$ is Galois over $\mbQ$, or if $F$ contains an imaginary quadratic subfield such that $Φ$ is induced from this subfield, we have $\phi_0: F \cong E$. In general, $E$ can be strictly larger than $\phi_0(F)$. 

\subsection{$(A,B)$-strictness for abelian schemes}
In complete analogy with \eqref{eq:reflex_E_A}, every subset $S\subseteq \Hom(F, \ov{\mbQ})$ defines a reflex field $E_S \subset \ov{\mbQ}$ by the formula
\begin{align*}
\Gal(\ov{\mbQ}/E_S)={}&\{\sigma\in\Gal(\ov{\mbQ}/\mbQ)\mid \sigma\circ S = S\}.
\end{align*}
We define the global Eisenstein ideal of $S$ (following Howard \cite{H})
\begin{equation*}
    J_S = \ker\left(\prod_{ϕ\in S} ϕ\tensor \mr{id} : O_F\otimes_{\mbZ}O_{E_S}\lr \prod_{\phi\in S}\ov{\mbQ}\right).
\end{equation*}
An equivalent way is to define $J_S$ as the ideal generated by all elements $e_{ζ,S}(ζ\tensor 1) \in O_F \tensor_\mbZ O_{E_S}$, for $ζ\in O_F$, where $e_{ζ,S}(T)$ is as in \eqref{eq:def_eisenstein_element}. The relation of global and local Eisenstein ideal is easy to describe. First, we recall the relation of local and global reflex fields, see \cite[(4.11)]{RSZ-AGGP}:

Let $ν\mid p$ be a place of $E_S$. Choose an embedding $α:\ov{\mbQ}\to \ov{\mbQ_p}$ that induces $ν$. Then there is a decomposition
\begin{equation}\label{eq:decomp_local_global}
\Hom(F, \ov{\mbQ}) \underset{α_*}{\simlr} \Hom(F, \ov{\mbQ_p}) = \coprod_{v\mid p} \Hom(F_v, \ov{\mbQ_p})
\end{equation}
by the places $v$ of $F$ above $p$. Let $S = \sqcup_{v\mid p} S_v$ be the induced decomposition of $S$. The Galois group of $\mbQ_p$ acts memberwise on the right hand side of \eqref{eq:decomp_local_global}. So we obtain
\begin{equation}\label{eq:reflex_localized}
\begin{aligned}
\Gal(\ov{\mbQ_p}/E_{S,ν}) = & \{σ\in \Gal(\ov{\mbQ_p}/\mbQ_p) \mid σ\circ S = S\}\\[1mm]
= & \{σ\in \Gal(\ov{\mbQ_p}/\mbQ_p) \mid σ\circ S_v = S_v \text{ for all $v$}\}.
\end{aligned}
\end{equation}
It follows that $E_{S,ν} = \langle E_{S_v}\rangle_{v\mid p}$, where $E_{S_v}\subset \ov{\mbQ_p}$ is the reflex field of $S_v$. Moreover, note that
$$(O_F\tensor_\mbZ O_{E_S})\tensor_{O_{E_S}} O_{E_S, ν} = \prod_{v\mid p} O_{F,v}\tensor_{\mbZ_p} O_{E_S, ν}.$$
Compatibly with this decomposition, one shows with Lemma \ref{lem:indep_of_generator} that
\begin{equation}\label{eq:localize_J}
    J_S \tensor_{O_{E_S}} O_{E_S, ν} = \prod_{v\mid p} J_{S_v}\tensor_{O_{E_{S_v}}}O_{E_S, ν}.
\end{equation}
We will make use of the natural embeddings $E_{S_v} \to E_{S,ν}$ during the uniformization of $\mcM$ later.

We come back to the data $Φ$ and $V$. Recall that $ϕ_0:F\hookrightarrow \ov{\mbQ}$ is the distinguished element of $Φ$, characterized by $\mr{sign}(V_{\phi_0})=(n-1,1)$. Set
\begin{equation}
\begin{aligned}
A = & \{ϕ\in Φ \mid \mr{sign}(V_ϕ) = (n,0)\}\\
&\ \cup \{ϕ \in \ov{Φ} \mid \mr{sign}(V_ϕ) = (0, n)\}.
\end{aligned}
\end{equation}
Since we assumed $n\geq 2$ which implies $(n-1,1)\notin \{(n,0), (0,n)\}$, there is a disjoint union decomposition
$$\Hom_{\mbQ}(F,\ov{\mbQ})=A\sqcup \ov{A}\sqcup \{\phi_0,\ov{\phi_0}\},$$
and we define $B:=A\sqcup \{\phi_0,\ov{\phi_0}\}$. Then it is clear from definitions that 
$$E= ϕ_0(F)E_AE_Φ.$$

\begin{notation}\label{notation:A}
Let $\mcA$ be an abelian scheme over a scheme $S$. We denote by $\mcD(\mcA)$ the locally free $\mcO_S$-module dual to the de Rham cohomology of $\mcA$. It is a locally free $\mcO_S$-module of rank $2\dim(\mcA)$. The Hodge filtration of $\mcA$ takes the form
\begin{equation*}
    0\lr \Fil(\mcA)\lr \mcD(\mcA)\lr \Lie(\mcA)\lr 0.
\end{equation*}

For rings of integers and homomorphism spaces of abelian varieties, we use the shorthand notations
\begin{equation}\label{eq:shorthands}
\begin{cases}\mbZ^\mfd = \mbZ[\mfd^{-1}],\quad O_F^\mfd = O_F[\mfd^{-1}],\ldots\\[2mm]
\Hom^\mfd(\mcA_1, \mcA_2) = \Hom(\mcA_1,\mcA_2)[\mfd^{-1}],\quad \End^\mfd(\mcA) = \End(\mcA)[\mfd^{-1}],\ldots.
\end{cases}
\end{equation}
Recall the following basic fact: Assume that $S$ is a $\mbZ^\mfd$-scheme, that $\mcA/S$ is an abelian scheme, and that $ι$ is an action $ι:O_F^\mfd \to \End^\mfd(\mcA)$. Then $[F_0:\mbQ]\mid \dim(\mcA)$ and $\mcD(\mcA)$ is a locally free $O_F\tensor_{\mbZ}\mcO_S$-module of rank $\dim(\mcA)/[F_0:\mbQ]$.
\end{notation}

\begin{defn}[Eisenstein and Signature conditions]\label{def:signature_abelian_variety} (1) Let $S$ be an $O_E^\mfd$-scheme and let $\mcA/S$ be an abelian scheme of dimension $n[F_0:\mbQ]$ together with an action $ι:O_F^\mfd\to \End^\mfd(A)$. We say that $ι$ is \emph{$(A, B)$-strict}, or that $ι$ satisfies the \emph{Eisenstein condition for $(A,B)$}, if
\begin{equation}\label{eq:AB_strict_abelian_variety}
J_B\cdot \mcD(\mcA) \subseteq \Fil(\mcA) \subseteq J_A\cdot \mcD(\mcA).
\end{equation}
Note that if $ι$ is $(A,B)$-strict, then by Lemma \ref{lem:AB-strict_dualizing} also the conjugate-dual action $\ov{ι}^\vee$ of $O_F$ on $\mcA^\vee$ is $(A,B)$-strict.

\medskip \noindent (2) Let $S$ as well as $(\mcA, ι)$ be as before and assume that $ι$ is $(A,B)$-strict. Then, by Lemma \ref{lem:eisenstein_ideal} (1) in conjunction with \eqref{eq:localize_J}, the product $J_A\cdot \Lie(\mcA)$ is a locally free $\mcO_S$-module of rank $n$. We say that $(\mcA, ι)$ satisfies the \emph{signature $(n-1,1)$ condition} if for every $a\in O_F$,
\begin{equation}\label{eq:signature_abelian_variety}
\mr{char}(ι(a)\mid J_A\cdot \Lie(\mcA); T) = (T - ϕ_0(a))^{n-1} (T-ϕ_0(\ov a)).
\end{equation}
\end{defn}

\begin{ex}[Relation of local and global signature conditions]
Let $ν\nmid \mfd$ be a place of $E$, and let $S$ be an $O_{E,ν}$-scheme. Let $\mcA/S$ be an abelian scheme of dimension $n[F_0:\mbQ]$ and let $ι:O_F^\mfd\to \End^\mfd(\mcA)$ be an action. Then
$$O_F^\mfd\tensor_\mbZ \mcO_S \simlr \prod_{v\mid p} O_{F,v}\tensor_{\mbZ_p} \mcO_S,$$
where the decomposition is over all places $v\mid p$ of $F$, and compatibly
$$\begin{aligned}
\Fil(\mcA) & = \oplus_{v\mid p} \Fil(\mcA)_v,\\[1mm]
\mcD(\mcA) & = \oplus_{v\mid p} \mcD(\mcA)_v,\\[1mm]
\Lie(\mcA) & = \oplus_{v\mid p} \Lie(\mcA)_v.
\end{aligned}$$
Each $\mcD(\mcA)_v$ is locally free of rank $n$ as $O_{F,v}\tensor_{\mbZ_p}\mcO_S$-module. Choose an embedding $α:\ov{\mbQ}\to \ov{\mbQ_p}$ that induces $ν$. This defines local sets $\{A_v\}_{v\mid p}$ and $\{B_v\}_{v\mid p}$ through \eqref{eq:decomp_local_global}. Moreover, each local reflex field $E_{A_v}E_{B_v}$ is naturally a subfield of $E_ν$. We obtain from \eqref{eq:localize_J} that
\begin{equation}\label{eq:translate_local_global_signature}
\text{$\Fil(\mcA)$ is $(A,B)$-strict}\quad \Longleftrightarrow\quad \text{each $\Fil(\mcA)_v$ is $(A_v, B_v)$-strict}.
\end{equation}
Let $v_0\mid p$ be the unique place such that $ϕ_0\in B_{v_0}$. First, consider a place $v\notin \{v_0, \ov{v_0}\}$. Then $A_v = B_v$, and the condition for $v$ on the right hand side of \eqref{eq:translate_local_global_signature} can be rephrased by
\begin{equation}\label{eq:banal_factor_abelian_variety}
\Fil(\mcA)_v = J_{A_v}\cdot \mcD(\mcA)_v.
\end{equation}
This is the so-called banal Eisenstein condition from \cite[Appendix B]{RSZ-AGGP} which we recalled in Remark \ref{rmk:reflex}.

Assume from now on that \eqref{eq:banal_factor_abelian_variety} holds for all $v\notin \{v_0, \ov{v_0}\}$, and focus on the place $v_0$. We first consider the case $\ov{v_0} \neq v_0$. Then $(\mcA, ι)$ satisfies the signature $(n-1,1)$ condition from Definition \ref{def:signature_abelian_variety} if and only if
\begin{equation}
\begin{cases}
\text{$\Fil(\mcA)_{v_0}$ is $(A_{v_0}, A_{v_0}\cup \{ϕ_0\})$-strict of relative dimension $n-1$, and}\\
\text{$\Fil(\mcA)_{\ov{v_0}}$ is $(A_{\ov{v_0}}, A_{\ov{v_0}}\cup\{\ov{ϕ_0}\})$-strict of relative dimension $1$.}
\end{cases}
\end{equation}
The local conditions here are the ones from Definition \ref{def:AB-strict}. Finally, we consider the case $\ov{v_0} = v_0$. Then $(\mcA, ι)$ satisfies the signature $(n-1,1)$ condition if and only if
\begin{equation}
\text{$\Fil(\mcA)_{v_0} \subset \mcD(\mcA)_{v_0}$ is of signature $(n-1,1)$ in the sense of Definition \ref{def:abs-LM} (v) and (vi)}.
\end{equation}
\end{ex}

\subsection{Integral models of toric Shimura varieties}
Let $\mfd \in \mbZ_{>1}$ be a product of primes such that the neat level $K_{Z^\mbQ}$ factors as
\begin{equation*}
K_{Z^\mbQ}=K_{Z^\mbQ,\mfd}\times K_{Z^\mbQ}^{\mfd}
\end{equation*}
where $K_{Z^\mbQ}^{\mfd}$ is the maximal compact subgroup in $Z^\mbQ(\mbA^\mfd_f)$. As an auxiliary datum, we also fix a hermitian $F_\mfd/F_{0,\mfd}$-module $\mbW_\mfd$ of rank $1$. Following \cite[\S 3.2]{RSZ-AGGP} and \cite[Definition 3.6, \S 4.1]{RSZ}, we define
$$
\mcM'_0 \lr \Spec O_{E_\Phi}^\mfd
$$
as the functor sending a scheme $S/O_{E_Φ}^\mfd$ to the set of isomorphism classes of tuples $(\mcA_0, \iota_0, \lambda_0, \ov \eta_0)$ where 
\begin{altenumerate}
 		\item $\mcA_0$ is an abelian scheme over $S$ of dimension $[F_0 : \mathbb Q]$.
		\item $\iota_0: O_{F}^\mfd\to \End^\mfd(\mcA_0)$ is an $O_F^\mfd$-action that is \emph{$(Φ, Φ)$-strict}, meaning that $\mr{Fil}(\mcA_0) = J_Φ\cdot \mcD(\mcA_0)$.\footnote{This condition is taken from \cite{H}.}
		\item $\lambda_0 \in \Hom^\mfd(\mcA_0, \mcA_0^\vee)$ is a \emph{$\mfd$-quasi polarization}, meaning it is a quasi-isogeny such that locally on $S$ some $\mfd^k$-multiple is a polarization.
        \item We require that $ι_0$ and $λ_0$ are compatible in the sense that for all $a \in O_F^\mfd$, we have
        $$ \lambda^{-1}_0 \circ \iota_0(a)^{\vee} \circ \lambda_0 = \iota_0(\overline{a}).$$
        \item We require that the degree of $λ_0$ is a product of powers of primes dividing $\mfd$. Put differently, we demand that $\ker(\mfd^k \lambda_0)[\mfd^{-1}] = 0$, where $k\geq 0$ is chosen (locally on $S$) such that $\mfd^k λ_0\in \Hom(\mcA, \mcA^\vee)$.
		\item $ \ov{\eta}_0$ is a $K_{Z^{\mathbb Q}_\mfd}$-level structure in the sense of \cite[C.3]{liu2021fourier} using $\mbW_0$.
\end{altenumerate}
Here, an isomorphism of such tuples
\begin{equation}\label{eq:iso_A_0}
γ_0:(\mcA_0, ι_0, λ_0, η_0) \simlr (\mcA_0', ι_0', λ_0', η_0')
\end{equation}
is an $O_F^\mfd$-linear element $γ_0\in \Hom^\mfd(\mcA_0, \mcA_0')$ such that $γ_0^*(λ_0') \in \mbZ[\mfd^{-1}]^\times\cdot λ_0$ and $\ov{η}_0 = \ov{η}_0'\circ γ_0$.

We emphasize that the datum $\mbW_0$ is only used to pin down a moduli description for the integral model of the toric Shimura variety. Suppose that $F/F_0$ is ramified at some place, or that there exists a non-split place of $F_0$ that divides $\mfd$. Then there exists a choice for $\mbW_0$ such that $\mcM_0'$ is non-empty, see \cite[Remark 3.7]{RSZ}. Only in the rather specific case that $F/F_0$ is everywhere unramified and that all places above $\mfd$ are split, it can happen that $\mcM_0'$ is empty for choice of $\mbW_0$. In this case, one could slightly modify the moduli problem as explained in \cite[after Definition 3.6]{RSZ}. For simplicity, we will stick to the above moduli problem and henceforth assume that $\mcM_0'$ is non-empty.

As explained in \cite[\S 3.4]{RSZ}, $\mcM_0'$ is representable by a finite etale $O^\mfd_{E_\Phi}$-scheme. Its generic fiber $(\mcM_0')_\mbQ$ is a disjoint union of copies of the Shimura variety $\mathrm{Sh}_{K_{Z^{\mbQ}}}(Z^{\mbQ},h_{\Phi})$. We work with a fixed copy of this union, and we denote by $\mcM_0\subseteq \mcM_0'$ its closure. It is an open and closed subscheme of $\mcM_0'$ that provides the finite, étale integral model of $\mathrm{Sh}_{K_{Z^{\mbQ}}}(Z^{\mbQ},h_{\Phi})$.

\subsection{Integral models of RSZ Shimura varieties}\label{sec:integral of RSZ Shimura}
Now we define the punctured integral model of $\mr{Sh}_{\wt K}(\wt G, \{h_{\wt G}\})$. Let $L$ be an $O_F$-lattice in $V$ and let $\mfd \in \mbZ_{>1}$ be chosen such that:
\begin{altitemize}
    \item If $v$ is a finite place of $F_0$ that is ramified in $F$, then $v\mid \mfd$;
    \item For any finite place $v\nmid \mfd$ of $F$ that is inert over $F_0$, the localization $L_v$ in $V_v=V \otimes_F F_v$ is a vertex lattice, i.e. $L_v \subseteq L_v^\vee \subseteq \pi_v^{-1}L_v$, where $\pi_v\in F_v$ is a uniformizer;
    \item For any pair $\{v, \ov v\}\nmid \mfd$ of places of $F$ split over $F_0$, the localization of $L$ is self-dual in the sense $L_v^\vee = L_{\ov v}$.
    \item The level $K$ factors as 
    \begin{equation*}
        K=K_\mfd\times K^\mfd
    \end{equation*}
    where $K_\mfd\subset G(F_{0,\mfd})$ is any open compact subgroup and where $K^\mfd=\mathrm{Stab}(\wh{L}^\mfd)$ is the stabilizer of the completion $\wh{L}$ in $G(\mbA_{0,f}^\mfd)$;
    \item The level $K_{Z^\mbQ}$ factors similarly as
    \begin{equation*}
        K_{Z^\mbQ}=K_{Z^\mbQ,\mfd}\times K_{Z^\mbQ}^{\mfd}
    \end{equation*}
    where $K_{Z^\mbQ,\mfd}\subset Z^\mbQ(\mbQ_\mfd)$ is any open compact subgroup and where $K_{Z^\mbQ}^{\mfd}$  is the maximal compact subgroup in $Z^\mbQ(\mbA^\mfd_f)$.
\end{altitemize}

\begin{defn}[compare \protect{\cite[Definition 4.1]{RSZ}}]\label{def:RSZ-integral}
The punctured integral model of the RSZ Shimura variety
$$
\mathcal M \lr \Spec O_E^{\mfd}
$$ 
is the functor that takes an $O_E^\mfd$-scheme $S$ to the set of isomorphism classes of tuples
$$(\mcA_0, ι_0, \lambda_0,\ov{\eta}_0; \mcA, \iota, \lambda,\ov{\eta})$$
that are of the following form:
\begin{altenumerate}
\item  $(\mcA_0, \iota_0, \lambda_0, \ov{\eta}_0) \in \mcM_0(S)$.
\item $\mcA$ is an abelian scheme over $S$ of dimension $n[F_0:\mathbb Q]$.
\item $\iota: O^{\mfd}_{F} \to \End^{\mfd}(\mcA)$ is an $O_F^\mfd$-action that is of signature $(n-1,1)$ in the sense of Definition \ref{def:signature_abelian_variety}.
\item $\lambda\in\Hom^\mfd(\mcA,\mcA^\vee)$ is a $\mfd$-quasi polarization.
\item We require $ι$ and $λ$ to be compatible in the sense that for $a \in O_{F}^\mfd$ we have $ \lambda^{-1} \circ \iota(a)^{\vee} \circ \lambda= \iota(\overline{a})$. We also require that for every inert place $v \nmid \mfd$ of $F/F_0$, corresponding to a prime ideal $\mfp_v \subset O_F^\mfd$, that $\ker(λ)[\mfp_v^\infty]$ is annihilated by $\mfp_v$ and of order $|L^\vee_v/L_v|$.

\item $\ov{\eta}$ is a $K_{G,\mfd}$-orbit of isometries of hermitian $F_\mfd = F\tensor_{\mbQ} \prod_{p\mid\mfd}\mbQ_p$-sheaves over $S$: 
\begin{equation*}
    \eta:V_{\mfd}(\mcA_0,\mcA)\overset{\sim}{\lr}V(F_{0,\mfd}).
\end{equation*}
Here, the notation is as follows. First,
\begin{equation*}
V_{\mfd}(\mcA_0,\mcA):=\prod_{p\mid\mfd}V_p(\mcA_0,\mcA)\quad\text{and}\quad
V_p(\mcA_0,\mcA)=\Hom_{F\otimes_{\mbQ}\mbQ_p}(V_p(\mcA_0),V_p(\mcA)),
\end{equation*}
where $V_p(\mcA_0)$ and $V_p(\mcA)$ denote the rational Tate modules at $p$; these are $\mathbb Q_p$-sheaves on $S$. Moreover, $V_\mfd(\mcA_0, \mcA)$ is made into a hermitian $F_\mfd$-vector space by
$$(x,y) := λ_0^{-1}\circ x^\vee\circ λ \circ y \in \End(V_\mfd(\mcA_0)) = F_\mfd.$$
Second, we consider
\begin{equation*}
V(F_{0,\mfd}):=F_{0,\mfd}\tensor_{F_0} V = \prod_{p\mid\mfd}\mbQ_p\otimes_{\mbQ}V
\end{equation*}
as locally constant $\prod_{p\mid\mfd}\mbQ_p$-sheaf on $S$.
\end{altenumerate}
An isomorphism between two such tuples
\begin{equation}\label{eq:iso_A_0_and_A}
(γ_0, γ):(\mcA_0, ι_0, \lambda_0,\ov{\eta}_0; \mcA, \iota, \lambda,\ov{\eta}) \simlr (\mcA_0', ι_0', \lambda_0',\ov{\eta}_0'; \mcA', \iota', \lambda',\ov{\eta}')
\end{equation}
is given by an isomorphism $γ_0:(\mcA_0, ι_0, \lambda_0,\ov{\eta}_0) \simto (\mcA_0', ι_0', \lambda_0', \ov{\eta}'')$ as in \eqref{eq:iso_A_0}, and an $O_F^\mfd$-linear $\mfd$-quasi isogeny $γ\in \Hom^\mfd(\mcA, \mcA')$ such that $γ^*(λ') = λ$ and $(γ_0, γ)^*(\ov{η}') = \ov{η}$.
\end{defn}

\begin{prop}\label{prop:representable}
$\mcM$ is representable by a flat, normal, and quasi-projective $O_E^\mfd$-scheme. The generic fiber $\mcM_E$ is isomorphic to the Shimura variety $\mr{Sh}_{\wt K}(\wt G, \{h_{\wt G}\})$. If $F_0\neq \mbQ$, then $\mcM$ is projective. 
\end{prop}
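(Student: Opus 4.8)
The plan is to follow the construction of \cite[\S\S3--4]{RSZ}, \cite{RSZ-AGGP} and \cite[\S C]{liu2021fourier} essentially verbatim. The point is that the ramification assumptions on $F_0/\mbQ$ enter those constructions only through the analysis of the \'etale-local structure of $\mcM$, and this analysis can now be carried out via Theorem \ref{thm:LM-comparison} and Corollary \ref{cor:LM-comparison}. Concretely: representability and quasi-projectivity will come from the classical theory of PEL moduli problems; flatness and normality will be read off from the local model; and the description of the generic fibre and (for $F_0\neq\mbQ$) projectivity will follow by the same arguments as in \cite{RSZ}.

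For representability one first forgets the datum $(\mcA_0,\iota_0,\lambda_0,\ov\eta_0)$, which varies in the finite \'etale $O^\mfd_{E_\Phi}$-scheme $\mcM_0$, and the $K_{G,\mfd}$-level structure $\ov\eta$, which is a finite \'etale covering datum. One is left with the moduli functor of triples $(\mcA,\iota,\lambda)$ consisting of an abelian scheme of dimension $n[F_0:\mbQ]$ with $O_F^\mfd$-action and a $\mfd$-quasipolarization subject to the degree conditions of Definition \ref{def:RSZ-integral}, together with a $\mathrm{Stab}(\wh L^{\mfd})$-level structure away from $\mfd$. Since $K$ is neat this functor is rigid and representable by a separated quasi-projective $O_E^\mfd$-scheme by the standard theory (see \cite{RSZ-AGGP} and the references there); taking the fibre product over $\mcM_0$ and passing to the finite \'etale $\ov\eta$-cover then shows that $\mcM$ is representable by a separated quasi-projective $O_E^\mfd$-scheme.

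Flatness and normality of $\mcM\to\Spec O_E^\mfd$ are \'etale-local on $\mcM$ and are governed by the local model. By the choice of $\mfd$, every place $v\nmid\mfd$ of $F$ is unramified over $F_0$, so at a place $\nu\mid p$ of $E$ the deformation theory of $\mcM$ is controlled --- through the local model diagram of \cite{RZ1} and the identification of the signature $(n-1,1)$ condition with the Eisenstein condition (Proposition \ref{prop:RZ-eisenstein} and the local--global translation \eqref{eq:translate_local_global_signature}) --- by the product over the places $v\mid p$ of absolute local models: an \'etale point at the banal places $v\notin\{v_0,\ov{v_0}\}$ (Remark \ref{rmk:reflex}), and at the distinguished place $v_0$ either the naive unitary absolute local model $\bfM_{n,A_{v_0}}^{L_{v_0},\naive}(n-1,1)$ if $v_0$ is inert, or the product $\bfM_{n,A_{v_0}}^{L_{v_0}}(n-1)\times\bfM_{n,A_{\ov{v_0}}}^{L_{\ov{v_0}}}(1)$ of standard absolute local models if $v_0$ splits. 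Since $F/F_0$ is unramified at each such $v$, Corollary \ref{cor:LM-comparison}(1) and Theorem \ref{thm:local_model_comparison} identify every factor with the base change of a flat, normal and Cohen--Macaulay relative local model along the (possibly ramified) extension $O_{E_0}\to O_E$ of \eqref{eq:iso_loc_mod_unit}, hence each factor is itself flat, normal and Cohen--Macaulay. Therefore $\mcM$ is flat and normal over $O_E^\mfd$; in particular the moduli problem of Definition \ref{def:RSZ-integral} is \emph{already} flat, so no wedge or spin conditions need be imposed, which is exactly why the assumptions in \cite[\S5.2, (3)--(5)]{RSZ} become unnecessary. I expect this step --- setting up the local model diagram for $\mcM$ at a prime where $F_0/\mbQ$ ramifies, and verifying that the controlling object is the \emph{absolute} local model of Definition \ref{def:abs-LM} rather than a naively base-changed relative one --- to be the main obstacle, since this is precisely where the $(A,B)$-strictness reformulation of the Eisenstein condition does its work.

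It remains to identify the generic fibre and to prove projectivity for $F_0\neq\mbQ$. In characteristic zero the containment $J_B\mcD(\mcA)\subseteq\Fil(\mcA)\subseteq J_A\mcD(\mcA)$ is automatic and is equivalent to the Kottwitz signature $(n-1,1)$ condition, so $\mcM_E\cong\mr{Sh}_{\wt K}(\wt G,\{h_{\wt G}\})$ exactly as in \cite[\S3]{RSZ}. Now suppose $F_0\neq\mbQ$. Then $F_0$ has a real place other than $\phi_0|_{F_0}$, at which $V$ is definite and hence anisotropic; by the Hasse principle for hermitian spaces, $V$ is anisotropic over $F_0$, so $\mathrm{U}(V)$ is anisotropic modulo its centre and $\mr{Sh}_{\wt K}(\wt G,\{h_{\wt G}\})$ is proper over $E$. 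Properness of $\mcM$ over $O_E^\mfd$ then follows from the valuative criterion: over a discrete valuation ring $R/O_E^\mfd$ with fraction field $K'$, the factor $(\mcA_0,\dots)$ of a $K'$-point extends uniquely since $\mcM_0$ is finite over $O^\mfd_{E_\Phi}$, and $\mcA/K'$ extends to a semiabelian scheme over $R$ whose toric part must vanish --- a nonzero toric part would, via the monodromy pairing and the $O_F$-action, produce a nonzero isotropic $O_F$-submodule of $V$, contradicting anisotropy --- so the limit is again an abelian scheme carrying all the required structures, exactly as in \cite[\S4]{RSZ}. Hence $\mcM$ is projective over $O_E^\mfd$ when $F_0\neq\mbQ$.
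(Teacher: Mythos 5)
Your argument follows the same outline as the paper: representability, quasi-projectivity and the generic-fibre identification are inherited from \cite[Theorem 3.5]{RSZ}, and flatness and normality are read off from the local model diagram \eqref{equ:RSZ-LM} by noting that only the distinguished place $w_0$ contributes nontrivially (the remaining factors being banal and hence \'etale), so the controlling object is the absolute local model $\bfM_{v_0}$ whose flatness and normality are Theorem \ref{thm:local_model_comparison} (split) or Corollary \ref{cor:LM-comparison}(1) (inert).

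Two small remarks. In the split case the paper works with a single EL local model $\bfM^{\langle L_{v_0}\rangle}_{n,A_{v_0}}(n-1)$, not the product over $v_0$ and $\ov{v_0}$ you write: the $\ov{v_0}$-component of the Hodge filtration is the orthogonal complement of the $v_0$-component under the perfect polarization pairing, so your second factor is redundant data (the paper just ``picks one of the two places''). For projectivity at $F_0\neq\mbQ$ the paper cites the classical properness of the PEL moduli space attached to the anisotropic group $G^{\mbQ}$ and transfers it along $\wt G\subseteq Z^{\mbQ}\times G^{\mbQ}$, whereas you rerun the valuative criterion through semi-abelian reduction and the monodromy pairing. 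Your route is more self-contained but compresses the step where a nonzero toric part contradicts anisotropy of $V$; making this precise requires identifying the hermitian $F$-space $\Hom_{F_\ell}(V_\ell\mcA_0,V_\ell\mcA)$ with $V_\ell$ for all $\ell$, which is exactly what the classical PEL argument packages once and for all. Both routes are correct.
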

\begin{proof}
The representability, the quasi-projectivity over $O_E^\mfd$, and the fact that $\mcM_E$ is isomorphic to the Shimura variety of $(\wt G, \{h_{\wt G}\})$ for level $\wt K$ follow in the same way as \cite[Theorem 3.5]{RSZ}. Moreover, if $F_0\neq \mbQ$, then $\wt G$ is anisotropic modulo center because $V$ is definite at some archimedean place. It is well known that the Shimura variety and the PEL moduli problem for $G^\mbQ$ are then projective. In our case, $\wt G \subseteq Z^\mbQ \times G^\mbQ$ compatibly with the PEL description and hence $\mcM$ is projective as well.

The flatness and normality of $\mcM$ follow from the flatness and normality of the local models. We recall this argument from \cite[Theorem 5.2]{RSZ}. Let $ν\nmid \mfd$ be a place of $E$, and let $O_{E, ν}$ denote the completion at $ν$. Let $\mcM_{O_{E,ν}}$ be the base change along $O_E\to O_{E,ν}$. Our aim is to prove flatness and normality of $\mcM_{O_{E,ν}}$.

Denote by $v_0 \mid w_0 \mid p$ the places of $F\mid F_0\mid \mbQ$ below $ν$. Choose an embedding $α:E\to \ov{\mbQ_p}$ that induces $ν$; it gives rise to decompositions
$$A = \underset{v\mid p}{\sqcup}\ A_v,\quad B = \underset{v\mid p}{\sqcup}\ B_v$$
as in \eqref{eq:decomp_local_global}. Note that
$$v \notin \{v_0, \ov{v_0}\}\quad \Longrightarrow\quad B_v = A_v$$
while
$$B_{v_0} = \begin{cases}
    A_{v_0} \sqcup \{ϕ_0\} & \text{split case $v_0\neq \ov{v_0}$}\\[1mm]
    A_{v_0} \sqcup \{ϕ_0, \ov{ϕ_0}\} & \text{inert case $v_0 = \ov{v_0}$}.
\end{cases}$$
For each place $w\mid p$ of $F_0$, we have a local model as follows:
\begin{altitemize}
\item Assume $w$ splits in $F$. Pick one of the two places of $F$ above $w$, call it $v$; by a duality argument the choice does not matter. Let $\langle L_v\rangle = \{π_v^\mbZ L_v\}$ be the lattice chain generated by $L_v$.
\begin{itemize}
    \item[-] If $v \notin \{v_0, \ov{v_0}\}$, then $B_v = A_v$ and there is the banal local model $\bfM^{\langle L_v\rangle}_{n, A_v, \mr{banal}}$ over $O_{E_{A_v}}$ from Remark \ref{rmk:reflex}.
    \item[-] If $v \mid w_0$, then we may (by duality) assume without loss of generality that $v = v_0$. Then $B_v = A_v \sqcup \{ϕ_0\}$ and there is the local model $\bfM^{\langle L_v\rangle}_{n, A_v}(n-1)$ over $O_{ϕ_0(F_v)E_{A_v}}$ from Definition \ref{def:abs_loc_model_GL}.
\end{itemize}
\item Assume $w$ is inert in $F$, denote by $v$ its extension. The lattice $L_v$ generates a self-dual lattice chain in the sense of Definition \ref{def:self-dual-chain}. It is given by $\langle L_v\rangle = \{\pi_v^\mbZ L_v, \pi_v^\mbZ L_v^\vee\}$.
\begin{itemize}
    \item[-] If $v\neq v_0$, then there is the banal local model $\bfM^{\langle L_v\rangle}_{n, A_v, \mr{banal}}$ over $O_{E_{A_v}}$ from \eqref{eq:banal_LM_unitary}.
    \item[-] If $v = v_0$, then there is the local model $\bfM^{\langle L_v\rangle}_{n, A_v}(n-1, 1)$ over $O_{E_{A_v}E_{B_v}}$ from Definition \ref{def:abs-LM}. It coincides with the naive local model by Corollary \ref{cor:LM-comparison} (1).
\end{itemize}
\end{altitemize}
By \eqref{eq:reflex_localized}, all occurring reflex fields here are subfields of $E_ν$, and we consider the base changes to $O_{E,ν}$ of the above local models. All except the local model for the place $w_0$ are banal, so their base change is just $\Spec(O_{E,ν})$. By general theory, there is a local model diagram involving $\mcM_{O_{E,ν}}$ and the product of all local models. As just explained, only the factor for $w_0$ contributes non-trivially to the product, so we obtain a local model diagram as in \cite[(5.5)]{RSZ}:
\begin{equation}\label{equ:RSZ-LM}
\begin{aligned}
\xymatrix{
& \wt{\mcM}\ar[ld]_{\mr{pr}}\ar[rd]^{\wt{\varphi}} & \\
\mcM_{O_{E,ν}} && \mcM^{\mfa}_{0,O_{E,ν}}\times_{O_{E,ν}} \bfM_{v_0}
}
\end{aligned}
\end{equation}
with
\begin{equation}
    \bfM_{v_0} = O_{E, ν}\tensor \begin{cases} \bfM_{n,A_{v_0}}^{\langle L_{v_0}\rangle }(n-1) & \text{$w_0$ split}\\[2mm]
    \bfM_{n,A_{v_0}}^{\langle L_{v_0}\rangle }(n-1,1) & \text{$w_0$ inert}.
    \end{cases}
\end{equation}
The scheme $\wt \mcM$ here is defined as the functor that sends an $O_{E,ν}$-scheme $S$ to the set of isomorphism classes of points $(\mcA_0,\iota_{0},\lambda_0,\eta_0;\mcA,\iota,\lambda,\eta) \in \mcM(S)$ together with a trivialization of its first de Rham homology in the following sense:
\begin{itemize}
    \item If $w_0$ is split, then this trivialization is an $O_F$-linear isomorphism
    $$L_{v_0} \tensor_{O_{F, v_0}} \mcO_S \simlr \mcD(\mcA)_{v_0}.$$
    \item If $w_0$ is inert, then this trivialization is an $O_F$-linear isometry from the polarized lattice chain
    $$L_{v_0}\tensor_{O_{F,v_0}} \mcO_S \lr L_{v_0}^\vee\tensor_{O_{F,v_0}} \mcO_S \lr (π_{v_0}^{-1}L_{v_0})\tensor_{O_{F, v_0}} \mcO_S$$
    to the polarized chain
    $$\mcD(\mcA)_{v_0} \overset{\lambda}{\lr} \mcD(\mcA^\vee)_{v_0}\overset{\lambda^\vee}{\lr}\mcD(\mcA)_{v_0}.$$
\end{itemize}
The morphism $\mr{pr}$ in \eqref{equ:RSZ-LM} is the forgetful map while $\wt{\varphi}$ maps a point to its Hodge filtration. They are both smooth and of the same relative dimension, see \cite[Theorem 5.2]{RSZ} for a detailed discussion. It follows that the local rings of $\mcM_{O_{E, ν}}$ have the same flatness and normality properties as the local rings of $\bfM_{v_0}$. By Theorem \ref{thm:local_model_comparison} for the split case, or Corollary \ref{cor:LM-comparison} (1) for the inert case, the local model $\bfM_{v_0}$ is flat and normal, so the proof is complete.
\end{proof}

\begin{rmk}
The above moduli description can be extended to ramified places $v$ of $F$ where the localization $L_v$ is a vertex lattice. In this case, one needs to add the sign condition from \cite[Definition 4.1]{RSZ} to the moduli problem. One also needs to impose stronger signature conditions on the Lie algebras of $(\mcA_0, ι_0, \lambda_0,\ov{\eta}_0; \mcA, \iota, \lambda,\ov{\eta})$; we refer to \cite[\S 1.4]{Luo} for a summary on the second point.
\end{rmk}

\subsection{Basic uniformization}\label{sec:uniformofSV}

We now formulate the basic uniformization result for $\mcM$. This statement and its proof are relatively standard and taken from \cite{KR-global,liu2021fourier}; our own addition is that we can replace the occurring absolute RZ space $\mcN_A^{[t]}$ by the relative RZ space $\mcN^{[t]}$, thanks to Corollary \ref{cor:unitary-comp}. Unlike the basic uniformization in \cite{PR-Shtuka-I}, which is given by a group-theoretical construction, our approach is given explicit through a moduli description. This explicit description makes it clear that the construction also allows to uniformize special divisors and CM cycles in \S \ref{sec:algebraiccycles}.

\begin{assumption}\label{assumption:pos_def}
We make the following assumption throughout the rest of the article. We assume that $V$ is positive definite at all places except the one of signature $(n-1,1)$. There are two reasons for this. First, it conforms with the setting in \cite{liu2021fourier} which is our reference for the uniformization statement below. Second, it is needed to define KR divisors in \S\ref{sec:algebraiccycles} below and, in particular, underlies the whole global intersection theory setting.
\end{assumption}

Let $\nu \nmid \mfd$ be a $p$-adic place of $E$ such that the restriction $v = ν\vert_{F_0}$ is inert in $F$. Fix an embedding $E\hookrightarrow \ov{\mbQ}_p$ to obtain decompositions $A = \sqcup_{w\mid p} A_w$ and $B = \sqcup_{w\mid p} B_w$ as in \eqref{eq:decomp_local_global}. Here, the decomposition is over all places $w\mid p$ of $F$. Then $E_ν$ contains the local reflex field $E_{A_v}ϕ_0(F_v)$, and hence
$$(K = \mbQ_p, F_v/F_{0,v}, E_ν)$$
are local fields as in the setting for the definition of unitary RZ spaces in \eqref{eq:setting_unitary_RZ}. (The only small difference is that $E_ν$ contains the local reflex field from there but might be larger.) Let $t$ be the type of the vertex lattice $L_v$. Let again $\mbF$ denote the residue field of $\breve E_ν$. Fix a basic geometric point
$$(\mcA_0, ι_0, λ_0, \ov{η}_0; \mcA, ι, λ, \ov{η}) \in \mcM_v(\mbF).$$
The $p$-divisible group $\mcA[p^\infty]$ decomposes under the action of $(O_F\tensor_{\mbZ} \mbZ_p) = \prod_{w\mid p} O_{F,w}$. Let\footnote{We write ``$A$'' in the index for readability. In order to completely conform with the notation from \S\ref{sec:cycles on absolute RZ}, one should write $A_v$.} $(\mbX_A, ι_{\mbX_A}, λ_{\mbX_A})$ be the $v$-component of $(\mcA, ι, λ)$; set $(\mbX, ι, λ) = Φ_A(\mbX_A, ι_{\mbX_A}, λ_{\mbX_A})$. We use these framing objects to define the RZ spaces from \eqref{eq:def_RZ_AB_PEL} and \eqref{eq:def_RZ_PEL}:
\begin{equation}\label{eq:RZ_again_A}
\mcN_A^{[t]} \lr \Spf(O_{\breve E_ν}),\qquad \mcN^{[t]} \lr \Spf(O_{\breve F_v}).
\end{equation}
The comparison isomorphism in this situation (Corollary \ref{cor:unitary-comp}) reads
$$
\Phi_A: \mcN^{[t]}_{A}\simlr O_{\breve{E}_ν} \wh{\otimes}_{O_{\breve{F}_v}} \mcN^{[t]}.
$$
Denote by $\mathcal M^{\wedge}$ the formal completion of $O_{\breve E_ν}\tensor_{O_E}\mathcal M$ along the basic locus of the special fiber. Let $V^{(v)}$ be the nearby hermitian $F$-vector space of $V$ at $v$; that is, $V^{(v)}$ is positive definite at all archimedean places and isomorphic to $V$ after completion at all finite places $\neq v$. Consider the reductive groups
$$
G^{(v)}:=\Res_{F_0/\mathbb Q} \mathrm{U}(V^{(v)}), \quad \widetilde{G}^{(v)}:=Z^{\mathbb Q} \times \Res_{F_0/\mathbb Q} \mathrm{U}(V^{(v)}). 
$$

\begin{thm}[Basic uniformization] \label{thm: basic uniformization RSZ}
There is a natural isomorphism of formal schemes over $O_{\breve E_\nu}$:
\begin{equation}\label{eq: basic basic unif}
\Theta: \, \, \wt{G}^{(v)}(\mathbb{Q}) \backslash ( \mcN^{\prime} \times \wt{G} (\mathbb{A}_{f}^{p}) / \wt{K}^{p} )\overset{\sim}{\lr} \mathcal M^\wedge, 
\end{equation}  
where 
\[ \mcN^{\prime} \simeq (Z^{\mathbb Q} \left(\mathbb{Q}_{p}\right) / K_{Z^{\mathbb Q}, p} ) \times \bigl(O_{\breve{E}_\nu}\wh{\otimes}_{O_{\breve{F}_v}}\mcN^{[t]}\bigr) \times \prod_{w|p, w \not = v} \U(V)(F_{0, w}) / K_{w}. \]
Moreover, there is a natural projection map
\begin{equation}\label{projection map from RSZ to unitary}
\mathcal M^\wedge
\lr Z^{\mathbb Q}(\mathbb Q)  \backslash (Z^{\mathbb Q}(\mathbb A_{0, f}) / K_{Z^{\mathbb Q}} ) 
\end{equation}  
whose fibers $\mathcal{M}^{\wedge}_{0}$ are described by
\begin{equation}\label{eq:Theta_0}
\Theta_0: \mathrm{U}(V^{(v)})(F_0) \backslash [ \bigl(O_{\breve{E}_\nu}\wh{\otimes}_{O_{\breve{F}_{v}}}\mcN^{[t]}\bigr) \times \mathrm{U}(V)(\mathbb{A}_{0, f}^{v}) / K^{v} ]\overset{\sim}{\lr}\mathcal{M}^{\wedge}_{0}.
\end{equation}
\end{thm}
\begin{proof}
This uniformization map was constructed in \cite[Proposition C. 26]{liu2021fourier}, where after all efforts one gets an isomorphism
$$
\Theta_A:\wt{G}^{(v)}(\mbQ)\backslash (\mcN'_A\times \wt{G}(\mbA_f^p)/\wt{K}^p)\simlr \mcM^\wedge,
$$
where 
\[ \mcN_A^{\prime} \simeq (Z^{\mathbb Q} \left(\mathbb{Q}_{p}\right) / K_{Z^{\mathbb Q}, p} ) \times \mcN_{A}^{[t]} \times \prod_{w|p, w \not = v} \U(V)(F_{0, w}) / K_{w}. \]
Here, $\mcN_{A}^{[t]}$ is as in \eqref{eq:RZ_again_A}. Applying the comparison isomorphism $\Phi_A$ finishes the proof.
\end{proof}

\part{Arithmetic transfer}

\section{Kottwitz--Rapoport strata}\label{ss:KR_strata}
In this section, we study the (closed) Kottwitz--Rapoport strata of the special fiber of unitary Shimura varieties and Rapoport--Zink spaces. These strata relate to closed orbits in the associated local model, which was originally studied by Kottwitz and Rapoport \cite{Kottwitz-Rapoport-AdmPerm}, which justify the terminology adopted here.
The study of these strata will play an important role in the construction of the (derived) CM cycles (\S \ref{sec:algebraiccycles}), the almost intersection theory on Shimura variety (\S \ref{sec:arithPic}), and in the proof of the almost modularity of generating series of special divisors (\S \ref{sec:AT proof}).

\subsection{Kottwitz--Rapoport strata}\label{sec:KR-strata}
We use the notations from \S \ref{sec:Shimura}.
Let $S$ be an $O_E^\mfd$-scheme, and let $\mcA/S$ be an abelian scheme together with an action $ι:O_F^\mfd\to \End^\mfd(\mcA)$ of signature $(n-1,1)$. The Eisenstein condition ensures that $J_A\cdot\Lie(\mcA)$ is a locally free $\mcO_S$-module of rank $n$, on which the ring $O^\mfd_F$ acts $O^\mfd_{F_0}$-linearly. Recall that $\mfd$ was chosen such that $O_F^\mfd/O_{F_0}^\mfd$ is unramified. So the $O_F^\mfd$-action induces a splitting 
\begin{equation}\label{equ:Lie-split}
    J_A\cdot \Lie(\mcA) = (J_A\cdot \Lie(\mcA))_0 \ \oplus\ (J_A\cdot \Lie(\mcA))_1
\end{equation}
into $O_F^\mfd$-eigenspaces. That is, $O^\mfd_F$ acts on $(J_A\cdot \Lie(\mcA))_0$ via the structure morphism and on $(J_A\cdot\Lie(\mcA))_1$ via its conjugate. Moreover, by the signature condition, the two summands are of rank $n-1$ and $1$, respectively.
\begin{defn}\label{def:lb_mod_forms}
The \emph{line bundle of modular forms} of $\mcA$ is defined as
$$\omega_{\mcA} := (J_A\cdot\Lie(\mcA))_1.$$
This definition is functorial for $O_F^\mfd$-linear maps of abelian varieties. So given a point $(\mcA_0, ι_0, λ_0, \ov{η}_0;$ $\mcA, ι, λ, \ov{η}) \in \mcM(S)$ we may in particular consider the map on line bundles induced by $λ$:
$$ω_λ:ω_{\mcA} \lr ω_{\mcA^\vee}.$$
Let $\mcM^\mcZ\subseteq \mcM$ be the closed subscheme defined by the vanishing of $ω_λ$. Let $\mcM^\mcZ_\red$ be its underlying reduced subscheme. We call $\mcM^\mcZ$ the \emph{(Kottwitz--Rapoport) $\mcZ$-divisor} and $\mcM^\mcZ_\red$ the \emph{(Kottwitz--Rapoport) $\mcZ$-stratum}. 
\end{defn}

\begin{rmk}\label{rmk:correctlinebundle}
The above terminology is taken from \cite[\S 2]{BHKRY} and \cite[\S 4]{Howard-Linearinv}, but our definition is slightly different from theirs. Definition \ref{def:line bundle RZ} is sufficient for the study of Kottwitz--Rapoport strata, but to study global deformations of Kudla--Rapoport special divisors, one should twist it in the following way. Recall that Assumption \ref{assumption:pos_def} is in place and hence $Φ = A\cup \{ϕ_0\}$. So for any $S$-point $(\mcA_0,\mcA)$ in $\mcM$, the following two are line bundles:
\begin{equation*}
\Omega_{\mcA}:=\mcH\!om_{O_F\tensor \mcO_S}(\Fil(\mcA_0),(J_A\cdot\Lie(\mcA))_1),\quad
\Omega_{\mcA^\vee}:=\mcH\!om_{O_F\tensor \mcO_S}(\Fil(\mcA_0),(J_A\cdot\Lie(\mcA^\vee))_1).
\end{equation*}
These directly generalize the line bundles from \cite[\S 2]{BHKRY} and \cite[\S 4]{Howard-Linearinv}.
\end{rmk}

Note that $ω_λ$ is an isomorphism above all places where $L$ is self-dual, so $\mcM^\mcZ$ is supported above the inert primes $v$ of $F$ such that $L_v\neq L^\vee_v$. Let $ν\mid v$ be a finite place of $E$ above such a place $v$. By condition (5) in Definition \ref{def:RSZ-integral}, the map $ω_λ$ divides $π_v$. It follows that $\mcM^\mcZ$ (resp. $\mcM^\mcZ_\nu$) is a Cartier divisor. We denote by $\mcM^{\mcZ}_\nu$ the open subscheme of $\mcM^\mcZ$ supported above $\nu$.

\begin{defn}\label{def:KR_strata}
The \emph{(Kottwitz--Rapoport) $\mcY$-divisor $\mcM^\mcY_ν\subseteq \mcM_{\mcO_{E, ν}}$} above $ν$ is defined as the vanishing locus $V(π_vω_λ^{-1})$. Its maximal reduced subscheme $\mcM^\mcY_{ν,\red}$ is called the \emph{(Kottwitz--Rapoport) $\mcY$-stratum} above $ν$.

We define $\mcM^{\mcZ\cap \mcY}_ν$ as the intersection $\mcM^\mcZ\cap \mcM^\mcY_ν$. The reduced locus $\mcM^{\mcZ\cap \mcY}_{ν,\red}$ is called the \emph{linking stratum} above $ν$.
\end{defn}
\begin{prop}\label{prop:global-KR-Cartier}
(1) The strata $\mcM^\mcZ_{ν,\red}$ and $\mcM^\mcY_{ν,\red}$ are smooth over $\Spec \mbF_ν$.

\medskip\noindent (2) The structure map $\mcM_{O_{E, ν}} \to \Spec O_{E, ν}$ is smooth outside of $\mcM^{\mcZ\cap \mcY}_{ν,\red}$.
\end{prop}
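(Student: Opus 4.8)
The plan is to deduce both statements from the local model diagram \eqref{equ:RSZ-LM}, the comparison isomorphisms of \S\ref{s:LM}, and the known geometry of the standard local model $\bfM_n^{L_0}(1)$. First I would localize at the place $w_0\mid p$ of $F_0$ below $\nu$, writing $v_0\mid w_0$ for the place of $F$ below $\nu$. If $w_0$ splits in $F$, or if $w_0$ is inert with $L_{v_0}$ self-dual, then $\langle L_{v_0}\rangle$ has a single lattice up to scaling, so the local model $\bfM_{v_0}$ in \eqref{equ:RSZ-LM} is a Grassmannian, hence smooth over $O_{E,\nu}$; by \eqref{equ:RSZ-LM} the morphism $\mcM_{O_{E,\nu}}\to\Spec O_{E,\nu}$ is then smooth, while $\omega_\lambda$ is an isomorphism above $w_0$ so $\mcM^\mcZ_\nu=\mcM^\mcY_\nu=\emptyset$, and both claims are trivial. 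So I may assume $v_0$ is inert over $F_0$ and $L_{v_0}\subsetneq L_{v_0}^\vee$ is a non-self-dual vertex lattice.

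Next I would make $\bfM_{v_0}$ explicit. By Theorem \ref{thm:LM-comparison} and Corollary \ref{cor:LM-comparison}(1) the naive and non-naive absolute models agree, and after base change to $O_{E,\nu}$ one gets $\bfM_{v_0}\iso O_{E,\nu}\tensor_{O_{F_{v_0}}}\bfM_n^{\langle L_{v_0}\rangle}(n-1,1)$; moreover, as recalled in the proof of Theorem \ref{thm:rel_LM_flat}, the relative model $\bfM_n^{\langle L_{v_0}\rangle}(n-1,1)$ over $O_{F_{v_0}}$ is isomorphic to a standard model $\bfM_n^{L_0}(1)$ for a lattice chain $L_0$ with two lattices up to scaling. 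By Theorem \ref{thm:Goertz} the latter is semi-stable over $O_{F_{v_0}}$, and since $L_0$ has only two lattices up to scaling the strict complete local ring at a closed point is $O_{\breve F_{v_0}}[\![y_2,\dots,y_n]\!]$ or $O_{\breve F_{v_0}}[\![x_1,x_2,y_3,\dots,y_n]\!]/(x_1x_2-\pi_{v_0})$. Under this identification the line bundle $\omega_\mcA$ of Definition \ref{def:lb_mod_forms} corresponds to the rank-one universal cokernel $\msL_\Lambda$, and $\omega_\lambda$ corresponds to one of the two period maps $\msL_{\Lambda_1}\to\msL_{\Lambda_2}$, $\pi_v\omega_\lambda^{-1}$ to the other, their composite being multiplication by $\pi_{v_0}$. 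Consequently, via the smooth morphisms $\mr{pr}$ and $\wt\varphi$ of \eqref{equ:RSZ-LM}, the divisors $\mcM^\mcZ_\nu$, $\mcM^\mcY_\nu$ and the locus $\mcM^{\mcZ\cap\mcY}_\nu$ pull back, up to a smooth factor, to the loci on $O_{E,\nu}\tensor\bfM_n^{L_0}(1)$ where the first period map vanishes, where the second vanishes, and where both vanish.

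Then I would read off the geometry in the local coordinates above. The first locus is $V(x_1)$; over $O_{E,\nu}$ it equals $(O_{E,\nu}/\pi_{v_0})[\![x_2,y_3,\dots,y_n]\!]$, or $(O_{E,\nu}/\pi_{v_0})[\![y_2,\dots,y_n]\!]$ where $x_2$ is a unit, whose reduction is a power series ring over $\mbF_\nu$, hence smooth over $\mbF_\nu$; the same applies to $V(x_2)$. Via \eqref{equ:RSZ-LM} this gives that $\mcM^\mcZ_{\nu,\red}$ and $\mcM^\mcY_{\nu,\red}$ are smooth over $\mbF_\nu$, which is part (1). For part (2), the morphism $\bfM_n^{L_0}(1)\to\Spec O_{F_{v_0}}$ is smooth precisely away from $V(x_1)\cap V(x_2)=\{x_1=x_2=0\}$, and smoothness of a morphism is stable under the base change $O_{F_{v_0}}\to O_{E,\nu}$; thus $O_{E,\nu}\tensor\bfM_n^{L_0}(1)\to\Spec O_{E,\nu}$ is smooth away from the joint vanishing locus of the two period maps, and transporting back through \eqref{equ:RSZ-LM} shows that $\mcM_{O_{E,\nu}}\to\Spec O_{E,\nu}$ is smooth away from $\mcM^{\mcZ\cap\mcY}_{\nu,\red}$, which is part (2).

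The step I expect to be the main obstacle is the bookkeeping in the second paragraph: verifying that, under the chain of comparisons of \S\ref{s:LM}, the map $\omega_\lambda$ indeed matches a period map of the standard local model, so that the Kottwitz--Rapoport divisors correspond to the coordinate divisors $V(x_i)$, and recording that a two-step periodic lattice chain forces the exponent $s$ in the semi-stable local equation to be at most $2$. Granting these two points, both assertions follow formally from \eqref{equ:RSZ-LM} and the stability of smoothness under base change.
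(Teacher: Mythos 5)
Your proposal is correct and follows essentially the same route as the paper's proof: localize at $\nu$, pass to the local model diagram \eqref{equ:RSZ-LM}, use Theorem \ref{thm:LM-comparison} to identify the absolute local model with a base change of the relative one, identify the relative model with the standard model $\bfM_n^{\langle L_v\rangle_1}(1)$ for a two-step chain, and read off the statements from the explicit strict complete local rings $\breve O_{F,v}[\![x_1,\ldots,x_{n-1}]\!]$ or $\breve O_{F,v}[\![x_0,\ldots,x_{n-1}]\!]/(x_0x_1-\pi_v)$. The two points you flag as potential obstacles (matching $\omega_\lambda$ to a period map and bounding the number of factors by the chain length) are handled in the paper exactly as you anticipate; the only cosmetic difference is that you invoke Theorem \ref{thm:Goertz} by name for semi-stability while the paper states the coordinate rings directly, and you add an extra (harmless but redundant) case split for split and self-dual places, which the paper excludes up front since $\mcM^\mcZ_\nu$ and $\mcM^\mcY_\nu$ are only defined above inert places with $L_v\neq L_v^\vee$.
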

\begin{proof}
This follows by comparison with the local model. Let us first consider the relative local model $\bfM^{\langle L_v\rangle}_n(n-1,1)$ over $O_{F,v}$. (That is, if $n = 2$ in which case the reflex field is $F_{0,v}$, then we extend scalars to $F_v$.) We have a splitting of the form $F_v\tensor_{F_{0,v}} V = V_0\oplus V_1$, where $F_v$ acts on $V_0$ by the identity, and on $V_1$ by conjugate identity. Since $F_v/F_{0,v}$ is unramified in our current setting, for any lattice $\Lambda \subset V_v$, we get an analogous splitting
$$
\Lambda \otimes_{O_{F_0,v}}O_{F,v}=\Lambda_0\oplus \Lambda_1
$$
with $\Lambda_i\subset V_i$. Given an $S$-valued point $(\msF_Λ)_Λ \in \bfM^{\langle L_v\rangle}_n(n-1, 1)(S)$, each filtration $\msF_Λ$ similarly splits into
\begin{equation*}
    \msF_{Λ,i} = \msF_{Λ,i}\oplus \msF_{Λ,1}
\end{equation*}
with $\msF_{Λ,i}\subseteq \mcO_S\tensor_{O_{F,v}}\Lambda_i$. By the signature condition, each quotient $\msL_{Λ,1} = (\mcO_S\tensor_{O_{F,v}} Λ)/\msF_{Λ,1}$ is a line bundle on $S$. This induces a map
\begin{equation*}
    \bfM_n^{\langle L_v\rangle }(n-1,1)\lr \bfM^{\langle L_{v}\rangle_1}_n(1),\quad \msF_i\longmapsto \msF_{i,1}.
\end{equation*}
It is clear from definitions that this is an isomorphism; or see \cite[Proposition 2.14]{HPR} for a general statement.

In our setting, $\langle L_v\rangle$ is generated by the two lattices $L_v\subsetneq L_v^\vee$. So $\langle L_v\rangle_1$ is generated by the two lattices $(L_v)_1$ and $(L^\vee_v)_1$. The respective vanishing loci of the two maps
\begin{equation}\label{eq:line_bundle_maps}
\msL_{(L_v)_1} \lr \msL_{(L^\vee_v)_1} \lr \msL_{π_v^{-1}(L_v)_1}
\end{equation}
define Cartier divisors $\bfM^\mcZ$ and $\bfM^\mcY$ on $\bfM_n^{\langle L_v\rangle_1}(1)$ whose sum equals the special fiber $V(\pi_v)$. The strict complete local rings of $\bfM_n^{\langle L_v\rangle_1}(1)$ in closed points are isomorphic to
\begin{equation}\label{eq:complete_rings_local_model}
\begin{cases}
\breve O_{F,v}[\![x_1,\ldots,x_{n-1}]\!] & \text{(smooth points)}\\[1mm]
\breve O_{F,v}[\![x_0,\ldots,x_{n-1}]\!]/(x_0x_1 - \pi_v) & \text{(singular points).}
\end{cases}
\end{equation}
The smooth case occurs precisely when one of the two maps in \eqref{eq:line_bundle_maps} is an isomorphism near the considered point. In this case, near that point, one out of $\{\bfM^\mcZ, \bfM^\mcY\}$ agrees with the special fiber while the other divisor is empty. Near a singular point and in terms of the coordinates from \eqref{eq:complete_rings_local_model}, one of the divisors equals $V(x_0)$ and the other $V(x_1)$. In summary, we see that $\bfM^\mcZ$ and $\bfM^\mcY$ are smooth over $\Spec(\mbF_v)$ and that $\bfM^{\langle L_v\rangle_1}_n(1) \to \Spec O_{F,v}$ is smooth outside of $\bfM^\mcZ\cap \bfM^\mcZ$.

The situation of the absolute local model (base changed to $O_{E_{A_v}ϕ_0(F_v)}$ if $n = 2$) is completely analogous: For each $Λ\in \langle L_v\rangle$ we have a splitting for the relative filtration quotients
\begin{equation*}
J_A\mcL_{Λ} \simlr \msL_Λ=\msL_{Λ,0}\oplus\msL_{Λ,1}.
\end{equation*}
Together with the comparison isomorphisms from Theorems \ref{thm:local_model_comparison} and \ref{thm:LM-comparison}, this induces a commutative diagram
\begin{equation*}
\begin{aligned}
\xymatrix{
\bfM_{n,A_v}^{\langle L_v\rangle}(n-1,1)\ar[rr]^-{\sim}\ar[d]_{\cong} && O_{E_{A_v}ϕ_0(F_v)}\tensor_{O_{F,v}} \bfM_{n}^{\langle L_{v}\rangle_1}(1)\ar[d]^{\cong}\\
\bfM_{n,A_v}^{\langle L_v\rangle} (n-1,1)\ar[rr]^-{\sim} && O_{E_{A_v}ϕ_0(F_v)}\tensor_{O_{F,v}}\bfM_{n}^{\langle L_v\rangle_1}(1).
}
\end{aligned}
\end{equation*}
Consider the vanishing loci $\bfM_A^\mcZ$ and $\bfM_A^\mcY$ of the two maps
\begin{equation}\label{eq:line_bundle_maps_absolute}
J_A \mcL_{(L_v)_1} \lr J_A \mcL_{(L_v^\vee)_1} \lr \mcL_{(π_v^{-1}L_v)_1}.
\end{equation}
These are the base changes of $\bfM^\mcZ$ and $\bfM^\mcY$ to $O_{E_{A_v}ϕ_0(F_v)}$. In particular, the reduced loci $\bfM^\mcZ_{A,\red}$ and $\bfM^\mcY_{A, \red}$ are smooth over $\mbF_ν$, and $\bfM_{n,A}^{\langle L_v\rangle_1}(1)$ is smooth over $\Spec(O_{E_{A_v}ϕ_0(F_v)})$ outside of $\bfM^\mcZ_{A, \red}\cap \bfM^\mcY_{A, \red}$. If we extend scalars further to $O_{E,ν}$, then we still have that
\begin{equation}\label{eq:reduced_base_change}
(O_{E,ν}\tensor_{O_{F,v}} \bfM^\mcZ)_\red = \mbF_ν \tensor_{\mbF_v} \bfM^\mcZ_\red
\end{equation}
is smooth over $\mbF_ν$; analogously for $(O_{E,ν}\tensor_{O_{F,v}} \bfM^\mcY)_\red$.

The various definitions of $\mcZ$-divisors and $\mcY$-divisors are compatible in terms of the local model diagram \eqref{equ:RSZ-LM} in the sense that $\mcM^\mcZ_ν$ and $\mcM^\mcY_ν$ are (the descent along ``$\mr{pr}$'' of) the pullbacks of $\bfM^\mcZ_A$ and $\bfM^\mcY_A$. In this way, the claimed properties for $\mcM^\mcZ$ and $\mcM^\mcY$ follow from those we just discussed for local models.
\end{proof}
\begin{rmk}\label{rmk:non-regular}
Concretely, the above arguments show that the situation for the strict complete local rings in closed points of $\mcM$ above $ν$ is as follows: These rings are isomorphic to
\begin{equation}\label{eq:complete_local_ring_M}
\begin{cases}
\breve O_{E, ν}[\![x_1,\ldots,x_{n-1}]\!] & \text{(smooth points)}\\[1mm]
\breve O_{E, ν}[\![x_0,\ldots,x_{n-1}]\!]/(x_0x_1 - π_ν^{e(ν\mid v)}) & \text{(singular points).}
\end{cases}
\end{equation}
(Here, $e(ν\mid v)$ denotes the ramification index of $ν$ above $v$.) Near smooth points, one out of $\{\mcM^\mcZ, \mcM^\mcY\}$ will be empty, the other equal to $V(π_ν^{e(ν\mid v)})$. Near singular points and in terms of the above coordinates, one of them equals $V(x_0)$ and the other $V(x_1)$. In particular, we also see that the reduced loci $\mcM^\mcZ_{ν,\mr{red}}$ and $\mcM^\mcY_{ν, \mr{red}}$ that occurred in Proposition \ref{prop:global-KR-Cartier} agree with the fibers of $\mcM^\mcZ_ν$ and $\mcM^\mcY_ν$ over $\Spec \mbF_ν$.
\end{rmk}

\subsection{Local Kottwitz--Rapoport strata}\label{sec:local KR strata}
In this subsection, we use our local notation from \S\ref{sec:uni-RZ}: Let $K$ be a $p$-adic local field, let $F_0/K$ be a finite extension, and let $F/F_0$ be an unramified quadratic field extension with Galois conjugation $a\mapsto \ov{a}$. Denote by $O_K\subseteq O_{F_0} \subset O_F$ their rings of integers. Let $π_F \in O_F$ denote a uniformizer. Fix a subset $A\subset \Hom_K(F,\ov{K})$ and an element $ϕ_0\in \Hom_K(F, \ov K)$ such that 
$$
\Hom_K(F,\ov{K})=A\sqcup \ov{A}\sqcup \{\phi_0,\ov{\phi_0}\}.
$$
Set $B=A\sqcup \{\phi_0,\ov{\phi_0}\}$. Let $E$ be a finite extension of the reflex field $E_Aϕ_0(F)$ and let $\mbF$ be the residue field of the completion $\breve E$ of a maximal unramified extension of $E$. Let $(\mbX_A,\iota_{\mbX_A},\lambda_{\mbX_A})$ be an $(A,B)$-strict pair of signature $(n-1,1)$ over $\mbF$ with polarization of type $t$; see Definition \ref{def:AB_strict_PEL}. We are mainly interested in applications to the basic locus of the Shimura variety $\mcM$, so we also assume that $(\mbX_A, ι_{\mbX_A}, λ_{\mbX_A})$ is basic. A simple way to phrase this is to say that $\mbX_A$ is isoclinic as $p$-divisible group. Using $(\mbX_A, ι_{\mbX_A}, λ_{\mbX_A})$ as framing object, we consider the absolute RZ space defined by \eqref{eq:def_RZ_AB_PEL}:
$$
\mcN_{A}^{[t]}:=\mcN_{A,(n-1,1)}^{[t]} \lr \Spf O_{\breve{E}}.
$$
We now introduce \emph{Kottwitz--Rapoport strata} on $\mcN_{A}^{[t]}$ as a local analogue of the definitions in \S \ref{sec:KR-strata}. These are called the \emph{formal balloon and ground strata} in \cite[\S7]{ZZhang21}.

Let $S$ be an $\Spf O_{\breve{E}}$-scheme and let $(X_A,\iota,\lambda,\rho)$ be a $S$-point of $\mcN_{A}^{[t]}$. 
The Eisenstein condition ensures that $J_A\cdot \Lie(X)$ is a locally free $\mcO_S$-module of rank $n$, on which the ring $O_F$ acts $O_{F_0}$-linearly. Since $F/F_0$ is unramified, the $O_F$-action induces a splitting 
$$
J_A\cdot \Lie(X_A)=(J_A\cdot \Lie(X_A))_0\ \oplus \ (J_A\cdot \Lie(X_A))_1
$$
into $O_F$-eigenspaces where $O_F$ acts on $(J_A\cdot \Lie(X_A))_0$ via the structure morphism and on $(J_A\cdot \Lie(X_A))_1$ via its conjugate. Moreover, by the signature condition, the two summands are of rank $n-1$ and $1$, respectively.

\begin{defn}\label{def:line bundle RZ}
The \emph{line bundle of modular forms} of $X_A$ is defined as
$$
\omega_{X_A} := (J_A\cdot \Lie(X_A))_1.
$$
\end{defn}
Recall that $λ$ being of type $t$ entails that $π_Fλ^{-1}:X_A^\vee\to X_A$ is a homomorphism. The definition of $ω_{X_A}$ is functorial, so we may consider the two maps
$$
\omega_\lambda:\omega_{X_A}\lr \omega_{X_A^\vee}\quad\text{resp.}\quad\omega_{π_F\lambda^{-1}}:\omega_{X_A^\vee}\lr \omega_{X_A}.
$$

\begin{defn}\label{def:KR strata RZ}
\begin{altenumerate}
\item The \emph{Kottwitz--Rapoport $\mcZ$-divisor} $\mcN_{A}^{[t],\mcZ}\subset \mcN_{A}^{[t]}$ is defined as the vanishing locus of $\omega_{\lambda}$. Its special fiber $\mcN_{A,\mbF}^{[t], \mcZ}$ is called the \emph{Kottwitz--Rapoport $\mcZ$-stratum}.\footnote{It could equivalently be defined as the maximal closed formal subscheme that is formally reduced. However, in order to prevent confusion with the maximal reduced closed \emph{subscheme}, we avoid this terminology.}

\item Similarly, the \emph{Kottwitz--Rapoport $\mcY$-divisor} $\mcN_{A}^{[t],\mcY}\subset \mcN_{A}^{[t]}$ is defined as the vanishing locus of $\omega_{π_F\lambda^{-1}}$. Equivalently, it can be defined as the vanishing locus $V(\pi_F \omega_{\lambda}^{-1})$ as in Definition \ref{def:KR_strata}. Its special fiber $\mcN^{[t], \mcY}_{A,\mbF}$ is called the \emph{Kottwitz--Rapoport $\mcY$-stratum}.

\item We define $\mcN_{A}^{[t],\mcZ\cap \mcY}$ as the intersection $\mcN_{A}^{[t],\mcZ}\cap\mcN_{A}^{[t],\mcY}$. Its special fiber $\mcN_{A,\mbF}^{[t],\mcZ\cap \mcY}$ is called the \emph{linking stratum}.
\end{altenumerate}
\end{defn}
Since $(π_F\lambda^{-1})\circ\lambda=[\pi_F]$, we obtain stratifications of the special fiber (a decomposition as sum of effective Cartier divisors) and of the maximal reduced closed subscheme (a set-theoretic union):
\begin{equation}\label{eq:stratif_RZ}
\mcN^{[t]}_{A,\mbF} = \mcN^{[t],\mcZ}_{A,\mbF} + \mcN^{[t],\mcY}_{A,\mbF},\qquad \mcN_{A,\red}^{[t]}=\mcN_{A,\red}^{[t],\mcZ}\cup \mcN_{A,\red}^{[t],\mcY}.
\end{equation}
We now apply the equivalence $Φ_A$ from Theorem \ref{thm:AB_strict_p_div_PEL} to define
$$(\mbX, ι_\mbX, λ_\mbX) := Φ_A(\mbX_A, ι_{\mbX_A}, λ_{\mbX_A}).$$
Recall that $\mbX_A$ is isoclinic which also implies that $\mbX$ is isoclinic. More precisely, since the $O_K$-height of $\mbX_A$ is twice its dimension by definition, $\mbX_A$ is isoclinic of $O_K$-slope $1/2$. In the same way, the $O_{F_0}$-height of $\mbX$ is twice its dimension, so it is isoclinic of $O_{F_0}$-slope $1/2$. In particular, Theorem \ref{thm:AB_strict_p_div} applies and $(\mbX, ι_\mbX, λ_\mbX)$ is a polarized triple of signature $(n-1,1)$ and of type $t$ for $F/F_0$. Let
$$\mcN^{[t]} := \mcN^{[t]}_{(n-1,1)} \lr \Spf(O_{\breve F})$$
be the attached relative RZ space from \eqref{eq:def_RZ_PEL}. As in Corollary \eqref{cor:unitary-comp} the equivalence $Φ_A$ provides an $\mcQ \mcI sog(\mbX_A, ι_{\mbX_A}, λ_{\mbX_A})$-equivariant isomorphism
\begin{equation}\label{eq:comp_iso_recap}
Φ_A:\mcN^{[t]}_{A,(n-1,1)} \simlr O_{\breve E} \widehat{\tensor}_{O_{\breve F}} \mcN^{[t]}_{(n-1,1)}.
\end{equation}
Observe that the definition of $\mcN^{[t]}$ is actually the special case $K = F_0$ of the definition of $\mcN^{[t]}_A$. In particular, Definition \ref{def:KR strata RZ} applies and defines Kottwitz--Rapoport divisors and strata on $\mcN^{[t]}$.

\begin{prop}\label{prop:comp-KR-divisors}
For every relatively biformal, $(A,B)$-strict triple $(X, ι, λ)$ of signature $(n-1,1)$, the modification functor $\Phi_A$ gives rise to a commutative diagram
$$\xymatrix{
ω_{X_A} \ar[rr]^-{ω_λ} \ar[d]^-{\cong} && ω_{X_A^\vee} \ar[d]^-{\cong} \ar[rr]^-{π_Fω_λ^{-1}} && ω_{X_A} \ar[d]^-{\cong}\\
ω_{Φ_A(X_A)} \ar[rr]^-{ω_{Φ_A(λ)}} && ω_{Φ_A(X_A)^\vee} \ar[rr]^-{π_Fω_{Φ_A(λ)^{-1}}} && ω_{Φ_A(X_A)}.
}$$
In particular, it defines isomorphisms
$$
\Phi_A:\mcN_A^{[t],\Box}\simlr O_{\breve{E}}\widehat{\tensor}_{O_{\breve F}} \mcN_{(n-1,1)}^{[t],\Box},\quad \Box\in\{\mcZ,\mcY,\mcZ\cap\mcY\}.
$$
\end{prop}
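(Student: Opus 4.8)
The plan is to deduce everything from the natural isomorphism on Hodge filtrations supplied by Theorem~\ref{thm:AB_strict_p_div}~(2), together with its compatibility with duality from Theorem~\ref{thm:AB_strict_p_div_PEL}, and then to transport the resulting picture to the Rapoport--Zink spaces via Corollary~\ref{cor:unitary-comp}. First I would recall that, by Theorem~\ref{thm:AB_strict_p_div}~(2), the functor $\Phi_A$ comes with a natural $O_F$-linear isomorphism
\[
J_A\cdot\Lie(X_A)\ \simlr\ \Lie(\Phi_A(X_A)),
\]
functorial in $O_F$-linear morphisms of $(A,B)$-strict pairs, realizing the effect of $\Phi_A$ on Hodge filtrations as in \eqref{eq:bij-e_S}. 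Since $F/F_0$ is unramified, both sides decompose into their $O_F$-eigenspaces over $O_{F_0}$, and the displayed isomorphism, being $O_F$-linear, respects this decomposition. Restricting to the conjugate (``$1$'') eigenspace then yields a natural isomorphism $\omega_{X_A}\simlr\omega_{\Phi_A(X_A)}$ of line bundles, in the notation of Definition~\ref{def:line bundle RZ}; this is the left-hand vertical arrow of the diagram.

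For the middle vertical arrow I would use that $(X_A^\vee,\ob{\iota}^\vee)$ is again $(A,B)$-strict, by Lemma~\ref{lem:AB-strict_dualizing} (or Lemma~\ref{lem:dual_display_strict} on the display side). Applying the previous step to $X_A^\vee$ gives $\omega_{X_A^\vee}\simlr\omega_{\Phi_A(X_A^\vee)}$, and composing with the Lie algebra of the duality isomorphism $\Phi_A(X_A^\vee)\simlr\Phi_A(X_A)^\vee$ from Theorem~\ref{thm:AB_strict_p_div_PEL} produces the desired $\omega_{X_A^\vee}\simlr\omega_{\Phi_A(X_A)^\vee}$. The commutativity of the first square is then formal: $\lambda$ is an $O_F$-linear morphism $(X_A,\iota)\to(X_A^\vee,\ob{\iota}^\vee)$, so naturality of $J_A\cdot\Lie(-)\simlr\Lie(\Phi_A(-))$ gives a commuting square of $O_F$-modules, which under the duality identification of Theorem~\ref{thm:AB_strict_p_div_PEL} becomes the square with top row $\omega_\lambda$ and bottom row $\omega_{\Phi_A(\lambda)}$; passing to $1$-eigenspaces concludes. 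For the second square I would either repeat this with the honest isogeny $\pi_F\lambda^{-1}:(X_A^\vee,\ob{\iota}^\vee)\to(X_A,\iota)$ --- which exists because $\lambda$ has type $t$, i.e. $\ker\lambda\subseteq X_A[\pi_F]$ --- or simply invoke functoriality of $\Phi_A$ and the relation $(\pi_F\lambda^{-1})\circ\lambda=[\pi_F]$.

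Finally, for the isomorphisms of Kottwitz--Rapoport divisors I would apply the commutative diagram to the universal triple over $\mcN_A^{[t]}$. By Corollary~\ref{cor:unitary-comp} the functor $\Phi_A$ already gives an $\mcQ\mcI sog$-equivariant isomorphism $\mcN_A^{[t]}\simlr O_{\breve E}\,\wh{\tensor}_{O_{\breve F}}\,\mcN^{[t]}_{(n-1,1)}$ that matches the universal object with the base change of the relative universal object. Since the vertical maps of the diagram are isomorphisms of line bundles, the vanishing locus of $\omega_\lambda$ --- which is $\mcN_A^{[t],\mcZ}$ by Definition~\ref{def:KR strata RZ} --- is carried to the vanishing locus of $\omega_{\Phi_A(\lambda)}$, and the latter is the base change along $O_{\breve F}\to O_{\breve E}$ of the $\mcZ$-divisor of $\mcN^{[t]}_{(n-1,1)}$ because all the relevant constructions commute with base change. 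The same argument with $\pi_F\omega_\lambda^{-1}$ gives the case $\Box=\mcY$, and intersecting the two gives $\Box=\mcZ\cap\mcY$. I expect the only genuinely delicate point to be the bookkeeping in the middle vertical arrow, matching the hermitian $O$-dual used on the absolute side with the Faltings $O_{F_0}$-dual on the relative side and the corresponding Lubin--Tate dualizing objects; but this is precisely what Theorem~\ref{thm:AB_strict_p_div_PEL} (via Theorem~\ref{thm:comp-display}) records, so once those inputs are cited the remaining argument is a diagram chase.
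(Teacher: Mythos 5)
Your proposal is correct and matches the paper's own argument: both rest on the natural isomorphism $J_A\cdot\Lie(X_A)\simto\Lie(\Phi_A(X_A))$ recording the effect of $\Phi_A$ on Hodge filtrations, invoke functoriality together with the duality compatibility (Theorems~\ref{thm:comp-display} and \ref{thm:AB_strict_p_div_PEL}) to treat the middle arrow and the commutativity of both squares, and then transport to the RZ spaces via Corollary~\ref{cor:unitary-comp}. You are a bit more explicit than the paper in spelling out that the middle vertical arrow factors through $\Phi_A(X_A^\vee)\simto\Phi_A(X_A)^\vee$ and that the Lubin--Tate dualizing objects must be matched; the paper compresses this into the single phrase ``respect maps induced by polarizations by Theorem~\ref{thm:comp-display}.''
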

\begin{proof}
By the construction of the modification functor $\Phi_A$ in the proof of Theorem \ref{thm:AB_equiv_displays}, more precisely its effect on the Hodge filtration which is described in \eqref{eq:bij-e_S}, for any polarized $(A,B)$-strict triple $(X_A, ι, λ)$, we have isomorphisms of $O_F\tensor \mcO_S$-modules
$$
J_A\cdot\Lie(X_A)\cong \Lie(\Phi_A(X_A)),\quad J_A\cdot\Lie(X_A^\vee)\cong \Lie(\Phi_A(X_A^\vee)).
$$
These are functorial and respect maps induced by polarizations by Theorem \ref{thm:comp-display}. The assertions then follow from the way we defined the Kottwitz--Rapoport divisors.
\end{proof}

\begin{prop}\label{prop:KR-strata}
The Kottwitz--Rapoport divisors $\mcN_{A}^{[t],\mcZ}$ and $\mcN_{A}^{[t],\mcY}$ are Cartier divisors on $\mcN_A^{[t]}$. Their special fibers $\mcN_{A,\mbF}^{[t],\mcZ}$ and $\mcN_{A,\mbF}^{[t],\mcY}$ are formally smooth over $\mbF$. The structure map $\mcN_A^{[t]}\to \Spf(O_{\breve E})$ is formally smooth outside of the linking locus $\mcN_A^{\mcZ\cap\mcY}$.
\end{prop}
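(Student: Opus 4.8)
The plan is to pull everything back, via the comparison isomorphism $\Phi_A$ from Corollary \ref{cor:unitary-comp}, to the relative Rapoport--Zink space $\mcN^{[t]} := \mcN^{[t]}_{(n-1,1)}$, whose geometry is governed by the relative unitary local model and, in the unramified signature-$(n-1,1)$ case, is completely explicit. \emph{First}, I would reduce to $\mcN^{[t]}$: by Proposition \ref{prop:comp-KR-divisors}, $\Phi_A$ identifies the quadruple $(\mcN_A^{[t]};\,\mcN_A^{[t],\mcZ},\,\mcN_A^{[t],\mcY},\,\mcN_A^{[t],\mcZ\cap\mcY})$ with the base change along $O_{\breve F}\to O_{\breve E}$ of the corresponding quadruple for $\mcN^{[t]}$. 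Since $O_{\breve F}\to O_{\breve E}$ is flat, the effective-Cartier-divisor property is preserved; since $\breve F$ and $\breve E$ have the same (algebraically closed) residue field $\mbF$, the special fibers of the Kottwitz--Rapoport divisors are unchanged; and formal smoothness of a morphism is stable under base change, the base change of $\mcN^{[t],\mcZ\cap\mcY}$ being $\mcN_A^{[t],\mcZ\cap\mcY}$. So it suffices to prove the three assertions for $\mcN^{[t]}$ over $O_{\breve F}$, and here Definition \ref{def:KR strata RZ} applies because $\mcN^{[t]}$ is the case $K=F_0$ of $\mcN^{[t]}_A$.

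\emph{Second}, for $\mcN^{[t]}$ I would invoke the Rapoport--Zink local model diagram \cite{RZ1}, i.e.\ a formally smooth correspondence $\mcN^{[t]} \leftarrow \wt{\mcN}^{[t]} \to O_{\breve F}\widehat{\tensor}\,\bfM_n^{\langle L\rangle}(n-1,1)$ to the relative unitary local model. By Theorem \ref{thm:rel_LM_flat} (3) and the identification $O_F\tensor_{O_{E_0}}\bfM_n^{\langle L\rangle}(n-1,1)\cong \bfM_n^{\langle L\rangle_1}(1)$ used in the proof of Proposition \ref{prop:global-KR-Cartier}, the strict complete local rings at closed points of $\mcN^{[t]}$ are, as in \eqref{eq:complete_rings_local_model},
\[
\breve O_F[\![x_1,\ldots,x_{n-1}]\!] \quad\text{or}\quad \breve O_F[\![x_0,\ldots,x_{n-1}]\!]/(x_0x_1-\pi_F),
\]
and that same local model diagram matches $\mcN^{[t],\mcZ}$ and $\mcN^{[t],\mcY}$ with the vanishing loci of the two line-bundle maps in \eqref{eq:line_bundle_maps} --- the compatibility established in the proof of Proposition \ref{prop:global-KR-Cartier}, combined here with the $\Phi_A$-equivariance of Proposition \ref{prop:comp-KR-divisors}. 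Concretely: near a ``smooth'' closed point one of $\{\mcN^{[t],\mcZ},\mcN^{[t],\mcY}\}$ is empty and the other equals the special fiber $V(\pi_F)$; near a ``singular'' closed point, in suitable coordinates, one equals $V(x_0)$ and the other $V(x_1)$, with linking locus $V(x_0,x_1)$.

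\emph{Third}, I would read off the conclusions. Since $x_0$ and $\pi_F$ are non-zero-divisors in the respective integral complete local rings, and the effective-Cartier-divisor condition can be checked on strict completions, $\mcN^{[t],\mcZ}$ and $\mcN^{[t],\mcY}$ are effective Cartier divisors; each of $V(x_0)$, $V(x_1)$, $V(\pi_F)$ reduces modulo the uniformizer to $\mbF[\![x_1,\ldots,x_{n-1}]\!]$, hence $\mcN^{[t],\mcZ}_{\mbF}$ and $\mcN^{[t],\mcY}_{\mbF}$ are formally smooth over $\mbF$; and off $\mcN^{[t],\mcZ\cap\mcY}=V(x_0,x_1)$ the complete local rings of $\mcN^{[t]}$, after eliminating the relation, are power series rings over $O_{\breve F}$, so $\mcN^{[t]}\to\Spf O_{\breve F}$ is formally smooth there --- equivalently, a semi-stable scheme over a discrete valuation ring is smooth exactly off the crossing locus of its special fiber (Theorem \ref{thm:rel_LM_flat} (3)). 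I do not anticipate a real obstacle; the one delicate point is the possibly ramified base change of the first step, where one must check directly from the shape $\breve O_E[\![x_0,\ldots,x_{n-1}]\!]/(x_0x_1-\pi_E^{e})$ of the strict complete local rings of $\mcN_A^{[t]}$ (with $e$ the ramification index of $\breve E/\breve F$) that Cartier-ness and formal smoothness of the special fibers of the two divisors over $\mbF$ persist --- even though $\mcN_A^{[t]}$ itself is no longer regular along the crossing locus once $\breve E/\breve F$ ramifies, cf.\ Remark \ref{rmk:non-regular}.
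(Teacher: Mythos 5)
Your proof follows the paper's own stated alternative: reduce via the comparison isomorphism $\Phi_A$ from Corollary \ref{cor:unitary-comp} together with Proposition \ref{prop:comp-KR-divisors}, and then use the known structure of $\mcN^{[t]}$. The only cosmetic difference is that where the paper simply cites \cite[Proposition 5.9]{ZZhang21} for the facts about $\mcN^{[t]}$, you re-derive them from the local model diagram and the explicit strict complete local rings \eqref{eq:complete_rings_local_model}; this is correct and makes the argument more self-contained, and your closing worry about the ramified base change is already fully discharged by your first step (Cartier-ness persists under finite flat base change, the special fibers over $\mbF$ are literally unchanged, and formal smoothness off the linking locus is stable under base change).
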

\begin{proof}
This can be proved by the same arguments as for Proposition \ref{prop:global-KR-Cartier}. Alternatively, the statements follow by using the comparison isomorphism \eqref{eq:comp_iso_recap}, its compatibility with the definition of Kottwitz--Rapoport divisors from Proposition \ref{prop:comp-KR-divisors}, and the known results about $\mcN^{[t]}$ from \cite[Proposition 5.9]{ZZhang21}.
\end{proof}

\begin{rmk}
The isomorphism in \eqref{eq:comp_iso_recap} also identifies the maximal reduced subschemes $\mcN_{A,\red}^{[t]}$ and $\mcN^{[t]}_\red$. In this way, all definitions and statements related to the Bruhat--Tits stratification on $\mcN^{[t]}$ carry over to $\mcN_A^{[t]}$.
\end{rmk}

\section{Algebraic cycles}\label{sec:algebraiccycles}
In this section, we study special divisors (in the sense of Kudla--Rapoport \cite{KR-global}) and (derived) CM cycles (in the sense of W. Zhang \cite{Zhang21}) on unitary Shimura varieties and Rapoport--Zink spaces. Since our integral models need not be regular in the presence of ramification, see \eqref{eq:complete_local_ring_M}, we provide an additional argument to define their intersection numbers. These numbers will later make up the geometric side of the arithmetic transfer theorem in \S \ref{sec:AT-conj}. 

\subsection{Cycles on the Shimura varietie}\label{sec:global_intersection_numbers}
We begin in the global setting and resume the notations from \S \ref{sec:Shimura}. Recall that $V$ is assumed positive definite at all archimedean places except $\{ϕ_0, \ov{ϕ_0}\}$ (Assumption \ref{assumption:pos_def}).

\begin{defn}\label{defn:hecke}
\begin{altenumerate}
\item Let $μ = \mu_{G,\mfd}\in K_{G,\mfd}\backslash G(F_{0,\mfd})\slash K_{G,\mfd}$ be a double coset. The \emph{Hecke correspondence} $\Hk_\mu$ defined by $\mu$ is the scheme representing the following functor. For an $O_{E}^{\mfd}$-scheme $S$, the set $\Hk_μ(S)$ is the set of isomorphism classes in
\begin{equation}\label{eq:def_Hecke}
\left\{(\mcA_0,\mcA,\ov{η}_\mcA,\mcB,\ov{η}_\mcB,φ) \left\vert
\begin{array}{c}\text{$(\mcA_0,\mcA,\ov{η}_\mcA),\,(\mcA_0,\mcB,\ov{η}_\mcB)\in \mcM(S)$ and}\\
φ\in \Hom_{O_F}^\mfd(\mcA,\mcB)\text{ s.t.}\\
\text{$φ^*(λ_\mcB) = λ_\mcA$ and $η_\mcB\circ φ\circ η_\mcA^{-1} \in µ$ for $η_\mcA\in \ov{η}_\mcA$, $η_\mcB\in \ov{η}_\mcB$}\end{array}\right\}\right..
\end{equation}
Note that for every tuple in this set, the degree $φ$ is $1$ because of the compatibility with $(\ov{η}_\mcA, \ov{η}_\mcB)$. If $μ = K_{G,\mfd}$, then we by definition recover the diagonal $Δ_\mcM$. Moreover, it is clear that $\Hk_μ$ is representable and that the two projections $\Hk_μ\to \mcM$ are finite and étale.

\item All fiber products in the following are taken over $O_E^\mfd$. Consider the blow up 
\[
\mathrm{Bl}: \wt{\mathcal M \times \mathcal M} \lr \mathcal M \times \mathcal M
\]
of $\mcM\times \mcM$ along the closed subscheme $\mcM^\mcZ\times \mcM^\mcZ$. If non-empty, this subscheme is purely of dimension $n-1$ by Proposition \ref{prop:global-KR-Cartier}. It is a Cartier divisor only over those inert places $v$ of $F$ with $L_v^\vee = π_v^{-1}L_v$; over these $v$, the blow-up is an isomorphism. Over those places $v$ with $L_v\subsetneq L_v^\vee \subsetneq π_v^{-1}L_v$, however, $\mcM^\mcZ\times \mcM^\mcZ$ is only a Weil divisor and the blow-up operation is non-trivial.

\item Consider a Hecke correspondence $\Hk_μ\to \mcM\times\mcM$ as in (1). The fiber product
$$\Hk_μ \times_{(\mcM\times \mcM)} (\mcM^\mcZ \times \mcM^\mcZ)$$
is a Cartier divisor on $\Hk_μ$. (This is clear if $\Hk_μ = Δ_{\mcM}$ for then this fiber product recovers $\mcM^\mcZ \subset \mcM$. The general case is proved by using that any $φ$ in \eqref{eq:def_Hecke} is a $\mfd$-quasi isogeny and, in particular, induces an isomorphism $ω_A\simto ω_B$.) By the universal property of blow ups, there hence exists a unique lifting in the diagram
\begin{equation}
\xymatrix{
&& \wt{\mcM\times \mcM} \ar[d]\\
\Hk_μ \ar[rr] \ar@{-->}[rru] && \mcM \times \mcM.
}
\end{equation}
We write
$$\wt{\Hk_μ} \lr \wt{\mcM\times\mcM}$$
for the resulting map. In particular, this defines the strict transform $\wt{\Delta_\mcM}$ of the diagonal.
\end{altenumerate}
\end{defn}

The following theorem will only be used for $\wt{\Delta_\mcM}$.

\begin{thm}\label{prop:diag-lci}
The schemes $\wt{\Hk_\mu} $ and $\wt{\Delta_{\mcM}}$ are relative local complete intersections over $\wt{\mathcal M \times \mathcal M}$.    
\end{thm} 
\begin{proof}
The schemes $\mcM$ and $\Hk_μ$ are smooth over $O_E^\mfd$ above all places $v_0$ of $F_0$ for which $L_{v_0}$ is self-dual or satisfie $L_{v_0}^\vee = π_{v_0}^{-1} L_{v_0}$. Also, the $\mcZ$-stratum is empty above such places. It follows that both $\Hk_μ$ and $\wt{\mcM\times\mcM}$ are smooth, hence regular above such places and the local complete intersection property is automatic. So we consider one of the remaining places. That is, $v$ is an inert place of $F$ with $L_v\subsetneq L_v^\vee \subsetneq π_v^{-1}L_v$.

\medskip \noindent \emph{Step 1: The relative local model.} Consider the relative local model $\bfM := \bfM_n^{\langle L_v\rangle}(n-1,1)$ which is regular. Denote by $\bfM^\mcZ$ its $\mcZ$-divisor as after \eqref{eq:line_bundle_maps}. By \cite[Theorem 5.11]{ZZhang21}, the blow up
\begin{equation}\label{eq:blow_up_relative_LM}
\wt{\bfM\times\bfM} := \mr{BL}_{\bfM^\mcZ\times \bfM^\mcZ}(\bfM \times_{\Spec O_{F,v}} \bfM)
\end{equation}
is regular. (Recall that the key here is to check that the blow up of
$$\Spf O_{F,v}[\![x_0,\ldots,x_n,y_0,\ldots,y_n]\!]/(x_0x_1-π_v, y_0y_1 - π_v)$$
along the ideal $(x_0, y_0)$ is regular.) Hence the strict transform of the diagonal
$$\wt{\Delta_{\bfM}}: \bfM \lr \wt{\bfM \times \bfM}$$
is a closed immersion of regular schemes, and in particular a local complete intersection.

\medskip \noindent \emph{Step 2: The absolute local model.} Let $ν$ be a place of $E$ above $v$. Consider the base change
$$\bfM_A := O_{E,ν}\tensor_{O_{E_{A,v} ϕ_0(F)}} \bfM_{n,A}^{\langle L_v\rangle} (n-1,1)$$
of the absolute local model to $O_{E, ν}$. It comes with a $\mcZ$-divisor $\bfM_A^\mcZ$ as after \eqref{eq:line_bundle_maps_absolute}. As explained there, the pair $(\bfM_A, \bfM_A^\mcZ)$ can be identified with the base change to $O_{E,ν}$ of $(\bfM, \bfM^\mcZ)$. Define
$$\wt{\bfM_A\times \bfM_A} := \mr{BL}_{\bfM_A^\mcZ\times \bfM_A^\mcZ}(\bfM_A\times_{O_{E,ν}} \bfM_A).$$
Since blowing up commutes with flat base change, we obtain that the strict transform
$$\wt{\Delta_{\bfM_A}}:\bfM_A \lr \wt{\bfM_A\times \bfM_A}$$
is a local complete intersection closed immersion.

\medskip \noindent \emph{Step 3: The moduli space $\mcM$.} Consider a geometric closed point $(\mcA_0, \mcA, \mcB, φ) \in \mathrm{Hk}_\mu$ of the special fiber of $\mcM$ over $ν$. Let $R = \wh{\mcO}_{\Hk_μ, (\mcA_0, \mcA, \mcB, φ)}$ be its strict complete local ring. We claim that the map
\begin{equation}\label{eq:strict_local_rings}
\Spf(R) \lr \Spf (\wh{\mcO}_{\mcM\times \mcM, ((\mcA_0, \mcA), (\mcA_0, \mcB))})
\end{equation}
can be identified with the strict completion of $\Delta_{\bfM_A}$ in a closed point.

Recall for this that the existence theorem of the local model diagram \cite[Theorem 3.11, Proposition 3.33]{RZ1} states that there exists an $O_{F,v}$-linear isomorphism between
$$L_v\tensor_{\mbZ_p} R \lr L_v^\vee\tensor_{\mbZ_p}R \lr (π_v^{-1} L_v)\tensor_{\mbZ_p}R$$
and
$$\mcD(\mcA) \lr \mcD(\mcA^\vee) \lr \mcD(\mcA)$$
that propagates into an isomorphism of self-dual chains. Moreover, $γ$ can be chosen such that the Hodge filtrations of $\mcA$ and $\mcA^\vee$ identify $R$ with the strict complete local ring $\bfR$ of a geometric closed point of $\bfM_A$. Since $φ$ is a $\mfd$-quasi-isogeny, the pair $(γ, φ\circ γ)$ then identifies the strict complete local ring of $\mcM\times_{O_{E, ν}}\mcM$ in $((\mcA_0, \mcA), (\mcA_0, \mcB))$ with $\bfR\wh{\tensor}_{\breve O_{E, ν}} \bfR$. This identification is such that \eqref{eq:strict_local_rings} becomes the diagonal map
$$\Spf(\bfR) \lr \Spf (\bfR\tensor_{\breve O_{E, ν}} \bfR).$$
Moreover, it is clear from definitions that these identifications are compatible with the formation of $\mcZ$-divisors and hence extend to blow ups. We conclude that the maps in strict complete local rings of $\wt{\Hk_μ} \to \wt{\mcM\times \mcM}$ can be identified with the maps arising on strict complete local rings of $\wt{\Delta_{\bfM_A}}$. So the relative complete intersection property for $\wt{\Hk_μ}$ follows from that of $\wt{\Delta_{\bfM_A}}$.
\end{proof}

 \begin{rmk}
For a finite type, flat, and separated scheme $X \to \Spec(W)$ over a discrete valuation ring $W$, the diagonal $\Delta: X \to X \times_{\Spec(W)} X$ is a perfect complex (equivalently, a local complete intersection) if and only if $X$ is smooth over $R$, see \cite[\href{https://stacks.math.columbia.edu/tag/0FDP}{Tag 0FDP}]{stacks-project}.
\end{rmk}

\newcommand{\Fix}{\mathrm{Fix}}

Let $f=f_{\mfd}\otimes f^{\mfd}$ be a $\mbQ$-valued Schwartz function in $\mcS(G(\mathbb A_f))^{K_G}$ such that $f^\mfd=\id$. Our next aim is to define certain derived CM cycle classes $\mathrm{CM}^{\mbL}(\alpha,f)$. These will be elements of $\mcZ_1'(\mcM)$, defined as follows:

\begin{defn}\label{def:Z_prime}
Let $\mcZ_1'(\mcM)$ denote the group of $1$-cycles on $\mcM$ with $\mbQ$-coefficients modulo fiberwise rational equivalence. That is, the equivalence relation is the one generated by cycles of the form $\mathrm{div}(h)$ where $h$ is a non-zero meromorphic function on an integral $2$-dimensional subscheme of $\mcM$ that lies in some fiber of $\mcM\to \Spec(O_E^\mfd)$.
\end{defn}

\begin{defn}\label{def:classical hecke} (1) The \emph{fixed point locus} of $\wt{\Hk_μ}$ is defined as the fiber product
\begin{equation}
    \Fix(μ) := \wt{\Delta_\mcM} \times_{\wt{\mcM\times \mcM}} \wt{\Hk_μ}.
\end{equation}
Recall from \cite[Prop. 11.10]{ZZhang21} that there is a disjoint union decomposition of $\Fix(μ)$ as
\begin{equation}\label{equ:hecke-decomp}
\Fix(μ) = \coprod_{\alpha\in O_F^\mfd[t]^{\circ}_{\deg n}}\CM(\alpha,\mu),
\end{equation}
where $O_F^{\mfd}[t]^{\circ}_{\deg n}$ is the set of all monic polynomials in $O_F^{\mfd}[t]$ of degree $n$ with $\alpha(0)t^n\ov{\alpha}(t^{-1})=\alpha(t)$. Concretely, $\CM(α, μ)$ is the open and closed subscheme of points $(\mcA_0, \mcA, \mcA, φ) \in \Fix(μ)$ such that the characteristic polynomial of $φ$ equals $α$.

Assume that $α$ is irreducible. Then $F[t]/(α(t))$ is a CM field of degree $n$ over $F$, and $\CM(α, μ)$ is a moduli space of abelian varieties with CM by that field. It follows that $\CM(α,μ)\to \Spec(O_E^\mfd)$ is proper. It is finite étale above all primes $p$ at which $O_F^\mfd[t]/(α(t))$ is a maximal order and unramified.

\newcommand{\Ch}{\mathrm{Ch}}

\medskip \noindent (2) In the following, all our Chow groups are taken with $\mbQ$-coefficients. By Theorem \ref{prop:diag-lci}, the strict transform of the diagonal $\wt{\Delta_\mcM}:\mcM\to \wt{\mcM\times \mcM}$ is a local complete intersection. Its codimension is $n-1$, so by \cite[\href{https://stacks.math.columbia.edu/tag/0FBI}{Tag 0FBI}]{stacks-project} it defines a Gysin homomorphism
$$\wt{\Delta_\mcM}{}^!:\Ch_\bullet(\wt{\Hk_μ}) \lr \Ch_{\bullet - n- 1}(\Fix(μ)).$$
Moreover, by \eqref{equ:hecke-decomp}, there is a decomposition
$$\Ch_{\bullet-n-1}(\Fix(μ)) = \bigoplus_{α\in O_F^\mfd[t]^\circ_{\text{deg $n$}}} \Ch_{\bullet -n-1}(\CM(α, μ)).$$
In particular, given some $α$, we may apply the Gysin map to the fundamental cycle $[\wt{\Hk_μ}]$ and consider the $α$-component to define a class
\begin{equation}\label{eq:Gysin_alpha}
(\wt{\Delta_\mcM}{}^!([\wt{\Hk_μ}]))_α \in \Ch_1(\CM(α, μ)).
\end{equation}
Assume that $α$ is irreducible. In particular, as already mentioned, all irreducible components of $\CM(α, μ)$ that map dominantly to $\Spec(O_E^\mfd)$ are $1$-dimensional. There is hence a pushforward map $\Ch_1(\CM(α, μ)) \lr \mcZ_1'(\mcM).$ We denote by
$$\CM^\mbL(α, μ) \in \mcZ_1'(\mcM)$$
the image of \eqref{eq:Gysin_alpha}.

\medskip \noindent (3) Let $f = f_\mfd \tensor f^\mfd$ be a $\mbQ$-valued Schwartz function in $S(G(\mbA_f))^{K_G}$ such that $f^\mfd=\id$. We define the \emph{modified derived CM cycle} $\CM^{\mbL}(\alpha,f)$ as the (finite) sum
\begin{equation*}
    \CM^{\mbL}(\alpha,f) := \sum_{\mu \in K_{G,\mfd}\backslash G(F_{0,\mfd})\slash K_{G,\mfd}}f^\mfd(\mu)\CM^{\mbL}(\alpha,\mu)\in \mcZ_1'(\mcM).
\end{equation*}
\end{defn}
\begin{rmk}\label{rmk:two def of cycles}
The construction we provided agrees with previous constructions using the derived tensor product. In the following, we refer to the book \cite{Fulton-Intersection} whose main results apply to our arithmetic setting as explained in its \S 20.1, see in particular \cite[p. 395]{Fulton-Intersection}.

Consider the following diagram:
\begin{equation*}
\begin{aligned}
\xymatrix{
\CM(\alpha,\mu)\ar@{^(->}[r]^-{i}&\Fix_\mu \ar@{}[rd]|{\square}\ar@{^(->}[r]\ar[d]^-{p}&\wt{\Hk_\mu}\ar[d]^-{f}\\
&\wt{\Delta_{\mcM}}\ar@{^(->}[r]&\wt{\mcM\times\mcM}.
}
\end{aligned}
\end{equation*}
In terms of $K$-groups, the definition of the derived CM cycle is
\begin{equation}\label{eq:def_in_terms_of_K_group}
i^*[\mcO_{\wt{\Hk_\mu}}\otimes^{\mbL}_{\mcO_{\wt{\mcM\times\mcM}}}\mcO_{\wt{\Delta_{\mcM}}}]\in G_0(\CM(\alpha,\mu)),
\end{equation}
where $G_0(X)$ is our notation for the $\mbQ$-coefficient $K$-group of isomorphism classes of coherent sheaves on a scheme $X$. (This is also denoted by $K'_0(X)_\mbQ$ in the literature.) Note that $i^*$ is nothing but the restriction to an open and closed subscheme of $\mr{Fix}_μ$.

Recall that there is a Riemann--Roch isomorphism, see \cite[\S18]{Fulton-Intersection},
$$τ_{\mr{Fix}_μ}:G_0(\mr{Fix}_μ) \simlr \mr{Ch}_*(\mr{Fix}_μ).$$
By the description on \cite[bottom of p. 366]{Fulton-Intersection}, we have
$$\tau_{\Fix_\mu}([\mcO_{\wt{\Hk_\mu}}\otimes^{\mbL}_{\mcO_{\wt{\mcM\times\mcM}}}\mcO_{\wt{\Delta_{\mcM}}}])=\tau_f([\mcO_{\wt{\Hk_\mu}}])\cap \tau_{\wt{\mcM\times\mcM}}(\mcO_{\wt{\Delta_\mcM}})\in \mathrm{Ch}_*(\Fix_\mu)$$
where
$$\tau_f: K(\wt{\Hk_\mu}\to \wt{\mcM\times\mcM})\to A(\wt{\Hk_\mu}\to \wt{\mcM\times\mcM})$$
is the map defined on \cite[top of p. 366]{Fulton-Intersection}.
Since $f$ is local complete intersection, by \cite[Example 18.3.17]{Fulton-Intersection}, the induced element $\tau_f([\mcO_{\wt{\Hk_\mu}}])=c_f$ is the higher Gysin map. So we find for the element in \eqref{eq:Gysin_alpha} that
$$(\wt{\Delta_\mcM}{}^!([\wt{\Hk_μ}]))_α = τ_{\mr{Fix}_μ}([\mcO_{\wt{\mr{Hk}_μ}}\tensor_{\mcO_{\wt{\mcM\times \mcM}}}^\mbL \mcO_{\wt{Δ_\mcM}}])_α.$$
\end{rmk}

Next, we recall the definition of Kudla--Rapoport special divisors. Let $F_{0,+}\subseteq F_0^\times$ denote the subset of all totally positive elements.

\begin{defn}\label{def:KR} (1) Let $\xi\in F_{0, +}$ and let $μ = \mu_\mfd\subset V(F_{0,\mfd})$ be an open compact subset that is $K_{G,\mfd}$-stable. The \emph{Kudla--Rapoport special divisors} $\mcZ(\xi,\mu)$ and $\mcY(\xi,μ)$ are defined as the functors taking an $O_{E}^{\mfd}$-test scheme $S$ to the set of isomorphism classes of tuples $(\mcA_0,\mcA,\ov{\eta},u)$ where:
\begin{itemize}
\item $(\mcA_0,\mcA,\ov{\eta})\in \mcM(S)$;
\item $u\in \Hom^0_F(\mcA_0,\mcA)$ with $(u,u)=\xi$;
\item For the definition of $\mcZ(ξ, μ)$ we require $u\in\Hom^{\mfd}(\mcA_0,\mcA)$; for the definition of $\mcY(ξ, μ)$ we instead demand $\lambda\circ u\in\Hom^{\mfd}(\mcA_0,\mcA^\vee)$;
\item $\ov{\eta} \circ u$ is a $K_{G, \mfd}$-orbit in $\mu$.
\end{itemize}
\medskip \noindent (2) Let $ϕ_\mfd \in S(V(F_{0,\mfd}))^{K_{G,\mfd}}$ be a $\mbQ$-coefficient Schwartz function. For $i\in\{1,2\}$, extend it to a Schwartz function $\phi_i=\phi_{\mfd}\otimes \phi_i^\mfd\in \mcS(V(\mbA_{0,f}))^{K_G}$ with 
\begin{equation}\label{equ:phi_i_mfd}
    \phi_1^\mfd = 1_{\wh L^\mfd},\quad\text{and}\quad\phi_2^\mfd = 1_{\wh L^{\vee, \mfd}}.
\end{equation}
Given $\xi\in F_{0,+}$, we associate to $ϕ_i$ its KR divisor as follow: Write $ϕ_\mfd = \sum_{j = 1}^r a_j \, 1_{μ_j}$ as a linear combination of indicator functions. Then set
\begin{equation}\label{def:ZY_weighted}
\begin{aligned}
    \mcZ(\xi,\phi_{1}) & := \sum_{j = 1}^r \,a_j\, \mcZ(\xi,\mu_j)\\[2mm]
    \mcZ(\xi,\phi_2) & := \sum_{j = 1}^r\, a_j\,\mcY(\xi,\mu_j).
\end{aligned}
\end{equation}
These are algebraic cycles on $\mcM$.
\end{defn}

\begin{prop}
The schemes $\mcZ(\xi,\mu)\to \mcM$ and $\mcY(\xi,\mu)\to \mcM$ are finite, unramified and relatively representable. 
They are \'etale locally Cartier divisors over $\mcM$.
\end{prop}
\begin{proof}
The given morphisms are finite, unramified and relatively representable by \cite[Lemma A.2.3]{RChen24-III}. It remains to show that they are \'etale locally Cartier divisors. We verify this for $\mcZ(\xi,\mu)$, the case of $\mcY(ξ,μ)$ being analogous. Let $z\in \mcZ(ξ,μ)$ be a geometric closed point and let $\wh \mcO_{\mcZ(ξ,μ),z}$ and $\wh \mcO_{\mcM,z}$ denote the strict complete local rings in $z$. Let $\mcI\subseteq \mcO_{\mcM,z}$ be the kernel of the surjection $\wh \mcO_{\mcM,z}\to \wh \mcO_{\mcZ(ξ, μ),z}$, and let $\mfm_z$ denote the maximal ideal of $\wh \mcO_{\mcM,z}$. We need to show that $\mcI$ is generated by a regular element. Note that the residue characteristic of $z$ is $p$ because $\mcM$ is projective and $z$ closed.

Let $(\mcA_0,\mcA,\ov{\eta})$ denote the universal family over $S := V(\mfm_z\mcI)\subseteq \Spec(\wh \mcO_{\mcM,z})$ and let $u:\mcA_0\vert_{V(\mcI)}\to \mcA\vert_{V(\mcI)}$ be the homomorphism defined over $\Spec(\wh \mcO_{\mcZ(ξ, μ),z})$. Since $\mfm_z\supseteq\mfm_z\mcI\supset\mfm_z^2$, the closed subscheme defined by the ideal sheaf $\mfm_z\mcI$ is a square-zero thickening of $V(\mcI)$. In particular, $u$ defines a homomorphism $\mcD(u):\mcD(\mcA_0)\to \mcD(\mcA)$ between Grothendieck--Messing crystals evaluated at $S$. By Grothendieck--Messing theory, $V(\mcI) \subseteq S$ is the locus of $S$ where the induced diagram
\begin{equation}\label{eq:lifting_diagram}
\xymatrix{
0\ar[r]&\Fil(\mcA_0)\ar[r]&\mcD(\mcA_0)\ar[d]^{\mcD(u)}\ar[r]&\Lie(\mcA_0)\ar[r]&0\\
0\ar[r]&\Fil(\mcA)\ar[r]&\mcD(\mcA)\ar[r]&\Lie(\mcA)\ar[r]&0
}
\end{equation}
gives rise to a map $\Fil(\mcA)_0\to\Fil(\mcA)$. Equivalently, it is the vanishing locus of the composite morphism $γ:\Fil(\mcA_0)\to \Lie(\mcA)$. Recall that, by definitions, $\mcA_0$ is $(\Phi,\Phi)$-strict which means that $\Fil(\mcA_0) = J_Φ\cdot \mcD(\mcA_0)$. Since all maps in \eqref{eq:lifting_diagram} are $O_F$-linear, this implies that $γ$ takes values in $J_Φ\cdot \Lie(\mcA)$. By Assumption \ref{assumption:pos_def}, $Φ = A\cup \{ϕ_0\}$, so $J_Φ\cdot \Lie(\mcA) = (J_A\cdot\Lie(\mcA))_{1}$ which is a line bundle by the signature $(n-1,1)$ condition \eqref{eq:signature_abelian_variety} and \eqref{equ:Lie-split}.

Moreover, $J_{\{\ov{ϕ_0}\}}\cdot (J_A\cdot \Lie(\mcA))_1 = 0$, so the map $γ$ factors over the quotient line bundle $\Fil(\mcA_0)/J_{\{\ov{ϕ_0}\}}\cdot \Fil(\mcA_0)$. Hence, we see that $\mcI$ is defined by the vanishing of the map
$$γ:\Fil(\mcA_0)/J_{\{\ov{ϕ_0}\}}\cdot \Fil(\mcA_0) \lr (J_A\cdot \Lie(\mcA))_1$$
which shows that $\mcI$ can be generated by a single element.

Since the generic fibers $\mcZ(\xi,\mu)_E\to \mcM_E$ and $\mcY(\xi,\mu)_E\to \mcM_E$ are Cartier divisors by complex uniformization, and since $\mcM$ is flat over $\mcO_E$, this generator cannot be a zero divisor.
\end{proof}

Define the volume factor $\tau(Z^{\mbQ}) := \# Z^{\mbQ}(\mbQ)\backslash Z^{\mbQ}(\mbA_f)/K_{Z^{\mbQ}}$ and fix an irreducible polynomial $α\in O_F^\mfd[t]^\circ_{\text{deg $n$}}$. Consider an element $\xi\in F_{0,+}$, a function $f=f_{\mfd}\otimes f^{\mfd}\in \mcS(G(\mathbb A_{0,f}))^{K_G}$ and a function $\phi_i=\phi_i^{\mfd}\otimes\phi_{i,\mfd}\in \mcS(V(F_{0,\mfd}))^{K_{G,\mfd}}$ as before. By the same argument as \cite[Theorem 8.5]{RSZ-AGGP}, the intersection between the supports of $\mcZ(ξ, ϕ_i)$ and $\CM^\mbL(α, f)$ is contained in the basic locus above finitely many inert places of $F$. Thus, for each $i \in \{1,2\}$, we can define the global intersection number
\begin{equation}\label{eq:def_global_int_number}
    \Int(\xi,\phi_i, f):=\frac{1}{\tau(Z^{\mbQ})[E:F]}(\mcZ(\xi,\phi_i),\CM^{\mbL}(\alpha,f))\ \ \in \sum_{p\nmid \mfd} \mbQ\,\log(p).
\end{equation}

Later, we will define the global arithmetic intersection number $\Int^?(\xi,f,\phi_i)$ for $?=\mathbf{K}$, $\mathbf{B}$ and $\mathbf{K}-\mathbf{B}$ for any $\xi\in F_0$.

\subsection{Cycles on the RZ spaces}\label{sec:cycles on absolute RZ}
We switch to the local notation from \S\ref{sec:local KR strata}. In what follows, we introduce CM cycles and Kudla--Rapoport special divisors on $\mcN_A^{[t]}$ and explain why these are the same as the isomorphic images of the ``usual'' cycles on $\mcN^{[t]}$ under the comparison isomorphism \eqref{eq:comp_iso_recap}.

We first construct and compare CM cycles on absolute and relative RZ space. Fiber products in the following will be over $O_{\breve F}$ (for $\mcN^{[t]}$) or over $O_{\breve E}$ (for $\mcN_A^{[t]}$). Following \cite[Definition 5.10]{ZZhang21}, we define
\[
\wt{\mcN^{[t]} \times \mcN^{[t]}} \lr \mcN^{[t]} \times \mcN^{[t]}
\]
as the blow up along the closed formal subscheme 
$$\mcN^{[t],\mcZ} \times \mcN^{[t],\mcZ} \,\subset\, \mcN^{[t]} \times \mcN^{[t]}.$$
This blow up is regular by \cite[Theorem 5.11]{ZZhang21}. Similarly, for the absolute RZ space $\mcN_{A}^{[t]}$, we define
\[
\wt{\mcN_{A}^{[t]} \times \mcN_{A}^{[t]}} \lr \mcN_{A}^{[t]} \times \mcN_{A}^{[t]}
\]
as the blow up along the closed formal subscheme 
$$\mcN_A^{[t],\mcZ}\times \mcN_A^{[t],\mcZ}\,\subset\, \mcN_{A}^{[t]} \times \mcN_{A}^{[t]}.$$
Consider the diagonal embedding 
$$\Delta_A:\mcN_A^{[t]}\lr \mcN_A^{[t]}\times \mcN_A^{[t]}.$$
Since $\mcN_A^{[t],\mcZ}$ is a Cartier divisor, see Proposition \ref{prop:KR-strata}, the diagonal $\Delta_A$ lifts uniquely to a morphism
$$\wt{\Delta_A}:\mcN_A^{[t]} \lr \wt{\mcN_A^{[t]} \times \mcN_A^{[t]}}.$$
More generally, let $g\in \U(\mbV_A)$ be any element. The action of $g$ on $\mcN_A^{[t]}$ is purely in terms of the framing $ρ$ and, in particular, preserves Kottwitz--Rapoport strata. So the same argument shows that there exists a unique lifting
$$\wt{Γ_{A,g}}:\mcN_A^{[t]} \lr \wt{\mcN_A^{[t]} \times \mcN_A^{[t]}}$$
of the graph map $(1,g)$ to the blow up. All these definitions in particular apply to $(\mbX, ι_\mbX, λ_\mbX)$, i.e. in the case $K = F_0$ and $A = \emptyset$), which defines analogous cycles
$$\wt{\Delta},\, \wt{Γ_g}: \mcN^{[t]} \lr \wt{\mcN^{[t]} \times \mcN^{[t]}}.$$
\begin{prop}\label{prop:compar_strict_transform_local}
For every $g\in \mathrm U(\mbV_A)$, the comparison isomorphism $Φ_A$ from \eqref{eq:comp_iso_recap} defines a commutative diagram
\begin{equation}\label{eq:square_blow_up_phi_A}
\xymatrix{
\mcN_A^{[t]} \ar[rr]^-{\wt{Γ_{A,g}}} \ar[d]_-{Φ_A}^-\cong && \wt{\mcN_A^{[t]}\times \mcN_A^{[t]}} \ar[d]^-{Φ_A}_-\cong\\
O_{\breve E}\wh{\tensor}_{O_{\breve F}} \mcN^{[t]} \ar[rr]^-{\wt{Γ_{Φ_A(g)}}} && O_{\breve E}\wh{\tensor}_{O_{\breve F}}\big(\wt{\mcN^{[t]}\times \mcN^{[t]}}\big).
}
\end{equation}
In particular, for every $g$, the strict transform $\wt{Γ_{A,g}}$ is a local complete intersection in $\wt{\mcN_A^{[t]}\times \mcN_A^{[t]}}.$
\end{prop}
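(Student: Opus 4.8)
The plan is to deduce everything formally from the comparison isomorphism $Φ_A$ of \eqref{eq:comp_iso_recap} together with the results already established for $\mcN^{[t]}$. First I would recall how the lifts $\wt{Γ_{A,g}}$ and $\wt{Γ_{Φ_A(g)}}$ are characterized. The action of $g$ on $\mcN_A^{[t]}$ is entirely through the framing $ρ$, and the Kottwitz--Rapoport $\mcZ$-divisor depends only on the triple $(X_A,ι,λ)$ and not on $ρ$ (it is cut out by the vanishing of $ω_λ$). Hence $Γ_{A,g}^{-1}(\mcN_A^{[t],\mcZ}\times \mcN_A^{[t],\mcZ}) = \mcN_A^{[t],\mcZ}$, which is a Cartier divisor by Proposition \ref{prop:KR-strata}; the universal property of the blow-up then produces the unique lift $\wt{Γ_{A,g}}$, and the same argument on the relative side produces $\wt{Γ_{Φ_A(g)}}$.

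Next I would construct the right-hand vertical map and check commutativity. By Corollary \ref{cor:unitary-comp} the isomorphism $Φ_A$ is $\mathrm U(\mbV_A)\simeq \mathrm U(\mbV)$-equivariant, and by Proposition \ref{prop:comp-KR-divisors} it carries $\mcN_A^{[t],\mcZ}$ to $O_{\breve E}\wh{\tensor}_{O_{\breve F}}\mcN^{[t],\mcZ}$; therefore it identifies the blow-up center $\mcN_A^{[t],\mcZ}\times \mcN_A^{[t],\mcZ}$ with the base change along the finite flat map $O_{\breve F}\to O_{\breve E}$ of $\mcN^{[t],\mcZ}\times \mcN^{[t],\mcZ}$. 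Since blowing up commutes with flat base change (one reduces to the noetherian affine case, where it is the standard statement about Rees algebras), $Φ_A$ extends to an isomorphism $\wt{\mcN_A^{[t]}\times \mcN_A^{[t]}}\simlr O_{\breve E}\wh{\tensor}_{O_{\breve F}}(\wt{\mcN^{[t]}\times \mcN^{[t]}})$. To see the square of \eqref{eq:square_blow_up_phi_A} commutes it suffices, by uniqueness of lifts to the blow-up, to check that both composites agree after composing with the blow-down map to $O_{\breve E}\wh{\tensor}_{O_{\breve F}}(\mcN^{[t]}\times \mcN^{[t]})$: going top-then-right gives $Φ_A\circ Γ_{A,g}=(Φ_A,\,Φ_A(g)\circ Φ_A)$, while going left-then-bottom gives $Γ_{Φ_A(g)}\circ Φ_A=(1,Φ_A(g))\circ Φ_A$, and these coincide because $Φ_A\circ g = Φ_A(g)\circ Φ_A$ by equivariance.

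Finally, for the local complete intersection assertion: by the commutative square and flat base change along $O_{\breve F}\to O_{\breve E}$, it is enough to show $\wt{Γ_g}$ is a local complete intersection in $\wt{\mcN^{[t]}\times \mcN^{[t]}}$ for every $g\in \mathrm U(\mbV)$ (the l.c.i.\ property of a closed immersion is preserved under flat base change). But $\mcN^{[t]}$ is regular with semi-stable reduction and $\wt{\mcN^{[t]}\times \mcN^{[t]}}$ is regular by \cite[Theorem 5.11]{ZZhang21}; the map $Γ_g=(1,g)$ is a section of a projection and hence a closed immersion, which lifts to a closed immersion $\wt{Γ_g}$ into the blow-up. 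A closed immersion of regular (formal) schemes is automatically a regular immersion, hence a local complete intersection, and base changing back to $O_{\breve E}$ gives the claim for $\wt{Γ_{A,g}}$. I do not expect a genuine obstacle: the only mildly delicate points are the compatibility of blow-ups with flat base change in the formal-scheme setting and the framing-independence of the $\mcZ$-divisor, both of which are routine, so the proposition is essentially a formal consequence of Corollary \ref{cor:unitary-comp}, Propositions \ref{prop:comp-KR-divisors} and \ref{prop:KR-strata}, and the regularity result of \cite{ZZhang21}.
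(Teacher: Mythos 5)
Your proposal is correct and follows essentially the same route as the paper: use Proposition \ref{prop:comp-KR-divisors} to match the Kottwitz--Rapoport $\mcZ$-divisors, invoke compatibility of blow-ups with the flat base change $O_{\breve F}\to O_{\breve E}$ to extend $\Phi_A$ to the blow-ups, deduce the commutative square from equivariance and uniqueness of lifts, and then base-change the l.c.i.\ property of $\wt{\Gamma_g}$ from \cite[Theorem 5.11]{ZZhang21}. You simply spell out a few steps the paper compresses (the explicit commutativity check and the re-derivation of $\wt{\Gamma_g}$ being l.c.i.\ from regularity of the blow-up rather than citing it directly), but there is no substantive difference in approach.
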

\begin{proof}
By Proposition \ref{prop:comp-KR-divisors}, $Φ_A$ identifies the Kottwitz--Rapoport divisors on $\mcN_A^{[t]}$ and $O_{\breve E}\wh{\tensor}_{O_{\breve F}}\mcN^{[t]}$. Since blowing up commutes with flat base change, it follows that $Φ_A$ extends to an isomorphism
$$\wt{\mcN_A^{[t]} \times \mcN_A^{[t]}} \simlr O_{\breve E} \wh{\tensor}_{O_{\breve F}} \big(\wt{\mcN^{[t]}\times \mcN^{[t]}}\big).$$
The existence of \eqref{eq:square_blow_up_phi_A} is then clear. It is known by \cite[Theorem 5.11]{ZZhang21} that $\wt{Γ_g}$ is a local complete intersection. By base change, $\wt{Γ_{A,g}}$ is a local complete intersection as well.
\end{proof}

\begin{defn}
The local CM cycle defined by $g$ is the intersection
$$\CM_A(g) := \wt{Γ_g} \cap \wt{\Delta_g} \subseteq \mcN_{A}^{[t]}.$$
The derived CM cycle defined by $g$ is the complex
\[
\CM^\mbL_A(g) := \big(\mcO_{\wt{\Gamma_g}} \otimes^{\mathbb L} \mcO_{\wt{\Delta_A}}
\big) \in D(\mcN_{A}^{[t]})\]
where the tensor product is over the structure sheaf of $\wt{\mcN_{A}^{[t]} \times \mathcal N_{A}^{[t]}}$.
\end{defn}

\begin{prop}[Comparison of CM cycles] \label{prop-com-cycles}
For every $g\in \mathrm U(\mbV_A)$ and under the isomorphism $Φ_A$ from \eqref{eq:comp_iso_recap}, there is a natural isomorphism
$$\CM^\mbL_A(g) \simlr Φ_A^*(O_{\breve{E}} \otimes_{O_{\breve{F}}} \CM^\mbL(g))\ \in D(\mcN_A^{[t]}).$$
In particular, its class $[\CM^\mbL_A(g)]$ in $G_0(\CM_A(g))$ lies in the filtration subgroup $F_1G_0(\CM^\mbL_A(g))$ generated by coherent sheaves with support of dimension $\leq 1$.  
\end{prop}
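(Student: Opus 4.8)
The plan is to deduce both assertions from Proposition \ref{prop:compar_strict_transform_local} together with the compatibility of derived tensor products with flat base change.

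\emph{Step 1: geometric input.} By Proposition \ref{prop:compar_strict_transform_local} (and its proof), the comparison isomorphism \eqref{eq:comp_iso_recap} extends to an isomorphism of formal schemes
\[
\wt{\mcN_A^{[t]}\times \mcN_A^{[t]}} \simlr O_{\breve E}\,\widehat{\tensor}_{O_{\breve F}}\,\big(\wt{\mcN^{[t]}\times \mcN^{[t]}}\big),
\]
and under this isomorphism $\wt{Γ_{A,g}}$ is identified with the base change of $\wt{Γ_{Φ_A(g)}}$ (the commutative square \eqref{eq:square_blow_up_phi_A}), while the special case $g=\mr{id}$ identifies $\wt{\Delta_A}$ with the base change of $\wt{\Delta}$. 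Since $\wt{Γ_{A,g}}$ and $\wt{\Delta_A}$ are closed immersions into $\wt{\mcN_A^{[t]}\times \mcN_A^{[t]}}$, the isomorphism $Φ_A$ therefore identifies their structure sheaves with the pullbacks along $O_{\breve F}\to O_{\breve E}$ of the structure sheaves of $\wt{Γ_{Φ_A(g)}}$ and $\wt{\Delta}$, respectively.

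\emph{Step 2: flat base change.} The extension $\breve E/\breve F$ is finite and totally ramified — its residue field is $\mbF$, the common residue field of $\breve E$ and $\breve F$ by construction — so $O_{\breve F}\to O_{\breve E}$ is finite flat. By \cite[Theorem 5.11]{ZZhang21} (recalled in Proposition \ref{prop:compar_strict_transform_local}), $\wt{\Delta}$ is a local complete intersection, hence $\mcO_{\wt{\Delta}}$ is a perfect complex on $\wt{\mcN^{[t]}\times \mcN^{[t]}}$; consequently $\mcO_{\wt{Γ_{Φ_A(g)}}}\otimes^{\mbL}\mcO_{\wt{\Delta}}$ is a bounded complex of coherent sheaves, and, checking on affine formal charts, its formation commutes with the flat base change $-\,\widehat{\tensor}_{O_{\breve F}}\,O_{\breve E}$ (the classical fact that $\Tor$ commutes with flat base change over Noetherian rings). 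Combining this with Step 1, and with the identification $g\leftrightarrow Φ_A(g)$ under the isometry $\mbV_A\simto \mbV$, yields a natural isomorphism in $D(\mcN_A^{[t]})$
\[
\begin{aligned}
\CM^{\mbL}_A(g) &= \mcO_{\wt{Γ_{A,g}}}\otimes^{\mbL}_{\mcO_{\wt{\mcN_A^{[t]}\times \mcN_A^{[t]}}}}\mcO_{\wt{\Delta_A}}\\
&\simlr Φ_A^*\Big(O_{\breve E}\tensor_{O_{\breve F}}\big(\mcO_{\wt{Γ_{Φ_A(g)}}}\otimes^{\mbL}\mcO_{\wt{\Delta}}\big)\Big) = Φ_A^*\big(O_{\breve E}\tensor_{O_{\breve F}}\CM^{\mbL}(g)\big).
\end{aligned}
\]

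\emph{Step 3: the filtration statement.} By \cite[\S5]{ZZhang21}, the support $\CM(g)=\wt{Γ_{Φ_A(g)}}\cap\wt{\Delta}$ of $\CM^{\mbL}(g)$ has dimension $\leq 1$, i.e. $[\CM^{\mbL}(g)]\in F_1 G_0(\CM(g))$. Because $O_{\breve F}\to O_{\breve E}$ is finite, base change along it preserves the dimension of supports, so $O_{\breve E}\tensor_{O_{\breve F}}\CM^{\mbL}(g)$ has support of dimension $\leq 1$; applying the isomorphism $Φ_A^*$ and the isomorphism of Step 2 gives $[\CM^{\mbL}_A(g)]\in F_1 G_0(\CM_A(g))$. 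The only point requiring genuine care is the flat-base-change compatibility of the derived structure-sheaf tensor product in the formal-scheme setting, but this is routine bookkeeping: it reduces to affine charts and the classical Noetherian statement, the perfectness needed being exactly the local complete intersection property furnished by Proposition \ref{prop:compar_strict_transform_local}. Everything else is a formal consequence of the comparison isomorphisms already established.
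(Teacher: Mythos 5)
Your Steps 1 and 2 are correct and take essentially the paper's route: they are a reasonable unwinding of the paper's terse ``the first statement follows from definitions and Proposition \ref{prop:compar_strict_transform_local},'' using the extension of $\Phi_A$ to the blown-up products and flatness of $O_{\breve F}\to O_{\breve E}$.

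Step 3 has a genuine gap. You claim that the support $\CM(g)=\wt{\Gamma_{\Phi_A(g)}}\cap\wt{\Delta}$ of $\CM^{\mbL}(g)$ has dimension $\leq 1$, citing \cite[\S5]{ZZhang21}, and deduce $[\CM^{\mbL}(g)]\in F_1G_0(\CM(g))$ from that. This dimension claim is false in general: the proposition is for \emph{every} $g\in\mathrm U(\mbV_A)$, and for $g=\mathrm{id}$ one has $\CM(\mathrm{id})=\wt{\Delta}\cap\wt{\Delta}=\wt{\Delta}$, which is $n$-dimensional. More generally, $\CM(g)$ is the set-theoretic intersection of two $n$-dimensional closed formal subschemes of the $(2n-1)$-dimensional space $\wt{\mcN^{[t]}\times\mcN^{[t]}}$, and a priori only lower bounds on dimension are available; what \cite{ZZhang21} shows to be $0$-dimensional is $\CM(g)\cap\mcZ(x)$ for a \emph{regular semi-simple pair} $(g,x)$, not $\CM(g)$ itself. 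The correct argument for $[\CM^{\mbL}(g)]\in F_1G_0(\CM(g))$ — the one the paper uses — is that $\wt{\mcN^{[t]}\times\mcN^{[t]}}$ is \emph{regular} (by \cite[Theorem 5.11]{ZZhang21}) of dimension $2n-1$, and $\wt{\Gamma_g}$ and $\wt{\Delta}$ are both $n$-dimensional closed subschemes, so the standard theorem on derived intersections in regular schemes (Serre, Gillet--Soulé, SGA~6) puts the class of $\mcO_{\wt{\Gamma_g}}\otimes^{\mbL}\mcO_{\wt{\Delta}}$ in $F_{n+n-(2n-1)}G_0=F_1G_0$, \emph{regardless} of the actual dimension of the scheme-theoretic intersection. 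Once that input is in place, your observation that the finite flat base change $O_{\breve F}\to O_{\breve E}$ preserves dimensions of supports, combined with the isomorphism from Step~2, does transfer the statement to $\CM^{\mbL}_A(g)$. Note that this is also the point of routing through the relative RZ space in the first place: $\wt{\mcN_A^{[t]}\times\mcN_A^{[t]}}$ may fail to be regular when $\breve E/\breve F$ is ramified, so the regularity argument cannot be run directly on the absolute side.
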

\begin{proof}
The first statement follows from definitions and Proposition \ref{prop:compar_strict_transform_local}. The second statement holds because $[\CM^\mbL(g)]$ lies in $F_1G_0(\CM(g))$ because it arises by intersection of two $n$-dimensional subspaces of a regular, $(2n-1)$-dimensional space.
\end{proof}

Next, we define and compare the special divisors on absolute and relative RZ space.
Let $(\mbE_A,\iota_{\mbE_A},\lambda_{\mbE_A})$ be an $(A,B)$-strict, isoclinic $O_K$-module over $\Spec \mbF$ of height $2[F_0:K]$ and dimension $[F_0:K]$, of signature $(1,0)$, and such that $\lambda_{\mbE_A}$ is a principal polarization. Define
$$(\mbE, ι_\mbE, λ_\mbE) := Φ_A(\mbE_A,\iota_{\mbE_A},\lambda_{\mbE_A})$$
which is a principally polarized strict $O_{F_0}$-module of height $2$, dimension $1$, $O_{F_0}$-height $1/2$, and signature $(1,0)$. It is well-known that $(\mbE, ι_\mbE, λ_\mbE)$ has a unique deformation $(\mathcal E, \iota_{\mathcal E}, \lambda_{\mathcal E})$ to $\Spf O_{\breve{F}}$. In the same way, $(\mbE_A,\iota_{\mbE_A},\lambda_{\mbE_A})$ has a unique deformation $(\mcE_A,\iota_{\mcE_A},\lambda_{\mcE_A})$ to $\Spf O_{\breve{E}}$. (Use the equivalence $Φ_A$ or the fact that the banal local model in \eqref{eq:banal_LM_unitary} is étale.) We choose the deformation $\mcE$ as
$$(\mcE, ι_\mcE, λ_\mcE) = Φ_A(\mcE_A, ι_{\mcE_A}, λ_{\mcE_A}).$$
Define the two hermitian $F$-vector spaces
$$\mbV_A = \Hom_F^0(\mbE_A, \mbX_A),\qquad \mbV = \Hom_F^0(\mbE, \mbX).$$
The hermitian forms are given by Kudla--Rapoport construction:
$$(x,y) := \lambda_{\mbE_A}^{-1} \circ y^\vee \circ \lambda_{\mbX_A} \circ x,\quad \text{resp.}\quad λ_\mbE^{-1}\circ y^\vee \circ λ_\mbX \circ x.$$
These compositions are $F$-linear quasi-endomorphisms of $\mbE_A$ resp. $\mbE$ and hence can be viewed as elements of $F$. Elements of $\mbV_A$ and $\mbV$ are called \emph{special quasi-homomorphisms}. The equivalence $Φ_A$ from Theorem \ref{thm:AB_strict_p_div_PEL} provides an isometry
$$Φ_A: \mbV_A\simlr \mbV.$$
Moreover, it is known that
$\mcQ\mcI sog(\mbX_A, ι_{\mbX_A}, λ_{\mbX_A}) \simlr \mathrm U(\mbV_A)$ via composition.
\begin{defn}
Let $x\in \mbV_A$ be special quasi-homomorphism. 

\medskip \noindent (1) The \emph{Kudla--Rapoport $\mcZ$-divisor} $\mcZ_A(x)\subseteq \mcN_{A}^{[t]}$ is defined as the closed formal subscheme whose $S$-points are those tuples $(X_A,ι_A, λ_A, ρ_A)$ such that $\rho_A \circ x \in \Hom(\mathcal E_{A,S}, X_A)$. In other words, the composition of quasi-homomorphisms over $\ov{S}$ 
$$
\mcE_{A,\ov S} \simlr \mbE_{A,\ov{S}}\overset{x}{\lr} \mbX_{A,\ov{S}} \overset{\rho_A}{\lr} X_{A,\ov S}
$$
lifts to a homomorphism $\mcE_{A,S}\lr X_A$.

\medskip \noindent (2) In the same way, the \emph{Kudla--Rapoport $\mcY$-divisor} $\mcY_A(x)\subseteq \mcN_{A}^{[t]}$ is the closed formal subscheme whose $S$-points are those tuples $(X_A,ι_A, λ_A, ρ_A)$ such that
$$λ_A\circ \rho_A \circ x \in \Hom(\mathcal E_{A,S}, X_A^\vee).$$
\end{defn}

When applied to $(\mbX, ι, λ)$ which is the case $K=F_0$ and $A=\emptyset$, this recovers the standard definition of KR divisors $\mcZ(x), \mcY(x)\subseteq \mcN^{[t]}$.

\begin{prop}[Comparison of special cycles] \label{prop-com-KR-cycles}
For every $x\in \mbV_A$, the $\mathrm{U}(\mathbb V)$-equivariant isomorphism $\Phi_A$ in \eqref{eq:comp_iso_recap} restricts to isomorphisms
$$
\Phi_A: \mcZ_A(x) \simlr O_{\breve E}\,\wh{\tensor}_{O_{\breve F}}\, \mcZ(Φ_A(x)),\qquad \mcY_A(x) \simlr O_{\breve E}\,\wh{\tensor}_{O_{\breve F}}\, \mcY(Φ_A(x)).
$$
In particular, $\mcZ_A(x)$ and $\mcY_A(x)$ are Cartier divisors on $\mcN_{A}^{[t]}$.
\end{prop}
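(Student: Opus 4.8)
The plan is to reduce the statement to the functoriality of $\Phi_A$ established in Theorem \ref{thm:AB_strict_p_div_PEL} (the compatibility with duality) together with the effect of $\Phi_A$ on Hodge filtrations from Theorem \ref{thm:AB_strict_p_div}. The key observation is that both Kudla--Rapoport divisors $\mcZ_A(x)$ and $\mcY_A(x)$ are defined by a lifting condition for a quasi-homomorphism $\mcE_{A,S} \to X_A$ (resp. $\mcE_{A,S} \to X_A^\vee$), and $\Phi_A$ is an equivalence of categories compatible with all the auxiliary structures involved. So the strategy is: show that $\Phi_A$ carries the defining lifting condition on the source to the corresponding lifting condition on the target, hence induces the claimed isomorphisms of closed formal subschemes.

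First I would set up the dictionary. Recall that $\mcE_A = \Phi_A^{-1}(\mcE)$ and $\mbE_A = \Phi_A^{-1}(\mbE)$ by construction, and similarly for $\mbX_A$, so $\Phi_A$ identifies the framing data on both sides; in particular $\Phi_A:\mbV_A \simto \mbV$ is the isometry sending $x$ to $\Phi_A(x)$, and the quasi-isogeny $\rho_A$ corresponds to $\rho = \Phi_A(\rho_A)$ under the isomorphism \eqref{eq:comp_iso_recap}. Now fix an $O_{\breve E}$-scheme $S$ and a point $(X_A, \iota_A, \lambda_A, \rho_A) \in \mcN_A^{[t]}(S)$ with image $(X, \iota, \lambda, \rho) = \Phi_A(X_A,\iota_A,\lambda_A,\rho_A)$ under \eqref{eq:comp_iso_recap}. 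The condition defining $\mcZ_A(x)$ is that $\rho_A \circ x$, a quasi-homomorphism $\mcE_{A,\ov S} \to X_{A,\ov S}$ over $\ov S$, lifts to an honest homomorphism $\mcE_{A,S} \to X_{A,S}$. Since $\Phi_A$ is an exact equivalence of fibered categories over $\Spf(O_{\breve E})$-schemes (being built from the equivalences $\Psi$, $\mfA$, $\BT$, and the modification functor of Theorem \ref{thm:AB_equiv_displays}), and since it is compatible with base change in $S$ and with the special-fiber functor, it carries the $S$-homomorphism group $\Hom(\mcE_{A,S}, X_{A,S})$ isomorphically onto $\Hom(\mcE_S, X_S)$ and the reduction map to $\ov S$ commutes with this identification. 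Therefore $\rho_A \circ x$ lifts over $S$ if and only if $\Phi_A(\rho_A\circ x) = \rho \circ \Phi_A(x)$ lifts over $S$, which is precisely the condition defining $\mcZ(\Phi_A(x))$ pulled back to $O_{\breve E}\,\wh\tensor_{O_{\breve F}}\mcN^{[t]}$. This gives the first claimed isomorphism; the $\mcU(\mbV)$-equivariance is immediate from the equivariance of \eqref{eq:comp_iso_recap} and the fact that $\Phi_A$ intertwines the actions of $\mcQ\mcI sog(\mbX_A,\ldots)$ and $\mcQ\mcI sog(\mbX,\ldots)$ via $\mbV_A \simto \mbV$.

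For the $\mcY$-divisor the argument is the same, with the single additional input that $\Phi_A$ is compatible with polarizations: by Theorem \ref{thm:AB_strict_p_div_PEL} there is a natural isomorphism $\Phi_A((X_A)^\vee) \simto \Phi_A(X_A)^\vee$ identifying $\Phi_A(\lambda_A)$ with $\lambda$ (after fixing the auxiliary trivialization $\delta$ as in \S\ref{sec:uni-RZ}), and this isomorphism is again functorial and base-change compatible. Hence the condition $\lambda_A \circ \rho_A \circ x \in \Hom(\mcE_{A,S}, X_A^\vee)$ transforms, under $\Phi_A$, into $\lambda \circ \rho \circ \Phi_A(x) \in \Hom(\mcE_S, X^\vee)$, giving the second isomorphism. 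Finally, that $\mcZ_A(x)$ and $\mcY_A(x)$ are Cartier divisors follows formally: Scholze--Weinstein, Kudla--Rapoport or the cited \cite[Proposition 5.9]{ZZhang21} show $\mcZ(\Phi_A(x))$ and $\mcY(\Phi_A(x))$ are Cartier on $\mcN^{[t]}$, and being a Cartier divisor is stable under the flat base change $O_{\breve E}\,\wh\tensor_{O_{\breve F}}(-)$, so the isomorphic formal schemes $\mcZ_A(x)$ and $\mcY_A(x)$ inherit this property.

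The main obstacle, such as it is, is purely bookkeeping: one must check that the chain of equivalences defining $\Phi_A$ really does induce the stated bijection on $S$-point homomorphism groups \emph{compatibly with reduction to $\ov S$}, i.e.\ that no subtlety arises from the Ahsendorf functor or from passing between $p$-divisible $O$-modules, $O$-displays, and back. This is guaranteed because each constituent functor ($\Psi$, $\Phi_A$ at display level, $\mfA$, $\BT$) is an equivalence of fibered categories over $\Spf(O_{\breve E})$-schemes and is compatible with base change; the lifting condition "$\rho_A \circ x$ over $\ov S$ extends to $S$" is a property of the morphism of fibered categories, so it is preserved. I expect this to require only a short paragraph once the functoriality statements of Theorems \ref{thm:AB_strict_p_div} and \ref{thm:AB_strict_p_div_PEL} are invoked.
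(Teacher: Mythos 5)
Your proposal is correct and follows essentially the same route as the paper, which simply cites the moduli descriptions, the explicit construction of $\Phi_A$, and \cite[Proposition 5.9]{ZZhang21} for the Cartier-divisor claim. What you have written is the natural expansion of that one-sentence argument: the lifting condition defining the Kudla--Rapoport divisors is a property of morphisms in the fibered category, $\Phi_A$ is an equivalence of fibered categories compatible with base change and with duality (Theorems \ref{thm:AB_strict_p_div} and \ref{thm:AB_strict_p_div_PEL}), so the conditions match up, and the Cartier-divisor property transfers along the flat base change $O_{\breve E}\,\wh{\tensor}_{O_{\breve F}}(-)$.
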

\begin{proof}
This follows from the moduli descriptions of both sides and the explicit construction of $\Phi_A$. The last statement follows from the fact that $\mcZ(x)$ and $\mcY(x)$ are Cartier divisors in $\mcN^{[t]}$; see \cite[Proposition 5.9]{ZZhang21}.
\end{proof}

A pair $(g,x)\in \mathrm{U}(\mbV) \times \mbV$ is called \emph{regular semi-simple} if $\{ g^ix\}_{i\geq 0}$ spans $\mbV$. In this situation, the intersections $\CM(g)\cap \mcZ(x)$ and $\CM(g)\cap \mcY(x)$ are projective schemes over $\Spf(O_{\breve F})$, \cite[Proposition 6.2]{ZZhang21}. Thus it makes sense to define the intersection numbers
\begin{equation}\label{eq:def_int_numbers}
\begin{aligned}
\wt {\Int}{}^{\mcZ}(g, x) & = χ(\CM^\mbL(g) \tensor^{\mbL} \mcO_{\mcZ(x)})\\[2mm]
\wt {\Int}{}^{\mcY}(g, x) & = χ(\CM^\mbL(g) \tensor^\mbL \mcO_{\mcY(x)}).
\end{aligned}
\end{equation}
The tensor product here is over $\mcO_{\mcN^{[t]}}$, and the Euler--Poincaré characteristic is defined by
$$χ(K_\bullet) = \sum_{i,j\in \mbZ} (-1)^{i+j} \ell en_{O_{\breve F}}(H^j(\mcN^{[t]}, H_i(K_\bullet))).$$
The same definition can be made on $\mcN_A^{[t]}$, but we do not need to introduce new notation for this because of the following result. Note that it does not matter whether one takes the length as $O_{\breve E}$-module or $O_{\breve F}$-module because $\breve E/\breve F$ is totally ramified.
\begin{cor}
Let $(g,x)\in \mathrm U(\mbV_A)\times \mbV_A$ be regular semi-simple. Then
$$χ(\CM_A^\mbL(g) \tensor^\mbL_{\mcO_{\mcN_A^{[t]}}} \mcO_{\mcZ_A(x)}) = [\breve E: \breve F]\,\wt{\Int}{}^\mcZ(g,x)$$
and similarly for the $\mcY$-divisor intersection number.
\end{cor}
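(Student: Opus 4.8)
The plan is to transport the entire computation to the relative RZ space $\mcN^{[t]}$ via the comparison isomorphism $Φ_A$ of \eqref{eq:comp_iso_recap}, and then to track the effect of the base change $\Spf O_{\breve E}\to \Spf O_{\breve F}$ on Euler--Poincar\'e characteristics. Throughout I would identify $\mbV_A$ with $\mbV$ and $\mathrm U(\mbV_A)$ with $\mathrm U(\mbV)$ via $Φ_A$; in particular $(g,x)$ is regular semi-simple on both sides, so $\wt{\Int}{}^\mcZ(g,x)$ is defined, and $\CM(g)\cap\mcZ(x)$ is proper over the respective base by \cite[Proposition 6.2]{ZZhang21}.

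First I would combine the two comparison statements already established. Proposition \ref{prop-com-cycles} identifies $\CM^\mbL_A(g)$ with the pullback $Φ_A^*\big(O_{\breve E}\otimes_{O_{\breve F}}\CM^\mbL(g)\big)$, and Proposition \ref{prop-com-KR-cycles} identifies $\mcO_{\mcZ_A(x)}$ with $O_{\breve E}\,\wh{\tensor}_{O_{\breve F}}\,\mcO_{\mcZ(x)}$ under $Φ_A$. Since $\Spf O_{\breve E}\to \Spf O_{\breve F}$ is flat --- indeed finite flat, as $\breve E/\breve F$ is totally ramified --- derived tensor products of coherent complexes commute with this base change, so I obtain a natural isomorphism in $D(\mcN_A^{[t]})$
\begin{equation*}
\CM^\mbL_A(g)\otimes^\mbL_{\mcO_{\mcN_A^{[t]}}}\mcO_{\mcZ_A(x)}\ \simlr\ Φ_A^*\Big(O_{\breve E}\otimes^\mbL_{O_{\breve F}}\big(\CM^\mbL(g)\otimes^\mbL_{\mcO_{\mcN^{[t]}}}\mcO_{\mcZ(x)}\big)\Big).
\end{equation*}

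Next I would pass to cohomology. Writing $K_\bullet := \CM^\mbL(g)\otimes^\mbL_{\mcO_{\mcN^{[t]}}}\mcO_{\mcZ(x)}$, flatness of $O_{\breve F}\to O_{\breve E}$ gives $H_i(O_{\breve E}\otimes^\mbL_{O_{\breve F}}K_\bullet)=O_{\breve E}\otimes_{O_{\breve F}}H_i(K_\bullet)$, and flat base change in cohomology gives, for all $i,j$,
\begin{equation*}
H^j\big(\mcN_A^{[t]},\,H_i(\CM^\mbL_A(g)\otimes^\mbL\mcO_{\mcZ_A(x)})\big)\ \simlr\ H^j\big(\mcN^{[t]},\,H_i(K_\bullet)\big)\otimes_{O_{\breve F}}O_{\breve E}.
\end{equation*}
The last step is length bookkeeping: since $\breve E/\breve F$ is totally ramified, $\ell en_{O_{\breve E}}$ and $\ell en_{O_{\breve F}}$ agree on finite-length $O_{\breve E}$-modules, and since $O_{\breve E}$ is free of rank $[\breve E:\breve F]$ over $O_{\breve F}$, one has $\ell en_{O_{\breve F}}(M\otimes_{O_{\breve F}}O_{\breve E})=[\breve E:\breve F]\cdot\ell en_{O_{\breve F}}(M)$ for every finite-length $O_{\breve F}$-module $M$ (reduce to $M=O_{\breve F}/\mfm^k$ by additivity of length). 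Feeding this into the definition of $χ$ yields $χ\big(\CM_A^\mbL(g)\otimes^\mbL\mcO_{\mcZ_A(x)}\big)=[\breve E:\breve F]\,\wt{\Int}{}^\mcZ(g,x)$, and the $\mcY$-divisor case is verbatim the same using the second isomorphism of Proposition \ref{prop-com-KR-cycles}.

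I do not expect a serious obstacle, since all the geometric input has been absorbed into Propositions \ref{prop-com-cycles} and \ref{prop-com-KR-cycles}. The only points requiring some care are (i) that the derived tensor product of $\CM^\mbL_A(g)$ with $\mcO_{\mcZ_A(x)}$ genuinely commutes with the base change $O_{\breve F}\to O_{\breve E}$ in the formal-scheme setting --- standard, because the relevant schemes are of finite type over the two complete discrete valuation rings and the map is flat --- and (ii) the elementary length computation across the totally ramified extension.
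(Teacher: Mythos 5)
Your proof is correct and takes essentially the same route as the paper's one-line argument: transport everything to $\mcN^{[t]}$ via the comparison isomorphisms (Propositions \ref{prop-com-cycles} and \ref{prop-com-KR-cycles}, the former being the packaged form of \ref{prop:compar_strict_transform_local} which the paper actually cites), invoke flat base change for derived tensor products and for cohomology along $O_{\breve F}\to O_{\breve E}$, and do the elementary length count across the totally ramified extension, yielding the factor $[\breve E:\breve F]$. You have merely written out the details that the paper compresses into a single sentence; there is no gap.
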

\begin{proof}
This is clear from Proposition \ref{prop-com-KR-cycles}, Proposition \ref{prop:compar_strict_transform_local} and because taking the derived tensor product and taking cohomology commute with the finite flat base change $O_{\breve F}\to O_{\breve E}$.
\end{proof}

\subsection{Uniformization of algebraic cycles}
In this subsection, we prove the basic uniformization of CM cycles and special divisors. We resume notations in \S \ref{sec:uniformofSV}.

First, we state the basic uniformization for Hecke correspondences and CM cycles following \cite{ZZhang21}. Recall that $\Hk_μ$ was introduced in Definition \ref{defn:hecke}. Let $\mr{Hk}_{\mu}^{\wedge}$ be the locally noetherian formal scheme over $O_{\breve{E}_{\nu}}$ obtained by pullback of $\mr{Hk}_{\mu}$ 
along the map 
\[ 
\mathcal M^{\wedge} \times_{\mcM_0^\wedge} \mathcal M^{\wedge} \lr \mathcal M \times_{\mcM_0} \mathcal M.
\]
There is a natural projection 
\begin{equation}
\mr{Hk}_{\mu}^\wedge \lr Z^{\mathbb Q}(\mathbb Q)  \backslash Z^{\mathbb Q}(\mathbb A_{0, f}) / K_{Z^\mathbb Q} .
\end{equation}
Denote by $\mr{Hk}_{\mu,0}^{\wedge}$ any of its fibers. Consider the set-theoretic Hecke correspondence
\[
\mr{Hk}_{\mu,0}^{v}= \{ (g_1,g_2) \in G(\mathbb A_f^{v})/K^{v} \times G(\mathbb A_f^{v})/K^{v}\ |\ g_1^{-1}g_2 \in K^v\mu K^v \}.
\]
It is equipped with a diagonal action of $G^{(v)}(F_0)$ by left multiplication. Following the same reasoning as \cite[Proposition 7.16]{Zhang21}, and combining with our comparison result in the proof of Proposition \ref{prop-com-cycles}, we have a uniformization isomorphism:
\[ 
G^{(v)}(F_0) \backslash \big[ O_{\breve{E}_\nu}\wh{\otimes}_{O_{\breve{F}_v}}\mathcal N^{[t]} \,\times\, \mr{Hk}_{{\mu},0}^{v}\big] \simlr \mr{Hk}_{\mu,0}^{\wedge}\]
compatible with the uniformization map $\Theta_0$ from \eqref{eq:Theta_0}. 
For $(\delta', h') \in G^{(v)}(F_0) \times G(\mathbb A_{0,f}^v) / K^v$, consider the CM cycle
\begin{equation}
\CM (\delta', h')_{K^v} := [\CM(δ')_{O_{\breve E_ν}} \times 1_{h'K^v}] \lr [ O_{\breve E_ν}\tensor_{O_{\breve F_v}} \mathcal{N}^{[t]} \times G (\mathbb{A}_{0, f}^{v}) / K^v].
\end{equation}
The disjoint union
$$\coprod_{(\delta', h')} \CM (\delta', h')_{K^{v}} $$ over the $G^{(v)}(F_0)$-orbit of a given pair $(\delta, h)$ in $G^{(v)}(F_0) \times G(\mathbb A_{0,f}^v) / K^v$ descends to a finite unramified morphism $\CM(\delta, h) \to \mathcal{M}^{\wedge}_{O_{\breve{E}_{\nu}}, 0}$. Let $μ \in K_{\mfd}\backslash G(F_{0,\mfd})/K_{\mfd}$. As in the proof of \cite[Proposition 7.17]{Zhang21}, combined with Proposition \ref{prop-com-cycles}, we have
	\begin{prop}\label{prop:uni-CM}
		The restriction of the formal completion $\CM(\alpha, μ)^{\wedge}$ to any fiber of the projection $\Theta$ (\ref{eq: basic basic unif}) is the disjoint union
		\begin{equation}\label{basic naive Hk}
			\coprod_{(\delta, h) \in G^{(v)}(F_0) \backslash G^{(v)}(\alpha)(F_0)  \times G(\mathbb A_{0,f}^v)/K^{v} } 1_{μ \times K^{v,\mfd}} (h^{-1}\delta h) \cdot \CM (\delta, h)_{K^v}.
		\end{equation}
		Here $G^{(v)}(\alpha)(F_0) \subseteq G^{(v)}(F_0)$ consist of all elements with characteristic polynomial $\alpha$.\qed
	\end{prop}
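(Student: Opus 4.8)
The plan is to reduce the statement to the already-known uniformization of Kudla--Rapoport divisors and the local comparison of CM cycles from \S\ref{sec:cycles on absolute RZ}, thereby bypassing any new formal-scheme-theoretic argument. First I would recall from \cite[Proposition 7.17]{Zhang21} (for the unramified-over-$\mbQ_p$ case, with absolute RZ space $\mcN_A^{[t]}$ in place of the relative one) the analogous decomposition
\begin{equation*}
\CM(\alpha,\mu)_A^\wedge\big\vert_{\text{fiber}} = \coprod_{(\delta,h)\in G^{(v)}(F_0)\backslash G^{(v)}(\alpha)(F_0)\times G(\mbA_{0,f}^v)/K^v} 1_{\mu\times K^{v,\mfd}}(h^{-1}\delta h)\cdot \CM_A(\delta,h)_{K^v},
\end{equation*}
where $\CM_A(\delta,h)_{K^v}$ is built from the absolute local CM cycles $\CM_A(\delta')\subseteq \mcN_A^{[t]}$ exactly as $\CM(\delta',h')_{K^v}$ is built from $\CM(\delta')$. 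This decomposition is obtained from the basic uniformization isomorphism $\Theta_A$ of \cite[Proposition C.26]{liu2021fourier} together with the identification of the formal completion of $\Hk_\mu$ along the basic locus, a purely formal consequence of the moduli interpretation of $\Hk_\mu$ and the $p$-divisible-group description of the framing object; the point is that a fixed point $(\mcA_0,\mcA,\mcA,\varphi)$ on the basic locus corresponds, on the $v$-component of the $p$-divisible group, to a self-quasi-isogeny of $\mbX_A$ whose characteristic polynomial is $\alpha$, i.e. to an element of $G^{(v)}(\alpha)(F_0)=\mr{U}(\mbV_A)$ with that characteristic polynomial, via the isometry $\mbV_A\simto \mbV$.

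Second, I would transport this decomposition across the comparison isomorphism $\Phi_A$. By Corollary \ref{cor:unitary-comp} and Proposition \ref{prop:compar_strict_transform_local}, $\Phi_A$ identifies $\mcN_A^{[t]}$ with $O_{\breve E_\nu}\wh\tensor_{O_{\breve F_v}}\mcN^{[t]}$ compatibly with the blow-ups $\wt{\mcN_A^{[t]}\times \mcN_A^{[t]}}$ and with the strict transforms $\wt{\Gamma_{A,g}}$, $\wt{\Delta_A}$; hence it identifies the local CM cycles $\CM_A(\delta')$ with $O_{\breve E_\nu}\wh\tensor_{O_{\breve F_v}}\CM(\Phi_A(\delta'))$. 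Since $\Phi_A$ is the identity on the prime-to-$p$ factor $G(\mbA_{0,f}^v)/K^v$ and is equivariant for the action of $\mr{U}(\mbV_A)\simto \mr{U}(\mbV)$, the whole descended cycle $\CM_A(\delta,h)_{K^v}$ maps to $\CM(\Phi_A(\delta),h)_{K^v}$ as defined before Proposition \ref{prop:uni-CM}. Combining this with the compatibility of $\Theta_A$ and $\Theta_0$ proved in Theorem \ref{thm: basic uniformization RSZ}, the absolute decomposition becomes the asserted formula \eqref{basic naive Hk}. The indexing set $G^{(v)}(\alpha)(F_0)$ is unchanged because $\Phi_A$ preserves characteristic polynomials of special quasi-endomorphisms (it is an $F$-linear equivalence of categories of $p$-divisible $O$-modules with $O_F$-action).

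Third, I would check the two bookkeeping points that make the formula literally correct: that the Hecke condition $g_1^{-1}g_2\in K^v\mu K^v$ at the finitely many places away from $v$ and $\mfd$ is exactly encoded by the indicator $1_{\mu\times K^{v,\mfd}}(h^{-1}\delta h)$ (this is immediate from the definition of $\Hk_\mu$ in Definition \ref{defn:hecke} and the shape $f^\mfd=\id$, so the level is hyperspecial away from $\mfd$), and that passing to a fiber of the projection $\Theta$ onto $Z^\mbQ(\mbQ)\backslash Z^\mbQ(\mbA_{0,f})/K_{Z^\mbQ}$ removes precisely the $Z^\mbQ$-factor of $\mcN'$, which is what reduces $\wt G^{(v)}$-quotients to $G^{(v)}(F_0)=\mr{U}(V^{(v)})(F_0)$-quotients as in \eqref{eq:Theta_0}. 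I expect the main obstacle to be the careful verification that the formal completion of $\Hk_\mu$ along the basic locus admits the stated product uniformization with the $\mcN^{[t]}$-factor in the correct place --- i.e. that the lifting $\wt{\Hk_\mu}\to\wt{\mcM\times\mcM}$ of Definition \ref{defn:hecke} is compatible with the blow-up appearing in the local picture of Proposition \ref{prop:compar_strict_transform_local} --- but this is precisely where the $\mcZ$-divisor compatibility of $\Phi_A$ from Proposition \ref{prop:comp-KR-divisors} and the fact that any $\mfd$-quasi-isogeny $\varphi$ induces an isomorphism $\omega_{\mcA}\simto\omega_{\mcB}$ (already used in Definition \ref{defn:hecke}(3)) do the work, so no genuinely new input is needed beyond \cite[Proposition 7.17]{Zhang21} and the results of \S\ref{s:RZ}--\S\ref{s:uniformization}.
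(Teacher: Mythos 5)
Your proposal is correct and takes essentially the same route as the paper: the paper's own ``proof'' is the one-line citation ``As in the proof of \cite[Proposition 7.17]{Zhang21}, combined with Proposition \ref{prop-com-cycles}'', and you unpack exactly that -- first run the Zhang--Zhang argument against the absolute RZ space $\mcN_A^{[t]}$ via $\Theta_A$, then transport along $\Phi_A$ using Corollary \ref{cor:unitary-comp} and Propositions \ref{prop:comp-KR-divisors}, \ref{prop:compar_strict_transform_local}. One small slip: the phrase ``$G^{(v)}(\alpha)(F_0)=\mr{U}(\mbV_A)$'' conflates the global group $G^{(v)}(F_0)$, whose elements of characteristic polynomial $\alpha$ index the decomposition, with the local group $\mr{U}(\mbV_A)$ in which those elements land after localization at $v$; the correct statement is that the $v$-component map $G^{(v)}(F_0)\to \mr{U}(\mbV_A)\cong\mr{U}(\mbV)$ sends a global $\delta$ with $\mr{charpol}(\delta)=\alpha$ to the self-quasi-isogeny of the framing object, and $\Phi_A$ intertwines these -- which is exactly what you use in the second paragraph, so this does not affect the argument.
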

    
Consider a Schwartz function $f = f_{\mfd} \otimes f^\mfd \in \mathcal S(G(\mathbb A_{0, f}))^K$ where $f^{\mfd}=1_{K^{\mfd}}$. 
Replacing $\mathrm{Fix}(\delta)$ by $ {\mathrm {Fix}}^\mathbb L  (\delta)$ in the definition, we get a virtual $1$-cycle $[{}^{\mathbb L}\CM (\delta, h) ]_{K^v}$.  As in the proof of \cite[Proposition 7.17]{Zhang21}, combined with Proposition \ref{prop-com-cycles}, we have
	\begin{cor}\label{cor:uni-DCM}
		The restriction of the basic uniformization ${}^{\mathbb L} \CM(\alpha, f)^{\wedge}$ to any fiber of the projection $\Theta$ (\ref{eq: basic basic unif}) is the sum
		\begin{equation}\label{basic derived Hk}
			\sum_{(\delta, h) \in  G^{(v)}(F_0) \backslash  G^{(v)}(\alpha)(F_0)  \times G(\mathbb A_{0,f}^v)/K^v} f^v(h^{-1}\delta h) \cdot [{}^{\mathbb L}\CM (\delta, h) ]_{K^v}.
		\end{equation}
        as an element in the group $\bigoplus_{μ\in K_{\mfd}\backslash G(F_{0,\mfd})\slash K_{\mfd}}F_1 G_0(\CM(α,μ)^\wedge)$.
        \qed
	\end{cor}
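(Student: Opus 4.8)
The plan is to deduce this from the set-theoretic uniformization of Proposition \ref{prop:uni-CM} by the same ``derived refinement'' argument as in the proof of \cite[Proposition 7.17]{Zhang21}, the only new input being the comparison results of \S\ref{sec:cycles on absolute RZ}. First I would recall, from Definition \ref{def:classical hecke} and Remark \ref{rmk:two def of cycles}, that $\CM^{\mbL}(\alpha, f)$ is the finite weighted sum $\sum_\mu f^\mfd(\mu)\,\CM^{\mbL}(\alpha,\mu)$, and that each $\CM^{\mbL}(\alpha,\mu)$ is the $\alpha$-component of the class $[\mcO_{\wt{\Hk_\mu}}\tensor^{\mbL}_{\mcO_{\wt{\mcM\times\mcM}}}\mcO_{\wt{\Delta_\mcM}}]$ in $F_1G_0(\Fix(\mu))$ --- equivalently, of the Gysin class $\wt{\Delta_\mcM}^{\,!}([\wt{\Hk_\mu}])$. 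So it suffices to identify this derived fixed-point class ${}^{\mbL}\Fix(\mu)$ after formal completion along the basic locus over $\nu$, fiberwise over the projection \eqref{projection map from RSZ to unitary}.

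Next I would observe that all of the relevant constructions --- the Hecke scheme $\Hk_\mu$, the blow up $\wt{\mcM\times\mcM}$ along $\mcM^\mcZ\times\mcM^\mcZ$, the strict transforms $\wt{\Hk_\mu}$ and $\wt{\Delta_\mcM}$, and the derived tensor product against $\mcO_{\wt{\Delta_\mcM}}$, which is a perfect complex by Proposition \ref{prop:diag-lci} --- are étale-local on $\mcM\times\mcM$, hence compatible with the pro-(finite étale) uniformization map $\Theta$ of Theorem \ref{thm: basic uniformization RSZ}. Concretely, the local model identifications already used in the proof of Proposition \ref{prop:diag-lci} show that, after completion along the basic locus, the pair $(\wt{\Hk_\mu}^{\wedge},\wt{\Delta_\mcM}^{\wedge})$ is, at each basic point, a product of copies of the local pair $(\wt{\Gamma_{A,g}},\wt{\Delta_A})$ inside $\wt{\mcN_A^{[t]}\times\mcN_A^{[t]}}$ from \S\ref{sec:cycles on absolute RZ}, indexed by the set-theoretic Hecke data appearing in Proposition \ref{prop:uni-CM}. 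Since a derived tensor product against a perfect complex is stable under the flat (indeed formally étale) base change provided by $\Theta$, this identifies ${}^{\mbL}\Fix(\mu)^{\wedge}$, fiberwise, with the descent along $\Theta_0$ of a disjoint union of copies of the local derived CM cycles $\CM^{\mbL}_A(\delta)$, weighted by $1_{\mu\times K^{v,\mfd}}(h^{-1}\delta h)$.

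Then I would invoke $\Phi_A$: by Corollary \ref{cor:unitary-comp} and Proposition \ref{prop:compar_strict_transform_local}, $\Phi_A$ identifies $\wt{\mcN_A^{[t]}\times\mcN_A^{[t]}}$ with $O_{\breve E_\nu}\wh{\tensor}_{O_{\breve F_v}}(\wt{\mcN^{[t]}\times\mcN^{[t]}})$ compatibly with the strict transforms $\wt{\Gamma_{A,g}},\wt{\Delta_A}$ and $\wt{\Gamma_{\Phi_A(g)}},\wt{\Delta}$, so Proposition \ref{prop-com-cycles} gives $\CM^{\mbL}_A(\delta)\simeq\Phi_A^*\bigl(O_{\breve E_\nu}\tensor_{O_{\breve F_v}}\CM^{\mbL}(\delta)\bigr)$ and in particular places the class in $F_1G_0$. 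Substituting this into the identification of the previous paragraph, restricting to the $\alpha$-component, weighting by $f^\mfd(\mu)=f^v(\mu)$, and summing over $\mu$, the double sum over $(\mu,\delta,h)$ collapses via $\sum_\mu f^v(\mu)\,1_{K^v\mu K^v}(h^{-1}\delta h)=f^v(h^{-1}\delta h)$, producing exactly the sum \eqref{basic derived Hk} with $[{}^{\mbL}\CM(\delta,h)]_{K^v}$ now expressed on the relative RZ space $\mcN^{[t]}$.

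The only genuinely non-formal point --- and the one I expect to be the main obstacle --- is making precise in the second step that forming ${}^{\mbL}\Fix(\mu)$ commutes with the formal completion along the basic locus and with the pro-étale presentation $\Theta$; this is exactly what is handled in \cite[\S7]{Zhang21}, and nothing changes here once the comparison isomorphism $\Phi_A$ is available. Everything else is bookkeeping with Proposition \ref{prop:uni-CM} and the results of \S\ref{sec:cycles on absolute RZ}.
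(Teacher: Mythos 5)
Your proposal is correct and follows essentially the same route as the paper, which gives no detailed proof but simply invokes the argument of \cite[Proposition 7.17]{Zhang21} together with Proposition \ref{prop-com-cycles}. Your fleshing out — reducing to the derived fixed-point class of a single Hecke correspondence, using flatness of completion and pro-étale descent to carry the derived tensor product through $\Theta$, then applying $\Phi_A$ via Proposition \ref{prop:compar_strict_transform_local} and Proposition \ref{prop-com-cycles} to land on the relative RZ space, and finally collapsing the $(\mu,\delta,h)$ sum — is exactly the content the paper is delegating to the citation. (Minor notational slip: the weight on the double coset $\mu$ is $f_\mfd(\mu)$, not $f^v(\mu)$, since $\mu$ lives in $G(F_{0,\mfd})$; the collapse then reads $\sum_\mu f_\mfd(\mu)\,1_{\mu\times K^{v,\mfd}}(h^{-1}\delta h)=f^v(h^{-1}\delta h)$.)
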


Now we state the basic uniformization of global Kudla--Rapoport special divisors at $ν$. Recall from Definition \ref{def:KR} that these are denoted by $\mcZ(\xi,\phi_i)$ where $ξ\in F_{0,+}$ is a totally positive element and where $\phi_i \in \mathcal S(V(\mathbb A_{0,f}))^{K_G}$ with $i=1$ or $2$ is a Schwartz function of the form $ϕ_i = ϕ_\mfd \tensor ϕ^\mfd_i$ with
$$
\phi^{\mfd}_1 = 1_{\wh L^{\mfd}}\qquad \text{and}\qquad \phi^{\mfd}_2 = 1_{\wh L^{\vee,\mfd}}.
$$
Let $\mathcal{Z}(\xi, \phi_i)^\wedge$ be the pullback of $\mathcal{Z}(\xi, \phi_i)$ along the formal completion $\mcM^\wedge\to \mcM$ from before. Moreover, let $\mathcal{Z}(\xi, \phi_i)_{0}^\wedge$ be any fiber of  $\mathcal{Z}(\xi, \phi_i)^\wedge$ under the projection map (\ref{projection map from RSZ to unitary}). We use analogous notation for the $\mcY$-divisors. For $\xi \in F_0$, we denote by $V^{(v)}_ξ$ the quadratic hypersurface $\{x \in V^{(v)} | (x,x)=\xi\}$.	
\begin{prop}\label{prop:uni-KR}
Under the uniformization isomorphism \eqref{eq:Theta_0},
\begin{equation}\label{equ:uniformforKRdivisor}
\begin{aligned}
\mcZ(\xi, \phi_1)_{0}^\wedge & = \sum_{ 
(x, g) \in G^{(v)}(F_0) \backslash  (V^{(v)}_{\xi}(F_0) \times G (\mathbb{A}_{0, f}^{v}) / K^{v})}  \phi_1^{v}(g^{-1}x) \, [\mcZ(x,g)]_{K^v}\\[2mm]
\mcY(\xi, \phi_2)_{0}^\wedge &= \sum_{(x, g) \in G^{(v)}(F_0) \backslash  (V^{(v)}_{\xi}(F_0) \times G (\mathbb{A}_{0, f}^{v}) / K^{v})  }  \phi_2^{v}(g^{-1}x)\, [\mcY(x,g)]_{K^v}.
\end{aligned}
\end{equation}
Here, $[\mcZ(x, g)]_{K^{v}}$ is defined as the descent of 
$$ \sum_{(x',g') \in G^{(v)}(F_0)\cdot (x, g)\ \subseteq\ V^{(v)}\times G(\mbA^{v}_{0,f})/K_G^{v}} \big(O_{\breve E_ν}\wh{\tensor}_{O_{\breve F_v}}\mcZ(x')\big) \times g'K^{v},$$
which is a $G^{(v)}(F_0)$-invariant Cartier divisor on the left hand side of \eqref{eq:Theta_0}, along $\Theta_0$ to $\mathcal{M}^{\wedge}_{0}$. The definition of $[\mcY(x,g)]_{K^v}$ is completely analogous.
\end{prop}
\begin{proof}
This follows from definitions and the construction of $\Theta_0$ and Proposition \ref{prop-com-KR-cycles}. We refer to \cite[Prop. 6.3]{KR-global} or \cite[\S 4.6]{RChen24-III} for more details.
\end{proof}

\section{Arithmetic transfer for unramified maximal parahoric level}\label{sec:AT-conj}
The arithmetic fundamental lemma, discovered by Wei Zhang \cite{Zhang12}, and subsequent arithmetic transfer conjectures \cite{RSZ-Duke, RSZ-Annalen, ZZhang21, CRZ-HeckeAFL, CRZ-quasiAFL} establish identities between derived orbital integrals of certain test functions and arithmetic intersection numbers of cycles on unitary Rapoport--Zink spaces with certain levels. In this section, we extend the arithmetic transfer identities with unramified maximal parahoric levels proven in \cite{ZZhang21} to general $p$-adic fields. Our approach uses and generalizes the global method developed in \cite{Zhang21, MZ, ZZhang21}.

\subsection{Arithmetic Picard group}\label{sec:arithPic}
\newcommand{\Pic}{\mathrm{Pic}}
Since the RSZ integral model $\mcM\to \Spec O_E^{\mfd}$ is not necessarily regular (see Remark \ref{rmk:non-regular}), the arithmetic Chow group must to be replaced by the arithmetic Picard group. In this subsection, we recall the arithmetic Picard group as introduced in \cite{GS90-Ann-I,GS92-Invent,BGS94-JAMES} and study its basic properties.

Let $\mcX$ be a flat, generically smooth, projective scheme over $O_E^{\mfd}$ (recall the notation in \eqref{eq:shorthands}), where $E/\mbQ$ is a number field and $\mfd\in\mbZ_{> 0}$.
Following \cite[\S 2.1.2]{BGS94-JAMES}, a \emph{hermitian vector bundle} on $\mcX$ is a pair $\wh{\mcE}=(\mcE,(\lVert-\rVert_\nu)_{\nu\in\Hom(E,\mbC)})$ where $\mcE$ is a locally free coherent $\mcO_X$-module and $\lVert-\rVert_\nu$ is a hermitian metric product on the holomorphic vector bundle $\mcE_{E_\nu}$ over $\mcX_{\nu}(\mbC)$ for each complex embedding $\nu\in \Hom(E,\mbC)$. An isomorphism $\varphi:\wh{\mcE}_1\to\wh{\mcE}_2$ of hermitian vector bundles is an algebraic isomorphism between $\mcE_1$ and $\mcE_2$ that induces an isometry from $\mcE_{1,\nu}$ to $\mcE_{2,\nu}$ for each $\nu\in \Hom(E,\mbC)$.

We denote the \emph{arithmetic Picard group} with $\mbQ$-coefficients by $\wh{\rm{Pic}}(\mcX)$. It is the $\mbQ$-vector space generated by the isomorphism classes of hermitian line bundles on $\mcX$ with group structure given by the tensor product. The identity element in $\wh{\rm{Pic}}(X)$ is $\mcO_X$ with the standard metric, and the inverse of the class of $\wh{\mcL}$ is given by the dual metrized line bundle.

To proceed, fix a K\"ahler metric $h_\nu$ on $X_\nu(\mbC)$ for each $\nu\in \Hom(E,\mbC)$ that is invariant under the complex conjugation. This metric induces a Laplace operator on differential forms on $X_\nu(\mbC)$. Let $A^{1,1}(X_{\nu}(\mbC),\mbR)$ be the $\mbR$-vector space of real differential forms of type $(1,1)$ on $X_\nu(\mbC)$.
A hermitian line bundle $\wh{\mcL}=(\mcL,(\lVert-\rVert_\nu)_{\nu\in \Hom(E,\mbC)})$ is called \emph{admissible} if its first Chern form $\wh{c}_1(\mcL_{\nu})\in A^{1,1}(X_\nu(\mbC),\mbR)$ is harmonic for each $\nu$, see \cite[\S 2.3]{GS90-Ann-I}. 
We denote by $\wh{\rm{Pic}}{}^{\rm{adm}}(\mcX)$ the subvector space spanned by the classes of admissible hermitian line bundles on $\mcX$. By \cite[\S 2.6]{GS90-Ann-I}, the admissible hermitian scalar product $\lVert-\rVert_\nu$ is unique up to a locally constant scalar on each factor\footnote{In \cite[\S 2.6]{GS90-Ann-I} and its references, this constant is fixed by the K\"ahler form, see \cite[\S 1 equation ($*$)]{Arakelov74}}.
Note that although \cite{GS90-Ann-I} requires $\mcX$ to be regular and $\mfd=0$, the existence and uniqueness up to local constant of the admissible metric is a statement purely on the complex fiber. Thus, these assumptions can be safely omitted.

\begin{defn}
\begin{altenumerate}
\item Let $\wh{\mathrm{Pic}}_{\mathrm{vert}}(\mcX)$ denote the subvector space of $\wh{\mr{Pic}}(\mcX)$ of metrized line bundles $\wh{\mcL}$ such that the restriction $\mcL\vert_{\mcX_η}$ to the generic fiber of $\mcX$ is trivial. That is, we have an exact sequence
$$0\lr \wh{\mathrm{Pic}}_{\mathrm{vert}}(\mcX)\lr 
\wh{\rm{Pic}}{}^{\rm{adm}}(\mcX)\lr \mathrm{Pic}(\mcX_{\eta}).$$

\item Let $\wh{\Pic}_{\infty}(\mcX)\subseteq  \wh{\rm{Pic}}_\mr{vert}(\mcX)$ be the subvector space of metrized line bundles $\wh{\mcL}$ such that $\mcL \cong \mcO_\mcX$.
\end{altenumerate}
\end{defn}

\begin{lem}\label{lem:picard pair deg zero}
Let $\mcL$ be a line bundle whose generic fiber $\mcL_η = \mcL\vert_{\mcX_η}$ is trivial. Then $\mcL$ is of the form $\mcO(D)$ for a rational Cartier divisor $D$ supported in the fibers over those places of $O_E^\mfd$ where $\mcX\to \Spec(O_E^\mfd)$ is not smooth. 
\end{lem}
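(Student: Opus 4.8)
The plan is to reduce the statement to the generic fiber being trivial plus flatness, and to isolate the support using the fact that a Cartier divisor whose class is trivial on the generic fiber must already be supported in the fibers over a finite set of closed points. First I would recall that $\mcX$ is flat and projective over the Dedekind domain $O_E^\mfd$, and that $\mcX_\eta$ is integral (or at least that $\mathrm{Pic}(\mcX_\eta)$ makes sense) so that a line bundle $\mcL$ with $\mcL_\eta\cong\mcO_{\mcX_\eta}$ is automatically of the form $\mcO_\mcX(D)$ for some Cartier divisor $D$ on $\mcX$: choose a rational section $s$ of $\mcL$ that trivializes $\mcL_\eta$ (using $\mcL_\eta\cong\mcO_{\mcX_\eta}$ and the fact that rational sections spread out), and set $D=\operatorname{div}(s)$. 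Since $s$ is a unit on $\mcX_\eta$, by construction $D$ has no horizontal components, i.e. every prime divisor in $D$ lies in a closed fiber $\mcX_\nu$ for some $\nu\in\operatorname{Spec}(O_E^\mfd)$.

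Next I would argue that only finitely many $\nu$ can occur and that each such $\nu$ must be a non-smooth point. Finiteness is clear because $D$ is a divisor on a Noetherian scheme, so it has finitely many irreducible components, each living over a single $\nu$. For the claim that $\mcX\to\operatorname{Spec}(O_E^\mfd)$ is non-smooth over each such $\nu$: if $\mcX$ were smooth over $\nu$, then the closed fiber $\mcX_\nu$ would be a prime divisor on $\mcX$ (it is irreducible by smoothness together with connectedness of $\mcX_\eta$, which is inherited since $\mcX$ is flat and projective with geometrically connected generic fiber — here I would invoke that $\mcM$, hence $\mcX$, has geometrically connected generic fiber after the RSZ setup), and the components of $D$ over $\nu$ would be among the components of $\mcX_\nu$, hence would be rational multiples of the whole fiber $\mcX_\nu=\operatorname{div}(\pi_\nu)$ for a uniformizer $\pi_\nu$; but then we could subtract off that multiple of $\operatorname{div}(\pi_\nu)$ (a principal divisor) to remove the fiber from the support, at the cost of rescaling $s$. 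Iterating over all smooth $\nu$ in the support, we replace $D$ by a rationally equivalent Cartier divisor $D'$ with $\mcO_\mcX(D')\cong\mcL$ and $D'$ supported only over the non-smooth locus of $\mcX\to\operatorname{Spec}(O_E^\mfd)$. This is exactly the assertion.

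The one point requiring care, which I expect to be the main (mild) obstacle, is the reducibility/irreducibility bookkeeping of the closed fibers: over a smooth place the fiber is a disjoint union of its connected components, and $\operatorname{div}(\pi_\nu)$ is the sum of all of them, so a component-by-component argument is needed to see that each connected component of $\mcX_\nu$ is itself a principal divisor (this follows since $\pi_\nu$ vanishes to order one along each component and $\mcX$ is normal — recall $\mcM$ is normal by Proposition \ref{prop:representable}, so $\mcX$ may be taken normal). Once normality is in hand, $\operatorname{div}(\pi_\nu)$ decomposes as $\sum_j \mcX_{\nu,j}$ over the connected components $\mcX_{\nu,j}$, each of which is a $\mbZ$-Cartier divisor, and subtracting the appropriate rational multiples of these principal divisors clears the smooth fibers from $\operatorname{Supp}(D)$ entirely. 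I would present the normality hypothesis explicitly (or note it holds for $\mcX=\mcM$) and otherwise the argument is the standard ``divisors trivial on the generic fiber are vertical'' lemma adapted to the arithmetic setting.
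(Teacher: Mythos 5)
Your opening moves are the same as the paper's: extend a trivializing section of $\mcL_\eta$ to a meromorphic section of $\mcL$ (valid by flatness) and obtain $\mcL \cong \mcO_\mcX(D)$ with $D$ vertical, i.e.\ supported over finitely many closed points of $\Spec(O_E^\mfd)$. The gap appears in the step where you clear the smooth fibers from $\Supp(D)$.

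Your plan is to subtract off multiples of each connected component $\mcX_{\nu,j}$ of a smooth fiber $\mcX_\nu$, and for this you assert that ``each connected component of $\mcX_\nu$ is itself a principal divisor (this follows since $\pi_\nu$ vanishes to order one along each component and $\mcX$ is normal).'' This does not follow: normality and the vanishing order of $\pi_\nu$ tell you that each $\mcX_{\nu,j}$ is a reduced Weil divisor and that $\mathrm{div}(\pi_\nu) = \sum_j \mcX_{\nu,j}$ is principal, but they say nothing about whether an individual component $\mcX_{\nu,j}$ is principal. If there are $r\geq 2$ components, $D$ may have $r$ independent multiplicities over $\nu$, and subtracting rational multiples of the single principal divisor $\mathrm{div}(\pi_\nu)$ lets you adjust only one linear combination of them. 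You also invoke geometric connectedness of $\mcX_\eta$ ``after the RSZ setup''; this is both false in general for RSZ Shimura varieties (their generic fibers have several geometric components indexed by class groups) and in tension with the very situation you are trying to handle (disconnected closed fibers over smooth places).

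What makes the step work — and what the paper is quietly using by citing \cite[Lemma 4.1]{MZ} — is Stein factorization. Since $\mcX$ is proper and normal, $B := \Gamma(\mcX,\mcO_\mcX)$ is a normal domain finite over $O_E^\mfd$, and $\mcX\to \Spec B$ has geometrically connected fibers. The connected components of $\mcX_\nu$ correspond to the primes $\mfq$ of $B$ over $\nu$, and if $\mcX\to\Spec(O_E^\mfd)$ is smooth at $\nu$ then a uniformizer $\pi_\mfq\in B_\mfq$ — a \emph{global} function on $\mcX$ after clearing denominators — has divisor exactly $e\cdot\mcX_{\nu,\mfq}$ plus components over other places. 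These principal divisors do give you, component by component and rationally, enough to clear the smooth fibers, but only after passing through the Stein factorization; your proposal skips that and instead asserts principality of the components directly, which is the missing ingredient.
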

\begin{proof}
Choose any isomorphism $s:\mcO_{\mcX_η}\simto \mcL_η$. Then $s$ extends to a meromorphic section of $\mcL$ by the flatness of $\mcX\to \Spec(O_E^\mfd)$ and we obtain $\mcL \simto \mcO_\mcX(D')$ for a vertical Cartier divisor $D'$ on $\mcX$. By the Stein factorization argument from \cite[Lemma 4.1]{MZ}, $D'$ is rationally equivalent to a a divisor support only over singular fibers.
\end{proof}

Recall the $\mbQ$-vector space $\mcZ'_1(\mcX)$ from Definition \ref{def:Z_prime}, i.e. the vector space of $1$-cycles up to fiber-wise rational equivalence. Write $\mbR_\mfd = \mbR/\sum_{p\mid \mfd} \mbQ \log(p)$. Recall from \cite[Proposition 2.3.1]{BGS94-JAMES} that we have a well-defined intersection pairing:
\begin{equation}\label{equ:arithmetic intersection}
    \wh{\Pic}(\mcX)\times \mcZ_1'(\mcX)\lr \mbR_{\mfd},
\end{equation}
see also the remarks on p. 941 of \cite{BGS94-JAMES}. 
To be more precise, the left hand side of \eqref{equ:arithmetic intersection} has a an intersection pairing map to $\wh{\rm{Ch}}{}^1(O_E^{\mfd})$. 
The pairing \eqref{equ:arithmetic intersection} is obtained by composing with the arithmetic degree $\wh{\rm{Ch}}{}^1(O_E^{\mfd})\to \mbR_\mfd$.

Let $\mcZ'_1(\mcX)^{\perp} \subseteq \mcZ'_{1}(\mcX)$ denote the orthogonal complement of $\wh{\Pic}_{\mr{vert}}(\mcX)$. 
Let $\Pic^{\rm{adm}}(\mcX_\eta)$ be the image of the map $\wh{\Pic}{}^{\rm{adm}}(\mcX)\to \Pic(\mcX_\eta)$. Then \eqref{equ:arithmetic intersection} restricts/descends to an arithmetic intersection pairing:
\begin{equation}\label{equ:almost intersection}
(\cdot,\cdot)^\text{adm}: \, \Pic(\mcX_\eta)^\text{adm}\times \mcZ'_{1}(\mcX)^{\perp}  \lr \mbR_\mfd.
\end{equation}

We resume our notation from \S \ref{sec:Shimura}. In particular, $\mcM$ is the punctured integral model of the RSZ Shimura variety defined over $\Spec O_E^{\mfd}$ (\S \ref{sec:integral of RSZ Shimura}).

Let $\nu$ be a finite place of $E$ such that the base change $\mcM_{O_{E_\nu}}$ is not regular. Recall from Definition \ref{def:lb_mod_forms} and Definition \ref{def:KR_strata} that the Kottwitz--Rapoport divisors $\mcM_\nu^{\mcZ}$ and $\mcM^{\mcY}_\nu$ are Cartier divisors in $\mcM$ supported at $\nu$, with their intersection $\mcM_\nu^{\mcZ}\cap \mcM_\nu^{\mcY}$ having codimension $2$ in $\mcM$. 
By Proposition \ref{prop:global-KR-Cartier}, the underlying reduced loci of the Kottwitz--Rapoport strata, denoted by $\mcM_{\nu,\red}^{\mcZ}$ and $\mcM_{\nu,\red}^{\mcY}$, are smooth over $\mbF_\nu$ and thus decompose into disjoint unions of irreducible components.
This induces a decomposition of Cartier divisors
$$
\mcM_\nu^{\mcZ}=\coprod_{i_\nu} \mcM^{\mcZ}_{i_\nu},\quad \mcM_\nu^{\mcY}=\coprod_{j_\nu} \mcM^{\mcY}_{j_\nu}.
$$
By construction, we have 
$$\mbF_\nu\otimes\mcM=\bigcup_i (\mbF_\nu\otimes\mcM^{\mcZ}_{i_\nu})\cup \bigcup_j (\mbF_\nu\otimes\mcM^{\mcY}_{j_\nu}),$$ 
which expresses $\mbF_\nu\otimes\mcM$ as a union of Weil divisors in $\mcM$.

Recall that for a normal scheme $\mcX$, the first Chern class map
$$c_1:\Pic(\mcX)\lr \mathrm{Ch}^1(\mcX)$$ 
is injective, see \cite[\S 7 Proposition 2.14]{Liu-AGbook}. A \emph{$\mbQ$-Cartier divisor} is a Weil divisor in $\mathrm{Ch}^1(\mcX)$ (recall that we always take $\mbQ$-values for $\Pic$ and $\mathrm{Ch}^1$) that lies in the image of $c_1(\Pic(\mcX))\subseteq \mathrm{Ch}^1(\mcX)$. 
\begin{prop}\label{prop:vert picard group}
Let $\nu$ be a finite place of $E$ such that $\mcM_{O_{E_\nu}}$ is not smooth.
The special fibers of the Kottwitz--Rapoport divisors $(\mbF_\nu\otimes\mcM^{\mcZ}_{i_\nu})$ and $(\mbF_\nu\otimes\mcM^{\mcY}_{j_\nu})$ are $\mbQ$-Cartier divisors of $\mcM$.
\end{prop}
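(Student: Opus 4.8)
The plan is to reduce the statement to a purely local assertion about the strict henselizations of $\mcM$ described in Remark \ref{rmk:non-regular}, and to use that a Weil divisor is $\mbQ$-Cartier as soon as some positive integer multiple of it is Cartier. I would first record that the Kottwitz--Rapoport divisors $\mcM^\mcZ_\nu$ and $\mcM^\mcY_\nu$ are genuine Cartier divisors on $\mcM$: by Definitions \ref{def:lb_mod_forms} and \ref{def:KR_strata} they are the vanishing loci of the morphisms of line bundles $\omega_\lambda\colon \omega_\mcA\to \omega_{\mcA^\vee}$ and $\pi_v\omega_\lambda^{-1}\colon \omega_{\mcA^\vee}\to \omega_{\mcA}$, hence of sections of $\omega_\mcA^{-1}\otimes\omega_{\mcA^\vee}$ resp.\ $\omega_{\mcA^\vee}^{-1}\otimes\omega_{\mcA}$; since $\mcM$ is normal (Proposition \ref{prop:representable}) it has no embedded primes, and these sections vanish only along the nowhere dense special fibre over $\nu$, so they are nonzerodivisors and cut out Cartier divisors. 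Alternatively, this is immediate from the local equations $x_0$, $x_1$, $\pi_\nu^{e(\nu\mid v)}$ appearing in \eqref{eq:complete_local_ring_M}.

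Next I would pass to the individual components. By Proposition \ref{prop:global-KR-Cartier} the reduced locus $\mcM^\mcZ_{\nu,\red}$ is smooth over $\mbF_\nu$, hence a finite disjoint union of smooth irreducible $\mbF_\nu$-varieties, which are its connected components; being open and closed in $\mcM^\mcZ_{\nu,\red}$ they are open and closed in $\mcM^\mcZ_\nu$ as well, since the two schemes share the same underlying space. Therefore $\mcM^\mcZ_\nu = \coprod_{i_\nu}\mcM^\mcZ_{i_\nu}$ as a disjoint union of open and closed subschemes, and each $\mcM^\mcZ_{i_\nu}$ is again a Cartier divisor on $\mcM$: the ideal sheaf $\mcO_\mcM(-\mcM^\mcZ_{i_\nu})$ agrees with the invertible sheaf $\mcO_\mcM(-\mcM^\mcZ_\nu)$ near points of $\mcM^\mcZ_{i_\nu}$ and with $\mcO_\mcM$ away from $\mcM^\mcZ_{i_\nu}$, hence is invertible. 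The same applies verbatim to $\mcM^\mcY_\nu=\coprod_{j_\nu}\mcM^\mcY_{j_\nu}$.

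Finally I would compare $\mcM^\mcZ_{i_\nu}$ with its special fibre $D_{i_\nu} := \mbF_\nu\otimes\mcM^\mcZ_{i_\nu}$, which by Remark \ref{rmk:non-regular} is the reduced prime divisor equal to the $i_\nu$-th irreducible component of $\mcM^\mcZ_{\nu,\red}$, of codimension one in $\mcM$. Since the support of the Cartier divisor $\mcM^\mcZ_{i_\nu}$ is exactly $D_{i_\nu}$, its associated cycle is $m_{i_\nu}\,[D_{i_\nu}]$ for a positive integer $m_{i_\nu}$ (in fact $m_{i_\nu}=e(\nu\mid v)$, as one reads off from \eqref{eq:complete_local_ring_M}, but this is not needed). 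As $\mcM$ is normal, the first Chern class of $\mcO_\mcM(\mcM^\mcZ_{i_\nu})$ equals this cycle, so
\begin{equation*}
[D_{i_\nu}] = \tfrac{1}{m_{i_\nu}}\, c_1\!\big(\mcO_\mcM(\mcM^\mcZ_{i_\nu})\big)\ \in\ \mathrm{Ch}^1(\mcM),
\end{equation*}
which exhibits $\mbF_\nu\otimes\mcM^\mcZ_{i_\nu}$ as a $\mbQ$-Cartier divisor (recall that $\Pic$ and $\mathrm{Ch}^1$ carry $\mbQ$-coefficients here, so division by $m_{i_\nu}$ stays in the image of $c_1$). The identical argument handles $\mbF_\nu\otimes\mcM^\mcY_{j_\nu}$.

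The proof is essentially bookkeeping with the local model picture, so there is no deep obstacle; the one step that genuinely uses the geometry is the reduction to a local statement in the second paragraph, which relies on the disjointness of the components of $\mcM^\mcZ_{\nu,\red}$ (and of $\mcM^\mcY_{\nu,\red}$) — i.e.\ on the smoothness assertion of Proposition \ref{prop:global-KR-Cartier}. Without that smoothness the strata could meet and the individual Weil divisors need not be $\mbQ$-Cartier.
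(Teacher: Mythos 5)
Your proof is correct, and it reaches the conclusion by a genuinely more direct route than the paper. The paper's argument passes to the open subscheme $\mcU = \mcM\setminus\mcM_\nu^{\mcZ\cap\mcY}$ (whose complement has codimension $\geq 2$ and over which $\mcM$ is smooth above $\nu$), invokes the injectivity of $\Pic(\mcM)\to\Pic(\mcU)$ from Liu's book, identifies $\Pic_\nu(\mcU)\otimes\mbQ$ with the group of vertical Weil divisors using regularity of $\mcU$, and then observes that the restrictions $\mcM^\mcZ_{i_\nu}|_\mcU$ are integer multiples of the prime divisors. You instead work directly on $\mcM$: after verifying that each $\mcM^\mcZ_{i_\nu}$ is itself Cartier (open-closed in the Cartier divisor $\mcM^\mcZ_\nu$, using the disjointness of components coming from the smoothness in Proposition \ref{prop:global-KR-Cartier}), you observe that its associated Weil cycle is $m_{i_\nu}[D_{i_\nu}]$ because the support is the single prime divisor $D_{i_\nu}$, and then $[D_{i_\nu}] = \tfrac{1}{m_{i_\nu}}c_1(\mcO(\mcM^\mcZ_{i_\nu}))$ lies in the image of $c_1\otimes\mbQ$. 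Both proofs hinge on the same underlying multiplicity computation (in the paper it happens on $\mcU$; in yours it happens at the generic point of $D_{i_\nu}$, which is anyway in $\mcU$), but your packaging avoids the Picard-group injectivity lemma and the base-change to $\mcU$ entirely. What the paper's longer route buys is an explicit description of $\Pic_\nu(\mcU)$ and the observation that the Cartier divisors $\mcO(-\mcM^\mcZ_{i_\nu}),\,\mcO(-\mcM^\mcY_{j_\nu})$ span $\Pic_\nu(\mcM)\otimes\mbQ$, which feeds into the subsequent remark that $\wh{\Pic}_{\mathrm{vert}}(\mcM)$ is generated by irreducible components of closed fibers; your argument, while establishing the stated Proposition cleanly, would need a short additional sentence (combining with Lemma \ref{lem:picard pair deg zero}) to recover that generating statement.
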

\begin{proof}
Denote by $\mcU:= \mcM \setminus \mcM_\nu^{\mcZ\cap\mcY}$ the complement of the linking stratum above $ν$. By Proposition \ref{prop:global-KR-Cartier}, $\mcU \to \Spec(O_E^\mfd)$ is smooth above $ν$. The pull-back of line bundles from $\mcM$ to $\mcU$ induces the following commutative diagram:
\begin{equation*}
\begin{aligned}
\xymatrix{
0\ar[r]&\Pic_{\nu}(\mcU)\ar[r]&\Pic(\mcU)\ar[r]&\Pic(\mcU \setminus (\mbF_ν \tensor \mcU))\\
0\ar[r]&\Pic_{\nu}(\mcM)\ar[r]\ar[u]^{i_{\nu}}&\Pic(\mcM)\ar[r]\ar[u]^{i}&\Pic(\mcM \setminus (\mbF_ν\tensor \mcM))\ar[u]^{j}
}
\end{aligned}
\end{equation*}
By \cite[Lemma 2.17 in \S 9.2.3]{Liu-AGbook}, the middle morphism $i$ is injective, hence $i_{\nu}$ is also injective.
It suffices to show that $\Pic_{\nu}(\mcU)$ is generated by the restrictions $\mcO(-\mcM^{\mcZ}_{i_\nu})|_{\mcU}$ and $\mcO(-\mcM^{\mcY}_{i_\nu})|_{\mcU}$.

Since $\mcU$ is smooth above $ν$, we have $\Pic_{\nu}(\mcU)\cong \mathrm{Ch}^1_{\nu}(\mcU)$, where $\mathrm{Ch}^1_{\nu}(\mcU)$ is the subgroup of $\mathrm{Ch}^1(\mcU)=\mathrm{Ch}^1(\mcU)\otimes\mbQ$ generated by irreducible components of the special fibers at $\nu$, i.e., Weil divisors that are linear combinations of irreducible components of $(\mbF_\nu\otimes\mcM^{\mcZ}_{i_\nu})$ and $(\mbF_\nu\otimes\mcM^{\mcY}_{j_\nu})$. Since $\mcU$ is smooth above $ν$, the Cartier divisors $\mcM^{\mcZ}_{i_\nu}$ and $\mcM^{\mcY}_{j_\nu}$ are integral multiples of $(\mbF_\nu\otimes\mcM^{\mcZ}_{i_\nu})$ and $(\mbF_\nu\otimes\mcM^{\mcY}_{j_\nu})$, respectively. The assertion now follows.
\end{proof}

Let $Y\subset \mcM$ be a closed subscheme which is one-dimensional away from a finite set of places. Then we have the following commutative diagram (see Remark \ref{rmk:two def of cycles}):
\begin{equation}\label{equ:map to Z1M}
\begin{aligned}
\xymatrix@R=0.2pc{
\mathrm{Gr}_1G_0(Y)\ar[r]^-{\sim}\ar[dd]^{\sim}& \mathrm{Gr}_1G_0^Y(\mcM)\ar[rd]\ar[dd]^{\sim}&\\
&&\mcZ_1(\mcM)\\
\mathrm{Ch}_1(Y)\ar[r]^-{\sim}& \mathrm{Ch}_{1,Y}(\mcM)\ar[ur]&
}
\end{aligned}
\end{equation}
More generally, one can consider $Y\to \mcM$ to be a finite map whose image is one-dimensional away from a finite set of places. Then the diagram \eqref{equ:map to Z1M} is still commutative, but the horizontal maps are not necessarily isomorphism.

Let $C_\bullet$ be a bounded complex on $\mcM$ such that the support of $\mcH^i(C_\bullet)$ is proper over $\Spec(O_E^\mfd)$ and empty over the generic fiber. Recall that its \emph{arithmetic Euler--Poincaré characteristic} is defined by
\begin{equation*}
\chi(C_\bullet):=\sum_{i,j} (-1)^{i+j}\log \# H^j(\mcM, \mcH^i(C_\bullet)).
\end{equation*}
Let $s\in\mcL$ be a regular section of a hermitian line bundle $\wh{\mcL}$ on $\mcM$ and let $\mcF_\bullet$ be a perfect complex on $\mcM$, acyclic outside a closed subset $Y\subseteq \mcM$ as above, and such that its class $[\mcF_\bullet]\in G_0(Y)$ lies in the filtration subgroup $F_1G_0(Y)$ generated by coherent sheaves with dimension of support $\leq 1$. Suppose $\mr{div}(s)$ does not intersection $Y$ generically. Then the intersection pairing  \eqref{equ:almost intersection} of $\wh{\mcL}$ and the class in $\mcZ'_1(\mcM)$ defined by $\mcF_\bullet$ can be compute using the following formula:
\begin{equation}\label{equ:explicit intersection product}
\langle \wh{\mcL}, [\mcF_\bullet]\rangle\ =\ \chi(\mcF_\bullet\otimes_{\mcO_{\mcM}}(\mcL^{-1}\overset{s}{\lr}\mcO_{\mcM}))+\sum_{\nu\in\Hom(E,\mbC)}\sum_{i\in\mbZ} (-1)^i g_\nu(\mcH^i(\mcF_\bullet)_{E_\nu})\in \mbR,
\end{equation}
where $\mcH^i(\mcF_\bullet)_{E_\nu}$ is a coherent module over the complex fiber $\mcM_\nu(\mbC)$ with zero dimensional support, and we took lengths to view it as a linear combination of points in $\mcM_\nu(\mbC)$. This defines an intersection number valued in real number, which depends on the choice of the scetion $s\in\mcL$. (The image in $\mbR_\mfd$ does not depend on $s$.)

Let $\mcZ\subset \mcM$ be an effective Cartier divisor with associated line bundle $\mcO(-\mcZ)$. For each $\nu\in \Hom(E,\mbC)$, let $\mcG_\nu$ be a Green current associated to the analytic divisor $\mcZ_\nu\subset \mcM_\nu$ on the complex fiber. The pair $(\mcZ_\nu,\mcG_\nu)$ determines a hermitian line bundle $(\mcZ_\nu,\lVert-\rVert_\nu)$ on the complex fiber $\mcM_\nu(\mbC)$ (see \cite[\S 2]{GS90-Ann-I}). By a slight abuse of notation, we denote by $(\mcZ,(\mcG_\nu)_{\nu})$ the corresponding hermitian line bundle in the arithmetic Picard group (with a meromorphic section $s$ of the hermitian line bundle which defines this Cartier divisor).

For $?=\bf K, \bf B$ and $\xi \in F_{0}$,  consider the \emph{arithmetic Kudla--Rapoport divisor} (here $\nu|w$ are archimedean places of $E/F_0$): 
 \begin{equation}\label{equ:arithemtic KR divisor}
 	\widehat {\mathcal Z}^{?} (\xi, h_\infty, \phi)= (\mathcal Z (\xi, \phi), ( \mathcal{G}^{?}_{\nu}(\xi, h_w, \phi))_{ν | w | \infty} ) \in \widehat{\mathrm{Pic}}(\mathcal M).  
 \end{equation}
in the arithmetic Picard group as follows: 
\begin{itemize}
\item When $\xi\neq 0$, the first component $\mcZ(\xi,\phi)$ is defined in Definition \ref{def:KR}. And $\mcG_\nu^{\mathbf K}$ (resp. $\mcG_\nu^{\mathbf B}$) is the \emph{Kudla's Green current} (resp. \emph{automorphic Green current}). Set $\widehat {\mathcal Z}^? (\xi, \phi)=\widehat {\mathcal Z}^? (\xi, 1, \phi)$. We refer the reader to \cite[\S 3]{MZ} for their precise definition.
\item When $\xi=0$, we define arithmetic Kudla--Rapoport divisor using the Hodge bundle, see \cite[Definition 13.3, (13.5) and (13.6)]{ZZhang21}.
\end{itemize}

Let $\CM^{\mbL}(\alpha,\mu)\in F_1G_0(\CM(\alpha,\mu))$ be the modified derived CM cycle from Definition \ref{def:classical hecke} (2). It can also be defined in terms of the complex \eqref{eq:def_in_terms_of_K_group}, to which Equation \eqref{equ:explicit intersection product} applies for suitable $(\wh{\mcL}, s)$.

\subsection{Arithmetic transfer for maximal parahoric level}\label{sec:AT conj}
We next state our arithmetic transfer theorem, completely following \cite{ZZhang21}.
Let $F/F_{0}$ be an unramified quadratic extension of $p$-adic local fields where $p>2$. Let $q$ be the size of the residue field of $F_0$. Let $L \subseteq V$ be a vertex lattice of type $0 \leq t \leq n$ in a $n$-dimensional hermitian space $V$ over $F$. Let $\mathbb V$ be an $n$-dimensional hermitian space over $F$ that is not isomorphic to $V$. 

Choose an orthogonal basis $\{e_i\}_{i=1}^n$ of $L$. Choose a Galois involution $\overline{(-)}$ on $V$ with fixed subspace $V_0=\oplus_{i=1}^n F_0 e_i$. Let $L_0=\oplus_{i=1}^n O_{F_0}e_i$. Denote by $V_0^*$ the linear dual of $V_0$, and set $L_0^\vee=L^\vee \cap V_0$.

Consider the symmetric space over $F_0$:
\begin{equation}
		S(V_0)=\{ \gamma \in \GL(V) | \gamma \ov \gamma =\id \}.
\end{equation}
Then $\GL(V_0)$ acts on the space $S(V_0) \times V_0 \times V_0^*$ by
$$
	h.(\gamma, \, u_1, \, u_2)=(h^{-1}\gamma h, \, h^{-1}u_1, \, u_2h).
$$
Let $(S(V_0) \times V_0 \times V_0^*)(F_0)_{\mathrm{rs}}$ be the subset of regular semi-simple elements \cite[\S 2.2.]{Zhang21}.

Define $L_0^\vee:=L^\vee \cap V_0$. We normalize the Haar measure on $\GL(V_0)(F_0)$ such that the stabilizer $\GL(L_0, L_0^{\vee})$ of the lattice chain $L_0 \subseteq L_0^\vee$ is of volume $1$. 
	
For a Schwartz function $f' \in \mathcal{S}((S(V_0) \times V_0 \times V_0^{*})(F_0))$ and a regular semi-simple element $(\gamma,u_1,u_2) \in  (S(V_0) \times V_0 \times V_0^*)(F_0)_{\mathrm{rs}}$, define the one-variable orbital integral:
	\begin{equation}
		\Orb( (\gamma, u_1, u_2), f', s):= \omega(\gamma, u_1, u_2)  \int_{h \in \GL(V_0)} f'(h.(\gamma,u_1,u_2)) \eta(h) |h|^{s} d h, \quad s \in \mathbb C.
	\end{equation}
Here $ \eta(h):=\eta(\det h)$ and $|h|^s:=|\det h|^s$. And we define the \emph{transfer factor}
	\begin{equation}\label{transfer factor: semi-Lie}
		\omega(\gamma, u_1, u_2) :=\eta( \det (\gamma^{i} u_1 )_{i=0}^{n-1}) \in \{\pm 1\}.
	\end{equation} 
Define the derived orbital integral:
	\begin{equation}\label{derived orbital int}
		\partial\!\Orb((\gamma, u_1, u_2), f' \bigr):=\frac{d}{ds}\Big|_{s=0}  \Orb((\gamma, u_1, u_2), f', s).
	\end{equation}

	\begin{defn} \label{defn: semi-Lie match elements}
We say that $(g,u) \in (\mathrm{U}(\mathbb V) \times \mathbb V )(F_0)$ and $(\gamma,u_1,u_2) \in  (S(V_0) \times V_0 \times V_0^*)(F_0)$ \emph{match} if the following matching of \emph{invariants} holds:
	\[
	\det(T \id_{\mathbb V}+g)=\det(T \id_{V}+\gamma) \in F[T], \quad (g^iu,u)_{\mathbb V}=u_2 (\gamma^iu_1), \quad 0 \leq i \leq n-1.
	\]	
\end{defn}

Consider the following test functions on $(S(V_0) \times V_0 \times V_0^* )(F_0)$:
$$
f_{L}:=1_{S(L_0, L_0^{\vee})} \times 1_{L_0} \times 1_{(L^{\vee}_0)^*},\quad
f_{L^\vee} := 1_{S(L_0, L_0^{\vee})} \times 1_{L_0^\vee} \times 1_{(L_0)^*},
$$
where $S(L_0,L_0^\vee)$ is the stabilizer of the lattice chain $L\subseteq L^\vee$ inside $S(V_0)(F_0)$.
Recall from \eqref{eq:def_int_numbers} the definition of regular semi-simple pairs $(g,u)$ and the intersection numbers $\wt{\Int}{}^{\mcZ}(g,u)$ and $\wt{\Int}{}^{\mcY}(g,u)$.
\begin{thm}[\protect{see \cite[Conjecture 6.4]{ZZhang21}}] \label{thm: AT vertex level}
For any regular semi-simple pair $(g, u) \in (\mathrm{U}(\mathbb V) \times \mathbb V)(F_{0})$ matching $(\gamma, u_1, u_2) \in ( S(V_{0}) \times V_{0} \times V_{0}^* )(F_{0})$, there are the following equalities in $\mathbb Q \log q$:
\begin{equation}
\begin{array}{rcr}
\partial\!\Orb ((\gamma,u_1,u_2),  f_{L}) & = & - \wt {\Int}{}^{\mcZ}(g, u) \log q,\\[1mm]
\partial\!\Orb ((\gamma,u_1,u_2),  f_{L^\vee}) & = & - (-1)^{t} \wt {\Int}{}^{\mcY}(g, u) \log q.
\end{array} \mkern -80mu \tag{$\mathrm{AT}(F/F_0, L, g, u)$}
\end{equation}
\end{thm}    
This is proved in \cite[Theorem 1.1]{ZZhang21} when $F_0$ is unramified over $\mathbb Q_p$ for $p>2$. Our goal in the remainder of this section is to extend this proof to the general case. Note that the assumption on $F_0$ in \cite{ZZhang21} is only needed during the global part of the proof, and we focus on this aspect.

\subsection{Proof of Theorem \ref{thm: AT vertex level}}\label{sec:AT proof}
We switch to global notations. So $F_v/F_{0,v}$ is an unramified quadratic extension of $p$-adic local fields with $p>2$. And $L_v \subseteq V_v$ is a vertex lattice in an $n$-dimensional hermitian space $V_v$ over $F_{v}/F_{0,v}$. In this notation, we need to show the following statement.

\medskip

\begin{center}
\begin{minipage}{10cm}
\begin{center}
\emph{For every regular semi-simple pair $(g_v, u_v) \in (\mathrm{U}(\mathbb V_v) \times \mathbb V_v)(F_{0,v})$, the two identities $\mathrm{AT}(F_v/F_{0,v}, L_v, g_v, u_v)$ hold.}
\end{center}
\end{minipage}
\end{center}

\begin{proof}
We follow the proof of \cite[Theorem 1.1]{ZZhang21} in \cite[\S 15]{ZZhang21}. By the duality isomorphism of RZ spaces (see \cite[\S 5.1]{ZZhang21}), it suffices to prove the first identity of Theorem \ref{thm: AT vertex level}, and the second will follow.

\medskip

\noindent\textbf{Step 1.} We begin by globalizing our group-theoretical setup. We choose 
\begin{itemize}
    \item a CM extension of a totally real field $F/F_0$ and a finite place $v$ of $F$ such that $F_0 \not =\mathbb Q$ (so our RSZ Shimura varieties are projective), and the $v$-adic completion of $F/F_0$ is $F_v/F_{0,v}$. Moreover, we assume every other $p$-adic place $w \not = v$ of $F_0$ is split in $F$. We also assume that $F/F_0$ is ramified somewhere.
    \item a CM type $\Phi$ for $F/F_0$ and a distinguished embedding $\phi_0 \in \Phi$.
    \item an $F/F_0$-hermitian space $V$ of signature $(n-1,1)_{\phi_0}, (n,0)_{\phi \in \Phi - \phi_0}$ with a vertex lattice $L \subseteq V$ such that $L \otimes_{O_F} O_{F_v}=L_{v}$.
\end{itemize}  

For any place $w$ of $F_0$, recall that the nearby $F/F_0$-hermitian space  $V^{w}$ of $V$ at $w$ is defined as:
\begin{enumerate}
    \item If $w$ is finite, then $V^{w}$ is positive definite at all archimedean places of $F$. At the non-archimedean places it is isomorphic to $V$ except at $w$. In particular, can identify $V^v_v$ with the given local hermitian space $\mbV_v$.
    \item If $w$ is archimedean, then $V^w$ has signature $(n-1,1)$ at $w$ and is positive definite at all other places. Moreover, $V^w$ is isomorphic to $V$ at all non-archimedean places. In particular $V^{ϕ_0}$ is nothing but $V$ itself (up to isomorphism).
\end{enumerate}

Denote by $q_w$ the size of the residue field of $F_{0,w}$ for a finite place $w$ of $F_0$. Recall that $F_{0,+}$ denotes the cone of totally positive elements of $F_0$. By local constancy of orbital integrals and intersection numbers \cite[Theorem 1.2]{mihatsch2022local} which applies in our situation (see \cite[Remark 6.16]{ZZhang21}), we may choose a global regular semi-simple pair $(g_0,u_0) \in (\U(V^{v}) \times V^{v})(F_0)$ that is $v$-adically close enough to $(g_v,u_v)$ such that 
\begin{enumerate}
    \item Conjecture $\mathrm{AT}(F_{v}/F_{0,v}, g_v, u_v)$ and $\mathrm{AT}(F_{v}/F_{0,v}, g_0,u_0)$ are equivalent.
    \item $\xi_0=(u_0,u_0)$ is a totally positive element. 
\end{enumerate}
Let $\alpha \in F[t]$ be the characteristic polynomial of $g_0$.

We choose an integer $\mfd \in \mbZ_{\geq 1}$ such that 
\begin{itemize}
    \item $p$ does not divide $\mfd$;
    \item every $w \nmid \mfd$ is unramified in $F$, which is possible since there are only finitely many ramified place and all $p$-adic places are unramified;
    \item for every finite place $w\nmid \mfd$ unequal $v$, $L_w$ is self-dual; 
    \item for every finite inert place $w\nmid \mfd$ unequal $v$, the ring $O_{F_w}[t]/(\alpha(t))$ is regular, hence a product of discrete valuation rings.
\end{itemize}

\medskip
\noindent\textbf{Step 2.}
Next, we globalize the local intersection numbers from the AT conjecture. We work with the punctured integral models of the RSZ Shimura varieties
$$
\mathcal M \lr \Spec O_E^\mfd
$$
as in \S\ref{sec:integral of RSZ Shimura}. The level here is required to be of the form $K_{Z^{\mathbb Q}} \times K$ and to satisfy the assumptions at the beginning of \S\ref{sec:integral of RSZ Shimura}; in particular, $K^\mfd=\U(L^\mfd)$. Recall that the reflex field is given by $E=F^{\Phi}\varphi_0(F)$. We have also used the assumption that $F/F_0$ ramifies somewhere to define the moduli space $\mcM_0$.

Choose a $\mathbb Q$-valued test function $\Phi=f \otimes \phi \in \mathcal S((\U(V)\times V)(\mathbb A_{0,f}))^{K}$ such that for $w \nmid \mfd$, we have  $\Phi_{w}=1_{\U(L_w)}\otimes 1_{L_w}$. This defines a global intersection number as follows.

For the $f$-part, recall from Definition \ref{def:classical hecke} (3) the derived Hecke CM cycle
$$
{}^{\mathbb L} \mathrm{CM} (\alpha, f) \in \mcZ'_1(\mathcal M).
$$
For the $\phi$-part, for $?=\bf K, \bf B, \bf K- \bf B$ and $\xi \in F_{0}$,  recall from \eqref{equ:arithemtic KR divisor} the \emph{arithmetic Kudla--Rapoport divisor} $\widehat {\mathcal Z}^{?} (\xi, h_\infty, \phi)\in \wh{\Pic}(\mcM)$ in the arithmetic Picard group of $\mathcal M$.

Define the normalization volume factor $\tau(Z^\mathbb Q)=\#Z^{\mathbb Q}(\mathbb Q) \backslash Z^{\mathbb Q}(\mathbb A_f)/K_{Z^\mbQ}$. 
For $\xi \in F_0$, we define the \emph{global arithmetic intersection number}
	\begin{equation*}
		\Int^{?}(\alpha, \xi, \Phi) := \frac
		{1}{\tau(Z^{\mathbb Q}) [E:F]}
		(\widehat {\mathcal Z }^? (\xi, \phi), {}^{\mathbb L} \mathrm{CM}
        (\alpha, f)) \in \mathbb R_{\mfd}. 
	\end{equation*}
Here $(-,-)$ is the truncated arithmetic intersection pairing defined in \eqref{equ:almost intersection}. 

Set $\Int(\alpha, \xi, \Phi)=\Int^{\bf B}(\alpha, \xi, \Phi)$. When $\xi$ is totally positive, then the supports of $\mcZ(ξ, ϕ)$ and $\CM^\mbL(α,f)$ do not meet generically by \cite[Theorem 9.2]{Zhang21} which directly extends to our setting (also see \cite[Theorem 14.5]{ZZhang21}). Hence formula \eqref{equ:explicit intersection product} applies to the canonical section $\mcO_\mcM\to \mcO_\mcM(\mcZ(ξ,ϕ))$ and lifts $\Int(α, ξ, Φ)$ to a real value $\mbR$. It decomposed into a sum of local contributions over the archimedean and inert places $w \nmid \mfd$ of $F$; split places do not contribute by the cited theorems:
\begin{equation*}
\Int(\alpha, \xi, \Phi)=\sum_{w| \infty} \Int_w^{\bf B}(\alpha, \xi, \Phi)   +  \sum_{ w \nmid \mfd\ \text{finite inert}} \Int_w(\alpha, \xi, \Phi) \in \mathbb R.
\end{equation*}
We consider generating series of these intersection numbers to apply modularity results of geometric theta series: for $h_\infty  \in \SL_2(F_{0, \infty})$, define
\begin{equation*}
		\Int_{\alpha}(h_\infty, \Phi) = \Int(
        \alpha, 0, \Phi) + \sum_{\xi \in F_{0,+} } \Int (\alpha, \xi, \Phi) W_{\xi}^{(n)}(h_\infty)
\end{equation*}
where $W_{\xi}^{(n)}(h_\infty)$ is the weight $n$ Whittaker function (for $\xi$) on $\SL_2(F_{0,\infty}) \cong \prod_{w|\infty} \SL_2(\mathbb R)$.

Applying the uniformization Theorem \ref{thm: basic uniformization RSZ} together with Proposition \ref{prop:uni-KR} and Proposition \ref{prop:uni-CM} in the same way as in \cite[Theorems 9.4 and 10.1]{Zhang21}, also see \cite[Proposition 14.6]{ZZhang21}, we obtain the orbital integral decomposition of global intersection numbers into local ones:
\begin{prop}\label{prop:uni intersection number}
Let $\xi \in F_{0,+}$ be totally positive.
\begin{altenumerate}
    \item If $w \not =v$ is an inert place of $F/F_0$ not dividing $\mfd$, then
\[
\Int_{w}(\alpha, \xi,\Phi) =  
			2\log q_{w} \sum_{(g,u)\in [(\U(V^{(w)})(\alpha)\times V^{(w)}_\xi)(F_0)]} \Int_{w}(g,u ) \cdot \Orb\left((g,u), \Phi^{w} \right).
\]
Here, $\Int_{w}(g,u) \in \mathbb Q$ is the arithmetic intersection number that appears in the AFL for the quadratic field extension $F_{w}/F_{0,w}$ and the regular semi-simple pair $(g,u) \in (\U(V^{(w)}) \times V^{w})(F_{0,w})$. 
    \item If $w=v$, then 
 \[
\Int_{w}(\alpha, \xi,\Phi) =  
			2\log q_{w} \sum_{(g,u)\in [(\U(V^{(w)})(\alpha)\times V^{(w)}_\xi)(F_0)]} \Int_{w}(g,u ) \cdot \Orb\left((g,u), \Phi^{w} \right).
\]
Here, $\Int_{w}(g,u )=\wt {\Int}{}^{\mcZ}(g, u)$ is the arithmetic intersection number that appears in the main Theorem \ref{thm: AT vertex level}.  \qed
    
\end{altenumerate}
\end{prop}

For an archimedean place $w| \infty$ of $F_0$, by complex uniformization of RSZ Shimura varieties, we recall the following result.
\begin{prop}[\protect{\cite[Theorem 10.1, Corollary 10.3]{Zhang21}}] If $w|\infty$, then
\begin{equation}
	\Int_{w}^{\bf K}(\alpha, \xi,\Phi) =  
\sum_{(g,u)\in [(\U(V^{(w)})(\alpha)\times V^{(w)}_\xi)(F_0)]} \Int_{w}(g,u ) \cdot \Orb\left((g,u), \Phi \right).
\end{equation}
Here $\Int_{w}(g,u)={\mathcal G}^{\bf K}(u,1)(z_g)$ where $z_g$ is the unique fixed point of $g$ on the symmetric space $X_w$ for $G(F_w)$.\qed
\end{prop}

\medskip
\noindent\textbf{Step 3.} Next, we also globalize the orbital integral side of the AT conjecture, which is literally the same as in \cite{Zhang21,ZZhang21}. Denote by $\omega$ the Weil representation of $\SL_2(\mathbb A_{F_0}) \times \GL(V_0)(\mbA_{F_0})$ on $V'(\mathbb A_{F_0})$, see \cite[\S 11.1]{Zhang21}.

For every Gaussian decomposable function $\Phi' \in \mathcal S((S(V_0) \times V')(\mathbb A_{F_0}))$, and for $h \in \SL_2(\mathbb A_{F_0})$, consider the regularized integral ($s \in \mathbb C$): 
	\begin{equation}\label{defn of BJ}
		\mathbb J_{\alpha}(h,\Phi',s)=\int_{g \in [\GL(V_0)] } \left(\sum_{(\gamma, u')\in (S(V_0)(\alpha)\times V')(F_0)}\omega(h)\Phi'(g^{-1}. (\gamma, u'))\right)   |g|^s\eta(g)  dg.
	\end{equation}

Applying the geometric decomposition of the relative trace formula \cite[(12.32) and (12.38)]{Zhang21}, we find a local-global decomposition
	\begin{equation*}
\mathbb J_{\alpha}(h,\Phi',s)= \mathbb J_{\alpha}(h,\Phi',s)_{0}+ \sum_{(\gamma,u')\in [(S(V_0)(\alpha) \times V')(F_0)]_{\mathrm{rs}}} \Orb((\gamma,u'),\omega(h)\Phi',s),
	\end{equation*}
	where $\mathbb J_{\alpha}(h,\Phi',s)_{0}$ is the geometric term over nilpotent orbits (see \cite[\S 12.6]{Zhang21}).

For $\xi \in F_0^\times$ and $w$ a finite place of $F_0$, set 
\begin{equation*}
	\partial \mathbb J_{\alpha, w}(\xi, h, \Phi') := \sum_{(\gamma,u')\in [(S(V_0)(\alpha) \times V'_\xi)(F_0)]_{\mathrm{rs}} }  \partial \Orb((\gamma,u'), \omega(h)\Phi'_w) \cdot  \Orb((\gamma,u'), \omega(h)\Phi'^{w}).
\end{equation*}
This is the $\xi$-th Fourier coefficient of $\partial\mathbb J_{\alpha}(h,\Phi'):=\frac{\mathrm{d}}{\mathrm{d}s}\bigl|_{s=0}J_{\alpha}(h,\Phi',s)$ and is zero unless $\xi \in F_{0,+}$, see \cite[(14.5)]{Zhang21}.

\medskip
\noindent\textbf{Step 4.}
We now apply the modification construction from \cite[\S 12-13]{ZZhang21} to obtain modular forms. The goal is to obtain a \emph{modified} derived 1-cycle on $\mathcal M$
\begin{equation}\label{equ:modi cycle}
{}^{\mathbb L} \mathrm{CM} (\alpha, f)^{\mathrm{mod}} \in \mcZ_1'(\mcM)^\perp
\end{equation}
such that the difference ${}^{\mathbb L} \mathrm{CM} (\alpha, f)^{\mathrm{mod}}- {}^{\mathbb L} \mathrm{CM} (\alpha, f)$ is an explicit $1$-cycle on $\mathcal {M}$ whose intersection pairing against Kudla--Rapoport divisors is simple to understand. This difference will be specified in \eqref{equ:geometric modufication}. Being an element of the perp space in \eqref{equ:modi cycle} concretely means that
\begin{enumerate}
\item It is of degree $0$ on each connected component of each complex fiber $\mcM(\mathbb C)$ of $\mathcal M$.
    
\item Its intersection pairing against every vertical component of every geometric closed fiber of $\mathcal M$ is zero. 
\end{enumerate}
The point is that we may take the almost intersection pairing \eqref{equ:almost intersection} of ${}^\mbL\CM(α, f)^{\mr{mod}}$ against the geometric theta series $Z(h,\phi)$. This results in a a holomorphic modular form by the modularity of geometric theta series, see \cite[Theorem 8.1]{Zhang21}.

We call a polynomial $\alpha_0 \in O_F^\mfd[t]^\circ_{\deg n}$ of \emph{maximal order type} if $O_{F,w}[t]/(\alpha_0(t))$ is a product of discrete valuation rings for any finite place $w \nmid \mfd$ and moreover finite \'etale when $w=v$. Note that if $\alpha_0$ is of maximal order type, then $\mathrm{AT}(F_{v}/F_{0,v}, g_0,u_0)$ is known by \cite[\S 9]{ZZhang21}. In this situation, we already know that 
\begin{equation*}
2 \mathbb J_{\alpha_0} (\xi_0, \Phi') + \Int^{\mathbf K- \mathbf B} (\alpha_0, \xi_0, \Phi) + \Int(\alpha_0, \xi_0, \Phi)=0
\end{equation*}
by the local-global decomposition from Proposition \ref{prop:uni intersection number}, and the known case of arithmetic transfer for maximal orders from \cite[Theorem 9.14]{ZZhang21}.

The construction of the modified cycle \eqref{equ:modi cycle} is as follows. Over $\mathbb C$, we choose another derived CM cycle $\mathrm{CM}^{\mathbb L}(\alpha_0, f_0)$ where, after possibly enlarging $\mfd$, $α_0$ is of maximal order type, and where the test function $f_0 = f_{0,\mfd}\tensor 1_{K^\mfd}$ with $f_{0,\mfd} = 1_{\mu_{\mfd}}$ for some double coset $\mu_\mfd$. Choosing a suitable double coset, we arrange that the generic fiber of $\CM(α_0, μ_\mfd)$ is non-empty. Then the generic fiber of $\mathrm{CM}^{\mathbb L}(\alpha_0, f_0)$ has positive degree.

We compute the difference of the degrees of $\mathrm{CM}^{\mathbb L}(\alpha, f)$ and $\mathrm{CM}^{\mathbb L}(\alpha_0, f_0)$ on geometric connected components of the generic fiber of $\mcM$. As the Shimura datum for $\wt{G}$ is a product of Shimura data for $\Res_{F_0/\mathbb Q}(G)$ and $Z^{\mathbb Q}$, these CM cycles in the generic fiber all come by pullback along $\mcM_E \to M_G \tensor_F E$, where $M_G$ is the Shimura variety for $G$ with level $K$. Here, since our signature of $V$ is of strict Drinfeld type with respect to $Φ$, $M_G$ has reflex field $F$ and is moreover connected by \cite[Corollary 12.6]{ZZhang21}. Hence, the generic degrees of $\mathrm{CM}^{\mathbb L}(\alpha, f)$ and $\mathrm{CM}^{\mathbb L}(\alpha_0, f_0)$ are independent of the geometric connected component. So there is a constant $c$ such that the degree of $\mathrm{CM}^{\mathbb L}(\alpha, f)- c \mathrm{CM}^{\mathbb L}(\alpha_0, f_0)$ is zero on each geometric connected component.

Over $\mathbb F_p^{\mathrm{alg}}$, the modification follows the same logic as in \cite[\S 12.4]{ZZhang21} using the \emph{very special $1$-cycles} from \cite[Definition 12.14]{ZZhang21}. We add some details, because we use arithmetic Picard groups instead of arithmetic Chow groups.

Note that by Lemma \ref{lem:picard pair deg zero} and Proposition \ref{prop:vert picard group}, $\wh{\mathrm{Pic}}_{\mathrm{vert}}(\mathcal{M})$ is generated by irreducible components of closed fibers. These components are exactly the connected components of the Kottwitz--Rapoport $\mcZ$ and $\mcY$-strata because these strata are smooth.

First, consider the case when the type $t_v$ of $L_v$ is $1$. The proofs of \cite[Propositions 12.16 and 12.17]{ZZhang21} work in the same way, by relating the reduced Kottwitz--Rapoport $\mcZ$-stratum and $\mcY$-stratum in $\mbF_v\otimes \mcM$ to the Balloon-Ground stratification in the reduced locus of the relative RZ space $\mcN_{(n-1,1),\red}^{[1]}$.
Therefore, we may invoke \cite[Propositions 12.16 and 12.17]{ZZhang21} in our setting.
It is clear from the definition that the Balloon stratum lies in the reduced Kottwitz--Rapoport $\mcZ$-stratum $\mcN^{[1],\mcZ}_{(n-1,1),\red}$. In particular the reduced Balloon stratum is a disjoint union of dimension $n-1$ projective spaces by \cite{KR-BG}, see also \cite[\S 14.2]{CRZ-quasiAFL} for a precise statement.\footnote{The reduced Kottwitz--Rapoport $\mcZ$-stratum defined in Definition \ref{def:KR strata RZ} by the line bundle of modular forms is not the union of Balloon strata in $\mcN_{(n-1,1),\red}^{[t]}$ defined in \cite[\S 7]{ZZhang21}, see \cite[\S 2.4]{HLS}, where the construction and proof carry over directly to unramified extensions $F/F_0$.} 
Therefore, the construction of the very special $1$-cycles in \cite[Proposition 12.18]{ZZhang21} carries over to our setting when $t_v = 1$, via the uniformization in Theorem \ref{thm: basic uniformization RSZ}. The same argument applies to the case $t_v=n-1$.

Now consider the case when $1<t_v<n-1$. We need an analog of \cite[Proposition 12.24]{ZZhang21} which says that the reduced Kottwitz--Rapoport strata are irreducible in any connected component of $\mbF_v\otimes\mcM$. Note that the axiom (4c) of Rapoport–-He holds in our context by \cite[Corollary 1.5]{gleason2022connected}. 
Thus, the proof of \cite[Proposition 12.24]{ZZhang21} carries over to our setting and, combined with \cite[Proposition 12.25]{ZZhang21}, yields the desired very special $1$-cycles in the case $1<t_v<n-1$.

All in all, the above arguments showed that we there exists a modified CM cycle of the form  
\begin{equation}\label{equ:geometric modufication}
{}^{\mathbb L} \mathrm{CM} (\alpha, f)^{\mathrm{mod}}= \mathrm{CM}^{\mathbb L}(\alpha, f)- c\,\mathrm{CM}^{\mathbb L}(\alpha_0, f_0) - SC_{v} \in \mcZ_1'(\mcM)^\perp,
\end{equation}
where $SC_v$ is a sum of explicit special $1$-cycle in the special fibers of $\mathcal{M}$ above places $\nu | v$ of $E/F_0$ as in \cite[\S 15.3, (15.4)]{ZZhang21}.

\medskip

\noindent\textbf{Step 5.} Next, we combine the analytic and geometric generating series from Steps 3 and 4 and construct a holomorphic modular form $\mathrm{Diff}(h)^{\mathrm{mod}}$: Consider the modified analytic generating series
$$
\partial \mathbb J_{\rm hol}(h)^{\mathrm{mod}}:=2 \partial\mathbb J_{\alpha} (h, \Phi')^{\mathrm{mod}} + \Int^{\mathbf K- \mathbf B} _{\alpha}(h, \Phi)^{\mathrm{mod}},
$$ 
with
$$
\Int^{\mathbf K- \mathbf B} _{\alpha}(h, \Phi)^{\mathrm{mod}} = \sum_{w| \infty} \Int_w^{\bf K- \bf B}(\alpha, \xi, \Phi) - c\sum_{w| \infty} \Int_w^{\bf K- \bf B}(\alpha_0, \xi, \Phi) 
$$
and 
$$
2 \partial\mathbb J_{\alpha} (h, \Phi')^{\mathrm{mod}}= 2 \partial\mathbb J_{\alpha} (h, \Phi')- 2 c\, \partial\mathbb J_{\alpha_0} (h, \Phi') - \Theta_{SC_v}(h, \phi')
$$ where $\Theta_{SC_v}(h, \phi')$ is an explicit holomorphic theta series whose $\xi$-th Fourier coefficient ($\xi >0$) is equal to $(\mathcal{Z}(\xi, \phi), SC_v)$, see \cite[Theorem 13.9 and \S 15.3]{ZZhang21}.

\begin{prop}
The function $\partial \mathbb J_{\rm hol}(h)^{\mathrm{mod}}$ is a \emph{holomorphic} modular form on $ h \in \SL_2(\mathbb A_{F_0})$ with Fourier coefficients in $\mbR$.
\end{prop}
\begin{proof}
By \cite[Proposition 10.5]{MZ}, $\partial \mathbb J_{\rm hol}(h)$ is modular. By \cite[Proposition 14.5.]{Zhang21}, we know 
\[
2 \mathbb J_{\alpha} (h, \Phi') -  2 c\, \mathbb J_{\alpha_0} (h, \Phi') + \sum_{w| \infty} \Int_w^{\bf K- \bf B}(\alpha, \xi, \Phi) - c\sum_{w| \infty} \Int_w^{\bf K- \bf B}(\alpha_0, \xi, \Phi) 
\]
is holomorphic. So the result follows from holomorphic modularity of the theta series $\Theta_{SC_v}(h, \phi')$ \cite[Theorem 13.9]{ZZhang21}.
\end{proof}

For functions $\Phi$ in Step $2$ and $\Phi'$ in Step $3$ that are standard partial transfers \cite[Definition 14.7]{ZZhang21}, we consider (as \cite[Theorem 15.7]{ZZhang21}) the modified difference function on $h \in \SL_2(\mathbb A_{F_0})$
\begin{equation*}
\mathrm{Diff}(h)^{\mathrm{mod}}:=\partial \mathbb J_{\rm hol}(h)^{\mathrm{mod},\Phi'} + \Int (h, \Phi)^{\mathrm{mod}}\in\mcA_{\mathrm{hol}}(\SL_2(\mbA_{F_0}),K,n)_{\ov{\mbQ}}\otimes_{\ov{\mbQ}}\mbR_{\mfd,\ov{\mbQ}}
\end{equation*}
where we project the holomorphic modular form $\partial \mathbb J_{\rm hol}(h)^{\mathrm{mod}}$ to the quotient $\mbR_{\mfd}$ and where
$$
\Int(h,\Phi)^{\mathrm{mod}}:= (\mcZ^{\mathbf B}(h,\phi), {}^{\mathbb L} \mathrm{CM} (\alpha, f)^{\mathrm{mod}})^{\mathrm{adm}}
$$ 
comes from the admissible intersection pairing \eqref{equ:almost intersection}. This $\Int(h, Φ)^\mr{mod}$ is a holomorphic modular form (with Fourier coefficients in $\mbR_{\mfd,\ov{\mbQ}}$) by the modularity of the generic fiber generating series of Kudla--Rapoport divisors in \cite[Theorem 8.1]{Zhang21}.

\medskip
\noindent\textbf{Step 6.}
Finally, by subtracting the matching terms (see Step 4) we gave
$$
\mathrm{Diff}(h)^{\mathrm{mod}}=2 \mathbb J_{\alpha} (\xi_0, \Phi') + \Int^{\mathbf K- \mathbf B} (\alpha, \xi_0, \Phi) + \Int(\alpha, \xi_0, \Phi).
$$
By double induction \cite[Lemma 15.1]{ZZhang21}, the holomorphic modular form $\mathrm{Diff}(h)^{\mathrm{mod}}$ is a constant, hence has zero $\xi_0$-th Fourier coefficient. See \cite[Theorem 15.7]{ZZhang21} for precise argument. 
Namely, 
\begin{equation*}
2 \mathbb J_{\alpha} (\xi_0, \Phi') + \Int^{\mathbf K- \mathbf B} (\alpha, \xi_0, \Phi) + \Int(\alpha, \xi_0, \Phi)=0.
\end{equation*}
From this, the local arithmetic transfer identity $\mathrm{AT}(F_v/F_{0,v}, L_v, g_v, u_v)$ $(1)$ follows after enlarging $\mfd$ and choosing a suitable transferring pair $(Φ, Φ')$ as in \cite[Proposition 10.2]{MZ}.
\end{proof}

\appendix

\part{Appendix}

\section{Compatibility of $\BT$ with Faltings duality}

Let $K$ be a $p$-adic local field with ring of integers $O = O_K$, let $π\in O$ be a uniformizer, and let $q$ be the residue cardinality.

The aim of this appendix is to prove that the functor $\BT$ from Theorem \ref{thm:equiv_displays} intertwines duality for $O$-displays and Faltings duality for $p$-divisible $O$-modules (Proposition \ref{prop:Faltings_display_compatible}).

\subsection{Faltings duality of $p$-divisible $O$-modules}
\label{ss:Faltings}

 Fix a \emph{Lubin--Tate object} $\LT$ over $R$, that is, a $1$-dimensional $p$-divisible $O$-module whose underlying $p$-divisible group has height $[K:\mbQ_p]$. Faltings \cite{Faltings} defined the dual of a $p$-divisible $O$-module by considering its truncations and maps to $\LT$. This construction is completely analogous to that of the Serre dual of a $p$-divisible group. The purpose of this section is to recall Faltings' definitions for later reference. A more comprehensive presentation can be found in \cite{Hartl_Singh}.

\begin{defn}\label{def:small_deformations} Let $R$ be a ring.\\
\smallskip \noindent (1) We denote the augmentation map of an augmented $R$-algebra $A$ by $ε_A:A\twoheadrightarrow R$. We denote the augmentation ideal by $I_A := \ker(ε_A)$. Maps of augmented $R$-algebras are always understood to be compatible with the augmentation.

\smallskip \noindent (2) A \emph{small deformation} of an augmented $R$-algebra $A$ is a surjection $φ:B\twoheadrightarrow A$ of augmented $R$-algebras such that $I_B\cdot \ker(φ) = 0$. In particular, $φ$ is a square-zero thickening. Maps of small deformations of $A$ are always understood to be compatible with the surjection to $A$.

\smallskip \noindent (3) Assume that $A$ is a flat, augmented $R$-algebra that is a relative complete intersection. A small deformation $φ:B\twoheadrightarrow A$ of $A$ is called \emph{versal} if $B$ is a finite presentation $R$-algebra and if $φ$ is projective in the sense that for every small deformation $C\twoheadrightarrow A$ and every surjection $C\twoheadrightarrow B$ of deformations, there exists a section $B\to C$.  
\end{defn}

Whenever we speak of versal small deformations in the following, we implicitly assume that the assumptions on $A$ from (3) are satisfied. With this convention, versal small deformations always exist: Let $R[T_1,\ldots, T_d]/I \simto A$ be any presentation of $A$ as $R$-algebra. Let $I_0 \subset R[T_1,\ldots, T_d]$ be the augmentation ideal that comes from composition with $ε_A$. Then $R[T_1, \ldots, T_d]/(I_0\cdot I)$ is a versal small deformation of $A$.

\begin{lem}\label{lem:exist_versal_covering}
Let $B_i \twoheadrightarrow A$, for $i = 1,2$, be two versal small deformations of $A$. Then there exists a versal small deformation $C\twoheadrightarrow A$ together with surjections $C\twoheadrightarrow B_1$ and $C\twoheadrightarrow B_2$.
\end{lem}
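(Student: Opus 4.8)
\textbf{Proof plan for Lemma \ref{lem:exist_versal_covering}.}

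The plan is to build $C$ directly by a fibre-product-type construction and then thicken it slightly to restore versality. First I would form the $R$-algebra $D := B_1 \times_A B_2$, the fibre product over the common quotient $A$. This comes with the two evident projections $D \twoheadrightarrow B_i$; these are surjective because each $B_i \twoheadrightarrow A$ is surjective and a fibre product of surjections over a common target is surjective onto each factor. The algebra $D$ is an augmented $R$-algebra (the augmentation is the common composite $D \to B_i \xrightarrow{\varepsilon_{B_i}} R$, which agrees on both factors since both lift $\varepsilon_A$). However $D$ need not itself be a versal small deformation of $A$: the kernel $\ker(D \twoheadrightarrow A)$ is $\ker(B_1\to A) \oplus \ker(B_2 \to A)$, and while $I_D$ annihilates each summand individually (because $I_{B_i}$ does), $D$ may fail to be of finite presentation, and one must check it is still a small deformation, i.e. that $I_D \cdot \ker(D\twoheadrightarrow A) = 0$. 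The latter holds: an element of $I_D$ maps into $I_{B_i}$ under each projection, and $I_{B_i}\cdot\ker(B_i\to A)=0$ by hypothesis, so the product vanishes in each $B_i$ and hence in $D$.

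The remaining issue is finite presentation, which is needed for ``versal''. Here I would use the projectivity (versality) of $B_1$ and $B_2$ together with a Noetherian/finite-presentation approximation argument: choose a versal small deformation $C_0 \twoheadrightarrow A$ of the standard shape $R[T_1,\dots,T_d]/(I_0 I)$ described just after Definition \ref{def:small_deformations}, so $C_0$ is visibly of finite presentation. Because $B_1$ and $B_2$ are both small deformations of $A$, the projectivity property applied with the surjection $C_0 \twoheadrightarrow A$ (suitably enlarged if necessary so that $C_0$ dominates each $B_i$ — this is where one may first replace $C_0$ by a larger polynomial deformation surjecting onto $B_1\times_A B_2$) yields sections, hence surjections $C_0 \twoheadrightarrow B_1$ and $C_0 \twoheadrightarrow B_2$ compatible with the maps to $A$. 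Concretely: pick generators of $B_1$ and $B_2$ as $R$-algebras, adjoin corresponding polynomial variables to get a surjection $R[\underline S] \twoheadrightarrow B_1\times_A B_2$, and divide by the augmentation-ideal-times-relation-ideal to obtain $C := R[\underline S]/(I_0 \cdot J)$ where $J$ is the kernel; then $C \twoheadrightarrow A$ is a small deformation by construction, $C$ is of finite presentation, and the composites $C \to B_1\times_A B_2 \to B_i$ are the desired surjections. Versality (projectivity) of $C$ is then exactly the statement that a standard polynomial small deformation is projective, which is the content of the paragraph after Definition \ref{def:small_deformations}.

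The main obstacle I anticipate is bookkeeping around finite presentation: the naive fibre product $B_1\times_A B_2$ is the conceptually right object and has all the deformation-theoretic properties one wants, but it is not manifestly finitely presented, so the clean statement of the lemma forces one to replace it by a polynomial-algebra model. Thus the real work is checking that the polynomial model $C$ still (i) has augmentation ideal squaring into the kernel — i.e. is a \emph{small} deformation — which follows since the defining ideal contains $I_0\cdot J$ and hence $I_C\cdot \ker(C\to A)=0$; and (ii) admits the two surjections to $B_1$ and $B_2$, which one arranges by choosing enough polynomial generators at the outset. Once the polynomial model is in place, versality is inherited from the explicit construction, and no further deformation theory is needed. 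Everything else is formal.
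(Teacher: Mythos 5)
Your overall strategy --- present $A$ by a polynomial algebra with enough variables to also hit $B_1$ and $B_2$, then quotient by (augmentation ideal) $\cdot$ (kernel) --- is essentially the paper's, and with one correction it goes through. The paper is a bit more economical: it fixes a presentation $\psi_0 : R[T_1,\ldots,T_d]\twoheadrightarrow A$, adjoins $e$ extra variables $X_1,\ldots,X_e$ with $e$ bounding the number of generators of $\ker(B_i\twoheadrightarrow A)$ over $A$, and sends the $T_j$ to lifts of $\psi_0(T_j)$ and the $X_k$ to kernel generators; the fibre product $B_1\times_A B_2$ never appears. Your preliminary observation that $D := B_1\times_A B_2$ is a small deformation of $A$ is correct but is not actually needed. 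The middle paragraph invoking projectivity of $B_1,B_2$ is a non-argument and should be discarded: in the sense of Definition~\ref{def:small_deformations}~(3), versality of $B_i$ produces a \emph{section} $B_i\to C$ once a surjection $C\twoheadrightarrow B_i$ is already given; it cannot manufacture the surjection. You seem to recognize this, since you then give the direct construction.

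The direct construction has a genuine bug in the choice of ideal. You set $C := R[\underline S]/(I_0\cdot J)$ with $J = \ker(R[\underline S]\twoheadrightarrow B_1\times_A B_2)$, and claim $C\twoheadrightarrow A$ is small because ``the defining ideal contains $I_0\cdot J$.'' That is not sufficient: smallness of $C\twoheadrightarrow A$ requires $I_C\cdot\ker(C\to A)=0$, i.e. $I_0\cdot\ker(R[\underline S]\to A)\subseteq I_0\cdot J$, and $\ker(R[\underline S]\to A)$ strictly contains $J$ as soon as some $B_i\to A$ is not an isomorphism. Concretely, take $R=A=k$ and $B_1=B_2=k[\epsilon]/\epsilon^2$; then $R[\underline S]=k[X_1,X_2]$, $J=(X_1,X_2)^2$, $I_0=(X_1,X_2)$, $I_0\cdot J=(X_1,X_2)^3$, and $C=k[X_1,X_2]/(X_1,X_2)^3$ is a small thickening of $D=k[X_1,X_2]/(X_1,X_2)^2$ but not of $A=k$, since $(X_1,X_2)^2\neq 0$ in $C$. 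The repair is immediate and brings you back to the paper's recipe: take instead $J := \ker(R[\underline S]\to A)$. Then $C=R[\underline S]/(I_0\cdot J)$ is a versal small deformation of $A$ by the construction after Definition~\ref{def:small_deformations}, and the maps $R[\underline S]\to B_i$ still factor through $C$ because any $f\in I_0$, $g\in J$ map respectively into $I_{B_i}$ and $\ker(B_i\to A)$, so $fg\mapsto 0$ by the smallness of $B_i\twoheadrightarrow A$.
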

\begin{proof}
Let $e \geq 0$ be such that each kernel $\ker(B_i \twoheadrightarrow A)$, $i = 1,2$, is generated by $\leq e$ elements as $A$-module. Also choose a surjection $ψ_0:R[T_1,\ldots,T_d] \twoheadrightarrow A$ as before. Extend $ψ_0$ to a map
$$ψ: S := R[T_1,\ldots,T_d, X_1,\ldots, X_e] \twoheadrightarrow A$$
by defining $ψ(X_k) = 0$ for $0\leq k \leq e$. Let $I = \ker(ψ)$ and define $C := S/(I\cdot I_S)$. Then there exist surjections of deformations $C\twoheadrightarrow B_i$ by lifting the images $ψ_0(T_j)\in A$ to $B_i$ and by mapping the $X_k$ to generators of $\ker(B_i\twoheadrightarrow A)$.
\end{proof}

\begin{defn}\label{def:cotangent}
The \emph{co-Lie complex} of a versal small deformation $B \twoheadrightarrow A$ with kernel $I$ is defined as the two term complex
\begin{equation}\label{eq:def_cotangent}
L^*_{B\twoheadrightarrow A} = [I/(I_B\cdot I) \lr I_B/(I_B)^2].
\end{equation}
\end{defn}

The co-Lie complex is independent of $B$ up to homotopy equivalence: By Lemma \ref{lem:exist_versal_covering}, it suffices to check this for surjections $φ:C\twoheadrightarrow B$ of versal small deformations. Choose a section $s:B\to C$ which exists by definition of versality. Then $C$ decomposes as $B \oplus \ker(φ)$ with multiplication
\begin{equation}\label{eq:decom_C_B}
(b_1, n_1) \cdot (b_2, n_2) = (b_1b_2, b_1n_2 + b_2n_1).
\end{equation}
Let $I = \ker(B\twoheadrightarrow A)$ and $J = \ker(C\twoheadrightarrow A)$. Then $J = I \oplus \ker(φ)$. Moreover, $I_C = I_B \oplus \ker(φ)$ and $I_B\cdot \ker(φ) = 0$, so we obtain
\begin{equation}\label{eq:cotangent_compare}
\begin{aligned}
L^*_{C\twoheadrightarrow A} & = \big[\,I/(I_B\cdot I) \oplus \ker(φ) \ \lr\ I_B/(I_B)^2 \oplus \ker(φ)\,\big]\\[1mm]
& = L^*_{B\twoheadrightarrow A} \oplus \big[\ker(φ)\, \overset{\mr{id}}{\lr}\, \ker(φ)\big].
\end{aligned}
\end{equation}
which is homotopy equivalent to $L^*_{B\twoheadrightarrow A}$.

We now specialize to the case when $A$ is the coordinate ring of an affine flat commutative group scheme $G$ of finite presentation over $R$; these are always relative complete intersections by \cite[\S2]{Faltings}. Let $G\hookrightarrow G^\flat = \Spec(A^\flat)$ be a versal small deformation of $G$. Denote by $\End(G^\flat, G)$ the set of $R$-scheme endomorphisms of $G^\flat$ that restrict to a group scheme endomorphism on $G\subseteq G^\flat$. Faltings explains that $\End(G^\flat, G)$ again forms a commutative group. Moreover, he explains that the restriction map $\End(G^\flat, G) \twoheadrightarrow \End(G)$ is surjective. Clearly, any element of $\End(G^\flat, G)$ defines an $R$-linear endomorphism of the co-Lie complex $L^*_{A^\flat \twoheadrightarrow A}$. 

\begin{defn}\label{def:O_action_Faltings}
Assume that $R$ is an $O$-algebra. Let $G\hookrightarrow G^\flat$ be a versal small deformation of an affine flat commutative $R$-group scheme of finite presentation as before. An $O$-action $O\to \End(G^\flat, G)$ is called \emph{strict} if the induced action of $O$ on the two terms of $L^*_{G\hookrightarrow G^\flat}$ is by scalar multiplication along the structure map $O\to R$.
\end{defn}

\begin{lem}[\protect{\cite[Remark \S2 (b)]{Faltings}}]\label{lem:O_action_universal}
Let $O\to \End(G^\flat, G)$ be a strict $O$-action as in Definition \ref{def:O_action_Faltings}. Then there is a unique way to define a strict $O$-action on every versal small deformation of $G$ such that every map between such deformations is $O$-linear.
\end{lem}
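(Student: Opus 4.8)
The statement is essentially a rigidity/transport-of-structure result once we know that co-Lie complexes behave well under the maps between versal small deformations. The plan is to reduce everything to the comparison \eqref{eq:cotangent_compare} already recorded above and to the fact (which I would cite from \cite{Faltings} or from Lemma \ref{lem:exist_versal_covering}) that any two versal small deformations of $G$ are dominated by a third. So first I would fix the given strict $O$-action on $O\to \End(G^\flat,G)$ and a second versal small deformation $G\hookrightarrow G^{\flat\prime} = \Spec(A^{\flat\prime})$, and aim to produce a canonical strict $O$-action on $\End(G^{\flat\prime},G)$.

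\textbf{Step 1: the case of a surjection $\varphi\colon C\twoheadrightarrow B$.} Suppose $\Spec(C)\twoheadrightarrow \Spec(B)$ is a surjection of versal small deformations of $G$, both carrying compatible data; concretely, assume $B = A^\flat$ with its given strict $O$-action. Choose an $R$-algebra section $s\colon B\to C$ (exists by versality of $C$), giving the decomposition $C = B\oplus\ker(\varphi)$ with multiplication \eqref{eq:decom_C_B}. For $a\in O$, I would define the endomorphism of $\Spec(C)$ as follows: on the $B$-factor it is the given $\End(G^\flat,G)$-endomorphism $[a]_B$, and on $\ker(\varphi)$ — which, by \eqref{eq:decom_C_B} and $I_B\cdot\ker(\varphi)=0$, is a module over $B/I_B = A$ — it acts via the $A$-module structure through the restricted action $O\to\End(G)$ (scalar multiplication by the image $\bar a$ of $a$ in $A$, pushed forward along $O\to R\to A$, i.e.\ by the structure map since the action on $G$ is $O$-linear and $\ker(\varphi)$ is an $A$-module). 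One checks directly from \eqref{eq:decom_C_B} that this is a ring endomorphism of $C$ restricting to a group endomorphism on $G\subseteq\Spec(C)$, hence lies in $\End(G^\flat_C,G)$, and that $a\mapsto [a]_C$ is a ring homomorphism $O\to\End(G^\flat_C,G)$. Strictness is immediate from \eqref{eq:cotangent_compare}: the co-Lie complex of $\Spec(C)$ splits as $L^*_{G\hookrightarrow G^\flat}\oplus[\ker(\varphi)\xrightarrow{\mathrm{id}}\ker(\varphi)]$, and $[a]_C$ acts by $\bar a$ (the scalar from the $O\to R$ structure map) on both summands by construction.

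\textbf{Step 2: independence of the section $s$ and of $C$.} Two choices of section $s,s'\colon B\to C$ differ by an $A$-linear derivation $B\to\ker(\varphi)$, hence conjugate the endomorphisms defined in Step 1 trivially on the co-Lie complex; more precisely, the two resulting actions on $\End(G^\flat_C,G)$ induce the same action on $L^*$ and agree after restriction to $G$, so they agree because the co-Lie complex controls the difference (this is the standard deformation-theoretic uniqueness, and here I would lean on Faltings' remark that the restriction $\End(G^\flat,G)\to\End(G)$ is surjective with ``kernel'' acting trivially on $L^*$). For an arbitrary $G^{\flat\prime}$, use Lemma \ref{lem:exist_versal_covering} to get a common versal small deformation $C$ dominating both $A^\flat$ and $A^{\flat\prime}$; transport the action from $A^\flat$ up to $C$ via Step 1 and then down to $A^{\flat\prime}$ by restriction along the section $A^{\flat\prime}\to C$. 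Uniqueness of the resulting strict action on $A^{\flat\prime}$, and compatibility of all these actions with any map $A^{\flat\prime}_1\to A^{\flat\prime}_2$ of versal small deformations, follows by a further application of Lemma \ref{lem:exist_versal_covering} (pass to a deformation dominating all three) together with the independence statement just proved. This gives the existence and uniqueness asserted in Lemma \ref{lem:O_action_universal}.

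\textbf{Main obstacle.} The routine but slightly delicate point is Step 2: showing that the action constructed in Step 1 does not depend on the auxiliary section $s$, nor on the intermediate dominating deformation $C$. The cleanest way to see it is that a strict $O$-action on a versal small deformation is \emph{determined} by its restriction to $G$ (because the ``new'' coordinates of a small deformation form a module on which the action is forced to be $O$-linear via the $A$-module structure, by \eqref{eq:decom_C_B}), so any two strict lifts of the same action on $G$ must coincide — and two sections produce strict lifts of the same action on $G$. I would phrase this uniqueness as a short lemma and then everything else is formal bookkeeping with Lemma \ref{lem:exist_versal_covering}.
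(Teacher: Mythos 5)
Your outline matches the paper's — extend the given action from $G^\flat$ along a common versal small deformation (via Lemma \ref{lem:exist_versal_covering}), using a chosen $R$-algebra section to make the extension explicit, and then prove independence of all choices. The gap is in your Step 2: the argument that the construction is well-defined, and the claim in the ``Main obstacle'' that a strict $O$-action on a versal small deformation is determined by its restriction to $G$, are not established. The precise mechanism the paper uses is a rigidity statement about \emph{deformation maps}, not about the actions themselves: for any two homomorphisms of deformations $\varphi,\varphi'\colon C\to B$, the difference $\varphi-\varphi'$ is an $R$-derivation along the augmentation of $C$ with values in $\ker(B\twoheadrightarrow A)$, hence factors through an $R$-linear map $I_C/(I_C)^2\to\ker(B\twoheadrightarrow A)$; strictness forces $O$ to act on both modules by the structure scalar $O\to R$, so $\varphi-\varphi'$ is automatically $O$-linear, hence $\varphi'$ is $O$-linear whenever $\varphi$ is. From this single computation one gets at once that sections of a surjection of deformations are automatically $O$-linear, that the extension is independent of the chosen section, and that the resulting family is unique. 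Your appeal to ``the co-Lie complex controls the difference'' and to ``the kernel of $\End(G^\flat,G)\to\End(G)$ acts trivially on $L^*$'' does not pin this down: the relevant object is the module $\Hom_R(I_C/(I_C)^2,\ker(B\to A))$ on which $O$ acts by a scalar, not the kernel of the restriction-to-$G$ map; and the ``Main obstacle'' claim quietly invokes a splitting $C\cong B\oplus\ker(\varphi)$ under which the ``new coordinates'' are forced, without showing the result is splitting-independent — which is exactly what the difference computation supplies. As written your uniqueness step is circular.

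There are also two small slips worth fixing: $B/I_B = R$, not $A$ — the augmentation ideal $I_B$ is the kernel of $B\twoheadrightarrow R$, whereas $\ker(B\twoheadrightarrow A)$ is a different and generally smaller ideal, so $\ker(\varphi)$ is an $R$-module via $\varepsilon_C$, not an $A$-module; and correspondingly $s'-s$ is an $R$-linear derivation along the augmentation, not an ``$A$-linear derivation.'' Neither slip changes the end result, since the scalar multiplication in question factors through $O\to R$ either way, but they obscure exactly the computation that needs to be made precise.
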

\begin{proof}
Let $φ:C\twoheadrightarrow B$ be a surjection of versal small deformations of $A = Γ(G, \mcO_G)$. Choose a section of $φ$ to decompose $C = B\oplus \ker(φ)$ as in \eqref{eq:decom_C_B}.

Assume we are given a strict $O$-action on $\Spec(C)$. Then, by definition of strictness and by \eqref{eq:cotangent_compare}, $O$ acts by scalar multiplication on $\ker(φ)$. In particular, the action preserves $\ker(φ)$ and hence descends to $B$. The descended action is strict because $L^*_{B\twoheadrightarrow A}$ is a direct summand of $L^*_{C\twoheadrightarrow A}$, see again \eqref{eq:cotangent_compare}.

Conversely, assume that we are given a strict $O$-action on $\Spec(B)$. The explicit formula \eqref{eq:decom_C_B} allows to extend it to a strict $O$-action on $\Spec(C)$ by letting $O$ act by scalar multiplication on $\ker(φ)$.

Assume from now on that $B$ and $C$ are endowed with compatible strict $O$-actions as above, and let $φ':C\to B$ be a second homomorphism of deformations of $A$. The difference $φ-φ'$ is described by a map $I_C/(I_C)^2 \to I/(I_B\cdot I)$, where $I = \ker(B\twoheadrightarrow A)$. By the strictness, the $O$-action on source and target of this map is by scalar multiplication. Hence $φ-φ'$ is $O$-linear. So since $φ$ is $O$-linear, also $φ'$ is $O$-linear.

Using Lemma \ref{lem:exist_versal_covering}, it follows that the constructions from the beginning of the proof provides the unique way to extend the strict $O$-action from $G^\flat$ to all versal small deformations.
\end{proof}

\begin{defn}[\protect{\cite[Definition 1]{Faltings}}]
A \emph{strict $O$-action} on an affine, flat, commutative $R$-group scheme $G$ of finite presentation is the datum of compatible strict $O$-actions on all versal small deformations of $G$ in the sense of Lemma \ref{lem:O_action_universal}.

A homomorphism $G\to H$ of group schemes with strict $O$-actions is called \emph{strict} if it lifts to an $O$-linear map $G^\flat \to H^\flat$ for any (equivalently, one) pair $(G^\flat, H^\flat)$ of versal small deformations of $G$ and $H$.
\end{defn}

Assume that $X_1 \to X_2 \to \ldots$ is a sequence of strict $O$-group schemes over an $O$-algebra $R$ in which $p$ is nilpotent. Assume that there is an integer $h$ such that each $X_n$ is finite and locally free of degree $q^{nh}$, and that the transition maps are strict closed immersions with $X_n = X_{n+1}[π^n]$. Then $X = \colim_{n\geq 1}\,X_n$ is a $p$-divisible $O$-module. The integer $h$ is called its \emph{height}. The height of the underlying $p$-divisible group is $[K:\mbQ_p]\cdot h$.

Conversely, let $X$ be a $p$-divisible $O$-module over $R$. We claim that each truncation $X_n := X[π^n]$ is naturally a strict $O$-group scheme with strict transition maps $X_n\to X_{n+1}$.

\begin{lem}\label{lem:truncation_p_div_strict}
(1) Assume $π^mR = 0$. Define $X_n^\flat$ as the maximal small deformation of $X_n$ in $X_{n+m}$. Then $X_n^\flat$ is a versal small deformation of $X_n$.

\medskip \noindent (2) The $O$-action on $X_{n+m}$ reduces to a strict action on $X_n^\flat$ and hence endows $X_n$ with the structure of a strict $O$-group scheme.
\end{lem}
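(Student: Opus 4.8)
\textbf{Proof plan for Lemma \ref{lem:truncation_p_div_strict}.}

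The plan is to construct the versal small deformation $X_n^\flat$ explicitly inside $X_{n+m}$ and then invoke Faltings' criterion for versality. For part (1), fix $m$ with $π^mR = 0$. The inclusion $X_n \hookrightarrow X_{n+m}$ is a closed immersion of finite locally free $R$-group schemes, classified by the ideal $\mcI = \ker(\mcO_{X_{n+m}} \twoheadrightarrow \mcO_{X_n})$. I would define $X_n^\flat = \Spec(\mcO_{X_{n+m}}/(\mcI \cdot I_{X_{n+m}} + \mcI^2))$, i.e. the quotient by $\mcI^2$ together with the augmentation-ideal product; equivalently, $X_n^\flat$ is the largest subscheme $Y$ with $X_n \subseteq Y \subseteq X_{n+m}$ such that $Y \twoheadrightarrow X_n$ is a square-zero thickening with $I_{X_n^\flat}\cdot\ker(X_n^\flat\twoheadrightarrow X_n) = 0$ — this is precisely the ``maximal small deformation of $X_n$ in $X_{n+m}$''. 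That it \emph{is} a small deformation is immediate from the definition of the defining ideal. The content is versality: given any small deformation $C \twoheadrightarrow \mcO_{X_n}$ equipped with a surjection $C \twoheadrightarrow \mcO_{X_n^\flat}$ of deformations, one must produce a section. Here I would use that $X_{n+m}$ is smooth along $X_n$ in the appropriate relative sense — more precisely, that $\mcO_{X_{n+m}}$ is a relative complete intersection over $R$ (as all affine flat finite-presentation group schemes are, by \cite[\S2]{Faltings}) and that the cotangent complex behaviour of truncations of a $p$-divisible $O$-module is controlled by $\Lie$ — to lift. Concretely, one lifts the composite $\mcO_{X_{n+m}} \to C$ (which exists because $\mcO_{X_{n+m}}$ is the coordinate ring of a group scheme killed by $π^{n+m}$, hence maps to the $\pi^{n+m}$-torsion deformation $C$ as $\pi^m C^{\text{ann}} $-data allows) and checks it factors through $X_n^\flat$, using $I_C \cdot \ker(C \twoheadrightarrow \mcO_{X_n}) = 0$ to kill the products that define the ideal of $X_n^\flat$. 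This is the step I expect to be the main obstacle: making the lifting argument precise requires unwinding that the obstruction lives in a module on which one has enough control, and the cleanest route is likely to reduce to the known analogous statement for $p$-divisible groups (Faltings) and then observe $O$-equivariance is automatic from the construction.

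For part (2), the $O$-action on $X_{n+m}$ restricts to $X_n^\flat$ because the defining ideal $\mcI\cdot I_{X_{n+m}} + \mcI^2$ is manifestly $O$-stable (both $\mcI$ and $I_{X_{n+m}}$ are). It remains to check this restricted action is \emph{strict}, i.e. that $O$ acts by scalar multiplication through $O \to R$ on the two terms of the co-Lie complex $L^*_{X_n^\flat \twoheadrightarrow X_n} = [\ker(X_n^\flat\twoheadrightarrow X_n)/(I_{X_n^\flat}\cdot\ker) \to I_{X_n^\flat}/I_{X_n^\flat}^2]$. The right-hand term $I_{X_n^\flat}/I_{X_n^\flat}^2$ is the cotangent space of $X_n^\flat$ at the identity, which surjects onto (indeed agrees up to the thickening with) the cotangent space of $X$ at the identity, namely $\Fil$ or its dual — and strictness of the $O$-action on $X$ as a $p$-divisible $O$-module says exactly that $O$ acts by scalars on $\Lie(X)$, hence on $\omega_X = \Lie(X)^*$-type data. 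For the left-hand term, $\ker(X_n^\flat \twoheadrightarrow X_n)$ modulo $I_{X_n^\flat}$-multiples is the ``deformation direction'', and this too is an $\mcO_R$-module on which $O$ acts by scalars because it is a subquotient sitting between cotangent data of truncations. Then Lemma \ref{lem:O_action_universal} propagates this strict action on the versal deformation $X_n^\flat$ uniquely to all versal small deformations of $X_n$, which by definition is the datum of a strict $O$-group scheme structure on $X_n$. Compatibility with the transition maps $X_n \to X_{n+1}$ being strict follows because the inclusions are induced by the inclusions of the $X_n^\flat$'s inside a common $X_{n+m'}$ for $m'$ large, and these are $O$-linear by construction; one then applies the uniqueness clause of Lemma \ref{lem:O_action_universal} to conclude the transition maps lift $O$-linearly.

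I would present part (1) first in full, since part (2) is essentially bookkeeping once the versal deformation is pinned down and one knows the co-Lie complex is computed by $\Lie(X)$. The one genuinely delicate point — worth stating as a sub-claim — is the identification of $H^0$ and $H^{-1}$ of $L^*_{X_n^\flat \twoheadrightarrow X_n}$ with (pieces of) $\Lie(X)$ and $\Lie(X^\vee)$ respectively; this is where strictness of $X$ as a $p$-divisible $O$-module gets converted into strictness of the finite group schemes $X[\pi^n]$, and it is exactly the input needed to make the whole appendix's comparison of $\BT$ with Faltings duality go through.
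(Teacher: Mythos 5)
Your proposal has the right circle of ideas but leaves the two load-bearing steps undone, and in part (1) it actually starts from a map that does not yet exist. The paper's argument for versality is cleaner than what you sketch: given a small deformation $Y$ of $X_n$ with a closed immersion $X_n^\flat\hookrightarrow Y$, one first extends $X_n^\flat\hookrightarrow X$ to a map $Y\to X$ using \emph{formal smoothness of $X$ as a functor} (not obstruction theory for relative complete intersections), and only then argues that this extension factors through $X_{n+m}$. That factorization is exactly the place where $\pi^m R=0$ enters: the map $Y\to X$ restricts on $X_n$ to the inclusion $X_n\hookrightarrow X$, so $[\pi^n]\circ(Y\to X)$ vanishes on $X_n\subseteq Y$ and therefore lies in $\Lie(X)\otimes_R J$ where $J=\ker(\mcO_Y\to\mcO_{X_n})$; since $O$ acts on $\Lie(X)$ through $O\to R$ and $\pi^m R=0$, a further $[\pi^m]$ kills it, so $[\pi^{n+m}]\circ(Y\to X)=0$ and $Y\to X$ lands in $X_{n+m}$. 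Maximality of $X_n^\flat$ then puts the image inside $X_n^\flat$. By contrast you propose to ``lift the composite $\mcO_{X_{n+m}}\to C$'' --- but there is no such composite a priori; the only given map into $C$ is from $\mcO_{X_n^\flat}$, and producing a map from $\mcO_{X_{n+m}}$ is precisely the content of the step. You flag this as ``the main obstacle,'' which is honest, but the reduction to ``the known analogous statement for $p$-divisible groups'' is not needed: the torsion argument above is elementary and is the whole point.

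For part (2), the $H^0$ part of your plan is fine, but the treatment of $H^{-1}$ is where a genuine gap remains. You assert that $n_{X_n}$ is ``a subquotient sitting between cotangent data of truncations'' and so inherits the scalar $O$-action, but this is a restatement of what must be proved, not an argument. The paper instead establishes, for $n\geq m$, that the differential in $L^*_{X_n\hookrightarrow X_n^\flat}$ vanishes and then invokes Messing's long exact sequence \eqref{eq:les_coLie} attached to $0\to X_n\to X_{n+m}\xrightarrow{\pi^n}X_m\to 0$ to produce the identification $n_{X_n}\simeq\omega_{X_m}$, whose scalar $O$-action is known. Moreover, your argument does not distinguish $n\geq m$ from $n<m$: the identification $e^*\Omega^1_{X_n^\flat/R}\simeq\omega_{X_n}=\omega_X$ used above fails for small $n$, and the paper handles $n<m$ by a separate reduction, viewing $X_n=\ker([\pi^n]\colon X_{n+m}\to X_m)$ and applying Faltings' result that kernels of strict homomorphisms carry a canonical strict structure (\cite[Proposition 2]{Faltings}). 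Without the Messing exact sequence and without this case split, the strictness claim is not actually established.
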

\begin{proof}
(1) Let $X_n^\flat \hookrightarrow Y$ be any closed immersion into a small deformation $Y$ of $X_n$. It is well-known that $p$-divisible groups are formally smooth functors, so there exists an extension of $ι:X_n^\flat \hookrightarrow X$ to a map $Y\to X$. Since $Y$ is a square-zero thickening of $X_n$ and since multiplication by $π^m$ annihilates the Lie algebra of $X$ by assumption, the composition $[π^m]\circ ι$ factors through $X_n$. This means that $ι$ factors through a map $Y\to X_{n+m}$. Since $Y$ is a small deformation of $X_n$, its image is contained in $X_n^\flat$. Thus $ι$ provides a splitting $Y\to X_n^\flat$ as desired.

(2) The $O$-action on $X_{n+m}$ preserves $X_n$, and hence restricts to an action on $X_n^\flat$. We need to show that this action is strict. Introduce the following notation: Given an affine flat commutative group scheme $G$ over $R$, put
$$n_G := H^{-1}(L^*_{G\hookrightarrow G^\flat}),\quad ω_G := H^0(L^*_{G\hookrightarrow G^\flat}).$$
These do not depend on the choice of versal small deformation $G^\flat$. Moreover, $ω_G$ is nothing but the co-Lie algebra $e^*\Omega^1_{G/R}$.

First assume that $n \geq m$. The second term of $L^*_{X_n\hookrightarrow X_n^\flat}$ is the cotangent space $e^*\Omega^1_{X_n^\flat/R}$. Under our assumption on $n$, it coincides with the co-Lie algebra $ω_{X_n} = ω_X$ of $X$. The $O$-action on $ω_X$ is the natural one by the definition of $p$-divisible $O$-modules. Moreover, since $e^*\Omega^1_{X_n^\flat/R} = ω_{X_n}$, we see that
$$L^*_{X_n\hookrightarrow X_n^\flat} = [n_{X_n} \overset{0}{\lr} ω_{X_n}].$$
Consider the short exact sequence $0\to X_n \to X_{n+m} \overset{π^n}{\to} X_m\to 0$. By a statement of Messing, see \cite[Lemma 3.6]{Hartl_Singh}, it induces an exact sequence
\begin{equation}\label{eq:les_coLie}
0\lr n_{X_m} \lr n_{X_{n+m}} \lr n_{X_n} \lr ω_{X_m} \overset{0}{\lr} ω_{X_{n+m}} \simlr ω_{X_n}\lr 0.
\end{equation}
The terms of $L^*_{X_n\hookrightarrow X_n^\flat}$ are locally free as $R$-modules and of equal rank because $X_n$ is of relative dimension $0$. We obtain from \eqref{eq:les_coLie} that $n_{X_n}\simto ω_{X_m}$ and hence that the $O$-action on the first term of $L^*_{X_n\hookrightarrow X_n^\flat}$ is the natural one as well. This shows that the $O$-action on $X_n^\flat$ is strict.

For general $n$, we consider the multiplication map $[π^n]:X_{n+m}\to X_m$. It extends to $[π^n]:X_{n+2m}\to X_{2m}$ and hence restricts to an $O$-linear map $X_{n+m}^\flat\to X_m^\flat$. By \cite[Proposition 2]{Faltings}, there is a unique strict $O$-action on $X_n = \ker([π^n])$ such that the inclusion $X_n\to X_{n+m}$ is strict. Since $X_n^\flat$ includes into $X_{n+m}^\flat$, this $O$-action has to be the one we already defined.
\end{proof}

It is clear from construction that the transition maps $X_n\to X_{n+1}$ are strict. So we obtain that every $p$-divisible $O$-module $X$ is of the form $X = \colim_{n\geq 1} X_n$ for a sequence $X_1\to X_2\to \ldots$ of strict $O$-group schemes as before.

Let $\LT$ be a Lubin--Tate object over $R$, that is, a $p$-divisible $O$-module of dimension $1$ whose underlying $p$-divisible group has height $[K:\mbQ_p]$.

\begin{thm}[\protect{\cite[Theorem 8]{Faltings}}]\label{thm:Faltings}
Let $G$ be a finite, locally free strict $O$-group scheme over $R$. Then the functor $\underline{\Hom}_O^{\mr{strict}}(G, \LT)$ of strict group scheme homomorphisms to $\LT$ is representable by a finite, locally free strict $O$-group scheme $G^\vee$. It is of the same rank as $G$ and called the Cartier dual of $G$ (with respect to $\LT$).

Cartier duality defines a self anti-equivalence of the category of finite, locally free strict $O$-group schemes. It interchanges closed immersions and faithfully flat homomorphisms. For every $G$, the evaluation map $G\to (G^\vee)^\vee$ is an isomorphism.
\end{thm}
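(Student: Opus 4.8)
The plan is to establish Theorem \ref{thm:Faltings} by reducing to classical Cartier duality of finite locally free group schemes, exactly as in Faltings \cite{Faltings}, but spelling out the steps in the order that makes the strictness bookkeeping transparent. First I would fix a ring $R$ in which $p$ is nilpotent, say $\pi^m R = 0$, together with the Lubin--Tate object $\LT$ over $R$. The starting observation is that $\LT$ is $\pi$-divisible and $1$-dimensional, so for every finite locally free strict $O$-group scheme $G$ annihilated by $\pi^n$ we have $\underline{\Hom}_O^{\mr{strict}}(G, \LT) = \underline{\Hom}_O^{\mr{strict}}(G, \LT[\pi^n])$, which reduces the representability question to finite layers. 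Thus the first real step is to analyze $\LT[\pi^n]$: it is a finite locally free strict $O$-group scheme of rank $q^n$, and its co-Lie complex has both terms free of rank $1$ with $O$ acting by the structure map.

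The second step is the representability of $G^\vee := \underline{\Hom}_O^{\mr{strict}}(G, \LT)$. Here I would follow Faltings: a strict homomorphism $G \to \LT$ is the same as an ordinary homomorphism of fppf sheaves of abelian groups together with the condition that the induced map on co-Lie complexes of versal small deformations is $O$-linear, and using Lemma \ref{lem:truncation_p_div_strict} one can take $G^\flat$ (the maximal small deformation of $G$ inside $G[\pi^{n+m}]$-type data) and $\LT[\pi^n]^\flat$ as the versal objects. The functor of such homomorphisms is a closed (in fact, a union-of-connected-components type) subfunctor of the ordinary $\underline{\Hom}_{\text{group}}(G,\LT[\pi^n])$, which is representable by a finite locally free scheme by classical Cartier--Grothendieck duality. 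Cutting out the $O$-linearity condition on the co-Lie complexes is an affine, finitely presented condition, so $G^\vee$ is representable; one then checks it is finite locally free by descending to the algebraically closed fibers, where a dimension count using the exact sequence of co-Lie complexes from \cite[Lemma 3.6]{Hartl_Singh} (as in the proof of Lemma \ref{lem:truncation_p_div_strict}) shows $G^\vee$ has the same rank as $G$. Equipping $G^\vee$ with its strict $O$-action comes for free from the $O$-action on $\LT$ by post-composition, and one verifies strictness by identifying the co-Lie complex of $G^\vee$ with the $R$-linear dual (shifted) of that of $G$, again via the Messing-type exact sequence.

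The third step is the formal properties: that Cartier duality is a self anti-equivalence, exchanges closed immersions with faithfully flat maps, and that the biduality map $G \to (G^\vee)^\vee$ is an isomorphism. For biduality I would use that this can be checked fppf-locally and, since both sides are finite locally free of the same rank, it suffices to check it is a monomorphism, which reduces to the statement that a finite locally free strict $O$-group scheme has ``enough'' strict homomorphisms to $\LT$ to separate points --- this is the key input from \cite{Faltings} and follows from the explicit structure of $\LT[\pi^n]$ together with the classical fact for ordinary finite group schemes. The exchange of closed immersions and faithfully flat morphisms follows formally from biduality once one knows that $G \twoheadrightarrow H$ faithfully flat implies $H^\vee \hookrightarrow G^\vee$ is a closed immersion, which one reads off from left-exactness of $\underline{\Hom}_O^{\mr{strict}}(-, \LT)$ applied to the short exact sequence $0 \to \ker \to G \to H \to 0$ of strict $O$-group schemes.

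The main obstacle I expect is not any single computation but rather the careful verification that the strictness condition behaves well under all of these operations --- in particular, that the co-Lie complex of $G^\vee$ really is (up to homotopy and shift) the $R$-dual of that of $G$, compatibly with the $O$-actions, so that strictness of the $O$-action on $G^\vee$ is automatic. This requires being precise about which versal small deformations one uses on the dual side and invoking the independence-of-$G^\flat$ statement (the homotopy equivalence \eqref{eq:cotangent_compare}) to glue things. The other delicate point is the finiteness and flatness of $G^\vee$ over a general base $R$ with $p$ nilpotent: one wants to reduce to fibers, but the $O$-linearity condition is imposed on co-Lie complexes over the \emph{total} space, so one must check that the formation of $G^\vee$ commutes with arbitrary base change, which again comes down to the fact that forming the maximal small deformation and the co-Lie complex commutes with base change. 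Once these compatibilities are in place, the remaining assertions are formal consequences of classical Cartier duality.
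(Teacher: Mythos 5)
Your proposal and the paper's proof take completely different routes. The paper's proof is two sentences long: it simply cites \cite{Faltings}, notes that Faltings proves the result for one specific choice of Lubin--Tate object $\LT$, and observes that the general case follows because any two Lubin--Tate objects over $R$ are fpqc-locally (in fact pro-finite-\'etale-locally) isomorphic --- a fact supplied by Lemma~\ref{lem:LT_object_essentially_unique} together with the display equivalence of Theorem~\ref{thm:equiv_displays}. That descent step is the \emph{entire} content of the paper's proof, and it is the one thing your proposal does not address: you fix an arbitrary $\LT$ at the outset and never explain why Faltings' result, proved for a particular $\LT$, applies to it.

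Your reconstruction of Faltings' argument itself is broadly the right shape (reduce to $\LT[\pi^n]$; realize $\underline{\Hom}_O^{\mr{strict}}(G,\LT)$ as a finitely presented subfunctor of the classical Cartier dual cut out by $O$-linearity on co-Lie complexes; identify the co-Lie complex of the dual via the Messing sequence; deduce biduality and the exchange of closed immersions with faithfully flat maps). But several steps that you present as formal are in fact the substance of Faltings' proof. In particular, that the strictness locus inside $\underline{\Hom}(G,\LT[\pi^n])$ is finite locally free \emph{of the same rank as $G$}, and that its formation commutes with base change, is not a soft dimension count at geometric fibers; likewise ``$G$ has enough strict homomorphisms to $\LT$ to separate points'' is exactly what needs proving for biduality, and \cite{Faltings} handles it by a nontrivial dévissage rather than as a consequence of the classical theory. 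So as a proof sketch it would have to be expanded substantially, and even then it would be reproducing Faltings rather than doing what the paper does, which is to reduce to him. The paper's approach is strictly more economical: it isolates the single new observation needed (the $\LT$-independence by fpqc descent) and defers everything else to the reference.
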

\begin{proof}
Faltings proves this for a specific choice of $\LT$. Since all such choices are fpqc locally (even proétale locally) isomorphic, see Lemma \ref{lem:LT_object_essentially_unique} and Theorem \ref{thm:equiv_displays}, his statement holds with any Lubin--Tate group.
\end{proof}

\begin{defn}[Faltings]\label{def:dual}
The dual of a $p$-divisible $O$-module $X$ (with respect to $\LT$) is defined as $X^\vee := \colim_{n\geq 1}\,\,X[π^n]^\vee$ where the transition maps are $[π]^\vee:X[π^n]^\vee\to X[π^{n+1}]^\vee$. It is again a $p$-divisible $O$-module.

Dualization defines a self anti-equivalence of the category of $p$-divisible $O$-modules. It is an involution in the sense that the natural isomorphisms $X[π^n]\simto (X[π^n]^\vee)^\vee$ define a natural isomorphism $X\simto (X^\vee)^\vee$.
\end{defn}

\begin{lem}\label{lem:Faltings_dual}
Let $R$ be an $O$-algebra in which $p$ is nilpotent, and let $X$ and $Y$ be two $p$-divisible $O$-modules over $R$. Let
$$f = (f_n)_{n\geq 1}: X = \colim_{n\geq 1} X[π^n] \lr \colim_{n\geq 1}\, \underline{\Hom}(Y[π^n], \LT[π^n])$$
be an $O$-linear homomorphism of group valued functors. Then each $f_n$ is strictly $O$-linear. That is, $f$ defines a homomorphism $f:X\to Y^\vee$.
\end{lem}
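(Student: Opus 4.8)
The point is that an $O$-linear map $f = (f_n)$ of the colimit functors is automatically \emph{strictly} $O$-linear on each finite piece $f_n : X[\pi^n] \to \underline{\Hom}(Y[\pi^n], \LT[\pi^n]) = Y^\vee[\pi^n]$. By definition of strictness, this means we must lift $f_n$ to an $O$-linear scheme map $X[\pi^n]^\flat \to (Y^\vee[\pi^n])^\flat$ of versal small deformations, inducing scalar multiplication on the two terms of the co-Lie complex. First I would fix an $m$ with $\pi^m R = 0$ and use the explicit versal small deformations from Lemma \ref{lem:truncation_p_div_strict}(1), namely $X[\pi^n]^\flat := $ the maximal small deformation of $X[\pi^n]$ inside $X[\pi^{n+m}]$, and similarly $(Y^\vee[\pi^n])^\flat$ inside $Y^\vee[\pi^{n+m}]$. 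Since $p$-divisible groups are formally smooth, the restriction $f_{n+m} : X[\pi^{n+m}] \to Y^\vee[\pi^{n+m}]$ of the underlying group functor map carries $X[\pi^n]$ into $Y^\vee[\pi^n]$ (it is a group homomorphism and commutes with $[\pi^n]$, as $f$ is a map of colimit group functors), hence it restricts to a map $X[\pi^n]^\flat \to (Y^\vee[\pi^n])^\flat$ of small deformations. This is the required lift.

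\textbf{Key steps in order.} (1) Observe that a map of the colimit \emph{group} functors is the same as a compatible system of group scheme homomorphisms $f_n : X[\pi^n] \to Y^\vee[\pi^n]$ commuting with the $[\pi]$-transition maps; the $O$-linearity hypothesis gives $f_n \circ \iota(a) = \iota(a) \circ f_n$ as group scheme maps for all $a \in O$. (2) Choose $m$ with $\pi^m R = 0$; realize $X[\pi^n]^\flat$ and $(Y^\vee[\pi^n])^\flat$ as the maximal small deformations inside the $(n{+}m)$-truncations via Lemma \ref{lem:truncation_p_div_strict}(1). (3) Restrict $f_{n+m}$ to obtain a scheme map $X[\pi^n]^\flat \to (Y^\vee[\pi^n])^\flat$ over $R$ that restricts to $f_n$ on $X[\pi^n]$; this uses only that $f_{n+m}$ is a group homomorphism compatible with multiplication by $\pi^n$, so it sends the maximal small deformation into the maximal small deformation. (4) Check this lift is $O$-linear: the $O$-action on $X[\pi^n]^\flat$ and on $(Y^\vee[\pi^n])^\flat$ is the one induced from the $(n{+}m)$-truncations by Lemma \ref{lem:truncation_p_div_strict}(2), and $f_{n+m}$ is $O$-linear by hypothesis, so its restriction is too. (5) Conclude: an $O$-linear scheme-map lift exists, so by definition $f_n$ is a strict homomorphism; since this holds for all $n$ and the lifts are compatible, $f$ is a homomorphism $X \to Y^\vee$ of $p$-divisible $O$-modules. (One may also invoke the uniqueness in Lemma \ref{lem:O_action_universal} to see the induced action on co-Lie complexes is scalar, giving strictness directly.)

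\textbf{Main obstacle.} The only genuinely delicate point is step (3)--(4): verifying that the restriction of $f_{n+m}$ really does land in the \emph{maximal} small deformation on the target side, and that the $O$-action it carries is compatible with the canonical strict $O$-structure on $(Y^\vee[\pi^n])^\flat$ coming from Faltings' Cartier duality (Theorem \ref{thm:Faltings}) rather than some a priori different action. This is where one must use that $Y^\vee[\pi^n]^\flat$ sits inside $Y^\vee[\pi^{n+m}]$ as the maximal small deformation and that the $O$-action on the latter is the Faltings-dual action, matching the one in Lemma \ref{lem:truncation_p_div_strict}(2) applied to the $p$-divisible $O$-module $Y^\vee$. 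Once this bookkeeping is in place the argument is formal; no computation is needed beyond tracking which deformation ring receives which map. I would also remark that the statement is really the ``easy half'' of the assertion that $\underline{\Hom}_O(X, Y^\vee) = \underline{\mathrm{BiHom}}_O(X \times Y, \LT)$, needed as an input to Construction \ref{construction:alpha}; the nontrivial content is precisely that $O$-linearity of group functor maps upgrades to strictness for free, which is special to the $\pi$-nilpotent, formally smooth setting.
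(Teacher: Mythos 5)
Your proof addresses a different question than the lemma asks, and the mismatch is the crux. In the statement, the target of $f_n$ is $\underline{\Hom}(Y[\pi^n], \LT[\pi^n])$, the sheaf of \emph{plain} group-scheme homomorphisms; the functor $Y^\vee[\pi^n] = \underline{\Hom}^{\mathrm{strict}}_O(Y[\pi^n],\LT)$ is a proper subfunctor, and the content of the lemma is precisely that $f_n$ factors through it. That is, for every $R$-algebra $S$ and every $S$-point $x$ of $X[\pi^n]$, the resulting group-scheme map $f_n(x) : Y[\pi^n]_S \to \LT[\pi^n]_S$ is required to be a \emph{strict} homomorphism. You instead write ``$\underline{\Hom}(Y[\pi^n],\LT[\pi^n]) = Y^\vee[\pi^n]$'' in your Plan as though it were a definition, thereby assuming the conclusion, and then set out to prove an unrelated assertion (that $f_n$, already viewed as a map $X[\pi^n] \to Y^\vee[\pi^n]$, lifts to versal small deformations on the $X$-side). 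That latter fact may well be true but it is not what the lemma says, and none of your steps (1)--(5) address the strictness of the individual values $f_n(x)$.

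The paper's argument works on the correct object: fix $n$ and a schematic point $x$ of $X[\pi^n]$; choose $m$ with $\pi^m R = 0$; fppf-locally choose $x' \in X[\pi^{n+m}]$ with $\pi^m x' = x$ (using $\pi$-divisibility of $X$); then $f_{n+m}(x')$ is an $O$-linear group-scheme map $Y[\pi^{n+m}] \to \LT[\pi^{n+m}]$ extending $f_n(x)$, and its restriction to the maximal small deformation $Y[\pi^n]^\flat \subseteq Y[\pi^{n+m}]$ lands in $\LT[\pi^n]^\flat$ and is $O$-linear --- which is exactly the definition of $f_n(x)$ being strict (compare the proof of Lemma~\ref{lem:truncation_p_div_strict}). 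Note that the thing lifted along $\pi^m$ is the schematic point $x$ on the \emph{source} $X$, and the thing restricted to a versal small deformation is the resulting point $f_{n+m}(x')$ on the \emph{target} side $Y[\pi^{n+m}]$. Your proposal lifts and restricts the wrong objects. Your ``Main obstacle'' paragraph does correctly identify the intended content (``$O$-linearity of group functor maps upgrades to strictness for free''), so the high-level understanding is there, but the stated proof strategy would not establish it; to repair it, replace the lift of the map $f_{n+m}$ by a lift of the point $x$ and then argue about $f_{n+m}(x')$ as above.
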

\begin{proof}
Fix $n$, and a schematic point $x\in X[π^n]$, our aim being to show that $f_n(x)$ is strictly $O$-linear. After base change we may assume $x$ to be $R$-valued. Let $m$ be such that $p^m R = 0$. Locally for the fppf topology, choose a point $x' \in X[π^{n+m}]$ such that $π^mx' = x$. Then $f_{n+m}(x') \in \underline{\Hom}(Y[π^{n+m}], \LT[π^{n+m}])$ is an $O$-linear map that extends $f_n(x)$. In the notation of Lemma \ref{lem:truncation_p_div_strict}, it restricts to an $O$-linear map $Y[π^n]^\flat \to \LT[π^n]^\flat$ which means that $f_n(x)$ is strict as claimed.
\end{proof}

\subsection{$\BT$ and duality}
\label{ss:BT_compatibility}
For $O = \mbZ_p$, it was proved by Zink \cite[\S4]{Zink} that $\BT$ is compatible with Serre duality. For general $O$, a sketch of this compatibility over complete Noetherian local rings with perfect residue field is given in \cite[\S5]{ACZ}. Our aim is to extend this statement to arbitrary bases, see Proposition \ref{prop:Faltings_display_compatible}. Our proof will be self-contained, taking as input only that the functor $\BT$ from Theorem \ref{thm:equiv_displays} is defined. It is a simplified (but essentially equivalent) version of the arguments in \cite[\S4]{Zink}.

\newcommand{\Nil}{\mcN il}

We begin by recalling the construction of $\BT$. To this end, note that the truncations of a formal $p$-divisible group over a ring $R$ are pointed, affine, finite type, infinitesimal thickenings of $\Spec R$. Let $\Nil_R$ be the opposite category of such thickenings, that is, the category of augmented finite type $R$-algebras with nilpotent augmentation ideal. By the Yoneda Lemma, in order to define or characterize a formal group over $R$, it suffices to describe its functor of points on $\Nil_R$. Observe that every object of $\Nil_R$ is of the form $R\oplus \mcN$ as $R$-module, where $\mcN$ denotes the augmentation ideal.

Let $\mcP = (P, Q, \bfF, \dF)$ be a nilpotent $O$-display over $R$. Following \cite{Zink} and \cite{ACZ}, define two functors $\wh P$ and $\wh Q$ on $\Nil_R$ as follows. First, for every $R\oplus \mcN \in \Nil_R$, denote by $\wh W_O(\mcN)\subseteq W_O(R\oplus \mcN)$ all those Witt vectors that only have finitely many non-zero entries all of which lie in $\mcN$. This is an ideal of $W_O(R\oplus \mcN)$. Then define
\begin{equation}\label{eq:def_P_hat}
\wh P(\mcN) := \wh W_O(\mcN)\tensor_{W_O(R)} P.
\end{equation}
Define $\wh Q(\mcN)\subseteq \wh P(\mcN)$ as the submodule generated by $V\wh P(\mcN)$ and the image of $\wh W_O(\mcN)\tensor_{W_O(R)} Q$. Define the map
\begin{equation}
\begin{aligned}
    \dF: \wh Q(\mcN) & \lr \wh P(\mcN)\\[2mm]
    V(ξ)\tensor x & \longmapsto {\mkern 24mu} ξ \tensor \bfF(x),\\
    ξ \tensor y & \longmapsto σ(ξ) \tensor \dF(y).
\end{aligned}
\end{equation}
Here, $ξ\in \wh W_O(\mcN)$, $x\in \wh P(\mcN)$ and $y\in \wh Q(\mcN)$. Then \cite{ACZ} defines $\BT(\mcP)$ by the formula
\begin{equation}\label{eq:def_BT}
    \BT(\mcP) := \mr{coker}\big[\dF - \mr{id}:\wh Q\lr \wh P\big].
\end{equation}
Note that $\dF - \mr{id}$ is injective because $\dF$ is $σ$-linear and because we only consider nilpotent $\mcN$. In particular, for each $R\oplus \mcN \in \Nil_R$, there is a short exact sequence
$$0 \lr \wh Q(\mcN) \overset{\dot \bfF - \mr{id}}{\lr} \wh P(\mcN) \lr \BT(\mcP)(\mcN)\lr 0.$$
Assume that the dual $O$-display $\mcP^\vee$ is nilpotent as well. We call such $\mcP$ \emph{binilpotent}. Since $\mcP^\vee$ is defined in terms of maps to $(W, VW, σ, V^{-1})$ it is natural to choose the Lubin--Tate object for dualization in Faltings' sense as
\begin{equation}\label{eq:def_LT}
\LT := \BT\left((W, VW, σ, V^{-1})\right)\ \overset{(\star)}{=}\ \wh W/ (V^{-1}-\mr{id})(V\wh W).
\end{equation}
Here, $(\star)$ is just the explicit description of $\LT$ from \eqref{eq:def_BT}. At this point, we have defined the two $p$-divisible $O$-module $\BT(\mcP^\vee)$ and $\BT(\mcP)^\vee$, and our next aim is to construct a homomorphism $α_\mcP:\BT(\mcP^\vee) \to \BT(\mcP)^\vee$. Our definition is by the standard translation from the biextension formalism used in \cite[\S4]{Zink} to that of bilinear pairings. (This formalism is due to Mumford \cite{Mumford}.) Most notably, the key formula \eqref{eq:def_alpha} below is taken from \cite[Equation (219)]{Zink}.
\begin{construction}\label{construction:alpha}
Let $\ell \in \BT(\mcP^\vee)[π^n]$ and $x\in \BT(\mcP)[π^n]$ be $π^n$-torsion points. Choose liftings $\wt \ell \in \wh{P^\vee}$ and $\wt x\in \wh P$. Then there exist unique elements $λ\in \wh{Q^\vee}$ and $y\in \wh Q$ such that
\begin{equation}\label{eq:y_lambda}
π^n \wt \ell = \dot \bfF^\vee(λ) - λ,\qquad π^n \wt x = \dot \bfF (y) - y.
\end{equation}
Define
\begin{equation}\label{eq:def_alpha}
(α_{\mcP, n}(\ell))(x)\ :=\ \text{class of }\ (\dot \bfF^\vee(λ))(\wt x) + \wt \ell(y)\ \in\ \LT.\medskip
\end{equation}
Here, the class in $\LT$ is meant in the sense of $(\star)$ in \eqref{eq:def_LT}. We claim that $α_{\mcP, n}$ is well-defined: Let $\wt x' = \wt x + (\dot F(z) - z)$ be another choice of lifting of $x$. Then $π^n\wt x' = \dot F(y') - y'$ for $y' = y + π^nz$. We obtain
\begin{equation}\label{eq:indep_choices}
\begin{aligned}
\big[(\dot \bfF^\vee(λ))(\wt x') + \wt \ell & (y') \big] - \big[(\dot \bfF^\vee(λ))(\wt x) + \wt \ell(y)\big]\\[1mm]
& \overset{\phantom{\eqref{eq:dual_display_key}}}{=} (\dot \bfF^\vee(λ))(\dot \bfF(z) - z) + \wt \ell(π^n z) \\[1mm]
& \overset{\phantom{\eqref{eq:dual_display_key}}}{=} (\dot \bfF^\vee(λ))(\dot \bfF(z) - z) + (\dot \bfF^\vee(λ) - λ)(z) \\[1mm]
& \overset{\phantom{\eqref{eq:dual_display_key}}}{=} (\dot \bfF^\vee(λ))(\dot \bfF(z)) - λ(z)\\[1mm]
& \overset{\eqref{eq:dual_display_key}}{=} (V^{-1} - \mr{id})(λ(z))\\[1mm]
& \overset{\phantom{\eqref{eq:dual_display_key}}}{=} 0\ \text{in}\ \LT.
\end{aligned}
\end{equation}
Independence of \eqref{eq:def_alpha} from the choice of $\wt \ell$ is shown by the same arguments. In this way, we have constructed an element
$$α_{\mcP, n} \in \Hom(\BT(\mcP)[π^n], \LT[π^n]).$$
By definition, the $α_{\mcP,n}$ are compatible in the following sense. For a pair $\ell\in \BT(\mcP)[π^n]$ and $x\in \BT(\mcP)[π^{n+m}]$,
$$(α_{\mcP,n+m}(\ell))(x) = (α_{\mcP, n}(\ell))(π^mx).$$
This means that the family of maps $(α_{\mcP, n})$ assembles to a homomorphism
$$α_\mcP:\BT(\mcP^\vee)\lr \colim_{n\geq 1}\,\Hom(\BT(\mcP)[π^n], \LT[π^n]).$$
By Lemma \ref{lem:Faltings_dual}, this map factors through the dual $p$-divisible $O$-module, that is, we have defined a homomorphism
\begin{equation}\label{eq:resulting_alpha}
α_\mcP:\BT(\mcP^\vee) \lr \BT(\mcP)^\vee
\end{equation}
as desired. It is clear from construction that the definition of $α_\mcP$ is compatible with base change in $R$.
\end{construction}

\begin{lem}\label{lem:symmetry}
The family $(α_\mcP)_{\mcP}$ defines a natural transformation $α:\BT\circ \vee \to \vee\circ \BT$. That is, for every homomorphism $f:\mcP_1\to \mcP_2$ of binilpotent $O$-displays, the following square commutes:
\begin{equation}\label{eq:square_1}
\xymatrix{
\BT(\mcP_2^\vee) \ar[rr]^-{\BT(f^\vee)} \ar[d]_{α_{\mcP_2}} & & \BT(\mcP_1^\vee) \ar[d]^{α_{\mcP_1}}\\
\BT(\mcP_2)^\vee \ar[rr]_-{\BT(f)^\vee} & & \BT(\mcP_1)^\vee.
}
\end{equation}
It is compatible with the evaluation isomorphisms $\mr{ev}_\mcP:\mcP\simto (\mcP^\vee)^\vee$ and $\mr{ev}_X:X\simto (X^\vee)^\vee$. That is, for every nilpotent $O$-display such that also $\mcP^\vee$ is nilpotent, the following square commutes:
\begin{equation}\label{eq:square_2}
\xymatrix{
\BT(\mcP) \ar[rr]^-{\BT(\mr{ev}_\mcP)} \ar[d]_{\mr{ev}_{\BT(\mcP)}} & & \BT((\mcP^\vee)^\vee) \ar[d]^{α_{\mcP^\vee}}\\
(\BT(\mcP)^\vee)^\vee \ar[rr]_-{α_{\mcP}^\vee} & & \BT(\mcP^\vee)^\vee.
}
\end{equation}
In particular, if $λ:\mcP\to \mcP^\vee$ is a (skew-)symmetric homomorphism of $O$-displays, then
$$
α_\mcP\circ \BT(λ):\BT(\mcP) \lr \BT(\mcP)^\vee
$$
is a (skew-)symmetric homomorphism of $p$-divisible $O$-modules.
\end{lem}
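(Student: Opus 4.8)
The plan is to verify the three assertions of Lemma \ref{lem:symmetry} in order, with the bulk of the work being the naturality square \eqref{eq:square_1}; the compatibility with evaluation \eqref{eq:square_2} and the consequence for (skew-)symmetric polarizations will then be essentially formal.

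First I would establish \eqref{eq:square_1}. By the Yoneda lemma it suffices to compare the two composites as maps of $π^n$-torsion subgroups for all $n$, and then (using Lemma \ref{lem:Faltings_dual}) as maps of naive group-valued functors, so I can work with the explicit formula \eqref{eq:def_alpha}. Take a point $\ell \in \BT(\mcP_2^\vee)[π^n]$ and $x \in \BT(\mcP_1)[π^n]$, lift them to $\wt\ell \in \wh{P_2^\vee}$ and $\wt x \in \wh{P_1}$, and choose $λ_2 \in \wh{Q_2^\vee}$, $y_1 \in \wh{Q_1}$ with $π^n\wt\ell = \dF_2^\vee(λ_2) - λ_2$ and $π^n\wt x = \dF_1(y_1) - y_1$. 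Going around the square one way applies $\BT(f^\vee)$ first: $f^\vee:\mcP_2^\vee \to \mcP_1^\vee$ induces $\wt\ell \mapsto f^\vee(\wt\ell)$ with $f^\vee(λ_2)$ as the corresponding element of $\wh{Q_1^\vee}$, then $α_{\mcP_1}$ pairs it against $\wt x, y_1$ to give the class of $(\dF_1^\vee(f^\vee(λ_2)))(\wt x) + (f^\vee(\wt\ell))(y_1)$ in $\LT$. Going the other way, $α_{\mcP_2}$ pairs $\wt\ell, λ_2$ against $f(\wt x)$ and $f(y_1)$ (which work as liftings of $\BT(f)(x)$ since $f(Q_1) \subseteq Q_2$ and $f\circ \dF_1 = \dF_2\circ f$), giving the class of $(\dF_2^\vee(λ_2))(f(\wt x)) + \wt\ell(f(y_1))$. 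Now $\dF_1^\vee\circ f^\vee = f^\vee \circ \dF_2^\vee$ dualizes the intertwining relation, and unwinding the definition of $f^\vee$ as the adjoint of $f$ shows $(\dF_1^\vee(f^\vee(λ_2)))(\wt x) = (\dF_2^\vee(λ_2))(f(\wt x))$ and $(f^\vee(\wt\ell))(y_1) = \wt\ell(f(y_1))$; hence the two classes agree in $\LT$. This is the computation I expect to be the main obstacle, mostly bookkeeping to keep the three layers $\dF$, the adjoint $f^\vee$, and the passage to $\LT$ straight, but conceptually it is just dualizing $f\circ \dF_1 = \dF_2\circ f$.

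Next I would prove \eqref{eq:square_2}. Since dualization of $O$-displays is an involution with $\mr{ev}_\mcP:\mcP \simto (\mcP^\vee)^\vee$, and $\BT$ sends it to the natural isomorphism $\mr{ev}_{\BT(\mcP)}$ on the target (by functoriality of $\BT$ together with the fact that $\BT$ is an equivalence, so it commutes with the canonical biduality up to these evaluation maps — this is the statement that $\BT\circ \vee \circ \vee$ is naturally isomorphic to $\BT$ compatibly with $\mr{ev}$), the claim reduces to a symmetry property of the formula \eqref{eq:def_alpha}: namely, evaluating $α_{\mcP^\vee}$ at a point of $\BT((\mcP^\vee)^\vee) = \BT(\mcP)$ (via $\mr{ev}$) against a point of $\BT(\mcP^\vee)$ gives the same element of $\LT$ as evaluating $α_\mcP$ at the latter against the former. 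Concretely, with $\ell, x$ and liftings $\wt\ell, λ, \wt x, y$ as in Construction \ref{construction:alpha}, I must check that the class of $(\dF^\vee(λ))(\wt x) + \wt\ell(y)$ is symmetric under interchanging the roles of $(\mcP, x, y, \wt x)$ and $(\mcP^\vee, \ell, λ, \wt\ell)$; the computation in \eqref{eq:indep_choices} already isolates the relevant identity, since it shows the two natural expressions $(\dF^\vee(λ))(\wt x) + \wt\ell(y)$ and $(\dF^\vee(λ))(\wt x) + \wt\ell(y)$ — more precisely, $\wt\ell(\dF(y)) - λ(\wt x) \cdot(\text{torsion correction})$ — differ by something in $(V^{-1}-\mr{id})(V\wh W)$, hence agree in $\LT$. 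I would spell this out symmetrically, using $\dF^\vee$ defined by $(\dF^\vee(λ))(\dF(x)) = V^{-1}(λ(x))$ from \eqref{eq:dual_display_key}.

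Finally, the consequence for polarizations is purely formal: given $λ:\mcP \to \mcP^\vee$ with $λ^\vee \circ \mr{ev}_\mcP = \pm λ$, apply $\BT$ and use \eqref{eq:square_1} (naturality) together with \eqref{eq:square_2} (biduality compatibility) to compute $(α_\mcP \circ \BT(λ))^\vee \circ \mr{ev}_{\BT(\mcP)}$. One gets $\BT(λ)^\vee \circ α_\mcP^\vee \circ \mr{ev}_{\BT(\mcP)} = \BT(λ)^\vee \circ α_{\mcP^\vee} \circ \BT(\mr{ev}_\mcP) = \BT(λ^\vee \circ \mr{ev}_\mcP)\circ \dots$, and chasing the diagram yields $\pm\, α_\mcP \circ \BT(λ)$, as required. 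I would present this last step as a short diagram chase, citing \eqref{eq:square_1} and \eqref{eq:square_2} and the definition of (skew-)symmetry, with no further computation needed.
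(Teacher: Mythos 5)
Your overall structure is the same as the paper's, with emphasis inverted: the paper declares the commutativity of \eqref{eq:square_1} "immediate from the definition" and omits it, then does \eqref{eq:square_2} in detail, whereas you write out \eqref{eq:square_1} and sketch \eqref{eq:square_2}. Your computation for \eqref{eq:square_1} is correct: lifting $\ell$ to $\wt\ell$ with $\pi^n\wt\ell = \dF_2^\vee(\lambda_2)-\lambda_2$, observing that $f^\vee(\wt\ell)$ lifts $\BT(f^\vee)(\ell)$ with $\lambda_1 = f^\vee(\lambda_2)$ because $f^\vee\circ\dF_2^\vee = \dF_1^\vee\circ f^\vee$, and then using adjointness $(\dF_1^\vee(f^\vee(\lambda_2)))(\wt x) = (\dF_2^\vee(\lambda_2))(f(\wt x))$, $(f^\vee(\wt\ell))(y_1) = \wt\ell(f(y_1))$ does give the equality of the two composites. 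This is a useful supplement to the paper's "immediate."

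There is, however, a real gap in your treatment of \eqref{eq:square_2}. The reduction to the symmetry of formula \eqref{eq:def_alpha} is correct (unwinding $\mr{ev}_{\BT(\mcP)}$ and $\BT(\mr{ev}_\mcP)$ as you indicate), but the verification of the symmetry itself is not where you point. The computation \eqref{eq:indep_choices} establishes \emph{well-definedness} of $\alpha_\mcP$ — independence of the choice of lifting — and has a different shape (it uses $(\dF^\vee(\lambda))(\dF(z)) - \lambda(z) = (V^{-1}-\mathrm{id})(\lambda(z))$ via \eqref{eq:dual_display_key}). What you actually need is the identity
\begin{equation*}
(\dF^\vee(\lambda))(\wt x) + \wt\ell(y)\ =\ \lambda(\wt x) + \wt\ell(\dF(y)),
\end{equation*}
which holds on the nose, not merely modulo $(V^{-1}-\mathrm{id})(V\wh W)$. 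It follows by a direct cancellation from the defining relations $\pi^n\wt\ell = \dF^\vee(\lambda)-\lambda$ and $\pi^n\wt x = \dF(y)-y$ alone: the difference is $(\dF^\vee(\lambda)-\lambda)(\wt x) + \wt\ell(y-\dF(y)) = (\pi^n\wt\ell)(\wt x) - \wt\ell(\pi^n\wt x) = 0$. In particular \eqref{eq:dual_display_key} is not used here. Also note that after applying $\mr{ev}_\mcP$, the first term of $\alpha_{\mcP^\vee}(\mr{ev}_\mcP(x))(\ell)$ becomes $\wt\ell(\dF(y))$ \emph{because} $\mr{ev}_\mcP$ is a display homomorphism intertwining $\dF$ with $\dF^{\vee\vee}$ — a small but necessary observation you elide. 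Your garbled sentence ("the two natural expressions ... and ... — more precisely ...") suggests you sensed the right target but did not pin it down; fixing the citation and spelling out the cancellation closes the gap. The final polarization step is then formal and matches the paper.
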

\begin{proof}
The commutativity of \eqref{eq:square_1} is immediate from the definition of $α$; we omit this detail and focus on \eqref{eq:square_2}. Let $x\in \BT(\mcP)[π^n]$ and $\ell\in \BT(\mcP^\vee)[π^n]$ be two schematic $π^n$-torsion points. We need to check the identity
\begin{equation}\label{eq:alpha_BT_check}
[(α_{\mcP^\vee} \circ \BT(\mr{ev}_\mcP))(x)](\ell) = [(α_\mcP^\vee\circ \mr{ev}_{\BT(\mcP)})(x)](\ell).
\end{equation}
Let $\wt x$ and $π^n\wt x = \dF(y) - y$ as well as $\wt \ell$ and $π^n\wt \ell = \dF^\vee(λ) - λ$ be as in \eqref{eq:y_lambda}. Directly by definitions, the left hand side of \eqref{eq:alpha_BT_check} is
$$((\dF^{\vee\vee})(\mr{ev}_\mcP(y))(\wt \ell) + \mr{ev}_\mcP(\wt x)(λ).$$
This is equal to $\wt \ell(\dF(y)) + λ(\wt x)$ because $\mr{ev}_\mcP$ is a display homomorphism. Again by definitions, the right hand side of \eqref{eq:alpha_BT_check} equals $(\dF^\vee(λ))(\wt x) + \wt \ell(y)$. The difference with the left hand side quantity vanishes as desired:
\begin{equation}\label{eq:symmetry_duality_definition}
\begin{aligned}
\big[(\dot \bfF^\vee(λ))(\wt x) + \wt \ell & (y)\big] - \big[λ(\wt x) + \wt \ell(\dot \bfF(y))\big]\\[1mm]
& = (\dot \bfF^\vee(λ) - λ)(\wt x) + \wt \ell(y - \dot \bfF(y))\\[1mm]
& = (π^n \wt \ell)(\wt x) + \wt \ell( - π^n \wt x)\\[1mm]
& = 0.
\end{aligned}
\end{equation}
This proves commutativity of \eqref{eq:square_2} and we come to the last claim: Assume that $λ:\mcP\to \mcP^\vee$ satisfies $λ^\vee\circ \mr{ev}_\mcP = \pm λ$. Then we obtain
\begin{equation}\begin{array}{rcl}
    (α_\mcP\circ \BT(λ))^\vee \circ \mr{ev}_{\BT(\mcP)} & = &\BT(λ)^\vee \circ α_\mcP^\vee \circ \mr{ev}_{\BT(\mcP)}\\[1mm]
    & \overset{\eqref{eq:square_2}}{=} & \BT(λ)^\vee \circ α_{\mcP^\vee} \circ \BT(\mr{ev}_\mcP)\\[1mm]
    & \overset{\eqref{eq:square_1}}{=} & α_\mcP \circ \BT(λ^\vee \circ \mr{ev}_\mcP)\\[1mm]
    & = & \pm (α_\mcP \circ \BT(λ)). 
\end{array}
\end{equation}
\end{proof}

\begin{prop}\label{prop:Faltings_display_compatible}
For each binilpotent $O$-display $\mcP$, the homomorphism $α_\mcP:\BT(\mcP^\vee)\to \BT(\mcP)^\vee$ is an isomorphism.
\end{prop}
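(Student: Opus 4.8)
The plan is to reduce the statement to a situation where $\BT$ is already known to be duality-compatible, namely the case of a perfect base ring, and then to propagate the isomorphism to arbitrary $\pi$-adic $O$-algebras by a combination of base change and a deformation-free argument. First I would note that being an isomorphism of $p$-divisible $O$-modules can be checked after the faithfully flat base change $R \to R/\pi$, since $\BT$ and Faltings duality both commute with base change and a morphism of $p$-divisible groups is an isomorphism if and only if it is so on the closed subscheme $\Spec(R/\pi)$ (equivalently, on each truncation, which is finite locally free). So I may assume $p$ is nilpotent in $R$. Next, since $\alpha_\mcP$ is a morphism between two $p$-divisible $O$-modules of the same height (both $\BT(\mcP^\vee)$ and $\BT(\mcP)^\vee$ have height equal to the height of $\mcP$, using that Faltings duality preserves height and that $\BT$ preserves height), it suffices to show that $\alpha_\mcP$ is a monomorphism, or dually an epimorphism; and this can be checked on geometric points $\Spec(k) \to \Spec(R)$ with $k$ an algebraically closed field. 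Thus the whole statement reduces to the case $R = k$ a perfect field.

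\textbf{The perfect field case.} Over a perfect field, $O$-displays are the same as $O$-Dieudonné modules (Example \ref{ex:displays}(2)), and the functor $\BT$ becomes the classical (covariant) Dieudonné functor for $p$-divisible $O$-modules. Here there are several routes. The cleanest is to observe that $\alpha_\mcP$ induces a map on Dieudonné modules, and by the explicit formula \eqref{eq:def_alpha} this induced map is (up to the canonical identifications) the identity on the underlying Dieudonné module $P$, paired via the perfect pairing $P \times P^\vee \to W_O(k)$ that defines $\mcP^\vee$. Concretely: the Dieudonné module of $\BT(\mcP)^\vee$ is naturally identified with $P^\vee = \Hom_{W_O(k)}(P, W_O(k))$ with its dual Frobenius and Verschiebung, and unwinding Construction \ref{construction:alpha} on the level of $W_O(k)$-modules shows that $\alpha_\mcP$ corresponds to the tautological identity map $P^\vee \to P^\vee$. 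Since the Dieudonné functor is an equivalence over a perfect field, $\alpha_\mcP$ is an isomorphism. Alternatively, one can invoke the already-documented compatibility of $\BT$ with Serre duality over such rings as in \cite[\S5]{ACZ} and \cite[\S4]{Zink}, and check that the natural transformation constructed there agrees with our $\alpha_\mcP$ — but since our $\alpha_\mcP$ is constructed by the same biextension-to-bilinear-pairing translation (the formula \eqref{eq:def_alpha} is Zink's \cite[Eq. (219)]{Zink}), this agreement is essentially by inspection.

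\textbf{Main obstacle.} The part requiring care is the reduction from general $R$ (with $p$ nilpotent) to geometric points: one must be sure that "$\alpha_\mcP$ is an isomorphism" really is detectable pointwise. For $p$-divisible groups this is standard — a homomorphism $f \colon X \to Y$ of $p$-divisible groups over a ring in which $p$ is nilpotent is an isomorphism iff $f[\pi^n] \colon X[\pi^n] \to Y[\pi^n]$ is an isomorphism of finite locally free group schemes for all $n$, and a homomorphism of finite locally free group schemes of the same order is an isomorphism iff it is so on all geometric fibers (by flatness / fiberwise criterion for isomorphisms, or by checking that kernel and cokernel, which are finite and flat, vanish fiberwise) — but I would want to verify that $\alpha_\mcP$ indeed respects the height / order on the nose, which uses that $\BT$ preserves height (clear from \eqref{eq:def_BT}, the height being $\rk_{W_O(R)}P$) and that Faltings duality preserves height (Theorem \ref{thm:Faltings}). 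A secondary technical point is the identification of the Dieudonné module of $\BT(\mcP)^\vee$ with $P^\vee$: this uses that the Lubin--Tate object $\LT$ was chosen in \eqref{eq:def_LT} as $\BT$ of the multiplicative $O$-display, so that its Dieudonné module is $W_O(k)$ with the standard structure, making the Cartier-duality pairing on Dieudonné modules literally the $W_O(k)$-linear duality pairing. Granting these two points, the proof is short, and the heart of it — the perfect field case — is the verification that \eqref{eq:def_alpha} descends to the identity on $P^\vee$, which is the computation already carried out in \eqref{eq:indep_choices} and \eqref{eq:symmetry_duality_definition}.
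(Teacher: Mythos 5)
Your reduction to geometric points (and the remark that both sides have the same height, so injectivity suffices) matches Step~1 of the paper's proof. The gap is in what you call the perfect field case, which you treat as a short observation but which is in fact the entire content of the proposition.

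Specifically, the claim that ``the Dieudonn\'e module of $\BT(\mcP)^\vee$ is naturally identified with $P^\vee$, and unwinding Construction~\ref{construction:alpha} shows $\alpha_\mcP$ is the identity'' is not something you can read off. The identification $\mcD(\BT(\mcP)^\vee)\cong P^\vee$ is exactly the compatibility of \emph{Faltings} duality (Definition~\ref{def:dual}) with $O$-Dieudonn\'e module duality, and this is not a formal consequence of Example~\ref{ex:displays}(2): Faltings duality is defined via strict maps of versal small deformations into $\LT$, which a priori has no relation to $W_O(k)$-linear duality on the display side. The computations \eqref{eq:indep_choices} and \eqref{eq:symmetry_duality_definition} that you cite establish well-definedness of $\alpha_\mcP$ and compatibility with double duals, respectively --- neither computes $\mcD(\alpha_\mcP)$, and neither shows $\alpha_\mcP$ is nonzero, let alone the identity. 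So the ``cleanest route'' begs the question over a perfect field just as over general $R$.

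Your fallback, invoking \cite[\S 5]{ACZ} or \cite[\S 4]{Zink}, would logically close the gap, but it is precisely what the paper chose \emph{not} to do: the Remark inside the paper's proof states that \cite[Theorem~5.9]{ACZ} is only sketched there and refers onward to other sources, which is why the paper gives a self-contained argument. That argument goes further than your reduction: after passing to $\ov{\mbF}_p$, it uses the isogeny-invariance of the question (via \eqref{eq:square_1}) and Dieudonn\'e theory to reduce to a \emph{single} explicit isoclinic display of each slope $d/h$, with Frobenius given by the matrix \eqref{eq:Frob_hd}, and then proves injectivity of $\alpha_\mcP$ on tangent spaces by producing, for each nonzero tangent vector $\ell$, an explicit $\pi$-torsion point $x$ defined over $k[\varepsilon,\eta^{1/q^h}]/(\varepsilon^2,\eta^2)$ (the element $y$ in \eqref{eq:point_y}) with $(\alpha_{\mcP,1}(\ell))(x)=(\varepsilon\eta,0,0,\ldots)\neq 0$. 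This Witt-vector computation is the heart of the proof and has no analogue in your proposal.
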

\begin{proof}
\emph{Step 1. Reduction to specific displays over an algebraically closed field.} We know a priori that $\BT(\mcP^\vee)$ and $\BT(\mcP)^\vee$ are $p$-divisible $O$-modules. So it is enough to show that $α_\mcP$ is an isomorphism after base change to every geometric point $R \to k$. (Recall that $p$ is nilpotent in $R$, so $k$ is of characteristic $p$.) Since the construction of $α_\mcP$ commutes with base change, we may then simply assume that $R = k$ is an algebraically closed field.
\begin{rmk}
At this point, at least if $p \geq 3$, \cite[Theorem 5.9]{ACZ} directly implies Proposition \ref{prop:Faltings_display_compatible}. Since the proof of this theorem is only sketched and refers to \cite{Lau_dual}, we continue to give a self-contained argument.
\end{rmk}
Let $f:\mcP_1 \to \mcP_2$ be an isogeny of degree $d$ of binilpotent $O$-displays. Then both horizontal arrows in \eqref{eq:square_1} are isogenies of degree $d$, so $α_{\mcP_1}$ is an isomorphism if and only if $α_{\mcP_2}$ is an isomorphism. Since we are working over an algebraically closed field, by Dieudonné theory and additivity of all involved functors, we can hence assume that $\mcP$ is isoclinic of some slope $d/h$. By the same logic, it even suffices to prove the isomorphism property of $α_\mcP$ for a \emph{single} $O$-display $\mcP$ of slope $d/h$, for every possible $(d,h)$.

\emph{Step 2. Verification for a specific isoclinic $O$-display of dimension $d$ and height $h$.}
Let $(W, I) := (W_O(k), I_O(k))$ denote the ring of Witt vectors of $k$ and the Verschiebung ideal. Define a display $\mcP = (P, Q, \bfF, \dot \bfF)$ as follows. Let $P$ be the $W$-module $W^{\oplus h}$ with submodule $Q = I^{\oplus d} \oplus W^{\oplus (h-d)}$. Define the Frobenius $\dF:Q\to P$ by
\begin{equation}\label{eq:Frob_hd}
\dot \bfF := \begin{pmatrix}
0 & & & & & & σ\\
V^{-1} & \ddots & & & & &\\
& \ddots & \ddots & & & &\\
& & V^{-1} & \ddots & & &\\
& & & σ & \ddots & &\\
& & & & \ddots & \ddots &\\
& & & & & σ & 0
\end{pmatrix}.
\end{equation}
The datum $\bfF$ is determined by \eqref{eq:Frobenius_determined} and will not be used in the following. We claim that for this specific example, $α_\mcP$ is an isomorphism. To prove this, we first note that $\BT(\mcP^\vee)$ and $\BT(\mcP)^\vee$ are both formal groups of the same dimension. It is thus enough to show that $α_\mcP$ is injective on tangent spaces. Our proof is by making this explicit.

Consider the dual basis for $P^\vee = \Hom(P, W)$. It provides the $W$-module descriptions $P^\vee = W^{\oplus h}$ and $Q^\vee = W^{\oplus d}\oplus I^{\oplus (h-d)}$. In the dual basis, the dual Frobenius $\dot \bfF^\vee:Q^\vee \to P^\vee$ is given by
$$\dot \bfF^\vee := \begin{pmatrix}
0 & & & & & & V^{-1}\\
σ & \ddots & & & & &\\
& \ddots & \ddots & & & &\\
& & σ & \ddots & & &\\
& & & V^{-1} & \ddots & &\\
& & & & \ddots & \ddots &\\
& & & & & V^{-1} & 0
\end{pmatrix}.$$
Consider the dual numbers $k[ε]/(ε)^2$ with nilradical $\mcN = (ε)$. The tangent space $\BT(\mcP^\vee)(k[ε]/(ε)^2)$ is by definition of the functor $\BT$ the quotient in the exact sequence
$$0 \lr \wh {Q^\vee}(\mcN) \overset{\dot \bfF^\vee - \mr{id}}{\lr} \wh{P^\vee}(\mcN) \lr \BT(\mcP^\vee)(\mcN)\lr 0.$$
Since $\dot \bfF^\vee$ is $σ$-linear and $\mcN^2 = 0$, the first map is just the identity map $\wh {Q^\vee}(\mcN)\to \wh {P^\vee}(\mcN)$. Using the Teichmüller map
\begin{equation}\label{eq:Teichmuller}
\mcN\lr \wh W_O(\mcN),\quad n\longmapsto (n, 0, 0,\ldots),
\end{equation}
we obtain
$$0^{\oplus d} \oplus \mcN^{\oplus (h-d)} \simlr \wh {P^\vee}(\mcN)/\wh {Q^\vee}(\mcN) = \BT(\mcP^\vee)(k[ε]/(ε)^2).$$
Consider an arbitrary tangent vector
$$\ell = (0, \ldots, 0,n_{d+k}, n_{d+k+1},\ldots, n_h) \in 0^{\oplus d} \oplus \mcN^{\oplus (h-d)}.$$
Here, the indexing is chosen such that $1 \leq k \leq h-d$ and $n_{d+k} \neq 0$. We claim that $α_\mcP(\ell) \neq 0$. More precisely, we claim that there exist a $k[ε]/(ε)^2$-algebra $S$ and an $S$-valued point $x\in \BT(\mcP)[π]$ such that $\ell(x) \in \LT[π](R)$ is non-vanishing. One construction of such a point is as follows. 

Set $S = k[ε, η^{1/q^h}]/(ε^2, η^2)$ with nilradical $\mcU = (ε, η^{1/q^h})$. Consider the following element $y$ of $\wh Q(\mcU)$ where the notation means Witt vector coordinates with respect to $\wh P(\mcU) = \wh W_O(\mcU)^h$:
\begin{equation}\label{eq:point_y}
\begin{array}{rll}
    y = & (0, \ldots, 0, &\ \ \text{$d-1$ vanishing entries}\\[2mm]
    &(0, η^{1/q^{k-1}}, 0, 0, \ldots),&\ \ \text{$d$-th entry}\\[1mm]
    &(η^{1/q^{k-1}}, 0, 0,  \ldots),&\ \ \text{$(d+1)$-st entry}\\[1mm]
    &(η^{1/q^{k-2}}, 0, 0, \ldots),&\\
    &\ \ \ \vdots&\\
    &(η, 0, 0, \ldots),&\ \ \text{$(d+k)$-th entry}\\[2mm]
    &0, \ldots, 0)&\ \ \text{$h-d-k$ vanishing entries}.
\end{array}
\end{equation}
Then $\dot \bfF(y) - y \in V \wh P(\mcU)$. Note that since $π$ is zero in $S$, multiplication by $π$ on $W_O(S)$ is given by $π\cdot (w_0,w_1,\ldots) = (0, w_0^q, w_1^q, \ldots)$. Since $y$ actually has coordinates in the subring $k[η^{1/q^{h-1}}]/(η^2)$ of $S$, and since every element of this subring has a $q$-th root in $S$, there exists an element $\wt x\in \wh P(\mcU)$ such that $π \wt x = \dot \bfF (y) - y$. In particular, the image $x$ of $\wt x$ in $\BT(\mcP)(S)$ is $π$-torsion.

We claim that $(α_{\mcP,1}(\ell))(x) \neq 0$. In order to check this, we simply evaluate the definition in \eqref{eq:def_alpha}. Our elements $\wt x$ and $y$ are match the terminology there. We choose the lifting $\wt \ell \in P^\vee(\mcN)$ as the Teichmüller lifting of $\ell$ in $\wh W_O^{\oplus h}$. Since $\mcN^2 = 0$ and since multiplication by $π$ is $π\cdot (w_0, w_1, \ldots) = (0, w_0^q, w_1^q, \ldots)$, we have $π\cdot \wt \ell = 0$. In the notation of \eqref{eq:def_alpha}, this means $λ = 0$. So we find
$$(α_{\mcP,1}(\ell))(x) = \wt \ell(y) = (εη, 0, 0, \ldots)$$
which clearly does not lie in $(V^{-1} - \mr{id})(V\wh W_O)$ and hence defines a non-zero $S$-point of $\LT$. This finishes the proof of Proposition \ref{prop:duality-comp}.
\end{proof}

\begin{rmk}\label{rmk:the_real_dual}
Proposition \ref{prop:Faltings_display_compatible} also helps to clarify previous results, such as those in \cite{Mih} and \cite{KRZ}. Namely, since the relation of $O$-display duality with Faltings duality was not completely established at the time of their writing, these references \emph{defined} the dual $p$-divisible $O$-module in terms of the dual $O$-display, see \cite[Definition 11.9]{Mih} and \cite[after Corollary 3.4.13]{KRZ}. But this notion of duality is artificial because it depends on the choice of an inverse functor $Ψ$ to \eqref{eq:equiv_display} together with a choice of natural isomorphism $\BT\circ Ψ\simto \mr{id}$. Proposition \ref{prop:Faltings_display_compatible} makes this choice unnecessary.
\end{rmk}

\bibliographystyle{alpha}

\bibliography{reference}
\end{document}